\numberwithin{equation}{section} 
\let\realequation\equation
\def\equation{\setcounter{equation}{\arabic{subsection}}%
   \refstepcounter{subsection}%
   \realequation}
\let\realmultline\multline
\def\multline{\setcounter{equation}{\arabic{subsection}}%
   \refstepcounter{subsection}%
   \realmultline}
\newtheorem{theorem}[subsection]{Theorem}
\newtheorem{corollary}[subsection]{Corollary}
\newtheorem{lemma}[subsection]{Lemma}
\newtheorem{proposition}[subsection]{Proposition}
\theoremstyle{definition}
\newtheorem{definition}[subsection]{Definition}
\newtheorem{remark}[subsection]{Remark}
\newtheorem{example}[subsection]{Example}
\newtheorem{construction}[subsection]{Construction}
\newcommand{\bC}{\mathbb{C}}
\newcommand{\bS}{\mathbb{S}}
\newcommand{\cB}{\mathcal{B}}
\newcommand{\cC}{\mathcal{C}}
\newcommand{\cD}{\mathcal{D}}
\newcommand{\cE}{\mathcal{E}}
\newcommand{\cI}{\mathcal{I}}
\newcommand{\cK}{\mathcal{K}}
\newcommand{\cR}{\mathcal{R}}
\newcommand{\cS}{\mathcal{S}}
\newcommand{\ucI}{*}
\DeclareMathOperator{\hocolim}{hocolim}
\DeclareMathOperator{\colim}{colim}
\DeclareMathOperator*{\colimsubscript}{colim}
\DeclareMathOperator{\const}{const}
\DeclareMathOperator{\Sp}{Sp}
\DeclareMathOperator{\Ev}{Ev}
\DeclareMathOperator{\Hom}{Hom}
\DeclareMathOperator{\Mod}{Mod}
\DeclareMathOperator{\Ar}{Ar}
\DeclareMathOperator{\Fun}{Fun}
\DeclareMathOperator{\Obj}{Ob}
\DeclareMathOperator{\Sing}{Sing}
\newcommand{\ot}{\leftarrow}
\newcommand{\op}{{\mathrm{op}}}
\newcommand{\id}{{\mathrm{id}}}
\newcommand{\tensor}{\otimes}
\newcommand{\sm}{\wedge}
\newcommand{\barsm}{\barwedge}
\newcommand{\iso}{\cong}
\newcommand{\concat}{\sqcup}
\newcommand{\bld}[1]{{\mathbf{#1}}}
\newcommand{\Spsym}[1]{{\mathrm{Sp}^{\Sigma}_{#1}}}
\newcommand{\sset}{\mathrm{sSet}}
\newcommand{\tp}{\mathrm{Top}}
\newcommand{\SpsymR}{{\mathrm{Sp}^{\Sigma}_{\cR}}}
\DeclareMathOperator{\capitalGL}{GL}
\newcommand{\GLoneIof}[1]{\capitalGL^{\cI}_1\!{#1}}
\newcommand{\GLoneof}[1]{\capitalGL_1\!{#1}}
\newcommand{\NFin}{{\mathrm{NFin}_*}}
\newcommand{\arxivlink}[1]{\href{http://arxiv.org/abs/#1}{\texttt{arXiv:#1}}}
\title[Symmetric spectra in retractive spaces]{Multiplicative parametrized homotopy theory via symmetric spectra in retractive spaces}
\author{Fabian Hebestreit} \address{Mathematical Institute, University
  of Bonn, Endenicher Allee 60, 53115 Bonn, \newline{}Germany}
\email{f.hebestreit@math.uni-bonn.de}
\author{Steffen Sagave} \address{IMAPP, Radboud University Nijmegen,
  PO Box 9010, 6500 GL Nijmegen, \newline{}The Netherlands}
\email{s.sagave@math.ru.nl}
\author{Christian Schlichtkrull} \address{Department of Mathematics,
  University of Bergen, P.O. Box 7803, 5020 Bergen, \newline{}Norway}
\email{christian.schlichtkrull@math.uib.no} \date{\today}
\begin{document}

\begin{abstract}
In order to treat multiplicative phenomena in twisted (co)ho\-mology, we introduce a new point-set level framework for parametrized homotopy theory. We provide a convolution smash product that descends to the corresponding $\infty$-categorical product and allows for convenient constructions of commutative parametrized ring spectra. As an immediate application, we compare various models for generalized Thom spectra. In a companion paper, this approach is used to compare homotopical and operator algebraic models for twisted $K$-theory.
\end{abstract}

\keywords{parametrized spectrum, Thom spectrum, twisted cohomology}
\subjclass[2010]{55P43; 55P42}

\maketitle
\setcounter{tocdepth}{1}  
\tableofcontents

\section{Introduction}
Stable parametrized homotopy theory originally arose from the study of transfer maps and fiberwise duality for generalized (co)homology theories. To analyze these phenomena, Clapp and Puppe~\cite{Clapp-P_parametrized} introduced a first homotopy category of parametrized spectra. Later, May and Sigurdsson~\cite{May-S_parametrized} studied a more refined model category of orthogonal parametrized spectra over a base space $B$ enjoying favorable point-set topological properties. Both these approaches relate duality to a smash product obtained by first forming an external fiberwise smash product lying over $B \times B$ and then internalizing it by pullback along the diagonal $B \to B \times B$.

When studying cross and cup products in twisted (co)homology through the representing para\-metrized spectra, one needs a different symmetric monoidal structure. Suppose that the base space $B$ has a homotopy coherent commutative multiplication, that is, an $E_\infty$ structure. Then one can also attempt to internalize the external fiberwise smash product over $B \times B$ by pushout along the multiplication $\mu \colon B \times B \to B$. We will refer to this type of product as a \emph{convolution smash product} to distinguish it from the \emph{fiberwise smash product} considered above. However, the setup of May--Sigurdsson does not provide such a symmetric monoidal convolution smash product unless the multiplication of $B$ is strictly associative and commutative, ruling out the parameter spaces of many interesting parametrized spectra, such as those representing twisted $K$-theory or bordism theories. Ando, Blumberg, and Gepner~\cite{Ando-B-G_parametrized} implemented the convolution smash product in the $\infty$-categorical setup from \cite{Ando-B-G-H-R_infinity-Thom} and showed how it for example gives rise to twisted Umkehr maps. 

The primary aim of the present paper is to introduce a convenient point set level category of parametrized spectra that admits a symmetric monoidal convolution smash product descending to the $\infty$-categorical product of~\cite{Ando-B-G_parametrized}. Our main new idea is to also allow the base space to vary for the different levels of a parametrized spectrum. More precisely, the base spaces will assemble to an $\cI$-space, i.e., a functor from the category $\cI$  of finite sets $\bld{n} = \{1,\dots, n\}$, $n \geq 0$, and injective maps to the category of spaces $\cS$. Replacing the cartesian product of base spaces, the category $\cS^{\cI}$ of $\cI$-spaces is equipped with a symmetric monoidal Day convolution product $\boxtimes$ induced by the concatenation in $\cI$ and the cartesian product of spaces. We call commutative monoids with respect to $\boxtimes$ \emph{commutative $\cI$-space monoids}.

It is proved in \cite{Sagave-S_diagram} that every $E_{\infty}$ homotopy type arises as the homotopy colimit $M_{h\cI} = \hocolim_{\cI}M$ for a commutative $\cI$-space monoid $M$. We think of $M_{h\cI}$ as the underlying $E_{\infty}$ space of $M$. This point of view often leads to simple and explicit models of $E_{\infty}$ spaces. Working with symmetric spectra parametrized over commutative $\cI$-space monoids allows us to implement the notion of a convolution smash product in a convenient fashion. Different point-set frameworks for parametrized homotopy theory were recently developed for example in~\cite{HNP} and \cite{VBM-Thesis}, but to our knowledge these approaches again do not allow for a convolution smash product in sufficient generality for the applications we have in mind.

In a companion paper~\cite{HS-twisted}, the first named authors use the setup developed here to prove that twisted K-theory as defined via operator algebraic methods coincides with the version defined via homotopy theoretic methods. More specifically, it is shown that these theories agree as commutative parametrized ring spectra with respect to the convolution smash product. In this case, the commutative $\cI$-space monoids serving as base spaces model the classifying space of the projective orthogonal group of a Hilbert space. 

\subsection{Symmetric spectra in retractive spaces} To implement our approach, we first consider the category $\cS_{\cR}$ of retractive spaces. Objects of $\cS_{\cR}$ are pairs of spaces $(U,K)$ with structure maps $K \to U \to K$ that compose to the identity. Morphisms in $\cS_{\cR}$ are pairs of maps making the two obvious squares commutative. The external fiberwise smash product $\barsm$ provides a symmetric monoidal structure on $\cS_{\cR}$ with unit $(S^0,*)$. On base spaces, this $\barsm$-product is just the cartesian product. 

Following work of Hovey~\cite{Hovey_symmetric-general}, we form the category $\SpsymR$ of symmetric spectrum objects in $\cS_{\cR}$ with $-\barsm (S^1,*)$ as the suspension functor. It comes with a \emph{local} model category structure whose fibrant objects are $\Omega$-spectra. (We avoid the term \emph{stable} since in lack of a zero object, $\mathrm{Ho}(\SpsymR)$ is not stable in the technical sense.) The category  $\SpsymR$ inherits a symmetric monoidal product $\barsm$ from $\cS_{\cR}$ that will play the role of the external smash product. An object $(E,X)$ of $\SpsymR$ is a sequence of retractive spaces $(E_n,X_n)$ with an action of the symmetric group $\Sigma_n$ and structure maps $(E_n,X_n) \barsm (S^1,*) \to (E_{n+1},X_{n+1})$ compatible with the $\Sigma_n$-actions. Inspecting definitions, the projection $\pi_b \colon \cS_{\cR} \to \cS$ to the base space induces a projection $\pi_b \colon \SpsymR \to \cS^{\cI}$ to base $\cI$-spaces. If $X$ is an $\cI$-space, we define the category of \emph{$X$-relative symmetric spectra} $\Spsym{X}$ to be the fiber of $\pi_b$ over $X$. We stress that unless $X$ is constant, $\Spsym{X}$ is not the category of symmetric spectrum objects in some base category since the levels of $(E,X)$ take values in different categories.

A map of $\cI$-spaces  $f\colon X \to Y$ induces an adjunction $f_! \colon \Spsym{X} \rightleftarrows \Spsym{Y}\colon f^*$  where $f^*$ denotes degreewise pullback. We say that $f$ is an \emph{$\cI$-equivalence} if the map of homotopy colimits $f_{h\cI}\colon X_{h\cI} \to Y_{h\cI}$ is a weak homotopy equivalence and note that the $\cI$-equivalences are the weak equivalences in an \emph{$\cI$-model structure} on $\cS^{\cI}$ (see~\cite{Sagave-S_diagram}).

\begin{theorem}\label{thm:model-str-introduction}
Let $X$ be an $\cI$-space. The category $\Spsym{X}$ of $X$-relative symmetric spectra admits a local model structure where a map is a cofibration, fibration, or weak equivalence if and only if it is so as a map in $\SpsymR$. 

With respect to the local model structure, $(f_!,f^*)$ is a Quillen adjunction that is a Quillen equivalence if $f$ is an $\cI$-equivalence. In particular, $\Spsym{X}$ is Quillen equivalent to the stabilization of the category of spaces over and under $X_{h\cI}$.
\end{theorem}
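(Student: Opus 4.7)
For the model structure, I would exploit that $\pi_b\colon \SpsymR \to \cS^{\cI}$ is both a cocartesian and cartesian fibration of ordinary categories, with cocartesian lifts given by $f_!$ and cartesian lifts by $f^*$. Generating (acyclic) cofibrations of $\Spsym{X}$ arise by first taking the generating maps $F_n(i)$ of $\SpsymR$ (Hovey's framework), where $i$ is a generating map of $\cS_{\cR}$; since such $i$ is the identity on bases, $F_n(i)$ lies in $\Spsym{F_n^{\cS^{\cI}}(K)}$ for the free $\cI$-space on $K$ at level $n$. For every map $\xi\colon F_n^{\cS^{\cI}}(K) \to X$, the cocartesian pushforward $\xi_!$ produces a map in $\Spsym{X}$; taking all such pushforwards yields $I_X$ and $J_X$. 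Because both source and target of each generator have base $X$, pushouts stay in the fibre, and a small-object argument applied to $I_X$ and $J_X$ produces the level and, after the usual Bousfield localisation, the local model structure on $\Spsym{X}$, with cofibrations, fibrations, and weak equivalences inherited from $\SpsymR$.

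For the Quillen adjunction $(f_!, f^*)$, the right adjoint is the levelwise pullback of retractive spaces. Acyclic fibrations in the local model structure are level acyclic fibrations of spaces, and fibrations are level fibrations satisfying an $\Omega$-spectrum descent condition; both are preserved by pullbacks (which commute with loop-space formation), so $f^*$ is right Quillen.

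The main obstacle is proving the Quillen equivalence for $\cI$-equivalences. I would factor $f$ in the $\cI$-model structure on $\cS^{\cI}$ as an acyclic cofibration $j$ followed by an acyclic fibration $p$, and treat each piece separately. For $p$, the pullback $p^*$ is homotopically fully faithful with essentially surjective derived image, since the homotopy fibres of $p$ are contractible in the $\cI$-sense; this yields the Quillen equivalence by the standard criterion. For $j$, the derived unit $A \to Rj^* Lj_! A$ on cofibrant $A$ is verified by analysing the defining pushout square for $j_! A$ level-by-level: left-properness of the local model structure on $\SpsymR$ combined with the fact that $j$ is a cofibration whose $\cI$-realisation $j_{h\cI}$ is a weak equivalence forces the pushout total spaces to become equivalences after taking homotopy colimits over $\cI$, which suffices because weak equivalences between $\Omega$-spectra are detected via their total spaces. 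The final statement of the theorem follows by applying this Quillen equivalence to the $\cI$-equivalence $X \to \const_{\cI}(X_{h\cI})$ (an $\cI$-equivalence because $B\cI$ is contractible, $\cI$ having initial object $\bld{0}$) and identifying $\Spsym{\const_{\cI}(X_{h\cI})}$ with Hovey's symmetric spectra in $\cS_{X_{h\cI}/X_{h\cI}}$, the standard model for the stabilisation of $\cS_{X_{h\cI}/X_{h\cI}}$.
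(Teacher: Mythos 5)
Your construction of the model structure on $\Spsym{X}$ via fiberwise generating (acyclic) cofibrations obtained by pushing forward the free objects of $\SpsymR$ is essentially the paper's route (the section-category formalism of Proposition~\ref{prop:level-model-on-section} together with Lemma~\ref{lem:free-cR-and-free-relative-X-pushout}, followed by Bousfield localization). The first genuine gap is your assertion that the resulting cofibrations, fibrations, and \emph{weak equivalences} are ``inherited from $\SpsymR$''. For cofibrations this is easy, but after localizing, the local equivalences of $\Spsym{X}$ are defined via mapping spaces into $S$-local objects \emph{of the fiber}, and it is not formal that these agree with the local equivalences of $\SpsymR$ restricted to the fiber: the fibrant objects of the two localizations only visibly coincide when $X$ is $\cI$-fibrant (Lemma~\ref{lem:equivalent-fibrancy-conditions}), and the factorizations of $\SpsymR$ do not restrict to $\Spsym{X}$ (see the Remark following Proposition~\ref{prop:SpsyM-pushout-product}). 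The paper needs all of Lemmas~\ref{lem:SpsymK-SpsymR-maps-between-free}--\ref{lem:inclusion-preserves-fib-replacement} and the multi-step reduction of Proposition~\ref{prop:model-str-spsymR-spsymX} (constant cofibrant base, then cofibrant base, then general base) to establish this; your proposal skips it entirely.

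The second gap is in the Quillen equivalence for an $\cI$-equivalence $f$. Your factorization $f=pj$ is fine, and the acyclic-fibration leg $p$ is a level equivalence, so it is covered by the easy case (Lemma~\ref{lem:base-change-local-Q-adjunction}). But for the acyclic cofibration $j$, which is \emph{not} a level equivalence, your argument that the derived unit is an equivalence because ``weak equivalences between $\Omega$-spectra are detected via their total spaces'' after ``taking homotopy colimits over $\cI$'' is unjustified and close to circular: local equivalences in $\Spsym{X}$ are stable equivalences in which the spectrum and $\cI$-directions are interleaved, and there is no a priori reason that $\hocolim_{\cI}$ of total spaces detects them — establishing a criterion of this kind is exactly the hard content of Corollary~\ref{cor:base-change-local-Q-adjunction}. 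The paper's actual argument proceeds differently: it identifies $\Spsym{X}$ with the stabilization $\Spsym{}(\cS^{\cI}_X,S^1)$ (Proposition~\ref{prop:SpsymX-stable} and Lemma~\ref{lem:stabilization-quillen-equiv}), uses properness of the $\cI$-model structure to see that $f$ induces a Quillen equivalence $\cS^{\cI}_X\rightleftarrows\cS^{\cI}_Y$, and then invokes Hovey's theorem that stabilization preserves Quillen equivalences. Finally, a small point: there is no strict map $X\to\const_{\cI}(X_{h\cI})$; one must pass through the bar resolution zig-zag $\const_{\cI}X_{h\cI}\ot\overline{X}\to X$ as in Corollary~\ref{cor:SpsymX-vs-SpsymXhI}.
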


Consequently, $\Spsym{X}$ models the same homotopy theory as the category of $X_{h\cI}$-parametrized spectra in the sense of May--Sigurdsson~\cite{May-S_parametrized} or \cite{Ando-B-G-H-R_infinity-Thom}. We also show that $\mathbb R f^* \colon \mathrm{Ho}(\Spsym{Y}) \to \mathrm{Ho}(\Spsym{X})$ admits a right adjoint $\mathbb R f_*$ that does, however, not arise from a right Quillen functor.

We do in fact provide two versions of the local model structure in the theorem, an absolute and a positive one, where as usual the positive version is necessary to lift the model structures to categories of commutative monoids (see~\eqref{eq:mon-str-intro} below). The theorem also has a much easier unstable analogue: the category of retractive spaces $\cS_{\cR}$ inherits a model structure from the category of spaces where a map is a weak equivalence if both of its components are, and the standard model structure on the category of spaces $\cS_K$ over and under $K$ can be viewed as a ``restriction'' of this model structure to the subcategory $\cS_K$ of $\cS_{\cR}$. However, the proof of Theorem~\ref{thm:model-str-introduction} turns out to be not as easy as it may look. The problem is that the factorizations needed for the model category structure on $\Spsym{X}$ are not inherited from $\SpsymR$ since the factorizations in the latter category may change the base object. To circumvent this problem, we give an intrinsic description of the category $\Spsym{X}$ and its local model structure in terms of section categories and then show that its cofibrations, fibrations, and weak equivalences are detected in $\SpsymR$. 
 
It is also useful to notice that the model category $\SpsymR$ can be recovered from the $\Spsym{X}$ for varying $X$. As a category, $\SpsymR$ is equivalent to the Grothendieck construction of the pseudofunctor $X \mapsto \Spsym{X}$ sending an $\cI$-space $X$ to $\Spsym{X}$ and a map $f\colon X \to Y$ to $f_!$. Harpaz and Prasma~\cite{Harpaz-P_Grothendieck-construction} have identified conditions under which a model structure on the base category and model structures on the values of a pseudofunctor assemble to a so-called \emph{integral} model structure on the Grothendieck construction, and we verify these conditions in the case at hand. 

\begin{theorem}\label{thm:integral-introduction}
The $\cI$-model structure on $\cS^{\cI}$ and the local model structures on the $\Spsym{X}$ induce an integral model structure on the Grothendieck construction of $X \mapsto \Spsym{X}$. Under the equivalence of the Grothendieck construction with $\SpsymR$, the integral model structure corresponds to the local model structure on~$\SpsymR$. 
\end{theorem}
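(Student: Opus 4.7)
The plan is to apply the general criterion of Harpaz--Prasma~\cite{Harpaz-P_Grothendieck-construction} for the existence of an integral model structure on a Grothendieck construction. Their theorem requires three key inputs: (i) the base category $\cS^{\cI}$ carries a left proper model structure; (ii) the assignment $X \mapsto \Spsym{X}$ together with the change-of-base functors $f_!$ defines a left Quillen presheaf, meaning that for every map $f\colon X \to Y$ of $\cI$-spaces the pair $(f_!, f^*)$ is a Quillen adjunction; and (iii) a homotopy invariance condition ensuring that $\cI$-equivalences in the base induce Quillen equivalences on fibers that are compatible with cofibrant replacement.

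Condition (ii) and the Quillen equivalence half of condition (iii) are furnished directly by Theorem~\ref{thm:model-str-introduction}. The left properness demanded in (i) is established for the $\cI$-model structure in~\cite{Sagave-S_diagram}. The finer compatibility version of (iii) that Harpaz--Prasma require is then reduced to a direct check: for an $\cI$-equivalence $f\colon X \to Y$ and a cofibrant $X$-relative spectrum $E$, the unit $E \to f^* f_! E$ is a local weak equivalence, which can be extracted from the Quillen equivalence in Theorem~\ref{thm:model-str-introduction} combined with the explicit description of $f_!$ as a levelwise pushout in $\cS_{\cR}$.

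For the second assertion we first spell out the categorical equivalence between $\SpsymR$ and the Grothendieck construction $\int_{\cS^{\cI}} \Spsym{-}$: an object $(E, X)$ of $\SpsymR$ corresponds to the pair $(X, (E,X))$, and a morphism $(E, X) \to (E', Y)$ in $\SpsymR$ factors canonically as a base-change map $(E,X) \to (f_!E, Y)$ followed by a fiberwise morphism $(f_!E,Y) \to (E',Y)$ in $\Spsym{Y}$. By the general description of the integral structure, a morphism $(f,\phi)$ is a cofibration (resp.\ weak equivalence) iff $f$ has the corresponding property in $\cS^{\cI}$ and $\phi\colon f_! E \to E'$ has the corresponding property in $\Spsym{Y}$; fibrations are then determined by right lifting. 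Under the above factorization these classes match the local cofibrations, weak equivalences, and fibrations in $\SpsymR$ by inspection, using that local classes in $\SpsymR$ are detected by their underlying map of base $\cI$-spaces together with their fiberwise behavior over the target.

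The principal obstacle I expect is pinning down the precise form of the invariance condition (iii). The bare Quillen equivalence on fibers is already available, but Harpaz--Prasma's integral structure needs a sharper compatibility between $f_!$ and cofibrant replacement, akin to a homotopical left Beck--Chevalley property. This should reduce to the observation that pushouts of local cofibrations in $\SpsymR$ along base changes are computed levelwise in $\cS_{\cR}$, whence they are homotopically well-behaved by standard properties of cofibrations of retractive spaces together with the already-established control over homotopy colimits over~$\cI$.
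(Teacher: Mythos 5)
Your overall strategy (invoke Harpaz--Prasma for existence, then match the two structures) is the same as the paper's, but there are two genuine gaps. First, the hypotheses of \cite[Theorem 3.0.12]{Harpaz-P_Grothendieck-construction} are not just ``left Quillen presheaf plus Quillen equivalences along weak equivalences''; one also needs the two \emph{relative properness} conditions of Lemma~\ref{lem:extension-restriction-R-homotopical}: that $f_!$ preserves weak equivalences when $f$ is an acyclic cofibration of $\cI$-spaces, \emph{and} that $f^*$ preserves weak equivalences when $f$ is an acyclic fibration. Your sketch of ``the finer compatibility'' only discusses pushouts and cofibrations, i.e.\ at best the $f_!$ half; the $f^*$ half is a weak right-properness statement and does not reduce to levelwise pushouts of retractive spaces. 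In the paper both halves are Corollary~\ref{cor:acy-cof-fib-on-SpsymX}, and both rest on Proposition~\ref{prop:model-str-spsymR-spsymX} (local weak equivalences in $\Spsym{X}$ coincide with local weak equivalences of $\SpsymR$ restricted to $\Spsym{X}$), which is the main technical result of the section and is proved by a multi-step reduction to constant cofibrant bases using Lemmas~\ref{lem:equivalent-fibrancy-conditions}--\ref{lem:inclusion-preserves-fib-replacement}. Your proposal never supplies this input.

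Second, your comparison argument is circular: you match the integral classes with the local classes on $\SpsymR$ ``using that local classes in $\SpsymR$ are detected by their underlying map of base $\cI$-spaces together with their fiberwise behavior over the target.'' That detection statement is precisely Corollary~\ref{cor:characterization-local-equivalences}, which the paper \emph{deduces from} the theorem you are proving; it is not available beforehand (weak equivalences in a left Bousfield localization admit no a priori fiberwise description). The paper instead observes that the two model structures have the same cofibrations (by the already established level-model comparison, Proposition~\ref{prop:level-Grothendieck-construction}) and the same fibrant objects (Lemma~\ref{lem:equivalent-fibrancy-conditions}), and then invokes the fact that a model structure is determined by its cofibrations and fibrant objects. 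Replacing your detection claim by this cofibrations-plus-fibrant-objects argument, and supplying Corollary~\ref{cor:acy-cof-fib-on-SpsymX} via Proposition~\ref{prop:model-str-spsymR-spsymX}, would repair the proof.
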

This theorem is again analogous to the unstable situation where it is easy to check that $\cS_{\cR}$ is equivalent to the Grothendieck construction of the pseudofunctor $K \mapsto \cS_K$ on $\cS$, and that the integral model structure exists and is equivalent to the model structure on $\cS_{\cR}$ considered earlier. 

The fiberwise smash product on $\cS_{\cR}$ induces a fiberwise smash
product $\barsm$ on $\SpsymR$ that is the $\boxtimes$-product on the
base $\cI$-spaces. Therefore, it restricts to an external fiberwise
smash product $\Spsym{X} \times \Spsym{X} \to \Spsym{X \boxtimes X}$.
For a commutative $\cI$-space monoid $M$, we can now use the
pushforward along the multiplication $\mu \colon M \boxtimes M \to M$
to define a symmetric monoidal convolution product
\begin{equation}\label{eq:mon-str-intro} \Spsym{M} \times \Spsym{M} \xrightarrow{\barsm} \Spsym{M \boxtimes M} \xrightarrow{\mu_!} \Spsym{M} 
\end{equation}
on the category of $M$-relative symmetric spectra. 

Now any parametrized spectrum $(E,X) \in \Spsym{\cR}$ gives rise to a parametrized (co)homology theory as expected. To define it, recall that there is an $\cI$-spacification functor $\cS/X_{h\cI} \to \cS^{\cI}/X, \tau \mapsto \tau_\cI$ that is homotopy inverse to $\hocolim_{\cI}$~\cite[\S 4]{Schlichtkrull_Thom-symmetric}. We then set
\begin{align*}(E,X)_n \colon \cS/X_{h\cI} \longrightarrow \mathrm{Ab}, \quad & (\tau \colon A \rightarrow X_{h\cI}) \longmapsto \pi_{n}(\mathbb L\Theta)(\mathbb R \tau_\cI^*) (E,X) \quad\text{ and}\\
(E,X)^n \colon \cS/X_{h\cI} \longrightarrow \mathrm{Ab}, \quad & (\tau \colon A \rightarrow X_{h\cI}) \longmapsto \pi_{-n}(\mathbb R\Gamma)(\mathbb R \tau_\cI^*) (E,X),
\end{align*}
where for any $\cI$-space $Y$ the functors $\mathbb L\Theta, \mathbb R\Gamma \colon \mathrm{Ho}(\Spsym{Y}) \rightarrow \mathrm{Ho}(\Spsym{})$ denote the left and right adjoint, respectively, of the derived pullback functor along the unique map $Y \rightarrow *$.  

Let $(R,M)$ be a parametrized ring spectrum, that is, a monoid object in $\Spsym{\cR}$. Then we obtain the cross product displayed as the upper horizontal arrow in the following square: 
\[\xymatrix@-1pc{(R,M)_*(K,\tau) \otimes (R,M)_*(L,\sigma) \ar[rr]^-\times \ar[d] && (R,M)_*(K \times L, \tau \times \sigma) \ar[d] \\
            (R,M)_*(L,\sigma) \otimes (R,M)_*(K,\tau) \ar[rr]^-\times &&        (R,M)_*(L \times K, \sigma \times \tau)}\]
When $(R,M)$ is commutative, the square commutes up to the usual sign where the right hand vertical map is induced by the essentially unique homotopy between the following two maps arising from the $E_\infty$-structure on $M_{h\cI}$: 
\begin{gather*} L \times K \xrightarrow{\mathrm{tw}} K \times L \xrightarrow{\tau \times \sigma} M_{h\cI} \times M_{h\cI} \xrightarrow{\mu} M_{h\cI}\\  L \times K \xrightarrow{\sigma \times \tau} M_{h\cI} \times M_{h\cI} \xrightarrow{\mu} M_{h\cI}
\end{gather*}
An analogous statement holds for the cup and cross products in cohomology. 

\subsection{Comparison to the \texorpdfstring{$\infty$}{infinity}-categorical setup} 
We can also use Theorem~\ref{thm:integral-introduction} to compare the
categories $\Spsym{X}$ to the $\infty$-categorical set-up of parametrized
homotopy theory. There the category of parametrized
spectra over a space $K$ is given by $\Fun(K,\Sp_\infty)$, and these categories
also assemble into a category of parametrized spectra with varying base space
by Lurie's higher categorical version of the Grothen\-dieck construction. The
resulting category is also known as the tangent category $T\cS_\infty$ of the $
\infty$-category of spaces $\cS_\infty$, and we shall adopt this name to ease
notation in the comparison results. 

\begin{theorem}\label{thm:infty-categorical-products-intro}
For an $\cI$-space $X$, the underlying $\infty$-category of the local model
structure on $\Spsym{X}$ is canonically equivalent to $\Fun(X_{h\cI}, \Sp_\infty)$, which translates the Quillen adjunction $(f_!,f^*)$ for any map $f\colon X \rightarrow X'$ into its $\infty$-categorical counterpart. Therefore, as $X$
varies, these equivalences assemble into an equivalence between $(\SpsymR)_\infty$ and $T\cS_\infty$.

Furthermore, this equivalence is symmetric monoidal with respect to the
exterior smash product on both sides, and for $M$ a commutative $\cI$-space
monoid, the equivalence $\Spsym{M} \simeq \Fun(M_{h\cI}, \Sp_\infty)$ is
symmetric monoidal for the convolution smash product on the left and Day
convolution on the right.
\end{theorem}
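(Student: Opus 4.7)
The plan is to first establish the equivalence fiberwise over a fixed $\cI$-space $X$, then assemble these fiberwise equivalences using the Grothendieck constructions (Theorem~\ref{thm:integral-introduction} on the one side and $\infty$-categorical unstraightening on the other), and finally upgrade to the symmetric monoidal statement, first for the exterior smash product and afterwards for the convolution product.

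\emph{Fiberwise equivalence.} For fixed $X$, Theorem~\ref{thm:model-str-introduction} furnishes a Quillen equivalence between $\Spsym{X}$ and the stabilization of the category of spaces over and under $X_{h\cI}$. Passing to underlying $\infty$-categories yields an equivalence to $\Sp((\cS_\infty)_{/X_{h\cI}})_*$, which is the tangent $\infty$-category $T_{X_{h\cI}}\cS_\infty$; by the standard identification of tangent $\infty$-categories with parametrized spectra employed in~\cite{Ando-B-G_parametrized}, this is canonically $\Fun(X_{h\cI}, \Sp_\infty)$. That this equivalence translates the Quillen adjunction $(f_!, f^*)$ into the $\infty$-categorical pushforward and pullback along $f_{h\cI}$ follows because the chain of Quillen equivalences is compatible with base change by construction.

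\emph{Global assembly.} Theorem~\ref{thm:integral-introduction} presents $\SpsymR$ as the integral model structure on the Grothendieck construction of $X \mapsto \Spsym{X}$ over the $\cI$-model category $\cS^{\cI}$. Its underlying $\infty$-category is therefore the unstraightening of the associated derived functor. By the fiberwise step and the fact that $X \mapsto X_{h\cI}$ realises the equivalence $(\cS^{\cI})_\infty \simeq \cS_\infty$, this derived functor corresponds to $K \mapsto \Fun(K, \Sp_\infty)$, whose unstraightening is by definition the tangent $\infty$-category $T\cS_\infty$. This invokes the known comparison between model-categorical integral constructions and $\infty$-categorical unstraightening.

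\emph{Symmetric monoidal upgrade.} On the point-set side, $\barsm$ is a symmetric monoidal Quillen bifunctor on $\SpsymR$ covering the Day convolution $\boxtimes$ on $\cS^{\cI}$. Under our comparison $\boxtimes$ descends to the Cartesian product on $\cS_\infty$, so the induced symmetric monoidal structure on $T\cS_\infty$ covers $\times$ on $\cS_\infty$ and restricts over a point to the usual smash product on $\Sp_\infty$. These properties determine the canonical exterior smash product on $T\cS_\infty$ up to symmetric monoidal equivalence. For the convolution product on $\Spsym{M}$, the structure~\eqref{eq:mon-str-intro} is by definition $\mu_! \circ \barsm$; Day convolution on $\Fun(M_{h\cI}, \Sp_\infty)$ admits the entirely analogous description using the $E_\infty$-multiplication $\mu_{h\cI}$ and the $\infty$-categorical exterior smash product. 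Since our comparison matches both ingredients, the two symmetric monoidal structures agree.

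The main obstacle will be the symmetric monoidal upgrade: one must carefully promote the equivalence $(\SpsymR)_\infty \simeq T\cS_\infty$ to a symmetric monoidal equivalence, which requires combining the comparison between integral model structures and $\infty$-categorical unstraightening with a symmetric monoidal refinement of straightening/unstraightening. Once the exterior case is settled, however, the convolution case follows formally from the definition~\eqref{eq:mon-str-intro} together with the naturality of the exterior smash product comparison in $\cI$-space maps.
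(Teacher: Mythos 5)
Your overall architecture (fiberwise identification, assembly via the two Grothendieck constructions, then a monoidal upgrade) matches the paper's, but the symmetric monoidal upgrade contains a genuine gap. You assert that covering the cartesian product on $\cS_\infty$ and restricting over a point to the smash product on $\Sp_\infty$ ``determine the canonical exterior smash product on $T\cS_\infty$ up to symmetric monoidal equivalence.'' There is no such pre-existing canonical structure to appeal to: as the paper notes, the exterior symmetric monoidal structure on $T\cS_\infty$ had not appeared in the literature, and the bulk of Section~\ref{sec:infty-categorical} is devoted to constructing it. The paper does this by extending Nikolaus's stabilization of $\infty$-operads to the fibered setting: Proposition~\ref{fibremon} produces $T\cS_\infty^{\otimes}$ as the fiberwise stabilization $\mathrm{St}_\chi(p)$ of the cocartesian fibration of $\infty$-operads $\Ar(\cS_\infty)^\times \to \cS_\infty^\times$ and, crucially, establishes a universal property among stable $\cD$-monoidal $\infty$-categories. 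It is exactly this universal property that produces a \emph{lax} symmetric monoidal comparison functor out of $(\SpsymR)_\infty^{\barsm}$ (via the left exact lax monoidal $\Omega^{\cI}_{\mathrm{ar}}$), and a further nonformal argument in Theorem~\ref{thm: comp-mult-total} — strong monoidality of $\Sigma^\infty_+$ plus a colimit reduction to suspension spectra — is needed to upgrade lax to strong. Your fallback suggestion of a ``symmetric monoidal refinement of straightening/unstraightening'' is not the route taken and is itself a substantial missing ingredient rather than an available tool.

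Two smaller points. First, your fiberwise step dismisses functoriality with ``compatible with base change by construction,'' but the global assembly requires a coherent natural equivalence of $\mathrm{Cat}_\infty$-valued functors on $(\cS^{\cI})_\infty$, not merely objectwise equivalences commuting with each $(f_!,f^*)$; the paper spends Lemma~\ref{lem:SpsymX-infty-functorial} on this, identifying both sides as stable envelopes of the codomain fibration and invoking the uniqueness of stable envelopes. Second, the convolution case is not purely formal from the exterior case: the paper proves it separately (Theorem~\ref{thm:SpsymM-infty-identification}) by adapting the comparison of functors into $\mathrm{SymMonCat}_\infty$ from the Nikolaus--Sagave rigidification argument, since one must match the Day convolution on $\Fun(M_{h\cI},\Sp_\infty)$ — characterized via \cite{Gepner-G-N_universality} — with the model-categorical $\mu_!\circ\barsm$ coherently, not just on underlying bifunctors.
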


In fact, as far as we know, the symmetric monoidal structure on $T\cS_\infty$
has not appeared in the literature before. Therefore, extending recent work of
Nikolaus \cite{nik-stable}, we provide the necessary material on the stabilization of fibrations of $\infty$-operads needed to construct it, which may be of independent interest.

\subsection{Universal bundles and Thom spectra}
When $R$ is a (sufficiently fibrant) commutative symmetric ring spectrum, then its underlying multiplicative infinite loop space and its units arise as commutative $\cI$-space monoids $\GLoneIof{R} \subset \Omega^{\cI}(R)$ (see~\cite{Schlichtkrull_units}).  By definition, we have $\Omega^{\cI}(R)(\bld{n}) = \Omega^n R_n$ with structure maps and multiplication maps induced by those of $R$, and  $\GLoneIof{R}$ is the subobject whose path components represent units in $\pi_0(R)$. The underlying $E_{\infty}$ space of $\GLoneIof{R}$ is a model of what is usually denoted $\GLoneof{R}$.  Writing $G$ for (a suitable cofibrant replacement of) $\GLoneIof{R}$, the inclusion $G \to  \Omega^{\cI}(R)$ is adjoint to a map  of commutative symmetric ring spectra $\bS^{\cI}[G] \to R$ from the spherical monoid ring of $G$. 

There also is a parametrized suspension spectrum $\bS^{\cI}_t[G]$ of $G$ with $G$ as base commutative $\cI$-space monoid. The above map $\bS^{\cI}[G] \to R$ and the projection to the terminal $\cI$-space $G \to *$ induce  commutative $\bS^{\cI}_t[G]$-algebra structures on $\bS$ and $R$ that allow us to form the two-sided bar construction $B^{\barsm}(\bS, \bS^{\cI}_t[G], R)$ in commutative parametrized ring spectra. Its base commutative $\cI$-space monoid is the bar construction $BG$ of $G$ with respect to $\boxtimes$ which models the infinite loop space $B\GLoneof{R}$.  We write $\gamma_R$ for (a suitable fibrant replacement of) $B^{\barsm}(\bS, \bS^{\cI}_t[G], R)$ and view this $BG$-relative commutative symmetric ring spectrum as the \emph{universal $R$-line bundle} over $BG$. It only depends on the stable equivalence type of $R$ and is mapped to the $\infty$-categorical version of the universal $R$-line bundle from \cite{Ando-B-G_parametrized} under the equivalence from Theorem~\ref{thm:infty-categorical-products-intro} (including multiplicative structures). 
Moreover, $\gamma_R$ gives rise to an $R$-module Thom spectrum functor 
\[ T_{\cR}^{\cI} \colon \cS^{\cI}/BG \to \Mod_R,\quad (f\colon X \to BG)
\mapsto (X \to *)_! (f^*\gamma_R)\]
on $\cI$-spaces over $BG$ that is homotopy invariant and lax symmetric monoidal. Precomposing with the $\cI$-spacification $\cS/(BG)_{h\cI} \to
  \cS^{\cI}/BG$ mentioned above, we also get a Thom spectrum functor 
$T_{\cR}\colon \cS/(BG)_{h\cI}\to \Mod_R$ defined on maps of spaces to $(BG)_{h\cI} \simeq B\GLoneof{R}$. 

The functor $T_{\cR}^{\cI}$ provides a different construction of the $R$-module Thom spectra studied by Ando et al.~\cites{Ando-B-G-H-R_infinity-Thom, Ando-B-G-H-R_units-Thom} making the underlying parametrized spectra explicit on the point-set level. We give the following multiplicative comparison of these two and the multiplicative Thom spectrum functors studied by Basu--Sagave--Schlichtkrull~\cite{Basu_SS_Thom}. 
\begin{theorem}\label{thm:Thom-spectrum-comparison}
The Thom spectrum functor $T_{\cR}$ defined in terms of parametrized spectra, the $R$-module Thom spectrum functor $T_{EG}$ from~\cite{Basu_SS_Thom}, and the Thom spectrum functor from~\cite{Ando-B-G-H-R_infinity-Thom} are all equivalent, and the equivalences respect the monoidal structures and the operad actions preserved by these functors. \end{theorem}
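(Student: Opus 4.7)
The plan is to prove the theorem by establishing two separate comparisons that factor through the $\infty$-categorical setup, with Theorem~\ref{thm:infty-categorical-products-intro} serving as the central bridge. Concretely, I would show first that $T_{\cR}$ is equivalent to the Thom spectrum functor of \cite{Ando-B-G-H-R_infinity-Thom} (denote the latter $T_\infty$), and then separately that $T_{EG}$ is equivalent to $T_\infty$; the three-way equivalence and the coincidence of monoidal and operadic structure then follows by transitivity.

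For the first comparison, both $T_{\cR}$ and $T_\infty$ are defined by pulling back the respective universal $R$-line bundle along $f\colon X \to BG$ and then applying the left adjoint to the structure map to the point. As explicitly remarked in the discussion preceding the theorem, the universal $R$-line bundle $\gamma_R$ is sent to the $\infty$-categorical universal $R$-line bundle of \cite{Ando-B-G_parametrized} under the equivalence of Theorem~\ref{thm:infty-categorical-products-intro}, including the commutative ring structure. The same theorem translates the Quillen adjunction $(f_!,f^*)$ into its $\infty$-categorical counterpart. The equivalence $T_{\cR} \simeq T_\infty$ therefore follows formally after precomposition with the $\cI$-spacification $\cS/(BG)_{h\cI} \to \cS^{\cI}/BG$, and the symmetric monoidality of Theorem~\ref{thm:infty-categorical-products-intro} ensures compatibility with the lax symmetric monoidal structures and the operad actions preserved by the functors.

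For the second comparison, the functor $T_{EG}$ of \cite{Basu_SS_Thom} is built from a two-sided bar construction of $\cI$-space monoids that produces an object in $\Mod_R$ directly. I would identify $T_{EG}$ with $T_\infty$ by verifying that both compute the same derived induction from $\bS^{\cI}[G]$-modules to $R$-modules along the $\bS^{\cI}[G]$-algebra structure on $R$. On the $T_{\cR}^{\cI}$ side, this rests on the observation that the pushforward of the parametrized suspension spectrum $\bS^{\cI}_t[G]$ along $G \to *$ is the spherical monoid ring $\bS^{\cI}[G]$, and that pushforward along the projection commutes with the two-sided bar construction in commutative parametrized ring spectra. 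Chaining this with the first comparison, we obtain a natural zig-zag of weak equivalences $T_{EG}(f) \simeq T_{\cR}(f)$.

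The main obstacle lies in ensuring that both comparisons respect all the multiplicative structure involved, not merely the underlying equivalences of spectra. For the first comparison this is essentially built into Theorem~\ref{thm:infty-categorical-products-intro}, but for the second it requires genuine work, since $T_{EG}$ and $T_{\cR}^{\cI}$ package their operad actions and lax monoidal structures through different point-set mechanisms. The strategy is to organize the comparison entirely within commutative $\cI$-space monoids over $BG$ and to exploit that both the pushforward $(X \to *)_!$ and the bar construction $B^{\barsm}$ interact well with the convolution product $\boxtimes$ on commutative objects; the natural comparison zig-zag then automatically refines to one of lax symmetric monoidal and operadically enriched natural transformations, closing the argument without introducing any ingredient beyond what is already available in the previous sections.
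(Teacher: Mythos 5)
Your first comparison ($T_{\cR}$ versus the $\infty$-categorical functor via the image of $\gamma_R$ under the equivalence of Theorem~\ref{thm:infty-categorical-products-intro}) is the route the paper takes: this is Lemma~\ref{lem:SpsymX-infty-functorial} together with Proposition~\ref{prop:gamma_comp}, the latter being a genuine computation (identifying $\gamma_R$ with the functor $F_R$ by exhibiting both as the same relative $\barsm$-product, resp.\ colimit) rather than a remark one can simply quote. Your second comparison, however, is where the paper's real work lies, and your proposal has a genuine gap there. The paper does not compare $T_{EG}$ with the $\infty$-categorical functor; it compares $T_{EG}$ with $T_{\cR}$ directly on the point-set level (Proposition~\ref{prop:TEG-vs-TR}), and the difficulty is not the pushforward $\Theta$ (which is a strong symmetric monoidal left adjoint and commutes with the bar construction for free) but the pullback. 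The functor $T_{EG}$ is built from the honest, non-derived pullback $f^*(EG)$ of $\cI$-spaces, whereas $T^{\cI}_{\cR}(f)=\Theta(f^*\gamma_R)$ pulls back a \emph{local fibrant replacement} of the parametrized bar construction. To relate the two one must (a) connect the two models of the universal bundle, $B^{\barsm}(\bS^{\cI}_{\cR}[BG\amalg B(\ucI,G,G),BG],\bS^{\cI}[G],R)$ and $\gamma_R$, by a zig-zag of maps of \emph{commutative} parametrized ring spectra — this is the diagram~\eqref{ref:comparison-bundles}, where producing the lift $u$ through the fibrant replacement in $\cC\SpsymR$ is exactly what makes the resulting natural transformation lax symmetric monoidal; and (b) commute $f^*$ past $\bS^{\cI}_{\cR}[-]\barsm R$ and the bar construction up to local equivalence, which requires the flatness hypotheses and the weak right-properness substitute of Proposition~\ref{prop:weak-cogluing} together with Lemma~\ref{lem:commuting-f-upper-star-barsm}, since the local model structure is not known to be right proper. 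Your closing claim that the comparison ``automatically refines'' to a lax symmetric monoidal and operadically compatible one is precisely the non-automatic part; without the explicit multiplicative zig-zag of universal bundles there is nothing to refine.

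Two smaller omissions: the theorem also needs the simplicial-versus-topological bookkeeping for $T_{EG}$ (Proposition~\ref{prop:TEG-top-simp}), since the Basu--Sagave--Schlichtkrull functor lives topologically while the commutative model structure on $\cC\Spsym{BG}$ defining $\gamma_R$ is only available simplicially; and the paper's proof additionally invokes the comparison with the classical sphere-spectrum Thom functor (Proposition~\ref{prop:T_R-vs_T_BF}), which your proposal does not touch.
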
 
Over the sphere spectrum $\bS$, we provide a new interpretation of the ``classical'' Thom spectrum functor considered in \cite{LMS} and \cite{Schlichtkrull_Thom-symmetric}: Let $F(\bld n)$ denote  the topological monoid of base point preserving homotopy equivalences of $S^n$. Letting $\bld n$ vary, we show that the usual one-sided bar construction on $F(\bld n)$ gives rise to a commutative parametrized symmetric ring spectrum $B^{\times}(\ucI,F,\bS)$ that is locally equivalent to the universal line bundle for $\bS$. Using this, we get an explicit multiplicative equivalence relating the classical description of the Thom spectrum functor to the parametrized approach in the present paper. 
\subsection{Homotopical and operator algebraic models for twisted \texorpdfstring{$K$}{K}-theory}
As mentioned, in a companion paper~\cite{HS-twisted}, the first two authors use the framework developed here to relate operator algebraic models for various twisted $K$-theory spectra to their homotopical counterparts defined in terms of pullbacks of the universal bundle just discussed. To obtain the comparison we there generalize the construction of $B^{\times}(\ucI,F,\bS)$ by considering actions on symmetric ring spectra of what we term \emph{cartesian $\cI$-monoids}, that is, $\cI$-diagrams in topological monoids. For such an action of $H$ on $R$ one can produce a homotopy quotient spectrum $R \sslash H \in \Spsym{B^\times(H)}$, where $B^\times(H)$ denotes the bar construction of $H$ with respect to the cartesian product on $\cS^\cI$. In particular, $B^{\times}(\ucI,F,\bS) = \bS\sslash F$. When $H$ is grouplike and $R$ and $H$ are fibrant, we establish a comparison between $R \sslash H$ and the pullback of $\gamma_R$ along a certain map $BH \rightarrow B\GLoneIof(R)$ induced by evaluating the action of $H$ on the unit of~$R$~\cite[Proposition~4.2]{HS-twisted}. When $R, H$, and the action are suitably commutative, then this comparison is one of commutative parametrized ring spectra.

The whole construction can then be applied to actions of the cartesian $\cI$-monoid $\mathrm P \mathcal O$ formed by the projective orthogonal groups of the Hilbert spaces $L^2(\mathbb R^n)$ on the K-theory spectra introduced by Joachim in \cite{Jo-coherence}, and this action satisfies the commutativity assumptions mentioned above. The spectrum $\mathrm{KO} \sslash \mathrm P \mathcal O$ is easily related to operator algebraic definitions of twisted K-theory, whereas by the comparison results we produce here, the pullback of $\gamma_{\mathrm{KO}}$ is an incarnation of the homotopy theoretic definition. This allows us to deduce the equivalence of the usual definitions of twisted $K$-theory as considered by operator algebraists and homotopy theorists. Furthermore, we describe the resulting map $B\mathrm P \mathcal O \rightarrow B\GLoneIof{\mathrm{KO}}$ in purely homotopical terms, completing partial results by Antieau, Gepner and Gomez \cite{AGG-Uniqueness}.

\subsection{Organization}
In Section \ref{retrsp} we recall the category of retractive spaces and describe the relevant features of its model structure. In Section \ref{twisepc} we then introduce our categories of parametrized spectra and their level model structures both in the absolute setting and relative to an $\cI$-space. After discussing the external fiberwise and convolution smash products in the short Section \ref{sec:smash}, we establish the local model structures in Section \ref{sec:local-model}. In Section \ref{compmodel} we compare the different local model structures, prove Theorems~\ref{thm:model-str-introduction} and \ref{thm:integral-introduction}, and show how the positive local model structures lift to commutative parametrized ring spectra. Section \ref{sec:tw-coho} is about the (co)homology theories associated with parametrized spectra. In Section~\ref{sec:univ-bundle} we introduce the universal line bundle. Section~\ref{sec:thom-comparison} is about Thom spectra and provides the proof of Theorem~\ref{thm:Thom-spectrum-comparison}. The final Section~\ref{sec:infty-categorical} compares our constructions to the $\infty$-categorical approach and provides the proof of Theorem~\ref{thm:infty-categorical-products-intro}. 

\subsection{Acknowledgments}
The authors would like to thank Samik Basu, Emanuele Dotto, Gijs Heuts, Michael Joachim, Cary Mal\-kie\-wich, Irakli Patchkoria, and Tomer Schlank for useful conversations and particularly Thomas Nikolaus for help with the final chapter. The first author held a scholarship of the German Academic Exchange Service during a year at the University of Notre Dame, when initial work on this project was undertaken. The third was supported by Meltzers H\o{}yskolefond.  Furthermore, the authors would like to thank the Isaac Newton Institute for Mathematical Sciences for support and hospitality during the program ``Homotopy harnessing higher structures".

This work was supported by EPSRC grants EP/K032208/1 and EP/R014604/1
and the Hausdorff Center for Mathematics, DFG GZ 2047/1, project ID
390685813.

\subsection{Notations and conventions}
We write $\cS$ for the category of spaces, which can either be the
category of compactly generated weak Hausdorff spaces $\tp$ or the
category of simplicial sets $\sset$. We will only distinguish between
the simplicial and the topological case when arguments
differ. Moreover, we freely use the language of model categories and
refer to~\cite{Hirschhorn_model} as our primary source.

\section{Retractive spaces}\label{retrsp}
In this section we collect basic results about retractive spaces. This material is mostly easy and for example treated in~\cite[\S 1.1]{VBM-Thesis}. We carry out some details for later reference and to fix notations. 

\begin{definition}
  A \emph{retractive space} is a pair of spaces $(U,K)$ with structure
  maps $K \to U$ and $U \to K$ that compose to the identity of $K$. A
  map of retractive spaces $(U,K) \to (V,L)$ is a pair of maps $U\to
  V$ and $K \to L$ such that the two squares in
\[\xymatrix@-1pc{
  K \ar[d] \ar[r] & U \ar[d] \ar[r] & K \ar[d] \\
  L \ar[r] & V \ar[r] & L }\] commute. We refer to $U$ as the
\emph{total space} of the retractive space $(U,K)$, call $K$ its
\emph{base space}, and write $\cS_{\cR}$ for the category of retractive
spaces.
\end{definition}

\begin{construction}\label{constr:spaces-retractive-spaces}
Spaces and retractive spaces are related in various ways. The following diagram summarizes the constructions relevant for us:
\[\xymatrix@-.8pc{
\cS \ar@<.4ex>[rrr]^-{\iota_d}\ar@<-.3ex>[drrr]^(.6){\iota_{t}}  &&& \mathrm{Ar}(\cS) \ar@<-.4ex>[d]_-{\iota_{\mathrm{ar}}}  \ar@<.4ex>[lll]^-{\pi_d}  \ar@<-.4ex>[rrr]_-{\pi_c}&&& \cS \ar@<-.4ex>[lll]_-{\iota_c} \ar@<.3ex>[dlll]_(.6){\iota_{b}} &&& \cS_{\cR}  \ar@<-.4ex>[d]_-{\Theta}\\
&&& \cS_{\cR} \ar@<-.4ex>[u]_-{\pi_{\mathrm{ar}}}  \ar@<-1.1ex>[urrr]_(.4){\pi_{b}} \ar@<1.1ex>[ulll]^(.4){\pi_{t}}  &&&  &&& \cS_* \ar@<-.4ex>[u]_-{\pi_{\mathrm{pt}}} 
}\]
Here $\mathrm{Ar}(\cS)$ is the arrow category of $\cS$, $\pi_d(V\to L) = V$ and $\pi_c(V \to L) = L$ are the forgetful functors projecting to the domain and codomain, and $\iota_c(K) = (\emptyset \to K)$ and $\iota_d(K) = \id_K$ are their left adjoints. The forgetful functor $\pi_{\mathrm{ar}}(V,L) = {(V \to L)}$ remembers only the projection to the base and has a left adjoint $\iota_{\mathrm{ar}}{(f \colon A \to B)} = (B\amalg A, B)$ with structure maps $\mathrm{incl}_B \colon B \to B \amalg A$ and $(\id_B,f)\colon B \amalg A \to B$. Consequently, we obtain composite functors $\pi_b(V,L) = L$ and $\pi_t(V,L) = V$ projecting to base and total space. Their left adjoints are given by $\iota_b(K) = (K,K)$ and $\iota_t(K) = (K\amalg K,K)$. A based space $T$ can be viewed as a retractive space $(T,*) = (* \to T \to *)$ that we often denote by $T$. The functor $\pi_{\mathrm{pt}}\colon \cS_* \to \cS_{\cR}, T\mapsto T$ is right adjoint to $\Theta(U,K) = U/K$. 
\end{construction}

\begin{lemma}\label{lem:iota-b-left-right-adjoint}
The functor $\iota_b \colon \cS \to \cS_{\cR}$ is both left and right adjoint 
to $\pi_b$. \qed
\end{lemma}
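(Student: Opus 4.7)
The plan is a direct computation of hom-sets. Half of the claim is essentially already contained in Construction~2.3, where $\iota_b$ is expressed as the composite $\iota_{\mathrm{ar}} \circ \iota_c$ of left adjoints to $\pi_c$ and $\pi_{\mathrm{ar}}$, so that $\iota_b \dashv \pi_b$ follows formally; if one prefers a direct check, a morphism $(K,K) \to (V,L)$ is determined by its base component $f\colon K \to L$, with total component forced to be $g = s \circ f$, and the remaining square condition $f = p \circ g$ then holds automatically because $p \circ s = \id_L$. The substantive new content is therefore the right adjunction $\pi_b \dashv \iota_b$.

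For this, I would unwind what a morphism $(V,L) \to \iota_b(K) = (K,K)$ is, namely a pair $(g, f)$ with $g \colon V \to K$ and $f \colon L \to K$ compatible with the structure maps $s \colon L \to V$ and $p \colon V \to L$ of $(V, L)$ and the identity structure maps of $(K,K)$. The two square-commutativity conditions read $f = g \circ s$ and $g = f \circ p$. The second determines $g$ from $f$, and then $g \circ s = f \circ p \circ s = f$ holds automatically from $p \circ s = \id_L$. Hence the assignment $(g,f) \mapsto f$ is a bijection
\[
\Hom_{\cS_{\cR}}\!\bigl((V,L),\, \iota_b K\bigr) \;\xrightarrow{\;\cong\;}\; \Hom_{\cS}(L, K) \;=\; \Hom_{\cS}\!\bigl(\pi_b(V,L),\, K\bigr),
\]
with inverse $f \mapsto (f \circ p,\, f)$. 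Naturality in both variables is immediate, since the correspondence is purely pre- and postcomposition with the fixed structure maps.

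There is no real obstacle: the lemma amounts to the elementary observation that whenever either the source or the target of a morphism in $\cS_{\cR}$ has identity structure maps, the \emph{total} component of a morphism is forced by its \emph{base} component via $s$ or $p$ respectively. Equivalently, the full subcategory of $\cS_{\cR}$ consisting of retractive spaces with identity structure maps is both reflective and coreflective, with $\iota_b$ realising the fully faithful inclusion. The only risk in writing it up is bookkeeping errors about which square produces which of the identities $f = g \circ s$ and $g = f \circ p$; once those are in place the argument is purely formal.
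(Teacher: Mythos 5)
Your proof is correct and is exactly the routine verification the paper has in mind (the lemma is stated with no written proof, being regarded as immediate): on each side the total component of a morphism is forced by the base component via the structure maps, using $p\circ s=\id$. Nothing further is needed.
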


The category of retractive spaces $\cS_{\cR}$ is complete and
cocomplete since it can be viewed as a category of functors with
values in $\cS$. The last lemma implies that both
$\iota_b$ and $\pi_b$ preserve limits and colimits.

\subsection{Retractive spaces as a model category}
In the following we use the standard model structures on $\cS = \sset$ and $\cS = \tp$ with weak equivalences the weak homotopy equivalences. We say that a map of retractive spaces $(U,K) \to (V,L)$ is
\begin{itemize}
\item a weak equivalence if both $K \to L$ and $U \to V$ are weak
  equivalences in $\cS$,
\item a cofibration if both $K \to L$ and $U\cup_K L \to V$ are cofibrations in $\cS$, and 
\item a fibration if both $K \to L$ and $U \to V\times_{L} K$ are fibrations in $\cS$.  
\end{itemize}
\begin{proposition}\label{prop:model-str-on-R}
These classes of maps provide a model structure on $\cS_{\cR}$. 
\end{proposition}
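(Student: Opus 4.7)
The plan is to verify the model category axioms directly, using the splitting $r \circ i = \id_K$ to lift factorizations from $\cS$ to $\cS_{\cR}$. The 2-out-of-3 property and closure under retracts for all three classes follow immediately from the corresponding properties in $\cS$, together with the fact that both the pushout-corner and pullback-corner constructions are compatible with retracts. The only real content lies in the lifting and factorization axioms.

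For the lifting axioms, suppose $(U,K) \to (V,L)$ is a cofibration and $(W,M) \to (X,N)$ a trivial fibration (the other case is symmetric). One first solves the lifting problem on base spaces, using that $K \to L$ is a cofibration and $M \to N$ is a trivial fibration in $\cS$, to obtain $L \to M$. Using this, one assembles a lifting problem in $\cS$ of the pushout-corner map $U \cup_K L \to V$ against the pullback-corner map $W \to X \times_N M$, which by assumption is a cofibration on the left and a trivial fibration on the right. The resulting lift $V \to W$ is automatically a morphism of retractive spaces: the triangle over $X \times_N M$ enforces $V \to W \to M = V \to L \to M$, while the triangle under $U \cup_K L$ enforces $L \to V \to W = L \to M \to W$.

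The main step is the (cofibration, trivial fibration) factorization of a map $(U,K) \to (V,L)$. Factor $K \to L$ in $\cS$ as $K \to M \xrightarrow{p} L$ with first a cofibration and $p$ a trivial fibration, and form the pushout $W_1 = U \cup_K M$, which carries natural maps $M \to W_1 \to M$. Crucially, using the structure map $L \to V$ of the retractive space $(V,L)$, the composite $M \xrightarrow{p} L \to V$ assembles with $U \to V$ to give a map $W_1 \to V$ over $L$, and hence a map $W_1 \to V \times_L M$. Factoring this in $\cS$ as $W_1 \to W \to V \times_L M$ with a cofibration followed by a trivial fibration, the object $(W,M)$ becomes a retractive space via $M \to W_1 \to W$ and $W \to V \times_L M \to M$. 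One then reads off that $(U,K) \to (W,M)$ is a cofibration while $(W,M) \to (V,L)$ is a trivial fibration, where the weak equivalence $W \to V$ uses that $V \times_L M \to V$ is a trivial fibration as a base change of $p$. The dual factorization proceeds analogously, factoring $K \to L$ as a trivial cofibration followed by a fibration and then factoring an induced map $U \to V' \times_L M$ in $\cS$.

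The principal subtlety, and what prevents this from being entirely parallel to a Reedy-style argument, is the requirement that the middle object in each factorization genuinely carries a retractive structure; this is where the splitting condition becomes essential, since it provides the map $L \to V$ needed to produce $M \to V$ in the construction above. Once this is in place, functoriality of the factorizations and cofibrant generation (when $\cS$ is cofibrantly generated) follow by transferring generating sets along the left adjoints $\iota_b$ and $\iota_{\mathrm{ar}}$ of Construction~\ref{constr:spaces-retractive-spaces} and invoking the small object argument in the standard way.
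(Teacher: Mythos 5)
Your proof is correct in substance, but it takes a different (more hands-on) route than the paper, which simply invokes the general theory of Reedy model structures: $\cS_{\cR}$ is the category of diagrams on the two-object Reedy category $B \rightleftarrows T$ with $r s$ the extra endomorphism of $T$, and the corner maps $U\cup_K L \to V$ and $U \to V\times_L K$ you work with are precisely the relative latching and matching maps of that Reedy structure. So your direct verification is essentially an unwinding of the Reedy argument in this special case; what it buys is self-containedness and an explicit view of where the splitting $K \to U \to K$ enters (namely, in producing the retractive structure on the middle object of the factorization and the map $M \to L \to V$ out of the pushout), at the cost of redoing bookkeeping that the cited machinery handles once and for all. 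Your lifting argument and the (cofibration, trivial fibration) factorization are correct as written, and the observation that the lift $V \to W$ is automatically compatible with sections and retractions is exactly right.

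Two small points deserve attention. First, in the dual factorization you write that one factors "an induced map $U \to V'\times_L M$"; taken literally this does not control the relative latching map $U\cup_K M \to W$, nor does it supply a section $M \to W$. As in your first factorization, one must form the pushout $W_1 = U\cup_K M$ along the trivial cofibration $K \to M$ and factor $W_1 \to V\times_L M$ as a trivial cofibration followed by a fibration; then both the retractive structure on $W$ and the cofibrancy of the corner map come for free. Second, your argument silently uses the standard characterizations that a map is a trivial fibration if and only if $K \to L$ and $U \to V\times_L K$ are both trivial fibrations, and dually for trivial cofibrations (needed both for "the other case is symmetric" in the lifting axiom and for reading off that the constructed factorizations land in the right classes). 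Each follows in one line from stability of trivial (co)fibrations under (co)base change together with 2-out-of-3, but a complete write-up should record them.
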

\begin{proof}
  This follows from standard model category arguments. Alternatively,
  one can identify $\cS_{\cR}$ with the diagrams in $\cS$ indexed by a
  Reedy category with two objects $B$ and $T$, one non-identity degree
  raising morphism $s \colon B \to T$, one non-identity degree
  lowering morphism $r \colon T \to B$, and one non-identity
  endomorphism $sr$ of~$T$. With this, the proposition follows from
  the general theory of Reedy model structures (see e.g.\
  \cite[Theorem 15.3.4]{Hirschhorn_model}).
\end{proof}

To establish more properties of the model category $\cS_{\cR}$, we note that the following lifting properties
hold. 
\begin{lemma}\label{lem:lifting-properties-in-R}
Let $i\colon A\to B$ and  $(U,K) \to (V,L)$ be maps in $\cS$ and $\cS_{\cR}$, respectively. 
\begin{enumerate}[(i)]
\item The maps $i\colon A\to B$ and $K\to L$ have the lifting property in $\cS$
if and only if the maps $\iota_b(i\colon A\to B) = \left((A,A) \to (B,B)\right) $ and $(U,K)\to (V,L)$ have the lifting property in~$\cS_{\cR}$.  
\item The maps $i\colon A\to B$ and $U \to V\times_LK$ have the lifting
  property in $\cS$ if and only if $\iota_{\mathrm{ar}}(i \to \id_B) =
  \left((B\amalg A,B) \to (B\amalg B,B)\right)$ and $(U,K)\to (V,L)$ have the
  lifting property in~$\cS_{\cR}$.\qed
\end{enumerate}
\end{lemma}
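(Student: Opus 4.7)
The plan is to deduce both parts directly from the adjunctions $\iota_b \dashv \pi_b$ and $\iota_{\mathrm{ar}} \dashv \pi_{\mathrm{ar}}$ recorded in Construction~\ref{constr:spaces-retractive-spaces}, together with a routine unpacking of what a map in $\cS_{\cR}$ with domain $(B,B)$, respectively $(B\amalg B, B)$, amounts to once the section--retraction axioms are taken into account.

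For part (i), applying $\iota_b \dashv \pi_b$ puts commutative squares in $\cS_{\cR}$ with $\iota_b(i)\colon (A,A)\to (B,B)$ on the left into bijection with commutative squares in $\cS$ with $i$ on the left and $\pi_b((U,K)\to(V,L)) = (K\to L)$ on the right. Moreover, a map $(B,B)\to (U,K)$ in $\cS_{\cR}$ is completely determined by its base component $\psi\colon B\to K$, since the retractive-space compatibility forces the total component to be the composite $B\xrightarrow{\psi} K\to U$. Under the bijection of squares, $\psi$ is a diagonal filler in $\cS_{\cR}$ if and only if it fills the corresponding square for $i$ against $K\to L$ in $\cS$, which proves the equivalence of lifting properties.

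For part (ii), the analogous use of $\iota_{\mathrm{ar}} \dashv \pi_{\mathrm{ar}}$ turns a square in $\cS_{\cR}$ with $\iota_{\mathrm{ar}}(i\to\id_B)$ on the left into a lifting square in $\mathrm{Ar}(\cS)$ between $(i\to\id_B)$ and $(U\to K)\to(V\to L)$. Unwinding the constraints coming from the retractive structures on $(U,K)$ and $(V,L)$, the data of such a square reduce to a map $a\colon A\to U$ together with maps $b\colon B\to K$ and $\beta\colon B\to V$ satisfying $b\circ i = (U\to K)\circ a$, $\beta\circ i = (U\to V)\circ a$, and $(K\to L)\circ b = (V\to L)\circ \beta$. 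The last identity says exactly that $(b,\beta)$ defines a map $B\to V\times_L K$, and the first two say that this map agrees with $(U\to V\times_L K)\circ a$ after precomposition with $i$; so the square is precisely a lifting square in $\cS$ between $i$ and $U\to V\times_L K$. Similarly, a candidate lift $(B\amalg B,B)\to (U,K)$ is determined by a single map $B\to U$, and the compatibility conditions with the top and bottom maps translate term by term into the statement that this map is a diagonal filler for $i$ against $U\to V\times_L K$ in $\cS$.

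The only mildly delicate step is the bookkeeping in part (ii): identifying the base component of the top map and the total component of the bottom map as the two legs of a map into the pullback $V\times_L K$. Everything else is a matter of diagram chasing facilitated by the two adjunctions.
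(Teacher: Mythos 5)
Your argument is correct; the paper states this lemma without proof, and your adjunction-based unwinding (using $\iota_b\dashv\pi_b$ for (i) and $\iota_{\mathrm{ar}}\dashv\pi_{\mathrm{ar}}$ for (ii), then identifying the two legs of a map into $V\times_L K$) is exactly the routine verification the authors intend. The bookkeeping in part (ii), including the observation that a lift $(B\amalg B,B)\to(U,K)$ is determined by a single map $B\to U$, checks out.
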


If $S$ is a set of maps in $\cS$, we write $S_{\cR}$ for the set
\[ \{\iota_b(i) \, | \, i \in S\} \cup \{\iota_{\mathrm{ar}}(i \to \id_B) \, | \, i\colon A \to B \in S\} \] of maps in $\cS_{\cR}$. Moreover, we
let $I$ and $J$ be the standard sets of generating cofibrations
and generating acyclic cofibrations for $\cS$.
\begin{proposition}\label{prop:model-on-R}
  The model category $\cS_{\cR}$ is cofibrantly generated with
  generating cofibrations $I_{\cR}$ and generating acyclic
  cofibrations $J_{\cR}$. \qed
\end{proposition}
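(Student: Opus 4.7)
My plan is to invoke a standard recognition theorem for cofibrantly generated model structures (e.g.\ \cite[Theorem 11.3.1]{Hirschhorn_model}): since the model structure on $\cS_{\cR}$ is already in hand by Proposition~\ref{prop:model-str-on-R}, it suffices to check that (a) the sets $I_{\cR}$ and $J_{\cR}$ permit the small object argument, and (b) the $I_{\cR}$-injectives are exactly the acyclic fibrations of $\cS_{\cR}$ and the $J_{\cR}$-injectives are exactly its fibrations.

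For (a), observe that $\iota_b$ and $\iota_{\mathrm{ar}}$ are left adjoints, to $\pi_b$ and $\pi_{\mathrm{ar}}$ respectively, and hence preserve colimits. Because the domains of the maps in $I$ and $J$ are small in $\cS$ in the required sense, their images under $\iota_b$ and $\iota_{\mathrm{ar}}$ are small in $\cS_{\cR}$; this is exactly what is needed to run the small object argument against $I_{\cR}$ and $J_{\cR}$.

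For (b), I apply Lemma~\ref{lem:lifting-properties-in-R}. By part (i), a map $(U,K)\to(V,L)$ has the right lifting property against every $\iota_b(i)$ with $i\in I$ if and only if $K\to L$ has the right lifting property against all of $I$, i.e.\ is an acyclic fibration in $\cS$. By part (ii), it has the right lifting property against every $\iota_{\mathrm{ar}}(i\to\id_B)$ with $i\in I$ if and only if the relative map $U\to V\times_L K$ is an acyclic fibration in $\cS$. Together, these are precisely the two conditions defining the acyclic fibrations of $\cS_{\cR}$ in Proposition~\ref{prop:model-str-on-R}. The identical argument, with $J$ in place of $I$ and ``fibration'' in place of ``acyclic fibration'', identifies the $J_{\cR}$-injectives with the fibrations of $\cS_{\cR}$. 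This finishes the verification, and hence the proof.

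The conceptual content---that the fibration and acyclic fibration conditions on $\cS_{\cR}$ split cleanly into a base piece and a relative piece, and that these two pieces are independently controlled by the $\iota_b$- and $\iota_{\mathrm{ar}}$-halves of $I_{\cR}$ and $J_{\cR}$---is already packaged into Lemma~\ref{lem:lifting-properties-in-R}, so I do not anticipate a serious obstacle; the only thing requiring a moment of care is the smallness assertion in (a), which is handled by the left adjoint property.
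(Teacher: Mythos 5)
Your argument is correct and is exactly the one the paper intends: the proposition is stated without proof precisely because Lemma~\ref{lem:lifting-properties-in-R} identifies the $I_{\cR}$- and $J_{\cR}$-injectives with the acyclic fibrations and fibrations of Proposition~\ref{prop:model-str-on-R}, and the domains of $I_{\cR}$ and $J_{\cR}$ are small because the right adjoints $\pi_b$ and $\pi_{\mathrm{ar}}$ preserve the relevant (filtered) colimits, so smallness transfers across the adjunctions. The only step you pass over silently is that a map in $\cS_{\cR}$ is an acyclic fibration if and only if both $K\to L$ and $U\to V\times_L K$ are acyclic fibrations in $\cS$; this needs the routine observation that $V\times_L K\to V$ is then a pullback of an acyclic fibration, together with two-out-of-three.
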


We refer to \cite[Definition 12.1.1]{Hirschhorn_model}
or~\cite[Appendix A]{Hovey_symmetric-general} for the notion of a
\emph{cellular} model category. This property is useful because left proper cellular model categories admit left Bousfield localizations \cite[Theorem 4.1.1]{Hirschhorn_model}.

\begin{proposition}
The model category $\cS_{\cR}$ is proper and cellular. 
\end{proposition}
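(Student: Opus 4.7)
The plan is to reduce properness and cellularity of $\cS_{\cR}$ to the corresponding properties of $\cS$, exploiting that (co)fibrations in $\cS_{\cR}$ are detected componentwise on base and total spaces and that the generating (acyclic) cofibrations lie in the image of the left adjoints $\iota_b$ and $\iota_{\mathrm{ar}}$ of Construction~\ref{constr:spaces-retractive-spaces}.

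First I would observe that every cofibration $(U,K)\to (V,L)$ in $\cS_{\cR}$ has both components $K \to L$ and $U \to V$ cofibrations in $\cS$: the former holds by definition, while the latter factors as $U \to U\cup_K L \to V$, the cobase change of the cofibration $K \to L$ composed with the cofibration $U\cup_K L \to V$. Dually, every fibration $(U,K)\to (V,L)$ has both components $K\to L$ and $U\to V$ fibrations in $\cS$, the latter via $U \to V\times_L K \to V$, whose second factor is the base change of $K \to L$ along the retraction $V \to L$. Combined with the componentwise definition of weak equivalences and the fact that limits and colimits in $\cS_{\cR}$ are computed componentwise (viewing it as the functor category on the small Reedy category from the proof of Proposition~\ref{prop:model-str-on-R}), left and right properness of $\cS_{\cR}$ follow directly from left and right properness of $\cS$.

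For cellularity, I would verify the three axioms of \cite[Definition 12.1.1]{Hirschhorn_model}. Each domain or codomain of a map in $I_{\cR}$ or $J_{\cR}$ has the form $\iota_b(X)$ or $\iota_{\mathrm{ar}}(i)$, so by adjunction the hom-sets out of them in $\cS_{\cR}$ identify with hom-sets out of $X$ or $i$ in $\cS$ or $\mathrm{Ar}(\cS)$. Since the transfinite compositions and filtered colimits appearing in relative $I_{\cR}$-cell complexes are computed componentwise, the compactness and smallness needed for the first two axioms transfer from $I$ and $J$ in $\cS$. The main obstacle will be the third axiom, requiring that cofibrations in $\cS_{\cR}$ be effective monomorphisms. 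But cokernel pairs and equalizers in $\cS_{\cR}$ are likewise computed componentwise, and both components of any cofibration in $\cS_{\cR}$ are cofibrations in $\cS$, which are effective monomorphisms in both $\sset$ and $\tp$. Hence the componentwise equalizer of the cokernel pair recovers the original map, completing the verification.
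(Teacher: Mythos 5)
Your argument is correct and is essentially the paper's own proof spelled out in detail: the paper likewise deduces both properties from the fact that the projection $\cS_{\cR}\to\cS\times\cS$ preserves and detects limits and colimits and carries cofibrations of $\cS_{\cR}$ to objectwise cofibrations, which is exactly your componentwise analysis of (co)fibrations via the factorizations $U\to U\cup_K L\to V$ and $U\to V\times_L K\to V$. Your extra verification of the three cellularity axioms (compactness and smallness via the adjunctions for $\iota_b$ and $\iota_{\mathrm{ar}}$, and effectiveness of monomorphisms componentwise) fills in what the paper leaves implicit.
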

\begin{proof}
It is easy to see that the properness of $\cS_{\cR}$ is inherited from $\cS$. Cellularity is inherited since $\cS$ is cellular and the projection $\cS_{\cR} \to \cS \times \cS$ which forgets the structure maps preserves and detects colimits and limits and sends cofibrations in $\cS_{\cR}$ to objectwise cofibrations in $\cS \times \cS$. 
\end{proof}

\subsection{Retractive spaces as a Grothendieck construction}\label{subsec:retractive-parametrized-spaces} Recall that
\begin{equation}\label{eq:projection-R-S}
\pi_b \colon \cS_{\cR} \to \cS, (U,K) \mapsto K
\end{equation}
denotes the projection to the base space. Let $\cS_K$ be the fiber of
$\pi_b$ over a space $K$, i.e., the subcategory of $\cS_{\cR}$ whose objects
have $K$ as the base space and whose morphisms are the identity on the
base. The category $\cS_K$ is equivalent to the category of spaces
over and under $K$, i.e., to the category of pointed objects in the
over-category $\cS/K$. Other common notations for $\cS_K$ are $(\cS/K)_*$
or $\cS^K_K$. 

Every map of spaces $f\colon K \to L$ induces an adjoint pair of functors 
\begin{equation}\label{eq:res-ext-adj-retractive-spaces}
f_{!}\colon \cS_K \rightleftarrows \cS_L\colon f^*. 
\end{equation}
The left adjoint sends $(U,K)$ to $(U\cup_{K}L,L)$ with its canonical
structure maps, and the right adjoint sends $(V,L)$ to $(f^*(V),K)$ with
its canonical structure maps. 

We write $\mathrm{Cat}$ for the ``category'' of (not necessarily small) categories.  Recall that a pseudofunctor $\cC\colon \cK \to \mathrm{Cat}$ on a category $\cK$ consists of categories $\cC_{K}$ for every object $K$
of $\cK$ and functors $\alpha_{!} \colon \cC_{K} \to \cC_{L}$ for
every morphism $\alpha\colon K \to L$ of $\cK$. The condition that
$\cC_{-}$ is a pseudofunctor (rather than a functor) amounts to saying
that there are coherent isomorphisms (rather than identities) $
\id_{\cC_{K}}\cong (\id_{K})_!$ and $g_! f_!  \cong (gf)_!$ for
composable morphisms $f$ and $g$ in $\cK$. We refer
to~\cite[Definition~7.5.1]{Borceux-1} for a complete definition.

The universal property of the pushout implies:
\begin{lemma}\label{lem:cS_K-pseudofunctor}
The categories $\cS_K$ assemble to a pseudofunctor $\cS \to \mathrm{Cat}$ given
by 
\begin{equation}\label{eq:retractive-spaces-as-Grothendieck-construction}
K \mapsto \cS_K,\quad (f\colon K \to L)\mapsto (f_!\colon\cS_K \to  \cS_L).
\end{equation}\qed
\end{lemma}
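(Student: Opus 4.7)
The plan is to exhibit the two pieces of structural data required of a pseudofunctor --- unit isomorphisms $(\id_K)_! \cong \id_{\cS_K}$ and composition isomorphisms $\varphi_{f,g}\colon g_! f_! \cong (gf)_!$ --- and then to verify the coherence axioms by appealing to the uniqueness part of the universal property of the pushout. Throughout, I would use that by Lemma~\ref{lem:iota-b-left-right-adjoint} the functor $\iota_b$ preserves colimits, so pushouts in $\cS_L$ are computed from pushouts of the total and base spaces in $\cS$; this lets me work entirely with ordinary pushouts of spaces while automatically carrying along the retractive structure maps.

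First I would check that each assignment $f_!(U,K) = (U \cup_K L, L)$ extends to a functor $\cS_K \to \cS_L$: a morphism $(U,K) \to (U',K)$ in $\cS_K$ induces a map on pushouts $U \cup_K L \to U' \cup_K L$, and functoriality follows from the uniqueness of the induced map. Next, the unit isomorphism is given by the canonical identification $(U \cup_K K, K) \cong (U,K)$, which is natural in $(U,K)$ since both sides are determined by the same universal property. For composable $K \xrightarrow{f} L \xrightarrow{g} M$, I would unpack
\[ g_! f_! (U,K) = \bigl( (U \cup_K L) \cup_L M,\; M \bigr), \qquad (gf)_! (U,K) = \bigl( U \cup_K M,\; M \bigr), \]
and invoke the pasting lemma for pushouts: the canonical comparison map out of the iterated pushout is a natural isomorphism.

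Finally I would verify the two coherence axioms. The unit axiom asks that the composite involving $\varphi_{\id,f}$ or $\varphi_{f,\id}$ with the appropriate unit isomorphism agrees with the identity on $f_!$; the associativity/pentagon axiom compares the two ways of reassociating a triple pushout for $K \to L \to M \to N$. In each case both natural transformations under comparison are maps out of a pushout, and they agree after precomposition with the generating data (the images of $U$ and of the base spaces), so the uniqueness clause of the universal property forces them to coincide. The only real obstacle is notational bookkeeping --- keeping track of the structure maps $K \to U \to K$ under the iterated pushout identifications --- but this is routine given that $\pi_b$ and $\iota_b$ preserve the relevant colimits.
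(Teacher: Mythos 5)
Your proposal is correct and is exactly the argument the paper intends: the paper justifies this lemma with the single phrase ``The universal property of the pushout implies'' and leaves the construction of the unit and composition isomorphisms, and the coherence checks via uniqueness of maps out of pushouts, to the reader. You have simply written out those routine details.
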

The \emph{Grothendieck construction} of a pseudofunctor
$F \colon \cK \to \mathrm{Cat}$ is the category $\cK\int F$ with objects 
the pairs $(K,U)$ with $K \in \Obj \cK$ and $U \in \Obj F(K)$. Morphisms
$(K,U) \to (L,V)$ are pairs of morphisms $f\colon K\to L$ in $\cK$ and $F(f)(U) \to V$ in $F(L)$, and the composite is defined in the obvious way \cite[Definition 3.1.2]{Thomason-homotopy-colimt}.

\begin{lemma}\label{lem:retractive-spaces-as-Grothendieck-construction}
The Grothendieck construction of~\eqref{eq:retractive-spaces-as-Grothendieck-construction} is equivalent to  $\cS_{\cR}$.
\end{lemma}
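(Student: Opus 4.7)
The plan is to construct an explicit functor
\[
\Phi\colon \cS\!\int\bigl(K \mapsto \cS_K\bigr) \longrightarrow \cS_{\cR}
\]
that is bijective on objects and fully faithful. On objects, an object of the Grothendieck construction is a pair $(K,(U,K))$ with $(U,K) \in \cS_K$, but such a pair is exactly a retractive space with base $K$; I send it to $(U,K) \in \cS_{\cR}$. This assignment is literally bijective on objects since $\pi_b$ recovers $K$ from $(U,K)$. In particular, essential surjectivity is trivial.

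For morphisms, a morphism $(K,(U,K)) \to (L,(V,L))$ in the Grothendieck construction is a pair consisting of $f\colon K \to L$ in $\cS$ together with a morphism $\varphi\colon f_!(U,K) = (U\cup_K L,L) \to (V,L)$ in $\cS_L$. Here $\varphi$ is a map of spaces $U\cup_K L \to V$ that is compatible with the sections and retractions of the retractive structures on $U \cup_K L$ and $V$. The section on $U\cup_K L$ is the canonical map $L \to U\cup_K L$, so compatibility with sections says that the pushout map $\varphi$ restricts to $s_V\colon L \to V$ on $L$. By the universal property of the pushout, such a $\varphi$ is determined by an additional map $g\colon U \to V$ satisfying $g \circ s_U = s_V \circ f$. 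Compatibility of $\varphi$ with the retraction of $U\cup_K L$ (which is induced by $r_U\colon U \to K$ and $\id_L$) translates under the same universal property into the identity $r_V \circ g = f \circ r_U$. These two conditions are precisely the commutativities required of a morphism $(f,g)\colon (U,K) \to (V,L)$ in $\cS_{\cR}$, so the assignment $(f,\varphi) \mapsto (f,g)$ is a bijection on hom-sets.

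Functoriality of $\Phi$ requires tracking how the pseudofunctor composition on the Grothendieck construction matches composition in $\cS_{\cR}$. Given composable morphisms $K \xrightarrow{f} L \xrightarrow{h} M$, the coherence isomorphism $h_!f_! \cong (hf)_!$ here is the canonical identification of the iterated pushout $(U\cup_K L)\cup_L M$ with $U\cup_K M$, which is precisely the isomorphism supplied by the universal property of pushouts. Under the bijection of the previous paragraph, composing two morphisms in the Grothendieck construction corresponds to composing the underlying pairs $(f,g)$ and $(h,g')$ in the evident way, so $\Phi$ preserves composition (and identities). The main minor obstacle is just the bookkeeping with these coherence isomorphisms; since the transition functors $f_!$ are left adjoints to the pullbacks $f^*$ from~\eqref{eq:res-ext-adj-retractive-spaces} and thus preserve colimits, all the necessary coherences reduce to associativity and pasting of pushout squares, and no genuine difficulty arises.
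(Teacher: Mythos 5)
Your proof is correct and takes essentially the same approach as the paper's: the paper's one-line argument observes that the total-space map of a morphism in $\cS_{\cR}$ factors as $U \to (K\to L)_!(U) \to V$ with the second map over and under $L$, which is exactly the pushout universal-property bijection on hom-sets you spell out. The only cosmetic slip is that the retraction on $U\cup_K L$ is induced by $f\circ r_U$ (not $r_U$ itself) together with $\id_L$, but this does not affect the argument.
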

\begin{proof}
  If $(U,K) \to (V,L)$ is a morphism in $\cS_{\cR}$, then its map of total
  spaces factors as $U \to (K\to L)_!(U) \to V$ where the second map
  is over and under $L$.
\end{proof}

\begin{remark}
  Equivalently, one checks that the
  projection~\eqref{eq:projection-R-S} is a (cartesian)
  \emph{fibration}, see e.g. \cite[Definition 8.1.2 and Theorem
  8.3.1]{Borceux-2}.
\end{remark}

We equip the categories $\cS_K$ with the standard model structures
where a map is a cofibration, fibration, or weak equivalence if the
map of total spaces has this property in $\cS$. With these
model structures, the categories $\cS_K$ are cofibrantly generated, cellular, and proper~\cite{Hirschhorn_over_under}. The following
homotopical properties of the
adjunction~\eqref{eq:res-ext-adj-retractive-spaces} easily
follow from the properness of $\cS$:
\begin{lemma}\label{lem:extension-restriction-R-homotopical} Let $f\colon K \to L$ be a map of spaces. 
\begin{enumerate}[(i)]
\item The adjunction $(f_!,f^*)$ is a Quillen adjunction.  
\item If $f$ is a weak equivalence, then $(f_!,f^*)$ is a Quillen equivalence.
\item If $f$ is an acyclic cofibration, then $f_!$ preserves weak equivalences.
\item If $f$ is an acyclic fibration, then $f^*$ preserves weak equivalences. \qed
\end{enumerate}
\end{lemma}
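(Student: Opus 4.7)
The plan is to reduce each claim to a statement about total spaces in $\cS$, using the formulas $f_!(U,K) = (U\cup_K L, L)$ and $f^*(V,L) = (V\times_L K, K)$ together with the fact that cofibrations, fibrations, and weak equivalences in $\cS_K$ and $\cS_L$ are all detected on total spaces. All four parts will then fall out of properness of $\cS$ and the standard closure of (acyclic) cofibrations under cobase change and of (acyclic) fibrations under base change.

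For (i), the right adjoint $f^*$ preserves (acyclic) fibrations: a pullback of an (acyclic) fibration in $\cS$ along any map is again one, and this is precisely the condition for the induced map in $\cS_K$ to be an (acyclic) fibration. For (iii), the canonical map $U \to U\cup_K L$ is the cobase change of $f$ along $K \to U$, so if $f$ is an acyclic cofibration then this map is an acyclic cofibration in $\cS$ for every $(U,K) \in \cS_K$. Given a weak equivalence $(U,K) \to (U',K)$ in $\cS_K$, the commuting square with horizontal arrows $U \to U\cup_K L$ and $U' \to U'\cup_K L$ then has horizontal weak equivalences, and two-out-of-three forces $U\cup_K L \to U'\cup_K L$ to be a weak equivalence. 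Statement (iv) follows by the dual argument using $V\times_L K \to V$ and right properness.

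For (ii), I would fix a cofibrant object $(U,K)$ in $\cS_K$ and a fibrant object $(V,L)$ in $\cS_L$, which amount to a cofibration $K \to U$ and a fibration $V \to L$ in $\cS$. Since $f$ is a weak equivalence, left properness makes $U \to U\cup_K L$ a weak equivalence, and right properness makes $V\times_L K \to V$ a weak equivalence. A map $U\cup_K L \to V$ in $\cS_L$ and its adjoint $U \to V\times_L K$ in $\cS_K$ both factor the same underlying composite $U \to V$, so two-out-of-three implies one is a weak equivalence on total spaces if and only if the other is. Combined with (i), this gives the Quillen equivalence.

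I do not anticipate any real obstacle. The only mild care needed is in (ii), to line up the adjunction data so that the two-out-of-three argument applies to both the pushout and the pullback factorization simultaneously, but this is immediate from unfolding the formulas for $f_!$ and $f^*$ and the universal properties involved.
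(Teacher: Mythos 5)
Your proof is correct and follows exactly the route the paper intends: the lemma is stated there without a written proof, as an easy consequence of the properness of $\cS$, and your reduction to total spaces via the formulas for $f_!$ and $f^*$, the closure of (acyclic) cofibrations and fibrations under cobase and base change, left/right properness, and the two-out-of-three comparison of the two factorizations of $U\to V$ in part (ii) is precisely that argument. (A minor remark: in (iv) you do not actually need right properness, since the base change of an acyclic fibration along any map is an acyclic fibration in every model category.)
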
 
Next we recall the terminology of~\cite[Definition 3.0.4]{Harpaz-P_Grothendieck-construction}. 
\begin{definition}\label{def:integral-model-str}
Let $\cK$ be a model category and let $F\colon \cK \to \mathrm{Cat}$
be a pseudo\-functor such that each $F(K)$ is equipped with a model structure
and such that $F$ maps each morphism $f\colon K \to L$ in $\cK$ to a
left Quillen functor $ F(f) \colon F(K) \to F(L)$. We write $f_!$ for
$F(f)$ and $f^*$ for its right adjoint. We say that a morphism in
$\cK\int F$ consisting of $f\colon K \to L$ and $f_!(U) \to V$ is
\begin{itemize}
\item an \emph{integral cofibration} if $f$ and $f_!(U) \to V$ are
  cofibrations,
\item an \emph{integral fibration} if $f$ and $U \to f^*(V)$ are
  fibrations, and
\item an \emph{integral weak equivalence} if $f$ is a weak equivalence
  and for any cofibrant replacement $U^{\mathrm{cof}} \to U$ in $F(K)$
  the composite of $f_!(U^{\mathrm{cof}}) \to f_!(U) \to V$ is a weak
  equivalence.
\end{itemize}
\end{definition}
It is shown in~\cite[Theorem
3.0.12]{Harpaz-P_Grothendieck-construction} that these classes of maps
form a model structure when $F$ is a \emph{proper relative}
pseudofunctor (in the language of~\cite[\S
3]{Harpaz-P_Grothendieck-construction}). This amounts to requiring
that the conditions of
Lemma~\ref{lem:extension-restriction-R-homotopical} hold for
$F$. Applying this discussion to the
pseudofunctor~\eqref{eq:retractive-spaces-as-Grothendieck-construction} thus provides the following statement. 
\begin{proposition}\label{prop:R-model-as-Grothendieck-construction}
  These classes form a model structure on the Grothendieck
  construction of $K \mapsto \cS_K$, called the \emph{integral}
  model structure.  Under the equivalence of
  Lemma~\ref{lem:retractive-spaces-as-Grothendieck-construction}, it
  coincides with the model structure on $\cS_{\cR}$ considered earlier.\qed
\end{proposition}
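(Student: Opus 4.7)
The plan is to invoke the Harpaz--Prasma existence theorem \cite[Theorem 3.0.12]{Harpaz-P_Grothendieck-construction} and then unravel the definitions to identify the resulting model structure with the one from Proposition~\ref{prop:model-str-on-R}. The hypotheses of that theorem are exactly that the pseudofunctor $K \mapsto \cS_K$ is \emph{proper relative}, and these were spelled out as the four parts of Lemma~\ref{lem:extension-restriction-R-homotopical}: namely, each $(f_!, f^*)$ is a Quillen adjunction, is a Quillen equivalence if $f$ is a weak equivalence, and the functors $f_!$ and $f^*$ preserve weak equivalences when $f$ is an acyclic cofibration or acyclic fibration, respectively. Thus the existence of the integral model structure on $\cS \int \cS_{-}$ is immediate.

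Next I would compare classes of maps under the equivalence of Lemma~\ref{lem:retractive-spaces-as-Grothendieck-construction}. A map $(U,K) \to (V,L)$ in $\cS_{\cR}$ corresponds to a pair consisting of $f \colon K \to L$ in $\cS$ and a morphism $f_!(U,K) = (U \cup_K L, L) \to (V,L)$ in $\cS_L$. Recall that a map in $\cS_L$ is a cofibration (resp.\ fibration, weak equivalence) precisely when the induced map on total spaces is one in~$\cS$. For cofibrations, this translates the condition ``$f$ is a cofibration and $f_!(U,K) \to (V,L)$ is a cofibration in $\cS_L$'' into ``$K \to L$ and $U \cup_K L \to V$ are cofibrations in~$\cS$'', matching Proposition~\ref{prop:model-str-on-R}. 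The fibration case is dually identical, using $f^*(V,L) = (V \times_L K, K)$.

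For weak equivalences one direction needs a little more care, since the definition involves cofibrant replacement. Suppose first that $(U,K) \to (V,L)$ is a weak equivalence in the sense of Proposition~\ref{prop:model-str-on-R}. Choose any cofibrant replacement $(U^{\mathrm{cof}}, K) \to (U, K)$ in $\cS_K$; by left properness of $\cS$ applied to the pushout defining $U^{\mathrm{cof}} \cup_K L$, and using that $K \to U^{\mathrm{cof}}$ is a cofibration while $K \to L$ is a weak equivalence, the canonical map $U^{\mathrm{cof}} \to U^{\mathrm{cof}} \cup_K L$ is a weak equivalence in $\cS$. Combined with $U^{\mathrm{cof}} \to U \to V$ being a weak equivalence and the two-out-of-three property, this shows $U^{\mathrm{cof}} \cup_K L \to V$ is a weak equivalence, hence the map is an integral weak equivalence. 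The converse runs through the same left properness argument in reverse.

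The main (and essentially only) subtlety is this left properness step in the weak equivalence comparison; everything else reduces to directly unwinding the definitions of $f_!$ and $f^*$ and the level-wise characterization of cofibrations, fibrations, and weak equivalences in $\cS_K$. Once both identifications are in place, Proposition~\ref{prop:R-model-as-Grothendieck-construction} follows.
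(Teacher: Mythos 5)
Your proposal is correct and follows the same route as the paper: the paper also obtains the existence of the integral model structure by citing \cite[Theorem 3.0.12]{Harpaz-P_Grothendieck-construction}, with the proper-relative hypotheses supplied exactly by Lemma~\ref{lem:extension-restriction-R-homotopical}, and leaves the identification with the model structure of Proposition~\ref{prop:model-str-on-R} as an immediate unwinding of definitions. Your extra left-properness argument for matching the integral weak equivalences (showing $U^{\mathrm{cof}} \to U^{\mathrm{cof}} \cup_K L$ is a weak equivalence and concluding by two-out-of-three) is a correct filling-in of the detail the paper suppresses.
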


\subsection{The monoidal structure on retractive spaces}
Let $(U,K)$ and $(V,L)$ be retractive spaces. Their structure maps induce
a commutative diagram 
 \begin{equation}\label{eq:fiberwise-smash-as-colimt}
\xymatrix@-1pc{
U\times V & U\times L \ar[l] \ar[r] & K \times L \\
U\times V \ar[u]^{=} \ar[d]_{=} & K\times L \ar[u] \ar[d] \ar[l] \ar[r]& K \times L \ar[u]_{=} \ar[d]^{=} \\
U\times V & K\times V \ar[l] \ar[r]& K \times L \ .
}
\end{equation}
\begin{definition}\label{def:fiberwise-smash}
  Let $U\barsm V$ be the colimit of~\eqref{eq:fiberwise-smash-as-colimt}.
  It is the total space of the  \emph{fiberwise smash product}
  $(U,K)\barsm (V,L) = (U\barsm V, K \times L)$ of $(U,K)$ and
  $(V,L)$. Its structure maps are induced by the universal property of the
  pushout and the structure maps of $(U,K)$ and $(V,L)$.  (We stress
  that we use the symbol $\barsm$ both for the fiberwise smash product
  in $\cS_{\cR}$ and for its total space.)
\end{definition}

Equivalently, $U\barsm V$ is the iterated pushout obtained by first
forming the vertical pushouts in~\eqref{eq:fiberwise-smash-as-colimt} and
then forming the pushout of the resulting diagram
\begin{equation}\label{eq:fiberwise-smash-as-iterated-colimt} U \times V \ot  \left((U \times L)\cup_{ K \times L}(K \times V)\right) \to K\times L. \end{equation}

Writing $W^V$ for the cotensor in $\cS$, there is an internal Hom-functor
\begin{equation}\label{eq:cotensor} \Hom_{\cR}\colon \cS_{\cR}^{\op} \times \cS_{\cR} \to \cS_{\cR}, \quad ((V,L),(W,M)) \mapsto (W^V\times_{(W^L \times_{M^L} M^V)}M^L,M^L)  
\end{equation}
where the latter pair of spaces has the obvious structure maps making it an object of $\cS_{\cR}$. 

\begin{proposition}\label{prop:R-closed-sym-monoidal}
  The fiberwise smash product is a closed symmetric monoidal product on the
  category of retractive spaces $\cS_{\cR}$ with monoidal unit $S^0$. In particular, there is a natural isomorphism 
\begin{equation} \label{eq:R-closed-sym-monoidal}
\cS_{\cR}((U,K)\barsm (V,L), (W,M)) \iso \cS_{\cR}((U,K),\Hom_{\cR}((V,L),(W,M))). 
\end{equation}
\end{proposition}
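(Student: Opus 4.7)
The approach is to verify each ingredient of a closed symmetric monoidal structure by leveraging the explicit iterated pushout description of $\barsm$ from \eqref{eq:fiberwise-smash-as-iterated-colimt}, together with the observation that the base behaviour is simply the cartesian product. Since $\pi_b$ preserves colimits by Lemma~\ref{lem:iota-b-left-right-adjoint}, most of the coherence data comes for free from the symmetric monoidal structure of $(\cS,\times)$, and the real work lies in controlling total spaces.

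For the unit axiom, the calculation is direct: setting $(U,K)=(S^0,*)$ in \eqref{eq:fiberwise-smash-as-iterated-colimt}, the subobject $(S^0\times L)\cup_{*\times L}(*\times V)$ simplifies to $L\sqcup V$, and the final pushout of $S^0\times V=V\sqcup V$ along this identifies the extra copy of $V$ with $L$, yielding $(V,L)$ up to canonical isomorphism. Symmetry is immediate, since the defining diagram \eqref{eq:fiberwise-smash-as-colimt} is manifestly invariant (up to the swap isomorphisms for the cartesian product) under interchanging the two factors.

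For associativity, the clean move is to describe the triple product as a single colimit. Both iterated bracketings of $(U_1,K_1)\barsm(U_2,K_2)\barsm(U_3,K_3)$ are canonically isomorphic to the pushout of
\[
U_1\times U_2\times U_3\;\longleftarrow\;\bigcup_{i=1}^{3}U_1\times\cdots\times K_i\times\cdots\times U_3\;\longrightarrow\;K_1\times K_2\times K_3,
\]
where the middle term is the evident iterated pushout of ``fat wedge'' type over $K_1\times K_2\times K_3$. This follows because cartesian products in $\cS$ (whether $\sset$ or $\tp$) distribute over colimits and pushouts commute among themselves, so iterating the two-fold pushout in either order produces the same three-fold colimit. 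The associator and pentagon coherence then reduce to those of the cartesian product on $\cS$.

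Closedness, i.e.\ the adjunction \eqref{eq:R-closed-sym-monoidal}, is obtained by unravelling what a morphism $(U,K)\barsm(V,L)\to(W,M)$ is: by the universal property of the pushout, such data amounts to a pair of maps $\tau\colon K\times L\to M$ on bases and $\sigma\colon U\times V\to W$ on total spaces satisfying two compatibilities expressing that $\sigma$ factors the structure maps of $(U,K)$ and $(V,L)$ through those of $(W,M)$. Transposing along the product–hom adjunction in $\cS$ turns $\tau$ into $K\to M^L$ and $\sigma$ into $U\to W^V$, and the two compatibilities become exactly the two pullback conditions cutting out $W^V\times_{(W^L\times_{M^L}M^V)}M^L$ in \eqref{eq:cotensor}. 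Together these give a morphism $(U,K)\to\Hom_{\cR}((V,L),(W,M))$, and the correspondence is bijective and natural in all three variables. The main obstacle is bookkeeping, namely making the two pullback identities match the two pushout cocone conditions; once this dictionary is in place the adjunction, and with it the remaining coherence (triangle and hexagon), reduce to the corresponding facts in the cartesian closed category $\cS$.
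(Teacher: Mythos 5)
Your proof is correct and, for the substantive claim \eqref{eq:R-closed-sym-monoidal}, follows the same route as the paper: both identify a morphism on either side of the adjunction with a pair of maps $U\times V\to W$, $K\times L\to M$ in $\cS$ subject to compatibility conditions (the paper records these as three commuting diagrams), matched via the product--hom adjunction in $\cS$ against the pullback defining $\Hom_{\cR}$ in \eqref{eq:cotensor}. Your explicit unit computation and the single three-fold colimit description for associativity are details the paper simply declares ``clear,'' so they are a welcome but not divergent elaboration.
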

\begin{proof}
It is clear that the fiberwise smash product is symmetric monoidal. To establish the adjunction isomorphism~\eqref{eq:R-closed-sym-monoidal}, we note that on both sides a morphism is given by a pair of maps $U \times V \to W$ and $K\times L \to M$ in $\cS$ such that the following three diagrams commute:
\[\xymatrix@-1pc{
U\times V \ar[d] \ar[r] & W \ar[d]  & U\times L \ar[d] \ar[r] & U\times V \ar[dr] & & K\times V \ar[r] \ar[d] & U \times V \ar[rd] & \\
 K\times L \ar[r] & M &  K\times L \ar[r] & M \ar[r] & W & K\times L \ar[r] & M \ar[r] & W 
}\qedhere\]
\end{proof}
\begin{remark}\label{rem:internal-hom}
The Hom-object considered here is an ``external'' one in that the base space of $\Hom_{\cR}((V,L),(W,L))$ is $L^L$ and not $L$. It does not appear to give rise to an ``internal'' Hom-object in the category $\cS_L$. While the latter ``internal'' Hom is not relevant for our work, it plays an important role in the approach by May--Sigurdsson. Implementing it requires them to deal with considerably more involved point set topological issues (see e.g.~\cite[\S 1.3]{May-S_parametrized}).
\end{remark}

For later reference, we describe some $\barsm$-products of retractive
spaces considered so far. In the notation of
Construction~\ref{constr:spaces-retractive-spaces} and
Lemma~\ref{lem:lifting-properties-in-R}, we have:
\begin{align}
\label{eq:barsmiso-zero}  (A,A) \barsm (V,L) &\iso (A\times L,A\times L)\\
\label{eq:barsmiso-one}   (B\amalg A,B) \barsm (V,L) &\iso ((B\times L)\amalg_{(A\times L)}(A\times V),B\times L) \\
\label{eq:barsmiso-three} (B\amalg A,B) \barsm (B'\amalg A', B') &\iso ( (B\times B')\amalg (A\times A'), (B\times B'))
\end{align} 

\begin{proposition}\label{prop:pushout-product-for-cScR}
The model category $\cS_{\cR}$ satisfies the pushout product axiom. 
\end{proposition}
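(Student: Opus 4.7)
My plan is to verify the pushout product axiom on the generating sets. Since $\cS_{\cR}$ is cofibrantly generated by $I_{\cR}$ and $J_{\cR}$ (Proposition~\ref{prop:model-on-R}), it suffices by a standard argument to show that $f \square g$ is a cofibration when $f, g \in I_{\cR}$, and that $f \square g$ is a trivial cofibration when $f \in I_{\cR}$ and $g \in J_{\cR}$. Since each of these generating sets has the form $\iota_b(S) \cup \iota_{\mathrm{ar}}(S \to \id)$ with $S$ a generating set in $\cS$, four cases arise: $\iota_b(i) \square \iota_b(j)$, $\iota_b(i) \square \iota_{\mathrm{ar}}(j)$, $\iota_{\mathrm{ar}}(i) \square \iota_b(j)$, and $\iota_{\mathrm{ar}}(i) \square \iota_{\mathrm{ar}}(j)$.

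In each case I would compute the pushout product explicitly from the $\barsm$-formulas \eqref{eq:barsmiso-zero}--\eqref{eq:barsmiso-three}, exploiting that $\pi_b$ and $\pi_t$ preserve colimits (Lemma~\ref{lem:iota-b-left-right-adjoint}). For two $\iota_b$'s, every term in the pushout square has the form $(X, X)$ by \eqref{eq:barsmiso-zero}, and the pushout product reduces to $\iota_b$ applied to the pushout product $i \square j$ in $\cS$; since $\iota_b$ preserves (trivial) cofibrations, this is handled by the pushout product axiom in $\cS$. For the mixed cases, whenever one factor is $(X, X)$ the smash product collapses via \eqref{eq:barsmiso-zero} to a retractive space whose total and base coincide; the outcome is that the pushout product map is an \emph{identity} of retractive spaces, hence trivially a (trivial) cofibration.

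The interesting case is $\iota_{\mathrm{ar}}(i) \square \iota_{\mathrm{ar}}(j)$ for $i \colon A \to B$ and $j \colon C \to D$ in $\cS$. By \eqref{eq:barsmiso-three}, each of the four smash products in the pushout diagram has base $B \times D$ and total space of the form $B \times D \amalg P$ for a suitable $P$; the pushout is computed by assembling these summands, and the resulting map to the target has identity on bases and, on the disjoint summand, equals the pushout product $i \square j$ in $\cS$. Verifying the explicit cofibration criterion of Proposition~\ref{prop:model-str-on-R}, namely that both $K \to L$ and $U \cup_K L \to V$ are cofibrations in $\cS$, then reduces (thanks to the disjoint summand shape) directly to $i \square j$ being a cofibration in $\cS$, and likewise for triviality using the weak equivalence criterion.

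The principal obstacle is the combinatorial bookkeeping in the fourth case: one must track the disjoint-summand structure given by \eqref{eq:barsmiso-three} through the pushout, and then verify that the cofibration criterion of Proposition~\ref{prop:model-str-on-R} cleanly reduces to the pushout product axiom in $\cS$. The first three cases are essentially degenerate because of the collapsing phenomenon in \eqref{eq:barsmiso-zero}.
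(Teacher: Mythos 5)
Your proposal is correct and matches the paper's own proof: reduce to the generating (acyclic) cofibrations $I_{\cR}$ and $J_{\cR}$ and compute the pushout products explicitly via the isomorphisms \eqref{eq:barsmiso-zero}--\eqref{eq:barsmiso-three}, so that everything follows from the pushout product axiom for the cartesian product in $\cS$. The paper states this in two sentences and leaves the case analysis implicit; your four cases --- including the degenerate mixed cases collapsing to identities and the disjoint-summand computation for $\iota_{\mathrm{ar}}(i)\mathbin{\square}\iota_{\mathrm{ar}}(j)$ --- are precisely the details it omits.
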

\begin{proof}
  By \cite[Lemma 3.5(i)]{Schwede-S_algebras_modules}, it suffices to
  verify the pushout product axiom for the generating (acyclic)
  cofibrations $I_{\cR}$ and $J_{\cR}$ in
  Proposition~\ref{prop:model-on-R}. Using the above
  isomorphisms~\eqref{eq:barsmiso-zero}, \eqref{eq:barsmiso-one}
  and~\eqref{eq:barsmiso-three}, the claim follows from the pushout
  product axiom for $\cS$.
\end{proof}

\begin{lemma}\label{lem:cobase-barsm-map}
  Let $(U,K)$ and $(V,L)$ be retractive spaces, and let $f \colon K
  \to K'$ and $g \colon L \to L'$ be maps in $\cS$. Then
  there is a natural isomorphism 
\[  (f_!(U),K')\barsm (g_!(V), L') \xrightarrow{\iso} ((f\times g)_! (U\barsm V), K' \times L') \ .\]
\end{lemma}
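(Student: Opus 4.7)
The plan is to realize the base-change functor $f_!$ as a pushout construction in $\cS_\cR$ and then exploit that $\barsm$ preserves colimits in each variable, since $\cS_\cR$ is closed symmetric monoidal by Proposition~\ref{prop:R-closed-sym-monoidal}.

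First I would observe that the formula $f_!(U,K) = (U\cup_K K', K')$ realizes $f_!(U,K)$ as the pushout in $\cS_\cR$ of the span
\[ \iota_b(K') \xleftarrow{\iota_b(f)} \iota_b(K) \longrightarrow (U,K), \]
where the right-hand map is the counit $\iota_b\pi_b(U,K) \to (U,K)$ of the adjunction $\iota_b \dashv \pi_b$ from Lemma~\ref{lem:iota-b-left-right-adjoint}, given on total spaces by the structure map $K \to U$.

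Next I would smash this span with $g_!(V,L)$. Since $(-)\barsm g_!(V,L)$ preserves colimits, the left-hand side $f_!(U,K)\barsm g_!(V,L)$ is the pushout of the span
\[ \iota_b(K') \barsm g_!(V,L) \longleftarrow \iota_b(K) \barsm g_!(V,L) \longrightarrow (U,K) \barsm g_!(V,L). \]
The two outer terms simplify via~\eqref{eq:barsmiso-zero} to $\iota_b(K'\times L')$ and $\iota_b(K\times L')$ respectively, since $g_!(V,L)$ has base $L'$. Applying the same strategy to $(U,K)\barsm g_!(V,L)$, namely expanding $g_!(V,L)$ as a pushout and using that $(U,K)\barsm(-)$ preserves colimits, yields
\[ (U,K) \barsm g_!(V,L) \;\iso\; (\id_K \times g)_!\bigl((U,K) \barsm (V,L)\bigr), \]
via the pushout description of base change applied to $\id_K\times g \colon K\times L \to K\times L'$ and the retractive space $(U,K) \barsm (V,L)$, whose base is $K\times L$.

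Substituting back, the left-hand side becomes the pushout of the span
\[ \iota_b(K'\times L') \longleftarrow \iota_b(K\times L') \longrightarrow (\id_K\times g)_!\bigl((U,K) \barsm (V,L)\bigr), \]
which is, again by the pushout description, $(f\times\id_{L'})_!(\id_K\times g)_!\bigl((U,K) \barsm (V,L)\bigr)$. Pseudofunctoriality of $(-)_!$, together with the identity $(f\times\id_{L'})\circ(\id_K\times g) = f\times g$, then identifies this with $(f\times g)_!\bigl((U,K) \barsm (V,L)\bigr)$. Naturality in all four inputs is automatic from the naturality of every intermediate identification, so the only work is bookkeeping of the pushout squares; no step presents a serious obstacle.
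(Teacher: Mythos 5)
Your argument is correct, and it takes a genuinely different route from the paper's. The paper proves this lemma by hand: it exhibits $f_!(U)\times g_!(V)$, $K'\times L'$ and the intermediate pushout $(f_!(U)\times L')\cup_{K'\times L'}(K'\times g_!(V))$ as colimits of explicit $3\times 3$-diagrams and then interchanges the order in which the colimits are formed. You instead package the same colimit interchange structurally: you note that $f_!(U,K)$ is the pushout in $\cS_{\cR}$ of $\iota_b(K')\leftarrow\iota_b(K)\to(U,K)$ (using the counit of the adjunction from Lemma~\ref{lem:iota-b-left-right-adjoint} and the fact that colimits in $\cS_{\cR}$ are computed separately on total and base spaces), and then invoke the closed symmetric monoidal structure of Proposition~\ref{prop:R-closed-sym-monoidal} to push $\barsm$ through these pushouts, once in each variable, finishing with pseudofunctoriality of $(-)_!$. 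Your version is more conceptual and reusable (it is essentially the observation that a strong monoidal left adjoint structure on $\barsm$ interacts with cobase change), whereas the paper's is self-contained and makes no appeal to the internal Hom. The cost of your route is exactly the bookkeeping you flag: one must verify that the isomorphisms \eqref{eq:barsmiso-zero} identify the maps of the smashed spans with the intended ones — e.g.\ that $(U,K)\barsm\bigl(\iota_b(L)\to(V,L)\bigr)$ corresponds under \eqref{eq:barsmiso-zero} to the counit $\iota_b(K\times L)\to(U,K)\barsm(V,L)$, and that $\iota_b(K)\barsm\iota_b(L')\to\iota_b(K')\barsm\iota_b(L')$ corresponds to $\iota_b(f\times\id_{L'})$. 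These checks are routine and are where the content of the paper's explicit diagrams resurfaces, so there is no gap, only deferred verification.
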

\begin{proof}
The products $f_!(U) \times g_!(V)$ and $K'\times L'$ can be identified
with the colimits of the following diagrams: 
\[{\scriptscriptstyle  \vcenter{\xymatrix@-1.4pc{
K'\!\!\times\!\! V&  K'\!\!\times\!\! L\ar[r] \ar[l] &  K'\!\!\times\!\! L'\\
K\!\!\times\!\! V\ar[u] \ar[d] &K\!\!\times\!\! L  \ar[r] \ar[l] \ar[u] \ar[d] &K\!\!\times\!\!  L' \ar[u] \ar[d] \\
U\!\!\times\!\! V &U\!\!\times\!\!  L \ar[r] \ar[l] &U\!\!\times\!\!  L'}}}
\qquad
{\scriptscriptstyle \vcenter{\xymatrix@-1.4pc{
K'\!\!\times\!\! L&  K'\!\!\times\!\! L\ar[r] \ar[l] &  K'\!\!\times\!\! L'\\
K\!\!\times\!\! L\ar[u] \ar[d] &K\!\!\times\!\! L  \ar[r] \ar[l] \ar[u] \ar[d] &K\!\!\times\!\!  L' \ar[u] \ar[d] \\
K\!\!\times\!\! L &K\!\!\times\!\!  L \ar[r] \ar[l] &K\!\!\times\!\!  L'}}}\]
Moreover, $(f_!(U)\times L')\cup_{K'\times L'}(K'\times g_!(V))$
is isomorphic to the colimit of the following diagram:
\[
{\scriptscriptstyle 
\left(\!\! \vcenter{\xymatrix@-1.4pc{
K'\!\!\times\!\! L&  K'\!\!\times\!\! L\ar[r] \ar[l] &  K'\!\!\times\!\! L'\\
K\!\!\times\!\! L\ar[u] \ar[d] &K\!\!\times\!\! L  \ar[r] \ar[l] \ar[u] \ar[d] &K\!\!\times\!\!  L' \ar[u] \ar[d] \\
U\!\!\times\!\! L &U\!\!\times\!\!  L \ar[r] \ar[l] &U\!\!\times\!\!  L'}}\!\!\right)
\ot
\left(\!\! \vcenter{\xymatrix@-1.4pc{
K'\!\!\times\!\! L&  K'\!\!\times\!\! L\ar[r] \ar[l] &  K'\!\!\times\!\! L'\\
K\!\!\times\!\! L\ar[u] \ar[d] &K\!\!\times\!\! L  \ar[r] \ar[l] \ar[u] \ar[d] &K\!\!\times\!\!  L' \ar[u] \ar[d] \\
K\!\!\times\!\! L &K\!\!\times\!\!  L \ar[r] \ar[l] &K\!\!\times\!\!  L'}}\!\!\right)
\to
\left(\!\! \vcenter{\xymatrix@-1.4pc{
K'\!\!\times\!\! V&  K'\!\!\times\!\! L\ar[r] \ar[l] &  K'\!\!\times\!\! L'\\
K\!\!\times\!\! V\ar[u] \ar[d] &K\!\!\times\!\! L  \ar[r] \ar[l] \ar[u] \ar[d] &K\!\!\times\!\!  L' \ar[u] \ar[d] \\
K\!\!\times\!\! V &K\!\!\times\!\!  L \ar[r] \ar[l] &K\!\!\times\!\!  L'}}\!\!\right)
}\]
Hence taking the colimit of each of the five $3\times 3$-diagrams and then forming the iterated colimit as in~\eqref{eq:fiberwise-smash-as-iterated-colimt} provides $f_!(U)\barsm g_!(V)$. Since colimits commute among each other, we may alternatively first form the colimit as in~\eqref{eq:fiberwise-smash-as-iterated-colimt} in each of the 9 entries of the $3\times 3$-diagrams and then form the colimit
of the resulting $3\times 3$-diagram:
\[{\scriptscriptstyle  \xymatrix@-1.4pc{
K'\!\times\! L&  K'\!\times\! L\ar[r] \ar[l] &  K'\!\times\! L'\\
K\!\times\! L\ar[u] \ar[d] &K\!\times\! L  \ar[r] \ar[l] \ar[u] \ar[d] &K\!\times\!  L' \ar[u] \ar[d] \\
U\!\barsm\! V &K\!\times\!  L \ar[r] \ar[l] &K\!\times\!  L'\ }}\]
The colimit of the latter diagram is isomorphic to $(f\times g)_!(U\barsm V)$. 
\end{proof}

The fiberwise smash product also commutes with base change: 
\begin{lemma}\label{lem:fiberwise-smash-pullback} 
  Let $(U,K)$ and $(V,L)$ be retractive spaces, and let $f \colon K'
  \to K$ and $g \colon L' \to L$ be maps in $\cS$. Then there
is a natural isomorphism \begin{equation}\label{eq:pullback-barsm-map}
    (f^*U,K')\barsm (g^*V, L') \to ((f\times g)^* (U \barsm V), K'
    \times L')\ .\end{equation}  
\end{lemma}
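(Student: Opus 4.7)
The canonical map asserted in the lemma exists from functoriality of $\barsm$ and the universal property of the pullback: the projections $(f^*U, K') \to (U, K)$ and $(g^*V, L') \to (V, L)$ induce a map of retractive spaces $(f^*U, K') \barsm (g^*V, L') \to (U,K) \barsm (V,L)$ whose base component factors through $f \times g \colon K' \times L' \to K \times L$, and universality of the pullback then yields \eqref{eq:pullback-barsm-map}. It remains to check that this map is an isomorphism on total spaces, since the base component $\id_{K' \times L'}$ already agrees.

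I would unwind both sides via the colimit presentation \eqref{eq:fiberwise-smash-as-colimt}. The total space $(f^*U) \barsm (g^*V)$ is the colimit of the analogous $3 \times 3$ diagram with $f^*U$, $g^*V$, $K'$, $L'$ in place of $U$, $V$, $K$, $L$. Each of the nine entries of this diagram is canonically identified with the pullback along $f \times g$ of the corresponding entry in the original diagram — for example $f^*U \times g^*V \cong (f \times g)^*(U \times V)$ because the cartesian product is itself a pullback, and likewise $f^*U \times L' \cong (f \times g)^*(U \times L)$, $K' \times g^*V \cong (f \times g)^*(K \times V)$, and $K' \times L' \cong (f \times g)^*(K \times L)$. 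A direct check shows these identifications are compatible with every arrow in the $3 \times 3$-diagram.

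What remains is to verify that $(f \times g)^*$ commutes with the colimit \eqref{eq:fiberwise-smash-as-colimt}. When $\cS = \sset$ this is automatic, since pullback in any topos preserves all colimits. When $\cS = \tp$ (compactly generated weak Hausdorff spaces), I would appeal to the iterated-pushout reformulation \eqref{eq:fiberwise-smash-as-iterated-colimt}: the underlying-set functor preserves these pushouts, and the compactly generated topology on the resulting pullback space agrees with that on the colimit of the pullbacks of each entry, since all maps involved are inclusions of closed subspaces and the $(f \times g)^{-1}$ preimages distribute over such pushouts. This topological commutation is the only step requiring care; once granted, the displayed map is an isomorphism by construction.
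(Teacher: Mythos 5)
Your argument is correct and follows essentially the same route as the paper: construct the canonical map, identify each entry of the defining colimit diagram with its pullback along $f\times g$, observe that the simplicial (set-level) case is automatic since base change preserves colimits there, and isolate the topological commutation of $(f\times g)^*$ with the iterated pushout \eqref{eq:fiberwise-smash-as-iterated-colimt} as the only delicate point. The one place where the paper is more explicit is in justifying your phrase ``all maps involved are inclusions of closed subspaces'': it cites that the section $K\to U$ of a retractive space in compactly generated weak Hausdorff spaces is a closed inclusion (\cite[Lemma 1.6.2]{May-S_parametrized}), that such pushouts agree with those formed in compactly generated spaces, and that base change preserves colimits of compactly generated spaces (\cite[Proposition 1.3]{Lewis_fibre-spaces}) --- facts you implicitly use and should record.
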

\begin{proof} It is clear that $(f \times g)^*(U \times V) \iso f^*(U)
  \times g^*(V)$. Moreover, as a functor between the underlying categories
  of sets, $(f \times g)^*$ preserves the pushouts that are used to
  form the fiberwise smash products. This shows the claim if $\cS =
  \sset$. 

  When $\cS = \tp$, the argument is more involved since forming
  colimits in compactly generated weak Hausdorff spaces may change the
  underlying sets.  By~\cite[Proposition 1.3]{Lewis_fibre-spaces},
  base change along a map of compactly generated weak Hausdorff spaces
  preserves colimits in compactly generated spaces.  The pushout in
  compactly generated spaces of a diagram of compactly generated weak
  Hausdorff spaces in which one map is a closed inclusion coincides
  with its pushout in compactly generated weak Hausdorff spaces, and
  the cobase change of the closed inclusion is again a closed
  inclusion in this case~\cite[App. A, Proposition 7.5]{Lewis_thesis}. The
  structure map from the base to the total space of a retractive space
  (in compactly generated weak Hausdorff spaces) is a closed
  inclusion~\cite[Lemma 1.6.2]{May-S_parametrized} and closed
  inclusions are preserved under products. Hence the pushout defining
  $(U \times L)\cup_{ K \times L}(K \times V)$ is preserved under base
  change. Because $K\to U$ and $L\to V$ are closed inclusions, so is
  $(U \times L)\cup_{ K \times L}(K \times V) \to U \times V$, and it
  follows from the description
  in~\eqref{eq:fiberwise-smash-as-iterated-colimt} that the pushout
  defining $U\barsm V$ is preserved under base change.
\end{proof}

\begin{remark} Given retractive spaces $(U,K)$ and $(L,V)$ and points
  $x \in K$ and $y \in L$, the proposition shows that the fiber of
  $U\barsm V \to K \times L$ over $(x,y)$ is isomorphic to the smash
  product of the fiber of $U\to K$ over $x$ with the fiber of $V \to
  L$ over $y$. This justifies the name fiberwise smash product.
\end{remark}
\subsection{Simplicial structure of $\cS_K$} The symmetric monoidal
structure on $\cS_{\cR}$ can be used to define simplicial and pointed
simplicial structures on the category $\cS_{K}$ for a fixed space
$K$. If $Q$ is an unbased simplicial set, we define a functor
\[ Q \tensor - \colon \cS_K \to \cS_K, \qquad (U,K) \mapsto (Q_+,*) \barsm (U,K) .\]
Here we implicitly compose with the cobase change along $K \times \{*\} \xrightarrow{\iso} K$ and apply the geometric realization to $Q$ when working with topological spaces.
\begin{proposition}\label{prop:S_K-simplicial}
This action equips $\cS_K$ with the structure of a simplicial model category. 
\end{proposition}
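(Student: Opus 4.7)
My plan is to bootstrap the simplicial structure and the pushout-product axiom (SM7) for $\cS_K$ from the closed symmetric monoidal structure on $\cS_\cR$ (Proposition~\ref{prop:R-closed-sym-monoidal}) together with its pushout-product axiom (Proposition~\ref{prop:pushout-product-for-cScR}).

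First I would check that the tensor $Q\otimes(U,K) = (Q_+,*)\barsm(U,K)$ stays inside $\cS_K$: by Definition~\ref{def:fiberwise-smash} its base space is $*\times K$, canonically identified with $K$. The mapping space would be defined as usual by
\[ \Map_{\cS_K}((U,K),(V,K))_n = \cS_K\bigl(\Delta^n\otimes(U,K),(V,K)\bigr), \]
and the cotensor by $(V,K)^Q := \Hom_{\cR}((Q_+,*),(V,K))$. Using formula~\eqref{eq:cotensor} with $L = *$, this cotensor has base space $K^{*}\iso K$ and so is an object of $\cS_K$. The required two-variable adjunction and coherences making $\cS_K$ tensored, cotensored, and enriched over $\sset$ then descend from the closed symmetric monoidal structure on $\cS_\cR$, using additionally that $(Q_+,*)\barsm(Q'_+,*) \iso ((Q\times Q')_+,*)$ and that $(S^0,*)$ is the monoidal unit.

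Next I would verify SM7. A map $j\colon(U,K)\to(V,K)$ in $\cS_K$ is a cofibration, fibration, or weak equivalence in $\cS_K$ precisely when it is one as a map in $\cS_\cR$, since the condition on the base map $\id_K$ is automatic and the remaining conditions reduce (via $U\cup_K K\iso U$ and $V\times_K K\iso V$) to the standard conditions on total spaces. Similarly, for a map $i\colon Q\to Q'$ in $\sset$, unravelling the definitions in Section~\ref{retrsp} shows that $(Q_+,*)\to(Q'_+,*)$ is a (co)fibration (respectively acyclic cofibration) in $\cS_\cR$ if and only if $i$ is one in $\sset$: the base map $*\to *$ is trivially a (co)fibration, and the relative condition collapses to $Q_+\to Q'_+$. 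Since all four corners of the pushout-product of such $i$ and $j$ have base $K$ and the base-space pushout is formed under the identity of $K$, the pushout-product formed in $\cS_K$ agrees with that formed in $\cS_\cR$. SM7 for $\cS_K$ then follows from Proposition~\ref{prop:pushout-product-for-cScR} combined with the detection principles just stated.

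The hard part will be handling the subtle point flagged in Remark~\ref{rem:internal-hom}: the internal Hom in $\cS_\cR$ is ``external'', producing base spaces of the form $M^L$ rather than $M$, so there is no honest internal Hom on $\cS_K$ itself. The saving grace is that we only ever cotensor by objects of the form $(Q_+,*)$, whose base is a point, which collapses $M^L$ to $M = K$ and makes the cotensor land genuinely in $\cS_K$.
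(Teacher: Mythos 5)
Your proposal is correct and follows essentially the same route as the paper: the paper obtains the simplicial enrichment, tensor, and cotensor by citing \cite[Lemma II.2.4]{Goerss-J_simplicial} (which your explicit construction of mapping spaces and cotensors via $\Hom_{\cR}((Q_+,*),-)$ simply unpacks), and then deduces SM7 from Proposition~\ref{prop:pushout-product-for-cScR} together with the compatibility of the model structures on $\cS_K$ and $\cS_{\cR}$ — exactly the detection principles you spell out. Your closing observation about the ``external'' Hom is a fair elaboration of why the cotensor by $(Q_+,*)$ nevertheless lands in $\cS_K$, matching the paper's own discussion following the proposition.
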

\begin{proof}
An application of~\cite[Lemma II.2.4]{Goerss-J_simplicial} shows that 
$\cS_K$ becomes a simplicial category. The compatibility with the model 
structure follows from Proposition~\ref{prop:pushout-product-for-cScR} 
and the compatibility of the model structures on $\cS_K$ and $\cS_{\cR}$. 
\end{proof}
Since $\cS_K$ is pointed, its simplicial structure induces
a pointed simplicial structure. The tensor of $(U,K)$
with a pointed simplicial set $P$ is the pushout of $(K,K) \ot
\{*\} \tensor (U,K) \to P \tensor (U,K)$. It follows that this
tensor is $(P,*) \barsm (U,K)$. Consequently, the cotensor is $\Hom_{\cR}((P,*),(U,K))$, and we deduce from~\eqref{eq:cotensor} that it has the total space
$U^P\times_{(U \times  K^P)}K$. So a point in the total space consists of a
map $h\colon P \to U$ whose image is contained in a single fiber and which
sends  the basepoint of $P$ to the canonical basepoint of the fiber.

\section{Twisted symmetric spectra}\label{twisepc}
We will now introduce a generalized form of symmetric spectra for
which we allow the individual levels of a symmetric spectrum $X$ to
take values in different categories. We will also construct level
model structures on these categories that we will use in
Section~\ref{sec:local-model} to build the local model structures we are really
after. Both the level and the local model structures come in an
absolute and a positive version with different cofibrations. The
positive version will be needed to get a lifted model structure on
commutative parametrized ring spectra (see Section~\ref{subsec:commutative-monoids}).

Let $\cI$ be the category with objects the finite sets
$\bld{m}=\{1,\dots, m\}$, $m \geq 0$, and morphisms the injections.
The ordered concatenation $\bld{m}\concat \bld{n} = \bld{m+n}$ of
finite sets makes $\cI$ a symmetric strict monoidal category with unit
$\bld{0}$. Its symmetry isomorphism is the shuffle
$\chi_{m,n}\colon \bld{m}\concat\bld{n} \to \bld{n}\concat\bld{m}$
moving the first $m$ elements past the last $n$ elements.

We first recall some notions needed for a description of symmetric
spectra using the category $\cI$ (see e.g.~\cite[\S
3.1]{Schlichtkrull_Thom-symmetric}). This viewpoint will be convenient
for the discussion of convolution products in Section~\ref{sec:smash}. Given a finite set $P$, we let $S^{P} = \bigwedge_{P}S^1$ be the $P$ fold
smash power of $S^1$. If $\alpha\colon\bld{m}\to\bld{n}$
is a morphism in $\cI$, we write $\bld n-\alpha$ for the complement of
its image.  The canonical extension of $\alpha$ to a bijection
$\bld m\amalg(\bld n-\alpha)\to\bld n$ induces a homeomorphism
\begin{equation}\label{eq:reindexing-basic} S^{\bld{m}} \sm
S^{\bld{n}-\alpha}\xrightarrow{\iso} S^{\bld{n}}.
\end{equation} 
More generally, if $\alpha\colon \bld{m}\to\bld{n}$ and $\beta\colon\bld{n}\to\bld{p}$ are
composable morphisms in $\cI$, then the canonical bijection
$(\bld{n}-\alpha)\amalg(\bld{p}-\beta)\to\bld{p}-\beta\alpha$ induces
a homeomorphism 
\begin{equation}\label{eq:reindexing-composite} S^{\bld{n}-\alpha} \sm
S^{\bld{p}-\beta} \xrightarrow{\iso} S^{\bld{p}-\beta\alpha}.
\end{equation}

\subsection{Quillen \texorpdfstring{$\cI$}{I}-categories}
The next definition again uses the language of pseudofunctors~\cite[Definition 7.5.1]{Borceux-1} with values in $\mathrm{Cat}$.

\begin{definition}
  A \emph{Quillen $\cI$-category} is a pseudofunctor
  $\cC\colon \cI \to \mathrm{Cat}$ with each $\cC_{\bld{m}}$
  equipped with a cofibrantly generated model structure and each
  $\alpha_!\colon \cC_{\bld{m}} \to \cC_{\bld{n}}$ left Quillen.
\end{definition}
\begin{remark}
 This notion of a Quillen $\cI$-category corresponds to the ``right Quillen presheaves'' of \cite[Definition 2.21]{Barwick_left-right} given by contravariant pseudofunctors on $\cI$ that send maps to right Quillen functors. We use the present terminology since our examples below make it more natural to treat the left adjoints as primary data.
\end{remark}

\begin{definition}
Let $\cC\colon \cI \to \mathrm{Cat}$ be a Quillen $\cI$-category. Its \emph{section category} $\cC^{\cI}$ has as objects $X$ families of objects $X_{\bld{m}}$ in $\cC_{\bld{m}}$ equipped with structure maps ${\alpha_!X_{\bld{m}} \to X_{\bld{n}}}$ for every $\alpha \colon \bld{m} \to \bld{n}$ in $\cI$ such that the structure map $(\id_{\bld m})_!X_{\bld m}\to X_{\bld m}$ is the isomorphism given by the pseudofunctor and such that the square
\[\xymatrix@-1pc{\beta_! (\alpha_! X_{\bld{m}}) \ar[r] \ar[d]_{\iso} & \beta_! X_{\bld{n}} \ar[d] \\
  (\beta \alpha)_!X_{\bld{m}} \ar[r] & X_{\bld{p}}} \] commutes for
all maps $\alpha \colon \bld{m} \to\bld{n}$ and $\beta\colon \bld{n}
\to \bld{p}$ in $\cI$. Here the left hand vertical map is the coherence isomorphism of the pseudofunctor $\cC$, while the three other maps
are the structure maps associated with $\alpha, \beta$ and $\beta\alpha$. Morphisms $X \to Y$ in $\cC^{\cI}$ are families of morphisms $X_{\bld{m}} \to Y_{\bld{m}}$ that make the obvious squares commutative. 
\end{definition}

If $\cC\colon \cI \to \mathrm{Cat}$ is a Quillen $\cI$-category and $\bld{m}$ is an object in $\cI$, we get an adjunction 
\begin{equation}\label{eq:free-evaluation-adjunction-pseudofunctor} F_{\bld{m}} \colon \cC_{\bld{m}} \rightleftarrows \cC^{\cI} \colon \Ev_{\bld{m}}
\end{equation}
with right adjoint the evaluation functor sending $X$ in $\cC^{\cI}$ to $X_{\bld{m}}$. The left adjoint is given in level $\bld{n}$ by
\begin{equation}\label{eq:free-explicit} (F_{\bld{m}}(Z))_{\bld{n}} = \textstyle\coprod_{\alpha\colon \bld{m} \to \bld{n}} \alpha_!Z \end{equation}
where the coproduct is indexed over $\cI(\bld{m},\bld{n})$ and formed in $\cC_{\bld{n}}$. The composition in $\cI$ induces structure maps turning the $(F_{\bld{m}}(Z))_{\bld{n}}$ into an object of $\cC^{\cI}$. 

\subsection{Level model structures}
Let $f\colon X \to Y$ be a morphism in the section category of a
Quillen $\cI$-category. Then $f$ is an absolute \emph{level fibration}
(resp.\ \emph{level equivalence}) if
$f_{\bld{m}} \colon X_{\bld{m}} \to Y_{\bld{n}}$ is a fibration
(resp.\ weak equivalence) in $\cC_{\bld{m}}$ for all
$\bld{m}$. Positive level fibrations and level weak equivalences are
defined by only requiring this condition if $|\bld{m}|\geq 1$. The
absolute (resp.\ positive) level cofibrations are the maps with the
left lifting properties against all maps that are both absolute
(resp.\ positive) level fibrations and level equivalences. The next
statement is analogous to~\cite[Theorem 2.28]{Barwick_left-right}.
\begin{proposition}\label{prop:level-model-on-section}
  Let $\cC\colon \cI \to \mathrm{Cat}$ be a Quillen $\cI$-category. Then the above classes of maps form absolute and positive
  level model structures on $\cC^{\cI}$.  Both level model structures
  are cofibrantly generated, and they are proper if each $\cC_{\bld{m}}$ is.
\end{proposition}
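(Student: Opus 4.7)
The plan is to apply the standard recognition theorem for cofibrantly generated model categories (e.g.\ \cite[Theorem~11.3.1]{Hirschhorn_model}) using as candidate generating (acyclic) cofibrations
\[ I_{\mathrm{lev}} = \bigcup_{\bld{m}} F_{\bld{m}}(I_{\bld{m}}), \qquad J_{\mathrm{lev}} = \bigcup_{\bld{m}} F_{\bld{m}}(J_{\bld{m}}), \]
where $I_{\bld{m}}$ and $J_{\bld{m}}$ are generating (acyclic) cofibrations for $\cC_{\bld{m}}$ and the indexing range is all of $\cI$ in the absolute case and $|\bld{m}|\geq 1$ in the positive case. The adjunction \eqref{eq:free-evaluation-adjunction-pseudofunctor} immediately gives that the maps with the right lifting property against $I_{\mathrm{lev}}$ are the (positive) level acyclic fibrations and those with the right lifting property against $J_{\mathrm{lev}}$ are the (positive) level fibrations; this determines the two classes of fibrations and acyclic fibrations, and forces the (positive) level cofibrations to be the $I_{\mathrm{lev}}$-cofibrations.

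A preparatory observation is that limits and colimits in $\cC^{\cI}$ are computed levelwise. Colimits are formed by taking $(\colim_i X^{(i)})_{\bld{m}} = \colim_i X^{(i)}_{\bld{m}}$ with structure maps assembled from those of the $X^{(i)}$ using that each $\alpha_!$ preserves colimits; limits are formed similarly with structure maps built from the canonical comparison $\alpha_!\lim_i X^{(i)}_{\bld{m}} \to \lim_i\alpha_!X^{(i)}_{\bld{m}}$. Consequently, every $\Ev_{\bld{m}}$ preserves all colimits and limits, so the smallness hypotheses needed for $I_{\mathrm{lev}}$ and $J_{\mathrm{lev}}$ reduce to the smallness built into the model structures on the $\cC_{\bld{m}}$.

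The central step is to show that any relative $J_{\mathrm{lev}}$-cell complex is a level equivalence. By \eqref{eq:free-explicit}, the evaluation of $F_{\bld{m}}(j)$ at a level $\bld{n}$ is the coproduct $\coprod_{\alpha\in \cI(\bld{m},\bld{n})}\alpha_!(j)$. Since each $\alpha_!$ is left Quillen by the definition of a Quillen $\cI$-category, every $\alpha_!(j)$ is an acyclic cofibration in $\cC_{\bld{n}}$, and coproducts, pushouts, and transfinite compositions of acyclic cofibrations are again acyclic cofibrations in a cofibrantly generated model category. Combined with the levelwise description of colimits in $\cC^{\cI}$, this shows that every relative $J_{\mathrm{lev}}$-cell complex is levelwise an acyclic cofibration and in particular a level equivalence. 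This is essentially the only place where genuine model-categorical content enters the argument; the fact that all $\alpha_!$ are left Quillen is used here in an essential way.

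With these pieces in place, the recognition theorem produces the absolute and positive level model structures, and they are automatically cofibrantly generated by construction. Properness then follows formally: cofibrations, fibrations and weak equivalences are all detected on levels, and pushouts and pullbacks in $\cC^{\cI}$ are computed levelwise, so both right and left properness descend from the corresponding properties of each $\cC_{\bld{m}}$. The main obstacle in the whole argument is really only the verification that $J_{\mathrm{lev}}$-cell complexes are level equivalences, which as noted above rests on the left Quillen assumption on each $\alpha_!$; everything else is routine cofibrantly generated bookkeeping.
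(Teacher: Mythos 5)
Your proposal is correct and follows essentially the same route as the paper's proof: the same generating sets $F_{\bld{m}}(I_{\bld{m}})$ and $F_{\bld{m}}(J_{\bld{m}})$, the recognition theorem for cofibrantly generated model categories, and the identification of the key step as checking that relative $J$-cell complexes are level equivalences via the formula \eqref{eq:free-explicit}, the left Quillen hypothesis on the $\alpha_!$, and the levelwise computation of colimits. The only cosmetic remark is that cofibrations in the level model structure are levelwise cofibrations but are not \emph{detected} levelwise; since the properness argument only uses the forward implication, this does not affect your conclusion.
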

\begin{proof}
For the absolute level model structure, we define 
\begin{equation}
I = \{ F_{\bld{m}}(i)\, |\, \bld{m} \in \cI, i \in I_{\bld{m}}\}\quad\text{ and }\quad J = \{ F_{\bld{m}}(j)\, |\, \bld{m} \in \cI, j \in J_{\bld{m}}\}
\end{equation}
where $I_{\bld{m}}$ (resp.\ $J_{\bld{m}}$) is a set of generating
(resp.\ generating acyclic) cofibrations for $\cC_{\bld{m}}$. Now we
apply the recognition theorem for cofibrantly generated model
structures~\cite[Theorem 2.1.19]{Hovey_model}. The least obvious
condition to check is that the relative $J$-cell complexes are absolute level
equivalences. To see this, we use that colimits in $\cC^{\cI}$ are
formed levelwise and deduce from~\eqref{eq:free-explicit} that for $j
\in J_{\bld{m}}$ and $\bld{n}$ in $\cI$, the map
$F_{\bld{m}}(j)_{\bld{n}}$ is an acyclic cofibration in
$\cC_{\bld{m}}$ because the $\alpha_!$ are left Quillen. The fact that
transfinite compositions of cobase changes of acyclic cofibrations in
$\cC_{\bld{n}}$ are weak equivalences shows that the relative $J$-cell
complexes are absolute level equivalences.

To treat the positive level model structure, we write $\cI_{\geq 1}$ for the full subcategory of $\cI$ on the objects $\bld{m}$ with $|\bld{m}|\geq 1$, define
\begin{equation}
I = \{ F_{\bld{m}}(i)\, |\, \bld{m} \in \cI_{\geq 1}, i \in I_{\bld{m}}\}\quad\text{ and }\quad J = \{ F_{\bld{m}}(j)\, |\, \bld{m} \in \cI_{\geq 1}, j \in J_{\bld{m}}\},
\end{equation}
and argue as before. The statement about the relative $J$-cell complexes in the absolute case implies the one in the positive case. 
\end{proof}

\begin{example}\label{ex:pseudo}
We now discuss how various well-known categories of relevance for us can be expressed in terms of Quillen $\cI$-categories $\cC\colon \cI \to \mathrm{Cat}$. Here $\cS$ and $\cS_*$ are equipped with the standard model structures, and $\cS_{\cR}$ and $\cS_K$ are equipped with the model structures discussed in the previous section. 
\begin{enumerate}[(i)]
\item Let $\cC_{\bld{m}} = \cS$ and $\alpha_! = \id$ for all $\bld{m}$
  and $\alpha$. Then the section category $\cC^{\cI}$ is the functor
  category $\cS^{\cI}$ of $\cI$-spaces, and the model structures of
  Proposition~\ref{prop:level-model-on-section} are the absolute and
  positive level model structure on $\cI$-spaces that arise from~\cite[Proposition 6.7]{Sagave-S_diagram}.
\item For each $\bld{m}$ in $\cI$, let $\cC_{\bld{m}}$ be the category
  of based spaces $\cS_*$.  The functor
  $\alpha_! \colon \cS_* \to \cS_*$ induced by
  $\alpha\colon \bld{m} \to \bld{n}$ is defined to be
  $- \sm S^{\bld{n}-\alpha}$, the smash product with the sphere
  $S^{\bld{n}-\alpha}$ indexed by the finite set $\bld{n}-\alpha$. We
  note that the coherence isomorphism of the smash product and the
  isomorphisms~\eqref{eq:reindexing-composite} equip $\cC$ with the
  structure of a pseudofunctor. The section category of this
  pseudofunctor is equivalent to the usual category of symmetric
  spectra $\Spsym{}$. Here the structure maps in the definition of the
  section category correspond to the generalized structure maps of
  symmetric spectra (see e.g.~\cite[\S
  3.1]{Schlichtkrull_Thom-symmetric}). Under this equivalence of
  categories, the model structures of
  Proposition~\ref{prop:level-model-on-section} correspond to the
  absolute and positive level model structures on $\Spsym{}$.
\item Analogously to (i), the section category of the constant
  pseudofunctor with value $\cS_{\cR}$ is equivalent to
  $\cS^{\cI}_{\cR}$, the category of $\cI$-diagrams in retractive
  spaces which is in turn equivalent to the category of retractive
  objects in $\cI$-spaces. Under the latter equivalence, the model
  structures of Proposition~\ref{prop:level-model-on-section}
  correspond to the model structures on retractive objects in the
  absolute or positive level model structures on
  $\cS^{\cI}$ that arise by the argument in the proof of Proposition~\ref{prop:model-str-on-R}. 
\item Let $X$ be an $\cI$-space. We define $\cC_{\bld{m}}$ to be
  $\cS_{X(\bld{m})}$, the category of spaces over and under
  $X(\bld{m})$, and $\alpha_! = (X(\bld{m}) \to X(\bld{n}))_! \colon
  \cS_{X(\bld{m})} \to \cS_{X(\bld{n})}$ as
  in~\eqref{eq:res-ext-adj-retractive-spaces}. The universal property
  of the pushout gives rise to coherence isomorphisms making this
  a pseudofunctor. Its section category is equivalent to
  $\cS^{\cI}_{X}$, the category of $\cI$-spaces over and under
  $X$. The model structures of
  Proposition~\ref{prop:level-model-on-section} correspond to those
  induced by the absolute and positive $\cI$-model structure on the
  category of objects over and under $X$ in the usual way. 
\end{enumerate}
\end{example}
By mixing (ii) and (iii), we obtain a pseudofunctor with each
$\cC_{\bld{m}}$ the category of retractive spaces $\cS_{\cR}$ and with
$\alpha_! \colon \cS_{\cR} \to \cS_{\cR}$ being the functor
$- \barsm S^{\bld{n}-\alpha}$ (where $S^{\bld{n}-\alpha}$ is a
shorthand notation for $(S^{\bld{n}-\alpha},*)$, see
Construction~\ref{constr:spaces-retractive-spaces}).
\begin{definition}\label{def:spsymR}
  The section category of this pseudofunctor defines the \emph{category of
    symmetric spectra in retractive spaces} $\Spsym{\cR}$.
\end{definition}
Explicitly, an object $(E,X)$ in $\Spsym{\cR}$ is a sequence of
retractive spaces $(E,X)_{\bld{m}}$ for $\bld{m}$ in $\cI$ with
structure maps
$\alpha_! \colon (E,X)_{\bld{m}} \barsm S^{\bld{n}-\alpha} \to
(E,X)_{\bld{n}}$
for each $\alpha\colon \bld{m} \to\bld{n}$ in $\cI$ such that the
obvious diagrams commute.  As we will see (and heavily exploit) below,
the base spaces $X(\bld{m})$ of $(E,X)$ assemble to an
$\cI$-space. Analogous to the discussion in (ii), one can check that
$\Spsym{\cR}$ is equivalent to the category $\Spsym{}(\cS_{\cR},S^1)$
of symmetric spectrum objects in $\cS_{\cR}$ with suspension functor
$-\barsm S^1$;
see~\cite[Definition~7.2]{Hovey_symmetric-general}. Under this
equivalence, the absolute level model structure on $\Spsym{\cR}$
corresponds to the level model structure established
in~\cite[Theorem~8.2]{Hovey_symmetric-general}.

Let $\cC$ and $\cD$ be Quillen $\cI$-categories and let
$\Phi\colon \cC \to \cD$ be a \emph{pseudo-natural
  transformation}~\cite[Definition 7.5.2]{Borceux-1}, i.e., a family
of functors $\Phi_{\bld{m}}\colon \cC_{\bld{m}} \to \cD_{\bld{m}}$
together with natural isomorphisms
$\alpha_!\Phi_{\bld{m}} \xrightarrow{\iso} \Phi_{\bld{n}}\alpha_!$ of
functors $\cC_{\bld{m}}\to \cD_{\bld{n}}$ that are compatible with the
coherence isomorphisms of $\cC$ and $\cD$. Then $\Phi$ induces a
functor $\Phi \colon \cC^{\cI} \to \cD^{\cI}$ of section
categories. For $X$ in $\cC^{\cI}$, the object $\Phi(X)$ consists of
the family of objects $\Phi_{\bld{m}}(X_{\bld{m}})$ together with
structure maps
\[\alpha_!(\Phi_{\bld{m}}(X_{\bld{m}})) \xrightarrow{\iso}
\Phi_{\bld{n}}(\alpha_!(X_{\bld{m}})) \to
\Phi_{\bld{n}}(X_{\bld{n}})\]
induced by the structure maps of $X$ and the coherence isomorphism of
$\Phi$.

\begin{lemma}\label{lem:adjunction-on-section-cat}
  Let $\Phi\colon \cC \to \cD$ be a pseudo-natural transformation of
  Quillen $\cI$-categories with each
  $\Phi_{\bld{m}} \colon \cC_{\bld{m}} \to \cD_{\bld{m}}$  a left
  Quillen functor. Then ${\Phi\colon \cC^{\cI} \to \cD^{\cI}}$ is a
  left Quillen functor with respect to the absolute and positive level model
  structures.
\end{lemma}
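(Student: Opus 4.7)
The strategy is to produce a right adjoint $\Psi \colon \cD^{\cI} \to \cC^{\cI}$ to $\Phi$ and verify that it is right Quillen. This is the cleanest path because the (positive) level fibrations and level equivalences in $\cC^{\cI}$ and $\cD^{\cI}$ are, by definition, detected at the objects $\bld{m}$ of $\cI$ (respectively of $\cI_{\geq 1}$), so the Quillen property of $\Psi$ will reduce to the hypothesis that each $\Phi_{\bld{m}}$ is left Quillen.

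To construct $\Psi$, let $\Psi_{\bld{m}}$ denote the right adjoint of $\Phi_{\bld{m}}$, which exists because $\Phi_{\bld{m}}$ is left Quillen, and let $\alpha^*$ be the right adjoint of $\alpha_!$ on either pseudofunctor. The pseudo-natural isomorphism $\alpha_!\Phi_{\bld{m}} \xrightarrow{\iso} \Phi_{\bld{n}} \alpha_!$ transports through adjunction to a natural isomorphism $\Psi_{\bld{m}} \alpha^* \xrightarrow{\iso} \alpha^* \Psi_{\bld{n}}$. Given $Y \in \cD^{\cI}$ with structure maps adjoint to $Y_{\bld{m}} \to \alpha^* Y_{\bld{n}}$, I set $\Psi(Y)_{\bld{m}} = \Psi_{\bld{m}}(Y_{\bld{m}})$ with structure maps adjoint to the composite
\[
\Psi_{\bld{m}}(Y_{\bld{m}}) \to \Psi_{\bld{m}}(\alpha^* Y_{\bld{n}}) \xrightarrow{\iso} \alpha^* \Psi_{\bld{n}}(Y_{\bld{n}}).
\]
A diagram chase using the coherence axioms for the pseudo-natural transformation $\Phi$ shows that these maps satisfy the associativity and unit conditions in the definition of the section category, so that $\Psi(Y) \in \cC^{\cI}$. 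The adjunction $\Phi \dashv \Psi$ on section categories is then assembled levelwise from the adjunctions $\Phi_{\bld{m}} \dashv \Psi_{\bld{m}}$, and functoriality of $\Psi$ is automatic.

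It remains to check that $\Psi$ preserves (acyclic) fibrations in both the absolute and the positive level model structure of Proposition~\ref{prop:level-model-on-section}. This is immediate from the construction: a map $Y \to Y'$ in $\cD^{\cI}$ is an absolute (respectively positive) level (acyclic) fibration if and only if each component $Y_{\bld{m}} \to Y'_{\bld{m}}$ is an (acyclic) fibration in $\cD_{\bld{m}}$ for every $\bld{m} \in \cI$ (respectively $\bld{m} \in \cI_{\geq 1}$); since $\Psi$ is level-wise given by the right Quillen functors $\Psi_{\bld{m}}$, the claim follows. The only step requiring actual work, and hence the main (minor) obstacle, is the coherence bookkeeping in the construction of the structure maps of $\Psi(Y)$; once $\Psi$ is in place the Quillen property is forced by the level-wise detection of fibrations and equivalences.
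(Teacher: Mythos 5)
Your proposal is correct and follows essentially the same route as the paper: construct the right adjoint $\Psi$ levelwise from the $\Psi_{\bld{m}}$, endow $\Psi(Y)$ with structure maps obtained by transporting the coherence isomorphisms of $\Phi$ through the adjunctions, and conclude that $\Psi$ is right Quillen because (positive) level fibrations and equivalences are detected objectwise. The only cosmetic difference is that you phrase the structure maps of $\Psi(Y)$ in the adjoint form $\Psi_{\bld{m}}(Y_{\bld{m}}) \to \alpha^*\Psi_{\bld{n}}(Y_{\bld{n}})$ using the (genuinely invertible) conjugate isomorphisms $\Psi_{\bld{m}}\alpha^* \iso \alpha^*\Psi_{\bld{n}}$, whereas the paper uses the one-sided mates $\alpha_!\Psi_{\bld{m}} \to \Psi_{\bld{n}}\alpha_!$, which are only op-lax; the two descriptions define the same functor.
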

\begin{proof}
  We fix right adjoints $\Psi_{\bld{n}}$ and units and counits for
  each of these adjunctions. The inverses of the coherence
  isomorphisms $\alpha_! \Phi_{\bld{m}} \to \Phi_{\bld{n}}\alpha_!$
  and the units and counits give rise to natural maps
  $\alpha_! \Psi_{\bld{m}} \to \Psi_{\bld{n}}\alpha_!$ that equip the
  $\Psi_{\bld{m}}$ with the structure of a \emph{left op-lax natural
    transformation} in the sense of \cite[Definition
  3.1.2]{Thomason-homotopy-colimt}. The $\Psi_{\bld{m}}$ induce a
  functor $\Psi \colon \cD^{\cI}\to\cC^{\cI}$ on section categories
  where $\Psi(Y)$ has structure maps
  $\alpha_! \Psi_{\bld{m}}(Y_{\bld{m}}) \to
  \Psi_{\bld{n}}\alpha_!(Y_{\bld{m}}) \to \Psi_{\bld{n}}(Y_{\bld{n}})$.
  Then $\Phi\colon \cC^{\cI} \rightleftarrows \cD^{\cI} \colon \Psi$
  is an adjunction, and it is immediate from the definition of the
  fibrations and weak equivalences that $\Phi$ is right Quillen.
\end{proof}
\begin{construction}\label{const:various-adjunctions}
  We will now apply Lemma~\ref{lem:adjunction-on-section-cat} to various pseudo-natural
  transformations relating the Quillen $\cI$-categories appearing in
  Example~\ref{ex:pseudo}(i)-(iii) and in
  Definition~\ref{def:spsymR}. Using the functors of
  Construction~\ref{constr:spaces-retractive-spaces} and
  Lemma~\ref{lem:iota-b-left-right-adjoint}, the values of the
  pseudo-natural transformations are the horizontal arrows in the
  following diagrams where all squares commute up to
  isomorphism:
\[\xymatrix@-1pc{
\bld{m} \ar[d]^{\alpha}  && \cS \ar^{\iota_t}[rr] \ar[d]^{\id} && \cS_{\cR}  \ar[d]^{\id} && \cS \ar^{\iota_b}[rr] \ar[d]^{\id} && \cS_{\cR} \ar^{-\barsm S^{\bld{m}}}[rr] \ar[d]^{\id} && \cS_{\cR} \ar^(.6){\pi_b}[rrr] \ar[d]^{-\barsm S^{\bld{n}-\alpha}} &&& \cS\ar[d]^{\id}  \\
\bld{n}  && \cS \ar^{\iota_t}[rr]&& \cS_{\cR} && \cS \ar^{\iota_b}[rr]&& \cS_{\cR} \ar^{-\barsm S^{\bld{n}}}[rr] && \cS_{\cR} \ar^(.6){\pi_b}[rrr] &&& \cS 
}\]
Hence we get left Quillen functors 
\begin{equation}\label{eq:various-left-adjoints} \xymatrix@-1.2pc{&& \cS^{\cI} \ar^{\iota_t}[rr] && \cS_{\cR}^{\cI} && \cS^{\cI} \ar^{\iota_b}[rr]  && \cS_{\cR}^{\cI} \ar^{\bS^{\cI}_{\cR}}[rr]  && \Spsym{\cR} \ar^-{\pi_b}[rrr]  &&& \cS^{\cI} \ .
}
\end{equation}\end{construction}
Explicitly, the values of the functors $\iota_t$ and $\iota_b$ on an $\cI$-space $X$ are 
\[ \iota_t(X)(\bld{m}) = (X(\bld{m})\textstyle\amalg X(\bld{m}),
X(\bld{m})) \quad\text{ and }\quad \iota_b(X)(\bld{m}) =
(X(\bld{m}),X(\bld{m}))\ , \] and their right adjoints
$\cS^{\cI}_{\cR} \to \cS^{\cI}$ are the projections to the base and
total $\cI$-spaces. The functor $\bS^{\cI}_{\cR}$ sends an object
$(Z,X)$ in $\cS_{\cR}^{\cI}$ to the symmetric spectrum in $\cS_{\cR}$
given in degree $m$ by $(Z(\bld{m}),X(\bld{m}))\barsm
S^{\bld{m}}$. Its right adjoint $\Omega^{\cI}_{\cR}$ is
given by $\Omega^{\cI}_{\cR}(E,X)(\bld{m}) =
\Hom_{\cR}((S^{\bld{m}},*),(E,X)(\bld{m}))$.  
We obtain two composite
functors 
\[ \bS^{\cI}_b = \bS^{\cI}_{\cR} \circ \iota_b \colon \cS^{\cI} \to \Spsym{\cR}\quad \text{ and } \quad  \bS^{\cI}_t = \bS^{\cI}_{\cR} \circ \iota_t \colon \cS^{\cI} \to \Spsym{\cR}.  \]
Their evaluations on an $\cI$-space $X$ are given in level $\bld{m}$ by 
\begin{equation}\label{eq:bsIb-and-bSIt-explicit}
\bS^{\cI}_b[X]_m = (X(\bld{m}),X(\bld{m}))\quad \text{ and }\quad \bS^{\cI}_t[X]_m = (X(\bld{m}) \times S^{\bld{m}},X(\bld{m})).
\end{equation}
Their right adjoints $\pi_b$ and $\Omega^{\cI}_t$ are given by composing
$\Omega^{\cI}_{\cR}$ with the projection to the base and total $\cI$-space. Finally, the functor $\pi_b$ in~\eqref{eq:various-left-adjoints} is
the projection to the underlying $\cI$-space $X$ of a symmetric
spectrum in retractive spaces $(E,X)$.
\begin{lemma}\label{lem:iota-b-left-right-adjoint-I}
  The functor $\bS^{\cI}_b \colon \cS^{\cI} \to \Spsym{\cR}$ is both
  left and right adjoint to $\pi_b$, and both left and right Quillen
  with respect to the absolute and positive level model
  structures. 
\end{lemma}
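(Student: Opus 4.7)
The plan is to verify both adjunctions by a direct hom-set computation and then to read off the Quillen properties from Construction~\ref{const:various-adjunctions}. The key preliminary observation is that the retractive space $\bS^{\cI}_b[X]_{\bld m} = (X(\bld m), X(\bld m)) \barsm (S^{\bld m}, *)$ is canonically isomorphic to $(X(\bld m), X(\bld m))$ by~\eqref{eq:barsmiso-zero}, so $\bS^{\cI}_b[X]$ is level-wise a retractive space whose base and total spaces coincide with identity structure maps; a straightforward check then identifies the $\alpha$-structure maps of $\bS^{\cI}_b[X]$, under this canonical isomorphism, with $\iota_b$ applied to $X(\alpha)\colon X(\bld m) \to X(\bld n)$.

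For the adjunction $\bS^{\cI}_b \dashv \pi_b$, I would unpack a morphism $\bS^{\cI}_b[X] \to (E, Y)$ in $\Spsym{\cR}$: at level $\bld m$ it consists of a base map $f^b_{\bld m} \colon X(\bld m) \to Y(\bld m)$ and a total map $f^t_{\bld m} \colon X(\bld m) \to E_{\bld m}$. The commutativity of the defining squares for a morphism in $\cS_{\cR}$ forces $f^t_{\bld m}$ to equal the composition of $f^b_{\bld m}$ with the section $Y(\bld m) \to E_{\bld m}$ of $(E, Y)_{\bld m}$; the second square is then automatic since the retraction is a left inverse of the section. Compatibility with the structure maps on both sides matches exactly the naturality condition for a morphism $X \to \pi_b(E, Y) = Y$ in $\cS^{\cI}$, giving the required natural bijection. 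The converse adjunction $\pi_b \dashv \bS^{\cI}_b$ is dual: a morphism $(E, X) \to \bS^{\cI}_b[Y]$ level-wise has its total component $E_{\bld m} \to Y(\bld m)$ forced to equal the composition of the retraction $E_{\bld m} \to X(\bld m)$ with the base component $X(\bld m) \to Y(\bld m)$, and both compatibility squares collapse to a single condition describing a morphism $X = \pi_b(E, X) \to Y$ in $\cS^{\cI}$.

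For the Quillen properties, I would observe that $\bS^{\cI}_b = \bS^{\cI}_{\cR} \circ \iota_b$ is a composite of left Quillen functors (for both the absolute and positive level model structures) by the first two arrows in~\eqref{eq:various-left-adjoints}, hence left Quillen; and $\bS^{\cI}_b$ is right Quillen since its left adjoint $\pi_b$ is the rightmost left Quillen functor in~\eqref{eq:various-left-adjoints}. No substantive obstacle is expected: the argument is essentially bookkeeping, with the only subtle step being the identification of the structure maps of $\bS^{\cI}_b[X]$ after applying~\eqref{eq:barsmiso-zero}.
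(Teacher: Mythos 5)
Your proposal is correct and follows essentially the same route as the paper: the paper deduces the two adjunctions from the space-level Lemma~\ref{lem:iota-b-left-right-adjoint} (which your level-wise hom-set computation simply spells out, using the identification $\bS^{\cI}_b[X]_m \iso (X(\bld m),X(\bld m))$ of~\eqref{eq:bsIb-and-bSIt-explicit}), and it obtains the Quillen properties exactly as you do, from $\bS^{\cI}_b = \bS^{\cI}_{\cR}\circ\iota_b$ and $\pi_b$ both being left Quillen by~\eqref{eq:various-left-adjoints}.
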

\begin{proof} The adjunction statement follows from
  Lemma~\ref{lem:iota-b-left-right-adjoint}. Both
  $(\bS^{\cI}_b,\pi_b)$ and $(\pi_b,\bS^{\cI}_b)$ are Quillen
  adjunctions since $ \bS^{\cI}_b = \bS^{\cI}_{\cR} \circ \iota_b $
  and $\pi_b$ are left Quillen.
\end{proof}

\subsection{The category of $X$-relative symmetric spectra}
Let $X$ be an $\cI$-space. By combining parts (ii) and (iv) of
Example~\ref{ex:pseudo}, we get another Quillen $\cI$-category with values $\cC_{\bld{m}} = \cS_{X(\bld{m})}$ on objects. In
this case, we define $\alpha_!\colon \cS_{X(\bld{m})} \to
\cS_{X(\bld{n})}$ to be the composite
\begin{equation}\label{eq:Quillen-I-cat-for-SpsymX}
\cS_{X(\bld{m})} \xrightarrow{X(\alpha)_!}  \cS_{X(\bld{n})} \xrightarrow{- \barsm (S^{\bld{n}-\alpha})}  \cS_{X(\bld{n})}\ .
\end{equation}
The universal property of the pushout, the coherence isomorphism of
the symmetric monoidal product $\barsm$ on $\cS_{\cR}$ and the
isomorphisms~\eqref{eq:reindexing-composite} provide the coherence
isomorphisms for this pseudofunctor.

\begin{definition}\label{def:X-relative-sym-spectra}
  Let $X$ be an $\cI$-space. Then the section category of the previous
  Quillen $\cI$-category is the category of \emph{$X$-relative
    symmetric spectra} $\Spsym{X}$. We will also refer to it as the
  category of symmetric spectra parametrized by $X$.
\end{definition}
When $K$ is a space, the category $\Spsym{K} = \Spsym{\const_{\cI}K}$
is equivalent to the category $\Spsym{}(\cS_K,S^1)$ of symmetric
spectrum objects in the category $\cS_K$ of spaces over and under $K$,
and the absolute level model structure on $\Spsym{K}$ corresponds to
the level model structure
from~\cite[Theorem~8.2]{Hovey_symmetric-general}.
\begin{lemma}\label{lem:base-change-Q-adj-for-SpsymX-level}
A map of $\cI$-spaces $f\colon X \to Y$ induces a Quillen adjunction 
\begin{equation}\label{eq:base-change-adj-for-SpsymX} f_!\colon \Spsym{X} \rightleftarrows \Spsym{Y} \colon f^* 
\end{equation}
with respect to the absolute and positive model structures. 
\end{lemma}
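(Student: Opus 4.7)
The plan is to verify this by appealing to Lemma~\ref{lem:adjunction-on-section-cat}. Concretely, I would show that the map $f\colon X\to Y$ of $\cI$-spaces induces a pseudo-natural transformation $\Phi\colon \cC\to \cD$ between the two Quillen $\cI$-categories whose section categories are $\Spsym{X}$ and $\Spsym{Y}$ (as in~\eqref{eq:Quillen-I-cat-for-SpsymX}), with each component $\Phi_{\bld{m}}$ left Quillen. Once this is in place, Lemma~\ref{lem:adjunction-on-section-cat} immediately produces a Quillen adjunction $(\Phi,\Psi)$ on section categories with respect to both the absolute and the positive level model structures, and it will remain to identify $\Phi$ with $f_!$ and $\Psi$ with the levelwise pullback $f^*$.

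For the component functors I take $\Phi_{\bld{m}} = f(\bld m)_!\colon \cS_{X(\bld m)}\to \cS_{Y(\bld m)}$, which is a left adjoint to $f(\bld m)^*$ by~\eqref{eq:res-ext-adj-retractive-spaces} and left Quillen by Lemma~\ref{lem:extension-restriction-R-homotopical}(i). To exhibit the pseudo-naturality data, I need, for each $\alpha\colon\bld m\to\bld n$, a natural isomorphism between the two composites from $\cS_{X(\bld m)}$ to $\cS_{Y(\bld n)}$ given by $\alpha_!^Y\circ f(\bld m)_!$ and $f(\bld n)_!\circ\alpha_!^X$. Unfolding the definitions, this amounts to the chain of isomorphisms
\[
f(\bld n)_!\bigl(X(\alpha)_!(-)\barsm S^{\bld n-\alpha}\bigr)
\;\iso\; f(\bld n)_!\bigl(X(\alpha)_!(-)\bigr)\barsm S^{\bld n-\alpha}
\;\iso\; Y(\alpha)_!\bigl(f(\bld m)_!(-)\bigr)\barsm S^{\bld n-\alpha},
\]
where the first isomorphism is the instance of Lemma~\ref{lem:cobase-barsm-map} for the cobase change along $f(\bld n)$ smashed with the identity on $S^{\bld n-\alpha}$, and the second is the canonical isomorphism $f(\bld n)_!X(\alpha)_!\iso Y(\alpha)_!f(\bld m)_!$ coming from the equality $f(\bld n)\circ X(\alpha)=Y(\alpha)\circ f(\bld m)$ in $\cS$ together with the fact that $(-)_!$ is pseudofunctorial in $\cS$.

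The main bookkeeping obstacle is then to check that these coherence isomorphisms are compatible with the pseudofunctor structures of $\cC$ and $\cD$, i.e.\ that the hexagons relating $(\beta\alpha)_!$ to $\beta_!\alpha_!$ commute. This reduces to two standard compatibilities: the pseudofunctoriality isomorphisms for $(-)_!$ in the composite $X(\beta)X(\alpha)=X(\beta\alpha)$ (resp.\ for $Y$), together with the monoidal coherence~\eqref{eq:reindexing-composite} identifying $S^{\bld n-\alpha}\sm S^{\bld p-\beta}$ with $S^{\bld p-\beta\alpha}$, and the fact that Lemma~\ref{lem:cobase-barsm-map} itself is natural in the maps involved. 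These are routine diagram chases of pushouts.

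Finally, to identify the resulting adjunction with the claimed one, I observe that the right adjoint produced in the proof of Lemma~\ref{lem:adjunction-on-section-cat} sends $(E,Y)$ to the section with levels $f(\bld m)^*E_{\bld m}$ and structure maps built via the oplax data dual to the coherence isomorphisms above; this is precisely the degreewise pullback $f^*$ as described in the introduction. The Quillen property for both the absolute and positive level model structures is then immediate from Lemma~\ref{lem:adjunction-on-section-cat}, since $f(\bld m)^*$ preserves fibrations and acyclic fibrations by Lemma~\ref{lem:extension-restriction-R-homotopical}(i), and the level fibrations and level equivalences are detected levelwise in both versions.
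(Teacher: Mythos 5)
Your proposal is correct and follows exactly the paper's argument: the paper's proof is the single sentence ``We apply Lemma~\ref{lem:adjunction-on-section-cat} to the functors $f(\bld{m})_!\colon \cS_{X(\bld{m})}\to\cS_{Y(\bld{m})}$,'' and your write-up simply makes explicit the pseudo-naturality coherence data and the identification of the induced right adjoint with the levelwise pullback that the paper leaves implicit.
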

\begin{proof} We apply  Lemma~\ref{lem:adjunction-on-section-cat} to the functors
  $f(\bld{m})_! \colon \cS_{X(\bld{m})} \to \cS_{Y(\bld{m})}$.
\end{proof}
Analogous
to Lemma~\ref{lem:cS_K-pseudofunctor}, we obtain a pseudofunctor
\begin{equation}\label{eq:SpsymX-pseudofunctor} X \mapsto \Spsym{X},\quad (f\colon X \to Y)\mapsto (f_!\colon \Spsym{X} \to  \Spsym{Y}) \ .
\end{equation}
\begin{lemma}\label{lem:SpsymR-Grothendieck-construction}
The Grothendieck construction of~\eqref{eq:SpsymX-pseudofunctor} is equivalent to  $\Spsym{\cR}$.
\end{lemma}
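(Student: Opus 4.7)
The plan is to apply the unstable equivalence of Lemma~\ref{lem:retractive-spaces-as-Grothendieck-construction} level-wise and then verify compatibility with the structure maps. To define a comparison functor $\Phi$ from the Grothendieck construction of~\eqref{eq:SpsymX-pseudofunctor} to $\Spsym{\cR}$ on objects, I send a pair $(X, (E,X))$ with $X \in \cS^{\cI}$ and $(E,X) \in \Spsym{X}$ to the underlying sequence of retractive spaces $(E_{\bld m}, X(\bld m))$ with the given structure maps. This is well-defined because the pseudofunctor defining $\Spsym{X}$ sends $\alpha \colon \bld m \to \bld n$ to the composite $X(\alpha)_! \circ (-\barsm S^{\bld n - \alpha})$, so that a morphism $\alpha_!(E_{\bld m}, X(\bld m)) \to (E_{\bld n}, X(\bld n))$ in $\cS_{X(\bld n)}$ is, after absorbing the base change, the same as a morphism $(E_{\bld m}, X(\bld m)) \barsm S^{\bld n - \alpha} \to (E_{\bld n}, X(\bld n))$ in $\cS_{\cR}$ covering $X(\alpha)$.

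On morphisms, a map $(f,g) \colon (X,(E,X)) \to (Y,(F,Y))$ in the Grothendieck construction consists of $f \in \cS^{\cI}(X,Y)$ together with a morphism $g \colon f_!(E,X) \to (F,Y)$ in $\Spsym{Y}$. In each level $\bld m$, I form the composite
\[ (E_{\bld m}, X(\bld m)) \longrightarrow f(\bld m)_!(E_{\bld m}, X(\bld m)) \xrightarrow{g_{\bld m}} (F_{\bld m}, Y(\bld m)) \]
as in the proof of Lemma~\ref{lem:retractive-spaces-as-Grothendieck-construction}; these compatibility of the $g_{\bld m}$ with the structure maps in $\Spsym{Y}$ translates into compatibility with the structure maps in $\Spsym{\cR}$.

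Conversely, given a morphism $\phi \colon (E,X) \to (F,Y)$ in $\Spsym{\cR}$, applying the functor $\pi_b$ from~\eqref{eq:various-left-adjoints} yields a map $f \colon X \to Y$ of $\cI$-spaces. Applying Lemma~\ref{lem:retractive-spaces-as-Grothendieck-construction} level-wise to each $\phi_{\bld m}$ produces factorizations $f(\bld m)_!(E_{\bld m}, X(\bld m)) \to (F_{\bld m}, Y(\bld m))$ in $\cS_{Y(\bld m)}$, and the main thing to check is that these assemble into a morphism $g \colon f_!(E,X) \to (F,Y)$ in $\Spsym{Y}$. The potential obstacle is bookkeeping: the structure maps of $f_!(E,X)$ in $\Spsym{Y}$ involve both $Y(\alpha)_!$ and the smash with $S^{\bld n - \alpha}$, while the original structure maps of $(E,X)$ involve $X(\alpha)_!$ and the same smash. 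The needed compatibility follows from naturality of the cobase change adjunction together with Lemma~\ref{lem:cobase-barsm-map}, which says that $(-)_!$ commutes with $-\barsm(-)$; these are precisely the coherence data making the pseudofunctors in Example~\ref{ex:pseudo}(iv) and Definition~\ref{def:spsymR} compatible. Finally, I would check that the two constructions are mutually inverse up to the canonical coherence isomorphisms, which reduces to the level-wise statement already provided by Lemma~\ref{lem:retractive-spaces-as-Grothendieck-construction}.
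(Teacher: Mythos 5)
Your proposal is correct and follows the same route as the paper, which simply applies Lemma~\ref{lem:retractive-spaces-as-Grothendieck-construction} levelwise; you have filled in the details the paper leaves implicit, in particular that the coherence between the structure maps of $\Spsym{X}$ (defined via $X(\alpha)_!$ followed by $-\barsm S^{\bld n-\alpha}$) and those of $\Spsym{\cR}$ is exactly the interchange isomorphism of Lemma~\ref{lem:cobase-barsm-map}. No gaps.
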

\begin{proof}
This follows from a levelwise application of Lemma~\ref{lem:retractive-spaces-as-Grothendieck-construction}.
\end{proof}
Under this equivalence, the category $\Spsym{X}$ corresponds to the
fiber of $\pi_b$ over $X$. This identification of $\Spsym{X}$ allows
us to give a different description of the
adjunction~\eqref{eq:base-change-adj-for-SpsymX}: there are natural
isomorphisms
\begin{equation}\label{eq:base-change-cobase-change} f_!(E,X) \iso (E,X)\cup_{\bS^{\cI}_b[X]}\bS^{\cI}_b[Y] \quad\text{ and }\quad f^*(F,Y) \iso (F,Y)\times_{\bS^{\cI}_b[Y]}\bS^{\cI}_b[X]
\end{equation}
where the pushouts and pullbacks are formed in $\Spsym{\cR}$. In other words,
$f_!$ corresponds to the cobase change along $\bS^{\cI}_b[f]$, and $f^*$ corresponds to the base change along $\bS^{\cI}_b[f]$. 

We write $F_{\bld{m}}^{\Spsym{\cR}}\colon \cS_{\cR} \to \Spsym{\cR}$,
$F_{\bld{m}}^{\Spsym{X}}\colon \cS_{X(\bld{m})} \to \Spsym{X}$, and
$F^{\cS^{\cI}}_{\bld{m}}\colon \cS \to \cS^{\cI}$  for the free functors
obtained by
implementing~\eqref{eq:free-evaluation-adjunction-pseudofunctor} in
the categories $\Spsym{\cR}$, $\Spsym{X}$, and $\cS^{\cI}$. 

For later use we record how the free functors to $\Spsym{\cR}$ and $\Spsym{X}$ are related. 
\begin{lemma}\label{lem:free-cR-and-free-relative-X-pushout}
  Let $(V,L)$ be an object in $\cS_L$, let
  $f\colon {F_{\bld{m}}^{\cS^{\cI}}(L) \to X}$ be a map in $\cS^{\cI}$, and let $\widetilde{f}\colon L \to X(\bld{m})$ be the
  adjoint of $f$.  Then there is a natural isomorphism
$F_{\bld{m}}^{\Spsym{X}}(\widetilde{f}_!(V,L)) \iso {f}_!  F_{\bld{m}}^{\Spsym{\cR}}(V,L) \ .$\qed
\end{lemma}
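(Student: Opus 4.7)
The plan is a Yoneda argument: I will show that, for every object $H$ of $\Spsym{X}$, both $F_{\bld{m}}^{\Spsym{X}}(\widetilde{f}_!(V,L))$ and ${f}_!  F_{\bld{m}}^{\Spsym{\cR}}(V,L)$ co-represent the functor $H \mapsto \cS_L((V,L), \widetilde{f}^* H_{\bld{m}})$ on $\Spsym{X}$.

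For the left hand side, chaining the free-forgetful adjunction $F_{\bld{m}}^{\Spsym{X}} \dashv \Ev_{\bld{m}}$ from~\eqref{eq:free-evaluation-adjunction-pseudofunctor} with the base-change adjunction $\widetilde{f}_! \dashv \widetilde{f}^*$ from~\eqref{eq:res-ext-adj-retractive-spaces} gives
\[
\Spsym{X}\bigl(F_{\bld{m}}^{\Spsym{X}}(\widetilde{f}_!(V,L)),\, H\bigr) \iso \cS_{X(\bld{m})}\bigl(\widetilde{f}_!(V,L),\, H_{\bld{m}}\bigr) \iso \cS_L\bigl((V,L),\, \widetilde{f}^* H_{\bld{m}}\bigr).
\]

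For the right hand side, first note that $F_{\bld{m}}^{\Spsym{\cR}}(V,L)$ actually lies in the fiber $\Spsym{F_{\bld{m}}^{\cS^{\cI}}(L)}$ of $\pi_b$: by~\eqref{eq:free-explicit} its level $\bld{n}$ is $\coprod_\alpha (V,L)\barsm (S^{\bld{n}-\alpha},*)$, and by~\eqref{eq:barsmiso-zero} each summand has base $L$, so the underlying $\cI$-space is $F_{\bld{m}}^{\cS^{\cI}}(L)$. Applying the adjunction $f_! \dashv f^*$ of Lemma~\ref{lem:base-change-Q-adj-for-SpsymX-level} then reduces the task to identifying $\Spsym{F_{\bld{m}}^{\cS^{\cI}}(L)}(F_{\bld{m}}^{\Spsym{\cR}}(V,L), f^* H)$ with $\cS_L((V,L), \widetilde{f}^* H_{\bld{m}})$. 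Using Lemma~\ref{lem:SpsymR-Grothendieck-construction} to view morphisms in $\Spsym{F_{\bld{m}}^{\cS^{\cI}}(L)}$ as morphisms in $\Spsym{\cR}$ whose underlying map of base $\cI$-spaces is the identity, and then applying the free-forgetful adjunction $F_{\bld{m}}^{\Spsym{\cR}} \dashv \Ev_{\bld{m}}$ on $\Spsym{\cR}$, these in turn correspond to morphisms $(V,L) \to (f^*H)_{\bld{m}}$ in $\cS_{\cR}$ whose base map is the $\id_{\bld{m}}$-summand inclusion $L \hookrightarrow \coprod_{\sigma \in \Sigma_m} L = F_{\bld{m}}^{\cS^{\cI}}(L)(\bld{m})$. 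Since $f(\bld{m})$ restricted along this summand inclusion is, by construction of the adjoint, exactly $\widetilde{f}$, pulling $(f^*H)_{\bld{m}}$ back along the summand inclusion reproduces $\widetilde{f}^* H_{\bld{m}}$ in $\cS_L$, as desired.

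The main obstacle is the bookkeeping in the final step, where one must translate the spectrum-level condition ``identity on base $\cI$-spaces'' through the various free-forgetful adjunctions to the unstable condition ``base map is the $\id_{\bld{m}}$-summand inclusion'', and then recognise the resulting pullback as $\widetilde{f}^* H_{\bld{m}}$. Once the Grothendieck construction picture of Section~\ref{subsec:retractive-parametrized-spaces} is in play, however, this reduces to a routine unwinding of the adjunctions, and Yoneda yields the claimed natural isomorphism.
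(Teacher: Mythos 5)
Your proof is correct. The paper leaves this lemma unproved (it is stated with a \qed as a routine unwinding of adjunctions), and your corepresentability argument is a clean way to supply the missing details: both adjunction chains are valid, and the key bookkeeping step --- that a morphism out of $F_{\bld{m}}^{\Spsym{\cR}}(V,L)$ in $\Spsym{\cR}$ covers the identity of $F_{\bld{m}}^{\cS^{\cI}}(L)$ exactly when its adjoint in $\cS_{\cR}$ covers the $\id_{\bld{m}}$-summand inclusion, whose composite with $f(\bld{m})$ is $\widetilde{f}$ --- is handled correctly. Two cosmetic remarks: the fact that each summand of $(F_{\bld{m}}^{\Spsym{\cR}}(V,L))_{\bld{n}}$ has base $L$ follows directly from Definition~\ref{def:fiberwise-smash} (the base of $(V,L)\barsm (S^{\bld{n}-\alpha},*)$ is $L\times *\iso L$), not from~\eqref{eq:barsmiso-zero}, which concerns a different product; and the alternative, equally routine route is a direct levelwise computation identifying both sides at level $\bld{n}$ with $\coprod_{\alpha}\bigl((X(\alpha)\circ\widetilde{f})_!(V,L)\bigr)\barsm S^{\bld{n}-\alpha}$ via~\eqref{eq:free-explicit} and Lemma~\ref{lem:cobase-barsm-map}, at the cost of also checking compatibility of structure maps, which your Yoneda argument avoids.
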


Applying Definition~\ref{def:integral-model-str} to the pseudofunctor~\eqref{eq:SpsymX-pseudofunctor}, the absolute and positive level model structures on $\cS^{\cI}$ and the $\Spsym{X}$ give rise to integral cofibrations, fibrations, and weak equivalences on the Grothendieck construction. 
\begin{proposition}\label{prop:level-Grothendieck-construction}
  These classes of maps form absolute and positive integral level
  model structures on the Grothendieck construction.  Under the
  equivalence with $\Spsym{\cR}$, they correspond to the absolute
  and positive level model structures on $\Spsym{\cR}$. 
\end{proposition}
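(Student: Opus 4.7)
The plan is to apply~\cite[Theorem~3.0.12]{Harpaz-P_Grothendieck-construction} to the pseudofunctor~\eqref{eq:SpsymX-pseudofunctor}, with $\cS^{\cI}$ and each $\Spsym{X}$ carrying either the absolute or the positive level model structure from Proposition~\ref{prop:level-model-on-section}. By Lemma~\ref{lem:base-change-Q-adj-for-SpsymX-level} each base change $(f_!,f^*)$ is a Quillen adjunction with respect to these structures, so what remains is to verify the three proper-relative conditions of~\cite[\S 3]{Harpaz-P_Grothendieck-construction}: that $(f_!,f^*)$ is a Quillen equivalence when $f$ is a weak equivalence, that $f_!$ preserves weak equivalences when $f$ is an acyclic cofibration, and that $f^*$ preserves weak equivalences when $f$ is an acyclic fibration.

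Since both the level model structures on $\cS^{\cI}$ and on each $\Spsym{X}$ detect weak equivalences and (co)fibrations level by level, each of these statements reduces to its counterpart in a single level $\bld{m}$. There the adjunction $(f_!,f^*)$ is, by~\eqref{eq:Quillen-I-cat-for-SpsymX}, the adjunction $(f(\bld{m})_!,f(\bld{m})^*)$ of~\eqref{eq:res-ext-adj-retractive-spaces} between $\cS_{X(\bld{m})}$ and $\cS_{Y(\bld{m})}$, composed with the Quillen autoequivalence $-\barsm S^{\bld{n}-\alpha}$; these twists do not affect the homotopical properties in question. Thus all three conditions are immediate from Lemma~\ref{lem:extension-restriction-R-homotopical}, and the integral level model structures exist in both the absolute and positive variants.

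To compare them with the level model structures on $\Spsym{\cR}$, we use Lemma~\ref{lem:SpsymR-Grothendieck-construction}: a morphism $(E,X)\to(F,Y)$ in $\Spsym{\cR}$ corresponds to a pair of $f\colon X\to Y$ in $\cS^{\cI}$ and $\phi\colon f_!(E,X)\to (F,Y)$ in $\Spsym{Y}$. By~\eqref{eq:base-change-cobase-change}, in level $\bld{m}$ the map $\phi$ identifies with the canonical map from the pushout $E_{\bld{m}}\cup_{X(\bld{m})}Y(\bld{m})$ to $F_{\bld{m}}$ in $\cS_{Y(\bld{m})}$, i.e., with the second datum of the pair appearing in the definition of (co)fibrations of $\cS_{\cR}$ at level $\bld{m}$ in Proposition~\ref{prop:model-str-on-R}. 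Hence the integral (co)fibrations agree with the level (co)fibrations of $\Spsym{\cR}$, and likewise in the positive case when the condition is restricted to $|\bld{m}|\geq 1$.

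The main obstacle is matching the two notions of weak equivalence: an integral weak equivalence requires $f$ to be a level equivalence and asks that $f_!((E,X)^{\cof})\to (F,Y)$ be a level equivalence after cofibrant replacement in $\Spsym{X}$, whereas the level equivalences of $\Spsym{\cR}$ ask for the levelwise total-space maps $E_{\bld{m}}\to F_{\bld{m}}$ to be weak homotopy equivalences. These coincide since a model structure is determined by any two of its three classes and the (co)fibrations have already been identified; alternatively, one checks directly by left properness of $\cS$ and Ken Brown's lemma that the levelwise pushout $E_{\bld{m}}\cup_{X(\bld{m})}Y(\bld{m})\to F_{\bld{m}}$ is a weak equivalence if and only if $E_{\bld{m}}\to F_{\bld{m}}$ is. This completes the identification of the integral level model structures with the level model structures on $\Spsym{\cR}$.
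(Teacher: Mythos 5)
This is correct and essentially the paper's own argument: both proofs check the proper-relative conditions of Harpaz--Prasma by reducing to individual levels (using that cofibrations in these level model structures are levelwise cofibrations) and then invoke Lemma~\ref{lem:extension-restriction-R-homotopical}, and both then identify the two model structures by matching levelwise-detected classes. One correction: the base change $f_!\colon \Spsym{X}\to\Spsym{Y}$ is given in level $\bld{m}$ simply by $f(\bld{m})_!$; the functors $-\barsm S^{\bld{n}-\alpha}$ appearing in~\eqref{eq:Quillen-I-cat-for-SpsymX} are the internal structure maps of a single $\Spsym{X}$, do not enter the base change adjunction, and are in any case not Quillen autoequivalences unstably, so that clause should be deleted. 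Finally, the paper compares the two structures via fibrations and weak equivalences (both genuinely levelwise notions) rather than via cofibrations, which are not characterized levelwise in these projective-type structures; your fallback argument via left properness of $\cS$ for the weak equivalences is the part of your comparison that actually carries the weight, and it is fine.
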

\begin{proof}
  Inspecting the generating cofibrations of $\cS^{\cI}$, we see that a
  cofibration in the absolute level model structure on $\cS^{\cI}$ is
  level-wise a cofibration in $\cS$. A similar result holds for the
  absolute level model structure on $\Spsym{X}$. It follows that the
  analogue of Lemma~\ref{lem:extension-restriction-R-homotopical}
  holds for the adjunction \eqref{eq:base-change-adj-for-SpsymX}.
  Hence~\cite[Theorem 3.0.12]{Harpaz-P_Grothendieck-construction}
  applies and shows the existence of the integral model structure. It
  matches with the absolute level model structure on $\Spsym{\cR}$
  since the fibrations and weak equivalences are the same. The case of
  the positive model structures is analogous.
\end{proof}
This proposition, the definition of the integral model structure, and
the identification of $\Spsym{X}$ with the fiber of $\pi_b\colon
\Spsym{\cR}\to \cS^{\cI}$ over $X$ imply:
\begin{corollary}\label{cor:compatible-level-model-str}
  A map in the absolute (resp.\ positive) level model structure on
  $\Spsym{X}$ is a cofibration, fibration, or weak equivalence if and
  only if it is so when viewed as a map in the absolute
  (resp.\ positive) model structure on $\Spsym{\cR}$.\qed
\end{corollary}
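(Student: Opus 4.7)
The plan is to deduce this statement as a direct consequence of Proposition~\ref{prop:level-Grothendieck-construction} applied to morphisms in the fiber over $X$. Recall that by Lemma~\ref{lem:SpsymR-Grothendieck-construction} the category $\Spsym{\cR}$ is equivalent to the Grothendieck construction of $Y\mapsto \Spsym{Y}$, and under this equivalence $\Spsym{X}$ is exactly the subcategory of objects living over $X$ and morphisms whose underlying map on base $\cI$-spaces is $\id_X$.

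First I would unwind what the integral cofibrations, fibrations, and weak equivalences of Definition~\ref{def:integral-model-str} specialize to for a morphism $(U \to V)$ in $\Spsym{X}$, i.e.\ one covering $\id_X$. Since $\id_X$ is simultaneously a cofibration, a fibration, and a weak equivalence in both the absolute and positive level model structures on $\cS^{\cI}$, and since $(\id_X)_!$ and $(\id_X)^*$ are naturally isomorphic to the identity functor on $\Spsym{X}$, the three integral conditions collapse respectively to: (i) $U \to V$ is a cofibration in $\Spsym{X}$; (ii) $U \to V$ is a fibration in $\Spsym{X}$; (iii) for some (equivalently, any) cofibrant replacement $U^{\mathrm{cof}}\to U$, the composite $U^{\mathrm{cof}}\to U \to V$ is a weak equivalence in $\Spsym{X}$, which by two-out-of-three is equivalent to $U \to V$ itself being a weak equivalence.

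Invoking Proposition~\ref{prop:level-Grothendieck-construction} then identifies these intrinsic conditions in $\Spsym{X}$ with the requirement that $U\to V$, viewed as a map in $\Spsym{\cR}$, be a cofibration, fibration, or weak equivalence in the corresponding absolute or positive level model structure on $\Spsym{\cR}$. The argument is identical in the absolute and positive cases, so no additional work is needed. There is no real obstacle here, the statement is essentially a translation exercise between the definition of the level model structure on a section category and the definition of an integral model structure, once one has recorded that morphisms in $\Spsym{X}$ are precisely those lying over the identity of $X$ in the Grothendieck construction.
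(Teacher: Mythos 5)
Your proposal is correct and follows exactly the route the paper intends: the corollary is stated as an immediate consequence of Proposition~\ref{prop:level-Grothendieck-construction}, the definition of the integral model structure, and the identification of $\Spsym{X}$ with the fiber of $\pi_b$ over $X$, and your unwinding of the three integral conditions for a morphism covering $\id_X$ (using that $(\id_X)_!$ and $(\id_X)^*$ are naturally isomorphic to the identity and two-out-of-three for the weak equivalence case) is precisely the translation exercise the paper leaves to the reader.
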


\section{The convolution smash product}\label{sec:smash}
The reason for using $\cI$ as an indexing category in the definition of a Quillen $\cI$-category 
is that this allows us to define symmetric monoidal products on section categories. 

In the
case of $\cI$-spaces mentioned in Example~\ref{ex:pseudo}(i), the monoidal
product $X \boxtimes Y$ of two functors $X,Y \colon \cI \to \cS$ is
the left Kan extension of the $\cI\times \cI$-diagram $(\bld{m},\bld{n}) \mapsto X(\bld{m})\times Y(\bld{n})$ along the concatenation $-\concat-\colon \cI \times \cI \to \cI$. This is an example of a Day convolution product, and more explicitly we have 
\[(X\boxtimes Y)(\bld{p}) = \colim_{\bld{m}\concat\bld{n} \to
  \bld{p}} X(\bld{m}) \times Y(\bld{n}) \] with the colimit taken over
the over category $-\concat-\downarrow \bld{p}$. The
$\boxtimes$-product provides a symmetric monoidal product on
$\cS^{\cI}$ with unit $\ucI = F^{\cS^{\cI}}_{\bld{0}}(*) \iso \cI(\bld{0},-)\iso \const_{\cI}(*)$.
\begin{definition}\label{def:commutative-I-space-monoid}
A \emph{commutative $\cI$-space monoid} is a commutative monoid in $(\cS^{\cI},\boxtimes, \ucI)$.
\end{definition}
Equivalently, a commutative $\cI$-space monoid is a lax symmetric
monoidal functor
$(\cI,\concat,\bld{0}) \to (\cS^{\cI},\boxtimes, \ucI)$. Every
$E_{\infty}$ space can be represented by a commutative $\cI$-space
monoid in the sense explained in~\cite[Corollary~3.7]{Sagave-S_diagram}.

In the case of symmetric spectra mentioned in
Example~\ref{ex:pseudo}(ii), the monoidal product is the well-known
smash-product of symmetric spectra. In this description of symmetric
spectra employing $\cI$, the smash product $E \sm F$ of $E,F$ is in
level $p$ given by the colimit
\[ (E\sm F)_{p} =
\colim_{\alpha\colon\bld{m}\concat\bld{n}\to\bld{p}}E_m\sm F_{n}\sm
S^{\bld{p}-\alpha} \] 
taken over the over category $-\concat - \downarrow \bld{p}$. The maps in the colimit system arise from the structure maps of $E$ and $F$, the isomorphism~\eqref{eq:reindexing-basic}, and the isomorphism 
\begin{equation}\label{eq:reindexing-product}
S^{(\bld{n_1}\concat\bld{n_2})-(\alpha_1\concat\alpha_2)} \iso S^{\bld{n_1}-\alpha_1} \sm S^{ \bld{n_2}-\alpha_2}
\end{equation} that is induced by the canonical bijection $(\bld{n_1}-\alpha_1)\amalg(\bld{n_2}-\alpha_2)\to (\bld{n_1}\concat\bld{n_2})-(\alpha_1\concat \alpha_2)$ associated with a pair of morphisms $\alpha_1\colon\bld{m_1}\to\bld{n_1}$ and $\alpha_2\colon\bld{m_2}\to\bld{n_2}$ in $\cI$. The structure maps of $E\sm F$ also arise from~\eqref{eq:reindexing-basic}.
\subsection{The monoidal structure on symmetric spectra in retractive spaces}
In analogy with the smash product in $\Spsym{}$ and the $\boxtimes$-product of $\cI$-spaces, there is a symmetric monoidal product \[- \barsm - \colon \SpsymR \times \SpsymR \to \SpsymR \] 
given in level $p$ by the colimit
\[ ((E,X) \barsm (F,Y))_{p} =
\colim_{\alpha\colon\bld{m}\concat\bld{n}\to\bld{p}}(E_m,X_m)\barsm
(F_{n},X_{n}) \barsm S^{\bld{p}-\alpha}\]
in $\cS_{\cR}$ taken over the category
$-\concat - \downarrow \bld{p}$. The maps in the colimit system and
the structure maps of $(E,X) \barsm (F,Y)$ are defined as for
symmetric spectra.  We also note that there are natural isomorphisms
\begin{equation}\label{eq:barsm-of-free}
F_{\bld{m}}^{\Spsym{\cR}}(U,K) \barsm F_{\bld{n}}^{\Spsym{\cR}}(V,L) \iso F_{\bld{m}\concat \bld{n}}^{\Spsym{\cR}}((U,K)\barsm (V,L))  \ .
\end{equation}

\begin{proposition}\label{prop:barsm-pushout-product-level}
  The $\barsm$-product defines a closed symmetric monoidal structure
  on $\SpsymR$ with unit $\bS$ that satisfies the pushout product
  axiom with respect to the absolute and positive level model
  structures.
\end{proposition}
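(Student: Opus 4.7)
The proof proceeds in three steps: exhibit the symmetric monoidal structure, check closedness, and verify the pushout product axiom by reduction to generators.

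For the symmetric monoidal structure, the $\barsm$-product on $\Spsym{\cR}$ is a Day convolution built from the concatenation on $\cI$, the fiberwise smash product on $\cS_{\cR}$, and the re-indexing isomorphisms \eqref{eq:reindexing-basic}, \eqref{eq:reindexing-composite}, and \eqref{eq:reindexing-product}. The associativity and symmetry constraints are inherited directly from those of $(\cI,\concat,\bld{0})$ and $(\cS_{\cR},\barsm,S^0)$ (Proposition~\ref{prop:R-closed-sym-monoidal}) by the same computation as for symmetric spectra of simplicial sets; the unit is $\bS = F^{\Spsym{\cR}}_{\bld{0}}(S^0)$, with the unit isomorphism $\bS \barsm (E,X)\iso (E,X)$ obtained from $S^{\bld 0}\barsm -\iso \id$ and the cofinality of $\id_{\bld p}\colon \bld p\to \bld p$ in $\bld 0\concat-\downarrow\bld p$.

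For the closed structure, I construct an internal Hom $\underline{\Hom}_{\cR}((F,Y),(G,Z))$ in $\Spsym{\cR}$ by the usual end formula: in level $\bld p$ it is
\[
\underline{\Hom}_{\cR}((F,Y),(G,Z))_{\bld p} \;=\; \lim_{(\bld m,\alpha)}\Hom_{\cR}\bigl((F,Y)_{\bld m}, \Omega^{\bld p - \alpha}(G,Z)_{\bld n}\bigr),
\]
with the limit indexed over the appropriate twisted arrow category, where $\Omega^P(-) = \Hom_\cR((S^P,*),-)$ and $\Hom_{\cR}$ is as in \eqref{eq:cotensor}. The adjunction isomorphism follows from the adjunction in $\cS_{\cR}$ and the universal property of the Day convolution; alternatively it suffices to check the adjunction on free objects $F^{\Spsym{\cR}}_{\bld m}(U,K)$, where it reduces via \eqref{eq:barsm-of-free} and the adjunction~\eqref{eq:free-evaluation-adjunction-pseudofunctor} to the adjunction~\eqref{eq:R-closed-sym-monoidal} in $\cS_{\cR}$.

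For the pushout product axiom, by \cite[Lemma~3.5(i)]{Schwede-S_algebras_modules} (as already used in the proof of Proposition~\ref{prop:pushout-product-for-cScR}) it is enough to check it on the generating (acyclic) cofibrations produced in Proposition~\ref{prop:level-model-on-section}. These are of the form $F^{\Spsym{\cR}}_{\bld m}(i)$ for $i$ a generating (acyclic) cofibration of $\cS_{\cR}$ and $\bld m$ ranging over $\cI$ (absolute version) or $\cI_{\geq 1}$ (positive version). The key input is the natural isomorphism \eqref{eq:barsm-of-free}, which gives
\[
F^{\Spsym{\cR}}_{\bld m}(i)\,\square\,F^{\Spsym{\cR}}_{\bld n}(j) \;\iso\; F^{\Spsym{\cR}}_{\bld m\concat \bld n}(i\,\square\,j),
\]
where $\square$ denotes the pushout product. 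Since $F^{\Spsym{\cR}}_{\bld m\concat\bld n}$ is left Quillen for the relevant level model structure by Proposition~\ref{prop:level-model-on-section}, and $i\,\square\, j$ is a (acyclic) cofibration in $\cS_{\cR}$ by the pushout product axiom for $\cS_{\cR}$ (Proposition~\ref{prop:pushout-product-for-cScR}), we conclude. In the positive case we use additionally that $\bld m,\bld n\in \cI_{\geq 1}$ implies $|\bld m\concat\bld n|\geq 1$, so that the resulting free spectrum is indexed by a positive dimension.

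The only step that requires genuine care is the closedness, since the formula for the internal Hom involves both the retractive-space cotensor $\Hom_{\cR}$ and a twisted end; I expect this to be the main place where a detailed verification is needed, but the calculation is parallel to the standard one for symmetric spectra, with $\Hom_{\cR}$ and $\barsm$ replacing the based space Hom and smash product throughout.
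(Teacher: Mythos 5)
Your argument for the pushout product axiom is exactly the paper's: reduce to generating (acyclic) cofibrations, use the isomorphism \eqref{eq:barsm-of-free} to identify $F^{\Spsym{\cR}}_{\bld m}(i)\,\square\,F^{\Spsym{\cR}}_{\bld n}(j)$ with $F^{\Spsym{\cR}}_{\bld m\concat\bld n}(i\,\square\,j)$, and invoke Proposition~\ref{prop:pushout-product-for-cScR}; the observation that $|\bld m\concat\bld n|\geq 1$ handles the positive case. For closedness the paper simply appeals to \cite[\S 7]{Hovey_symmetric-general} via the identification of $\SpsymR$ with symmetric spectrum objects in the closed category $(\cS_{\cR},\barsm,S^0)$, which is the construction you are unpacking; your displayed end formula is the right idea but its indexing is garbled (the roles of $\bld m$, $\bld n$, $\alpha$ and the exponent $\bld p-\alpha$ do not match up — one wants something like a limit over $\alpha\colon\bld m\to\bld n$ of $\Hom_{\cR}\bigl((F,Y)_{\bld m}\barsm S^{\bld n-\alpha},(G,Z)_{\bld n\concat\bld p}\bigr)$), so as written it would need correcting before the adjunction verification goes through.
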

\begin{proof}
  Since the pushout product axiom can be checked on the generating
  (acyclic) cofibrations, it follows from the
  isomorphisms~\eqref{eq:barsm-of-free} and the pushout product axiom
  for $\cS_{\cR}$ established in
  Proposition~\ref{prop:pushout-product-for-cScR}. The
  $\barsm$-product on $\SpsymR$ is closed because
  $(\cS_{\cR},\barsm, S^0)$ is closed by
  Proposition~\ref{prop:R-closed-sym-monoidal} (compare~\cite[\S
  7]{Hovey_symmetric-general}).
\end{proof}

We write $E \barsm F$ for the total space of $(E,X)
\barsm (F,Y)$. Observing that its base $\cI$-space can be
identified with $X\boxtimes Y$, we have 
\[ (E,X) \barsm (F,Y) = (E\barsm F, X\boxtimes Y). \]

\begin{lemma}\label{lem:cobase-change-spym-lax-monoidal}
If $(E,X)$ and $(F,Y)$ are objects in $\SpsymR$ and $f \colon X \to X'$ and ${g\colon Y \to Y'}$ are morphisms in $\cS^{\cI}$, then there is an isomorphism
\[ f_!(E,X)\barsm g_!(F,Y)  \xrightarrow{\iso} (f\boxtimes g)_!((E,X)\barsm (F,Y)). \]
It is natural with respect to the coherence isomorphisms $(f'f)_! \iso f'_! f_!$ for composable maps of $\cI$-spaces $f$ and $f'$. 
\end{lemma}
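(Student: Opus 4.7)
The plan is to reduce everything to the level-wise analogue given by Lemma~\ref{lem:cobase-barsm-map}. Concretely, I would compute both sides in level $\bld{p}$ and use that $f_!$ applied to an object in $\Spsym{X}$ is computed level-wise as $f(\bld{p})_!$ in $\cS_{X'(\bld{p})}$ (this is what is produced by the pseudo-natural transformation of Quillen $\cI$-categories fed into Lemma~\ref{lem:adjunction-on-section-cat}). Since $\barsm$ in $\Spsym{\cR}$ is defined by the colimit formula
\[ \bigl((E,X)\barsm (F,Y)\bigr)_{\bld{p}} = \colim_{\alpha\colon \bld{m}\concat\bld{n}\to \bld{p}}(E_{\bld m},X(\bld m))\barsm (F_{\bld n},Y(\bld n))\barsm S^{\bld{p}-\alpha}, \]
applying this formula to $f_!(E,X)\barsm g_!(F,Y)$ yields exactly the same colimit except that each term $(E_{\bld m},X(\bld m))$ is replaced by $f(\bld m)_!(E_{\bld m},X(\bld m))$ and similarly for $F$.

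Next I would apply Lemma~\ref{lem:cobase-barsm-map} twice inside each term of the colimit: once to pull $f(\bld m)_!$ and $g(\bld n)_!$ out past the first $\barsm$, producing $(f(\bld m)\times g(\bld n))_!$, and a second time with the trivial cobase change $\id_{*}$ on the base of $(S^{\bld{p}-\alpha},*)$ to pull the same $(f(\bld m)\times g(\bld n))_!$ out past the smash with $S^{\bld{p}-\alpha}$. This identifies each term of the colimit with $(f(\bld m)\times g(\bld n))_!\bigl((E_{\bld m},X(\bld m))\barsm (F_{\bld n},Y(\bld n))\barsm S^{\bld p-\alpha}\bigr)$.

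The final step is to commute the colimit over $\alpha$ past the cobase change. Using Lemma~\ref{lem:iota-b-left-right-adjoint} to compute colimits in $\cS_{\cR}$ component-wise, and the fact that cobase change in $\cS_{\cR}$ is a left adjoint (see \eqref{eq:res-ext-adj-retractive-spaces}), one gets
\[ \colim_{\alpha}(f(\bld m)\times g(\bld n))_!(-)\;\iso\;\Bigl(\colim_{\alpha} f(\bld m)\times g(\bld n)\Bigr)_!\bigl(\colim_{\alpha}(-)\bigr), \]
and the outer cobase change on the right is precisely $(f\boxtimes g)(\bld p)_!$ by the defining formula for $\boxtimes$. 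This produces the claimed level-$\bld p$ isomorphism, and the constructions are clearly natural in the structure maps indexed by morphisms of $\cI$, so the level-wise isomorphisms assemble into an isomorphism in $\Spsym{\cR}$.

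Naturality with respect to composition $(f'f)_! \iso f'_!f_!$ follows by unpacking the coherence isomorphisms: at each level, the corresponding statement for retractive spaces amounts to iterating Lemma~\ref{lem:cobase-barsm-map} in two different orders, and these agree because colimits commute and the pushouts defining both $\cdot_!$ in $\cS_{\cR}$ and $\barsm$ are related by the universal property. The only mildly subtle point is bookkeeping of the compatibility of the smash-with-$S^{\bld{p}-\alpha}$ step with the colimit-and-cobase-change exchange; this is where I expect to spend the most diagram chasing, though it requires no new ideas beyond closedness of $\barsm$ (Proposition~\ref{prop:barsm-pushout-product-level}) and the preservation of colimits in $\cS_{\cR}$ by $\iota_b$ and $\pi_b$.
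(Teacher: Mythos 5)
Your proposal is correct and matches the paper's argument: both compute level $\bld p$ of the $\barsm$-product as the colimit over $-\concat-\downarrow\bld p$, apply Lemma~\ref{lem:cobase-barsm-map} term by term, and commute that colimit with the pushout defining the cobase change to produce $(f\boxtimes g)(\bld p)_!$. The only difference is the direction (you start from the left-hand side, the paper from the right-hand side), which is immaterial.
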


\begin{proof}
  Commuting the colimit over $-\concat- \downarrow \bld{p}$ with the
  pushout computing the total space identifies the total space in
  level $p$ of the right hand expression with
  $\colim_{\alpha\colon \bld{m}\concat\bld{n}\to \bld{p}}
  \left(f(\bld{m})\times g(\bld{n})\right)_!(E_m\barsm F_m \barsm
  S^{\bld{p}-\alpha})$.  Composing it with the colimit over $\alpha$ in
  $-\concat- \downarrow \bld{p}$ of the natural isomorphisms
\begin{multline*} f(\bld{m})_! (E_m,X(\bld{m})) \barsm g(\bld{n})_! (F_n,Y(\bld{n}))\barsm S^{\bld{p}-\alpha}\\ \xrightarrow{\iso} \left(f(\bld{m})\times g(\bld{n})\right)_! \left( (E_m,X(\bld{m})) \barsm (F_n,F(\bld{n})) \barsm S^{\bld{p}-\alpha}\right) 
\end{multline*}
provided by Lemma~\ref{lem:cobase-barsm-map} gives the desired isomorphism.
\end{proof}
As a consequence, we note that given maps of $\cI$-spaces $f\colon X' \to X$ and $g\colon Y' \to Y$ as well as objects $(E,X)$ in $\Spsym{X}$ and $(F,Y)$ in $\Spsym{Y}$, there is a chain of maps 
\begin{multline}\label{eq:upper-star-lax-monoidal} 
f^*(E,X) \barsm g^*(F,Y) \to (f\boxtimes g)^*(f\boxtimes g)_!( f^*(E,X) \barsm g^*(F,Y) )\\ \xrightarrow{\iso} (f\boxtimes g)^*(f_! f^*(E,X) \barsm g_! g^*(F,Y) ) \to (f\boxtimes g)^*((E,X) \barsm (F,Y)) 
\end{multline}
induced by the adjunction unit of $({(f\boxtimes g)_!},(f\boxtimes g)^*)$, the isomorphism of Lemma~\ref{lem:cobase-change-spym-lax-monoidal}, and the adjunction counits of $(f_!,f^*)$ and $(g_!,g^*)$. We will show in Proposition~\ref{prop:monoidal_pull} that this morphism descends to an isomorphism between the derived functors in the homotopy category. 

The category $\cS^{\cI}_{\cR}$ of $\cI$-diagrams in $\cS_{\cR}$ has a Day convolution product induced by the $\barsm$-product on $\cS_{\cR}$ and the concatenation in $\cI$. Analogously, the cartesian product on $\mathrm{Ar}(\cS)$ induces a Day convolution product on  $(\mathrm{Ar}(\cS))^{\cI}$ that coincides with the objectwise $\boxtimes$-product in $\cS^{\cI}$ under the identification  $(\mathrm{Ar}(\cS))^{\cI} \iso \mathrm{Ar}(\cS^{\cI})$. In the next diagram, the first adjunction is induced by the corresponding space level adjunction from Construction~\ref{constr:spaces-retractive-spaces} and the second is from Construction~\ref{const:various-adjunctions}.
 \begin{equation}\label{eq:SIar-OmegaIar-adjunction}
\xymatrix@-1pc{\mathrm{Ar}(\cS^{\cI})  \ar@<.4ex>[rrr]^-{\iota_{\mathrm{ar}}} &&& \cS^{\cI}_{\cR} \ar@<.4ex>[rrr]^-{\bS^{\cI}_{\cR}} \ar@<.4ex>[lll]^-{\pi_{\mathrm{ar}}} &&& \SpsymR \ar@<.4ex>[lll]^-{\Omega^{\cI}_{\cR}} 
}
\end{equation}
\begin{lemma}\label{lem:SIar-OmegaIar-adjunction-monoidal}
The left adjoint functors $\iota_{\mathrm{ar}}$ and $\bS^{\cI}_{\cR}$ in~\eqref{eq:SIar-OmegaIar-adjunction} are strong symmetric monoidal. Hence so is their composite $\bS^{\cI}_{\mathrm{ar}} = \bS^{\cI}_{\cR} \circ \iota_{\mathrm{ar}} $, and the right adjoints $\pi_{\mathrm{ar}} $, $\Omega^{\cI}_{\cR} $, and $\Omega^{\cI}_{\mathrm{ar}} =  \pi_{\mathrm{ar}}\circ  \Omega^{\cI}_{\cR}$ are lax symmetric monoidal. \qed
\end{lemma}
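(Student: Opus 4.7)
The approach is to verify that each of the two left adjoint functors is strong symmetric monoidal by an explicit identification of the monoidal products, and then to invoke the general doctrinal adjunction principle that a right adjoint to a strong monoidal functor carries a canonical lax monoidal structure. In both cases the compatibility with associators, unitors, and symmetries reduces to tracing through natural isomorphisms already established above.

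For the first functor $\iota_{\mathrm{ar}}$ I would begin on the level of plain arrows. On underlying categories, $\iota_{\mathrm{ar}} \colon \mathrm{Ar}(\cS) \to \cS_{\cR}$ sends $(f\colon A \to B)$ to $(B \amalg A, B)$, and the isomorphism~\eqref{eq:barsmiso-three} exhibits a natural isomorphism $\iota_{\mathrm{ar}}(f) \barsm \iota_{\mathrm{ar}}(f') \iso \iota_{\mathrm{ar}}(f \times f')$; moreover $\iota_{\mathrm{ar}}(\id_{*}) = (S^0, *)$ matches the unit $S^0$ of $(\cS_{\cR}, \barsm)$. The coherence axioms follow from the universal property of coproducts and the symmetry of $\times$ and $\barsm$. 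Once $\iota_{\mathrm{ar}}$ is known to be strong symmetric monoidal on $\cS$-level, the lift to $\cI$-diagrams is automatic: under the identification $\mathrm{Ar}(\cS^{\cI}) \iso (\mathrm{Ar}(\cS))^{\cI}$, both sides are Day convolutions along $\cI$ with $\concat$, and the pointwise application of a strong symmetric monoidal functor induces a strong symmetric monoidal functor on Day convolutions.

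For $\bS^{\cI}_{\cR} \colon \cS^{\cI}_{\cR} \to \Spsym{\cR}$ the main computation is to compare the two sides of the would-be monoidality isomorphism in level $\bld p$. On the one hand,
\[ \bS^{\cI}_{\cR}(X \barsm Y)_{\bld p} = (X \barsm Y)(\bld p) \barsm S^{\bld p} \iso \Bigl(\colimsubscript_{\bld m \concat \bld n \to \bld p} X(\bld m) \barsm Y(\bld n)\Bigr) \barsm S^{\bld p}, \]
and on the other hand,
\[ (\bS^{\cI}_{\cR}(X) \barsm \bS^{\cI}_{\cR}(Y))_{\bld p} = \colimsubscript_{\alpha\colon\bld m \concat \bld n \to \bld p} X(\bld m) \barsm S^{\bld m} \barsm Y(\bld n) \barsm S^{\bld n} \barsm S^{\bld p - \alpha}. \]
The isomorphisms~\eqref{eq:reindexing-basic} and~\eqref{eq:reindexing-product} provide $S^{\bld m} \barsm S^{\bld n} \barsm S^{\bld p - \alpha} \iso S^{\bld p}$, and since $-\barsm S^{\bld p}$ is a left adjoint on $\cS_{\cR}$ (by Proposition~\ref{prop:R-closed-sym-monoidal}) and thus commutes with colimits, the two sides agree naturally. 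The unit $F_{\bld 0}^{\cS_{\cR}^{\cI}}(S^0)$ of $\cS^{\cI}_{\cR}$ is sent to the sphere spectrum $\bS$, and the associativity, unit, and symmetry coherences follow by inspection of the respective Day convolution formulas, using that each $\barsm$ in sight distributes over colimits.

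The composite $\bS^{\cI}_{\mathrm{ar}} = \bS^{\cI}_{\cR} \circ \iota_{\mathrm{ar}}$ is then strong symmetric monoidal as a composite of two such functors. Finally, for any adjunction $L \dashv R$ in which the left adjoint is strong symmetric monoidal, the right adjoint inherits a canonical lax symmetric monoidal structure whose constraint maps $R(Y) \barsm R(Y') \to R(Y \barsm Y')$ are adjoint to the composites of the counit with the inverse monoidal constraint of $L$; applying this to each of the three adjunctions yields the lax symmetric monoidal structure on $\pi_{\mathrm{ar}}$, $\Omega^{\cI}_{\cR}$, and their composite $\Omega^{\cI}_{\mathrm{ar}}$. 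The only delicate step is the level-wise colimit manipulation for $\bS^{\cI}_{\cR}$, and it is entirely routine given the isomorphisms~\eqref{eq:reindexing-basic} and~\eqref{eq:reindexing-product} and the closedness of $(\cS_{\cR}, \barsm)$.
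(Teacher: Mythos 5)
Your argument is correct and is precisely the routine verification the paper omits (the lemma is stated with a \qed and no proof): the isomorphism~\eqref{eq:barsmiso-three} handles $\iota_{\mathrm{ar}}$, the reindexing isomorphisms~\eqref{eq:reindexing-basic} and~\eqref{eq:reindexing-product} together with cocontinuity of $-\barsm S^{\bld p}$ handle $\bS^{\cI}_{\cR}$, and doctrinal adjunction gives the lax monoidal structures on the right adjoints. The only point to keep in mind, which your phrase ``by inspection'' covers, is that the levelwise identifications for $\bS^{\cI}_{\cR}$ must be checked to commute with the generalized structure maps (this uses naturality of the symmetry of $\barsm$ to move $S^{\bld m}$ past $Y(\bld n)$), but that is exactly the kind of coherence check the paper treats as immediate.
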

\subsection{The monoidal structure on $M$-relative symmetric spectra}
Let $X$ and $Y$ be $\cI$-spaces. Via the identification of $\Spsym{X}, \Spsym{Y}$, and $\Spsym{X\boxtimes Y}$ with subcategories of $\Spsym{\cR}$,  the $\barsm$-product on $\Spsym{\cR}$ induces an \emph{external} product 
\begin{equation}\label{eq:spsymrel-external-product}
-\barsm-\colon \Spsym{X}\times \Spsym{Y} \to \Spsym{X\boxtimes Y}.
\end{equation}
If $M$ is a commutative $\cI$-space monoid, then this external product
and the multiplication $\mu \colon M\boxtimes M \to M$ of $M$ induce a symmetric monoidal convolution product
\begin{equation}\label{eq:spsymrel-M-product} \Spsym{M}\times \Spsym{M} \to \Spsym{M\boxtimes M}  \xrightarrow{\mu_!} \Spsym{M}\ . 
\end{equation}
Let $\iota\colon *\to M$ be the unit and write $\bS_M=\iota_!(\bS)$, where $\bS = (\bS,*)$ is the monoidal unit in $\Spsym{}\!\!=\Spsym{*}$. It follows from Lemma~\ref{lem:cobase-change-spym-lax-monoidal} that $\bS_M$ is the monoidal unit for~$\Spsym{M}$.
\begin{proposition} This symmetric monoidal
  product satisfies the pushout product
  axiom with respect to the absolute level model structure on $\Spsym{M}$.
\end{proposition}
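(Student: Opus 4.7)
The plan is to reduce the pushout product axiom for the convolution product $\odot = \mu_! \circ \barsm$ on $\Spsym{M}$ to the pushout product axiom for the external fiberwise smash product $\barsm$ on $\Spsym{\cR}$ established in Proposition~\ref{prop:barsm-pushout-product-level}, combined with the fact that $\mu_!\colon \Spsym{M\boxtimes M} \to \Spsym{M}$ is left Quillen by Lemma~\ref{lem:base-change-Q-adj-for-SpsymX-level}.

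First, for any maps $f\colon A\to B$ and $g\colon C\to D$ in $\Spsym{M}$, I would observe that the convolution pushout product factors as $f\square_{\odot}g = \mu_!(f\square_{\barsm}g)$, since the left adjoint $\mu_!$ preserves the pushout defining $\square$. To make sense of the right hand side I need that $f\square_{\barsm}g$ lives in $\Spsym{M\boxtimes M}$. The external product already lands in this category by \eqref{eq:spsymrel-external-product}, so the only issue is whether the intermediate pushout $(B\barsm C)\cup_{A\barsm C}(A\barsm D)$, computed in $\Spsym{\cR}$, still has base $\cI$-space $M\boxtimes M$. This is automatic: $\pi_b$ is both left and right adjoint to $\bS^{\cI}_b$ by Lemma~\ref{lem:iota-b-left-right-adjoint-I}, hence preserves colimits, and the pushout of the constant diagram $M\boxtimes M \leftarrow M\boxtimes M \to M\boxtimes M$ in $\cS^{\cI}$ is just $M\boxtimes M$.

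Next, I would transport the cofibration conditions through Corollary~\ref{cor:compatible-level-model-str}. Cofibrations $f,g$ in $\Spsym{M}$ are the same as cofibrations in $\Spsym{\cR}$, and Proposition~\ref{prop:barsm-pushout-product-level} then produces $f\square_{\barsm}g$ as a cofibration in $\Spsym{\cR}$, acyclic if either $f$ or $g$ is. Applying Corollary~\ref{cor:compatible-level-model-str} in the reverse direction translates this back into a (resp.\ acyclic) cofibration in $\Spsym{M\boxtimes M}$. Finally, $\mu_!$ being left Quillen preserves both classes, so $\mu_!(f\square_{\barsm}g) = f\square_{\odot}g$ has the required property in $\Spsym{M}$.

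The main obstacle, such as it is, consists in the careful bookkeeping that pushouts computed in a fiber $\Spsym{X}$ of $\pi_b$ agree with the pushouts computed in the total category $\Spsym{\cR}$, so that the external pushout product genuinely lives in the correct intermediate category $\Spsym{M\boxtimes M}$ before $\mu_!$ is applied; but as indicated this is handled by the double adjointness of $\pi_b$. Beyond that, the proof is a one-line composition of already-established Quillen functoriality of the ingredients, following exactly the template used for convolution products on Day convolution categories.
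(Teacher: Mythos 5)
Your argument is correct and follows essentially the same route as the paper's proof, which likewise reduces the claim to the pushout product axiom for $\barsm$ on $\Spsym{\cR}$ (Proposition~\ref{prop:barsm-pushout-product-level}) together with $\mu_!$ being left Quillen (Lemma~\ref{lem:base-change-Q-adj-for-SpsymX-level}). Your additional bookkeeping about pushouts in the fiber agreeing with those in $\Spsym{\cR}$ via Lemma~\ref{lem:iota-b-left-right-adjoint-I} and Corollary~\ref{cor:compatible-level-model-str} is exactly the implicit content of the paper's one-line argument.
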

\begin{proof}
  This  follows from the pushout product axiom for
  $\SpsymR$ and the fact that $\mu_!$ preserves
  cofibrations and acyclic cofibrations by
  Lemma~\ref{lem:base-change-Q-adj-for-SpsymX-level}.
\end{proof}
In a similar fashion, the category $\cS^{\cI}_M$ of $\cI$-spaces over
and under $M$ inherits a symmetric monoidal product from $\cS^{\cI}_{\cR}$. For later use we note the following
compatibility.
\begin{lemma}\label{lem:bSIt-symm-mon}
  The functor $\bS^{\cI}_{\cR}\colon \cS^{\cI}_M \to \Spsym{M}$ is strong
  symmetric monoidal. \qed
\end{lemma}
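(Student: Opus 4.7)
The plan is to reduce the claim to Lemma~\ref{lem:SIar-OmegaIar-adjunction-monoidal}, which already establishes strong symmetric monoidality of $\bS^{\cI}_{\cR}\colon \cS^{\cI}_{\cR}\to\SpsymR$ with respect to the \emph{external} products, combined with the fact that $\bS^{\cI}_{\cR}$ intertwines pushforward along maps of $\cI$-spaces.

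First I would verify that $\bS^{\cI}_{\cR}$ does restrict to a functor $\cS^{\cI}_M \to \Spsym{M}$. Unwinding the formula $\bS^{\cI}_{\cR}(Z,X)_m = (Z(\bld m),X(\bld m))\barsm S^{\bld m}$, one sees (using Lemma~\ref{lem:fiberwise-smash-pullback} together with the canonical identification $X(\bld m)\times *\iso X(\bld m)$) that its underlying base $\cI$-space is $X$; hence objects over and under $M$ are sent to $M$-relative symmetric spectra, and the map $M\to\pi_b\bS^{\cI}_{\cR}(Z,M)$ induced by the section of $(Z,M)$ matches the identity on $M$.

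The main observation is that the monoidal structures on both sides are assembled in exactly the same way. By~\eqref{eq:spsymrel-M-product} and its evident $\cI$-space analogue, both products are defined as the composite of the \emph{external} $\barsm$-product (landing in the fiber over $M\boxtimes M$) followed by the pushforward $\mu_!$ along the multiplication of $M$. Given the external strong monoidality from Lemma~\ref{lem:SIar-OmegaIar-adjunction-monoidal}, it therefore suffices to produce a natural isomorphism $\bS^{\cI}_{\cR}\circ \mu_! \iso \mu_!\circ \bS^{\cI}_{\cR}$ compatible with the coherence data. For this I would invoke~\eqref{eq:base-change-cobase-change}, which presents $\mu_!$ on spectra as a pushout against $\bS^{\cI}_b[\mu]$; the completely analogous formula in $\cS^{\cI}_{\cR}$ presents $\mu_!$ as a pushout against $\iota_b(\mu)$. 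Since $\bS^{\cI}_{\cR}$ is a left adjoint by Construction~\ref{const:various-adjunctions} it preserves this pushout, and the identity $\bS^{\cI}_{\cR}\circ\iota_b = \bS^{\cI}_b$ shows that the base pieces of the two pushouts correspond under $\bS^{\cI}_{\cR}$.

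The main thing left to check is coherence of these isomorphisms with the associator, unit, and symmetry of the two convolution products. This is formal from the universal property of pushouts together with the naturality of the monoidality isomorphism in Lemma~\ref{lem:SIar-OmegaIar-adjunction-monoidal} and the compatibility statement for compositions in Lemma~\ref{lem:cobase-change-spym-lax-monoidal}. For the unit, the monoidal unit of $\cS^{\cI}_M$ is $\iota_!$ of the $\barsm$-unit in $\cS^{\cI}_{\cR}$, and $\bS^{\cI}_{\cR}$ sends the latter to $\bS$, so applying $\iota_!$ in $\SpsymR$ yields $\bS_M$ as required.
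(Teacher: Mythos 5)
Your proof is correct and is essentially the verification the paper treats as immediate: the lemma is stated without proof, and your route --- external strong monoidality from Lemma~\ref{lem:SIar-OmegaIar-adjunction-monoidal} combined with the commutation $\bS^{\cI}_{\cR}\circ\mu_!\iso\mu_!\circ\bS^{\cI}_{\cR}$ obtained from the pushout description~\eqref{eq:base-change-cobase-change}, left-adjointness of $\bS^{\cI}_{\cR}$, and the identity $\bS^{\cI}_{\cR}\circ\iota_b=\bS^{\cI}_b$ --- is exactly the intended unwinding. One minor citation nit: that the base $\cI$-space of $\bS^{\cI}_{\cR}(Z,X)$ is $X$ follows directly from Definition~\ref{def:fiberwise-smash}, which declares the base of $(U,K)\barsm(V,L)$ to be $K\times L$; Lemma~\ref{lem:fiberwise-smash-pullback} is about base change and is not needed for that point.
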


If $M \to N$ is a morphism of commutative $\cI$-space monoids, then
Lemma~\ref{lem:cobase-change-spym-lax-monoidal} implies that the
induced functor $(M\to N)_!\colon \Spsym{M} \to \Spsym{N}$ is strong
symmetric monoidal. In particular,
\[ \Theta =  (M \to \ucI)_! \colon \Spsym{M} \to \Spsym{} \]
is strong symmetric monoidal, so that commutative monoids in $\Spsym{M}$
give rise to commutative symmetric ring spectra if their base
$\cI$-space is collapsed.

\subsection{The simplicial structure on $X$-relative symmetric spectra}
Let $X$ be an $\cI$-space. If $Q$ is an unbased simplicial set, we define
a functor \[Q \tensor - \colon \Spsym{X} \to  \Spsym{X}, \quad (E,X)\mapsto
F_{\bld{0}}^{\Spsym{\cR}}(Q_+) \barsm (E,X)\ .\]
Here we again identify $\Spsym{X}$ with a subcategory of $\Spsym{\cR}$, view $Q_+$ as the retractive space $(Q_+,*)$, and apply geometric realization to $Q$ when working with $\cS =\tp$. 
\begin{proposition}\label{prop:SpsymX-simplicial}
  This action turns $\Spsym{X}$ into a simplicial model category.
\end{proposition}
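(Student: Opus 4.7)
The plan is to follow the template of the proof of Proposition~\ref{prop:S_K-simplicial}. The key observation is that the free spectrum $F_{\bld{0}}^{\Spsym{\cR}}(Q_+)$ has base $\cI$-space the monoidal unit $\ucI$ of $(\cS^{\cI},\boxtimes,\ucI)$. Hence, under the identification of $\Spsym{X}$ with a subcategory of $\Spsym{\cR}$, the external product $F_{\bld{0}}^{\Spsym{\cR}}(Q_+) \barsm (E,X)$ has base $\ucI \boxtimes X \iso X$ and therefore lies in $\Spsym{X}$, so that $Q \tensor -$ is well-defined as claimed.

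First I would check that $Q \mapsto F_{\bld{0}}^{\Spsym{\cR}}(Q_+)$ is strong symmetric monoidal from $(\sset,\times,*)$ to $(\Spsym{\cR}, \barsm, \bS)$. It factors as the evidently strong symmetric monoidal functor $Q\mapsto (Q_+,*)$ into $(\cS_{\cR}, \barsm, S^0)$ followed by $F_{\bld{0}}^{\Spsym{\cR}}$, which is strong symmetric monoidal by the isomorphism~\eqref{eq:barsm-of-free}. Combined with the coherence of $\barsm$, this provides a unital and associative action of $\sset$ on $\Spsym{X}$. With this structure in hand, \cite[Lemma II.2.4]{Goerss-J_simplicial} produces the simplicial enrichment, provided a cotensor right adjoint to $Q \tensor -$ exists. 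For this, I would observe that $F_{\bld{0}}^{\Spsym{\cR}}(Q_+)\barsm -$ has a right adjoint on $\Spsym{\cR}$ by the closedness in Proposition~\ref{prop:barsm-pushout-product-level}, and since the left functor preserves base $\cI$-spaces, so does this right adjoint, hence it restricts to one on $\Spsym{X}$.

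To verify axiom SM7, I would use Corollary~\ref{cor:compatible-level-model-str}: cofibrations, fibrations, and weak equivalences in the absolute (and positive) level model structure on $\Spsym{X}$ are detected as such in $\Spsym{\cR}$. Given a cofibration $i\colon Q\to Q'$ in $\sset$ and a cofibration $f\colon (E,X) \to (E',X)$ in $\Spsym{X}$, the map $F_{\bld{0}}^{\Spsym{\cR}}(i_+)$ is a cofibration in $\Spsym{\cR}$ by the defining adjunction and the isomorphisms~\eqref{eq:barsmiso-zero}--\eqref{eq:barsmiso-three}, acyclic if $i$ is. Their pushout product is therefore an (acyclic) cofibration in $\Spsym{\cR}$ by Proposition~\ref{prop:barsm-pushout-product-level}, and a fortiori in $\Spsym{X}$ by the corollary.

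The main obstacle is the subtlety that pushouts and fibrant replacements in $\Spsym{X}$ need not agree with those in $\Spsym{\cR}$ in general, so the reduction above could fail. However, because both inputs of the pushout-product construction preserve the base $\cI$-space $X$ (as $F_{\bld{0}}^{\Spsym{\cR}}(Q_+)$ has base $\ucI$, which is the $\boxtimes$-unit), the relevant pushouts are genuinely computed in $\Spsym{\cR}$ and the detection criterion of Corollary~\ref{cor:compatible-level-model-str} applies.
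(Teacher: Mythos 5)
Your proof is correct and follows essentially the same route as the paper: the simplicial enrichment comes from \cite[Lemma II.2.4]{Goerss-J_simplicial} via the closed symmetric monoidal structure on $\Spsym{\cR}$, and SM7 is deduced from the pushout product axiom of Proposition~\ref{prop:barsm-pushout-product-level} together with the detection of (co)fibrations and weak equivalences in $\Spsym{\cR}$ from Corollary~\ref{cor:compatible-level-model-str}. You simply spell out the details (the base $\ucI\boxtimes X\iso X$ identification and the agreement of pushouts) that the paper leaves implicit.
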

\begin{proof}
  An application of~\cite[Lemma II.2.4]{Goerss-J_simplicial} shows
  that $\Spsym{X}$ becomes a simplicial category since $\barsm$ is a
  closed symmetric monoidal structure on $\Spsym{\cR}$. The
  compatibility with the model structure follows from the pushout
  product axiom for $(\Spsym{\cR},\barsm)$ established in
  Proposition~\ref{prop:barsm-pushout-product-level} and the
  compatibility of the model structures on $\Spsym{X}$ and
  $\Spsym{\cR}$ established in
  Corollary~\ref{cor:compatible-level-model-str}. 
\end{proof}
It follows from the definitions that $(Q\tensor (E,X))_{n}$ can be
identified with the tensor $Q \tensor (E_n,X_n)$ in $\cS_{X_n}$ (see Proposition~\ref{prop:S_K-simplicial}).
Since the category $\Spsym{X}$ has a zero-object, the tensor structure
over $\sset$ induces a tensor over $\sset_*$, and one can check that
for a based simplicial set $B$ the based tensor is just the levelwise
$\barsm$-product with $(B,*)$. Particularly, the suspension is the
levelwise $\barsm$-product with~$S^1$.

\subsection{Tensor structures over \texorpdfstring{$\cI$}{I}-spaces}\label{subsec:tensor} 
There is a functor
\begin{equation}\label{eq:SIR-barsm-SpsymR} \cS^{\cI} \times  \SpsymR \to \SpsymR,\quad (Y, (E,X)) \mapsto \bS^{\cI}_t[Y] \barsm (E,X) 
\end{equation}
that exhibits $\SpsymR$ as a category tensored over
$(\cS^{\cI},\boxtimes, *)$, meaning that  $\SpsymR$ is a $\cS^{\cI}$-module in the sense of~\cite[Definition 4.1.6]{Hovey_model}.  For the applications in~\cite{HS-twisted} 
and in Section~\ref{subsec:Thom-over-S} below, it is
important that $\SpsymR$ is also tensored over
$(\cS^{\cI},\times,\ucI)$, i.e., $\cI$-spaces with the cartesian
product. To define this tensor, we first introduce a monoidal product
on the category $\cS^{\cI}_{\cR}$ of $\cI$-diagrams in $\cS_{\cR}$ and
an accompanying tensor structure on $\SpsymR$. The monoidal structure
on $\cS^{\cI}_{\cR}$ is the degreewise $\barsm$-product and will be
denoted by $\widetilde{\sm}$. Its unit is
$\iota_t(\ucI) = \const_{\cI}(S^0,*)$, and the functor
$\iota_t \colon (\cS^{\cI},\times,\ucI) \to (\cS^{\cI}_{\cR},
\widetilde{\sm},\iota_t(\ucI))$
sending $X$ to $(X\amalg X,X)$ is strong symmetric monoidal
by~\eqref{eq:barsmiso-three}.

The category $\SpsymR$ is tensored over
$(\cS^{\cI}_{\cR},\widetilde{\sm},\iota_t(\ucI))$ with tensor
structure
\begin{equation}\label{eq:SIR-tensor-SpsymR}
-\widetilde{\sm}- \colon \cS^{\cI}_{\cR} \times \SpsymR \to \SpsymR,\quad \big((Z,Y),(E,X)\big) \mapsto \big(\bld{n}\mapsto (Z,Y)(\bld{n}) \barsm (E,X)(\bld{n})\big).
\end{equation}
Here the structure maps act diagonally, i.e., $\alpha \colon \bld{m}\to\bld{n}$ acts via 
\[(Z,Y)(\bld{m}) \barsm (E,X)(\bld{m}) \barsm S^{\bld{n}-\alpha}  \xrightarrow{\alpha_* \barsm \alpha_*}   (Z,Y)(\bld{n}) \barsm (E,X)(\bld{n}).
\]
Restricting~\eqref{eq:SIR-tensor-SpsymR} along $\iota_t$ in the first variable, we get the desired tensor structure of $\SpsymR$ over $  (\cS^{\cI},\times,\ucI)$ given by 
\begin{equation}\label{eq:SI-tensor-SpsymR} -\times- \colon \cS^{\cI} \times \SpsymR \to \SpsymR 
\end{equation}
with $(Y \times (E,X))(\bld{n}) \iso (Y(\bld{n}) \times E_n, Y(\bld{n}) \times X(\bld{n}))$. The latter isomorphism results from~\eqref{eq:barsmiso-one} and justifies the symbol $\times$. Although $\times$ admits this easier description, we have chosen to define it via $\widetilde{\sm}$ since this makes the structure maps on $Y \times (E,X)$ more transparent. We shall primarily use the action $\times$ when $(E,X)$ is just a symmetric spectrum $E$ viewed as the object $E = (E,\ucI)$ of $\SpsymR$.

Now we relate this tensor structure to that of~\eqref{eq:SIR-barsm-SpsymR}:
\begin{construction}\label{const:rhoZYEX}
There is a natural transformation \begin{equation}\label{eq:rhoZYEX} \rho_{Y,(E,X)} \colon \bS^{\cI}_{t}[Y]\barsm (E,X) \to Y \times (E,X)
\end{equation} of functors $\cS^{\cI}\times \SpsymR \to
\SpsymR$. On the term in the colimit system defining the $\barsm$-product that is indexed by $\alpha\colon \bld{k}\concat \bld{l} \to \bld{n}$, it is given by the composite 
\begin{multline*} (\iota_tY)(\bld{k}) \barsm S^{\bld{k}} \barsm (E,X)(\bld{l}) \barsm S^{\bld{n}-\alpha} \xrightarrow{\iso}  (\iota_tY)(\bld{k}) \barsm (E,X)(\bld{l}) \barsm S^{\bld{n}-\alpha|_{\bld{l}}}\\ \to   (\iota_tY)(\bld{n}) \barsm  (E,X)(\bld{n}) = (Y \times (E,X))(\bld{n}).
\end{multline*}
Here the first map interchanges the two inner factors and uses the isomorphism of spheres induced by the bijection $\bld{k} \concat (\bld{n}-\alpha) \to (\bld{n}-\alpha|_{\bld{l}})$ defined by $\alpha$, and the second map is given by the
action of $\alpha|_{\bld{k}}$ and the structure map of $(E,X)$. 
\end{construction}
It is shown in \cite[Proposition~4.1]{HS-twisted} that under suitable conditions on $Y$ and $(E,X)$, the map~\eqref{eq:rhoZYEX} is a local equivalence in the sense of Section~\ref{subsec:local-model-spsymR} below. The latter result plays a  central role in our applications to models of twisted $K$-theory spectra in \cite{HS-twisted}. We also note that on base $\cI$-spaces, $\rho_{Y,(E,X)}$ is just the natural map $Y \boxtimes X \to Y \times X$ studied in~\cite[Proposition 2.27]{Sagave-S_group-compl}. 

It will also be useful to know that the different products are related
by the following commutative square explained below:
\begin{equation}\label{eq:distributivity} \xymatrix@-1pc{
\bS^{\cI}_t[Y] \!\barsm\! (E,X)  \!\barsm\! \bS^{\cI}_t[Y'] \!\barsm\! (E',X') \ar[rrr]^{\iso} \ar[d]^{\rho_{Y,(E,X)} \barsm \rho_{Y',(E',X')}}&&&  
\bS^{\cI}_t[Y \boxtimes Y'] \!\barsm\! (E,X) \!\barsm\! (E',X') \ar[d]_{\rho_{Y\boxtimes Y',(E,X)\barsm (E',X')}} \\
(Y \times (E,X) ) \!\barsm\! (Y' \times (E',X') ) \ar[rrr]^{\delta} &&&  (Y \boxtimes Y') \!\times \! (E,X) \!\barsm\! (E',X')}
\end{equation}
The vertical maps are instances of~\eqref{eq:rhoZYEX}, the upper horizontal map is the composite of the twists of the middle terms and the isomorphism ${\bS^{\cI}_t[Y] \barsm \bS^{\cI}_t[Y']} \to \bS^{\cI}_t[Y \boxtimes Y']$, and the lower horizontal map $\delta$ is the \emph{distributivity} map induced by the maps
\[ (Y(\bld{k}) \times (E,X)(\bld{k})) \barsm (Y'(\bld{l}) \times
(E',X')(\bld{l})) \barsm S^{\bld{n}-\alpha}\!\! \to (Y \boxtimes
Y')(\bld{n}) \times ( (E,X) \barsm (E',X'))(\bld{n}) \]
for $\alpha \colon \bld{k}\concat \bld{l} \to \bld{n}$ in $\cI$ which
are given by the twist of the middle factors and the canonical maps to
the $\boxtimes$ and $\barsm$-products.

\section{Local model structures}\label{sec:local-model}
Let $\cC\colon \cI \to \mathrm{Cat}$ be a Quillen $\cI$-category. If
$\alpha \colon \bld{m} \to \bld{n}$ is a map in $\cI$ and $Z$ is an
object in $ \cC_{\bld{m}}$, then the inclusion
$\alpha_!Z \to \textstyle\coprod_{\beta\colon \bld{m} \to \bld{n}}
\beta_!Z = (F_{\bld{m}}(Z))_{\bld{n}}$  
of the summand indexed by $\alpha$ gives rise to an adjoint map
\begin{equation}\label{eq:maps-we-are-localizing-at}
\alpha_Z \colon F_{\bld{n}}(\alpha_!Z) \to F_{\bld{m}}(Z)
\end{equation}
in the section category $\cC^{\cI}$. We define $S_{\cC}$ to be the set of
maps $\alpha_Z$ where $\alpha \colon\bld{m}\to \bld{n}$ is any
morphism in $\cI$ and $Z$ is the cofibrant replacement of a domain or
codomain of a generating cofibration of $\cC_{\bld{m}}$. Writing $\cI^+$ for
the full subcategory of $\cI$ on the objects $\bld{m}$ with $|\bld{m}|\geq 1$, 
we let $S_{\cC}^+$ be the subset of $S_{\cC}$ where $\alpha$ runs through the
morphisms in $\cI^+$.

Our aim is to form the left Bousfield localizations~\cite[\S 3]{Hirschhorn_model} of the level model
structures on $\cC^{\cI}$ at $S_{\cC}$ and $S_{\cC}^+$. We need an
additional hypothesis to ensure their existence and say that a
Quillen $\cI$-category $\cC$ is \emph{cellular} and \emph{left proper} if each
$\cC_{\bld{m}}$ is.
\begin{proposition}\label{prop:existence-local-model-structure}
  Let $\cC\colon \cI \to \mathrm{Cat}$ be a cellular and left proper
  Quillen $\cI$-category. Then the left Bousfield localizations 
  of the
  absolute level model structure on $\cC^{\cI}$ at the set $S_{\cC}$
  and the positive level model structure at $S_{\cC}^+$ exist and are
  cellular and left proper again. 
\end{proposition}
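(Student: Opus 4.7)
The plan is to verify the hypotheses of Hirschhorn's existence theorem for left Bousfield localizations \cite[Theorem 4.1.1]{Hirschhorn_model}, which asserts that a left Bousfield localization of a left proper cellular model category at a set of maps exists and is again left proper and cellular. So the task reduces to showing that both the absolute and positive level model structures on $\cC^{\cI}$ from Proposition~\ref{prop:level-model-on-section} are left proper and cellular whenever each $\cC_{\bld{m}}$ is. Once this is done, applying the theorem to $S_{\cC}$ (resp.\ $S_{\cC}^+$) yields the claim, including the preservation of left properness and cellularity under the localization.

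For left properness, I would argue as follows. A careful look at the proof of Proposition~\ref{prop:level-model-on-section} shows that cofibrations in the absolute level structure are in particular levelwise cofibrations in each $\cC_{\bld{m}}$, since $F_{\bld{m}}(i)_{\bld{n}} = \coprod_{\alpha\colon \bld{m}\to \bld{n}}\alpha_!i$ is a cofibration in $\cC_{\bld{n}}$ for $i$ a generating cofibration in $\cC_{\bld{m}}$ (using that each $\alpha_!$ is left Quillen), and cofibrations are closed under the usual small object argument constructions which are all computed levelwise. Since weak equivalences are also levelwise, the pushout of a weak equivalence along a cofibration is computed levelwise and left properness is inherited from each $\cC_{\bld{m}}$. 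The positive case is identical.

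The main obstacle is cellularity, which requires three conditions: the domains and codomains of the generating cofibrations are compact, the domains of generating cofibrations are small relative to cofibrations, and every cofibration is an effective monomorphism. The first two are routine: colimits in $\cC^{\cI}$ are formed levelwise, the generating (acyclic) cofibrations in the absolute structure are of the form $F_{\bld{m}}(i)$ for $i$ a generating (acyclic) cofibration of some $\cC_{\bld{m}}$, and the adjunction $(F_{\bld{m}},\Ev_{\bld{m}})$ together with the fact that $\Ev_{\bld{m}}$ preserves filtered colimits allows the compactness and smallness of $i$ in $\cC_{\bld{m}}$ to be transported to $F_{\bld{m}}(i)$ in $\cC^{\cI}$. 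For effective monomorphisms, the point is that a map in $\cC^{\cI}$ is an equaliser iff it is so levelwise (since limits and colimits in $\cC^{\cI}$ are levelwise), so a levelwise cofibration in $\cC^{\cI}$ is an effective monomorphism whenever each $\cC_{\bld{m}}$ has this property; combining this with the observation above that level cofibrations in $\cC^{\cI}$ are in particular levelwise cofibrations in each $\cC_{\bld{m}}$ finishes the argument.

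With $\cC^{\cI}$ shown to be left proper and cellular in both the absolute and positive level structures, Hirschhorn's theorem directly produces the left Bousfield localizations at $S_{\cC}$ and $S_{\cC}^+$, and these inherit left properness and cellularity, completing the proof. The one place to exercise care is the positive case, where one must note that the positive generating cofibrations still have the form $F_{\bld{m}}(i)$ for $\bld{m}$ in $\cI^+$, so that all three cellularity conditions are verified by the same argument applied to a subset of the absolute generating cells.
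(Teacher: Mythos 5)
Your proposal is correct and follows exactly the route the paper takes: reduce to Hirschhorn's existence theorem for left Bousfield localizations of left proper cellular model categories, and verify its hypotheses for the level model structures using the facts that cofibrations in $\cC^{\cI}$ are levelwise cofibrations and that colimits in $\cC^{\cI}$ are computed levelwise. The paper dispatches the verification of left properness and cellularity as "immediate" from these two observations, whereas you spell out the three cellularity conditions in detail, but the substance is the same.
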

\begin{proof} This follows from~\cite[Theorem 4.1]{Hirschhorn_model}
  once we verified that the absolute level model structure on
  $\cC^{\cI}$ is cellular and left proper. Since cofibrations in
  $\cC^{\cI}$ are in particular cofibrations in each level and
  colimits in $\cC^{\cI}$ are formed levelwise, this is immediate.
\end{proof}

\begin{definition} The model structures from the previous proposition are called the \emph{absolute} and \emph{positive local model structures} on the section category $\cC^{\cI}$.
\end{definition}
\begin{lemma}\label{lem:fibrant-in-local}
  An object $X$ in $\cC^{\cI}$ is fibrant in the absolute
  (resp.\ positive) local model structure if and only if for each
  $\alpha$ in $\cI$ (resp.\ $\cI^+$), the adjoint structure map
  $X_{\bld{m}} \to \alpha^*(X_{\bld{n}})$ is a weak equivalence
  between fibrant objects in $\cC_{\bld{m}}$.
\end{lemma}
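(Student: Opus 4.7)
The plan is to identify the fibrant objects via the standard characterization of local objects in a left Bousfield localization and then unwind what that means using the adjunctions at hand.

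First I would recall that by \cite[Proposition 3.4.1]{Hirschhorn_model}, an object $X$ is fibrant in the local model structure obtained from Proposition~\ref{prop:existence-local-model-structure} if and only if it is fibrant in the original level model structure \emph{and} it is $S_{\cC}$-local (resp.\ $S^+_{\cC}$-local), meaning that for every map $s\colon A \to B$ in the localizing set, the induced map on homotopy function complexes $\Map^h(B,X) \to \Map^h(A,X)$ is a weak equivalence of simplicial sets. The generating maps of the localization have the form $\alpha_Z\colon F_{\bld n}(\alpha_! Z) \to F_{\bld m}(Z)$ with $Z$ a cofibrant replacement of a (co)domain of a generating cofibration in $\cC_{\bld m}$; the adjunctions $(F_{\bld m},\Ev_{\bld m})$ and $(\alpha_!,\alpha^*)$ translate the induced map on homotopy function complexes into
\[
\Map^h(Z, X_{\bld m}) \longrightarrow \Map^h(Z, \alpha^* X_{\bld n})
\]
induced by postcomposition with the adjoint structure map $X_{\bld m} \to \alpha^* X_{\bld n}$.

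Next I would establish the implication $(\Leftarrow)$. Assume the adjoint structure maps are weak equivalences between fibrant objects for all $\alpha$ (resp.\ all $\alpha$ in $\cI^+$). Taking $\alpha = \id_{\bld m}$ (resp.\ $\alpha = \id_{\bld m}$ for $|\bld m| \ge 1$) shows $X$ is level fibrant in the relevant level model structure. Since $\alpha^*$ is right Quillen (as $\alpha_!$ is left Quillen by assumption on the Quillen $\cI$-category), $\alpha^*X_{\bld n}$ is automatically fibrant when $X_{\bld n}$ is. Given a weak equivalence between fibrant objects, the induced map on $\Map^h(Z,-)$ is a weak equivalence for every cofibrant $Z$, so in particular for the generating $Z$'s, giving locality.

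For $(\Rightarrow)$, assume $X$ is local. Then $X$ is level fibrant, so each $X_{\bld m}$ and each $\alpha^* X_{\bld n}$ is fibrant in $\cC_{\bld m}$. Locality means $\Map^h(Z, X_{\bld m}) \to \Map^h(Z, \alpha^*X_{\bld n})$ is a weak equivalence for $Z$ the cofibrant replacement of any (co)domain of a generating cofibration. By the standard detection lemma in cofibrantly generated model categories (see \cite[Theorem~17.7.7]{Hirschhorn_model}), a map between fibrant objects is a weak equivalence as soon as this holds on such a generating set of cofibrant test objects. Hence $X_{\bld m} \to \alpha^* X_{\bld n}$ is a weak equivalence. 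The positive case is identical, except one restricts $\alpha$ to morphisms in $\cI^+$ throughout, which only affects the level fibrancy of $X_{\bld 0}$ (which is not asserted in the positive case).

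The only genuinely delicate point is matching the homotopy function complex of the section category with those of the component categories through the free-evaluation adjunctions; this is routine once one notes that the $F_{\bld m}$ are left Quillen (used implicitly in Proposition~\ref{prop:level-model-on-section}) and hence preserve framings up to weak equivalence on cofibrant inputs. Everything else is formal manipulation of adjunctions.
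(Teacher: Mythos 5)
Your proposal is correct and follows essentially the same route as the paper: characterize local objects via homotopy function complexes, transport the condition through the $(F_{\bld m},\Ev_{\bld m})$ and $(\alpha_!,\alpha^*)$ adjunctions, and then detect the weak equivalence $X_{\bld m}\to\alpha^*(X_{\bld n})$ from its effect on $\Map^h(Z,-)$. One citation should be corrected: Hirschhorn's Theorem~17.7.7 detects weak equivalences between fibrant objects only against \emph{all} cofibrant test objects; the refinement that it suffices to test against cofibrant replacements of the domains and codomains of the generating cofibrations is not formal and is exactly \cite[Proposition~A.5]{Dugger_replacing}, which is the reference the paper uses for this step.
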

\begin{proof}
  We write $\mathrm{Map}_{\cC^{\cI}}$ and
  $\mathrm{Map}_{\cC_{\bld{m}}}$ for the homotopy function complexes
  in $\cC^{\cI}$ and in $\cC_{\bld{m}}$ (see~\cite[\S
  17.4]{Hirschhorn_model}). By definition, an object $X$ is fibrant in
  the absolute local model structure on $\cC^{\cI}$ if and only if it
  is absolute level fibrant and
  $\mathrm{Map}_{\cC^{\cI}}(\alpha_Z,X)$ is a weak equivalence of
  simplicial sets for all $\alpha_Z$ in $S_{\cC}$. Since homotopy
  function complexes are compatible with Quillen
  adjunctions~\cite[Proposition~17.4.15]{Hirschhorn_model},
  the latter
  condition is equivalent to asking that
  $\mathrm{Map}_{\cC_{\bld{m}}}(Z,X_{\bld{m}}) \to
  \mathrm{Map}_{\cC_{\bld{m}}}(Z,\alpha^*(X_{\bld{n}}))$
  is a weak equivalence of simplicial sets when $Z$ is the cofibrant
  replacement of a domain or codomain of a
  generating cofibration for $\cC_{\bld m}$ and $\alpha\colon \bld{m}\to \bld{n}$ is a map in
  $\cI$. By \cite[Proposition A.5]{Dugger_replacing}, for fixed
  $\alpha$ and varying $Z$ this condition is equivalent to
  $X_{\bld{m}} \to \alpha^*(X_{\bld{n}})$ being a weak equivalence in
  $\cC_{\bld{m}}$. The positive case is analogous.
\end{proof}

 The argument in the last proof also implies the following statement.
 \begin{corollary} \label{cor:alpha-Z-weak-equivalence} If $Z$ is any
   cofibrant object in $\cC_{\bld{m}}$ and $\alpha$ is a map in $\cI$
   (resp.\ $\cI^+$), then $\alpha_Z$ is a weak equivalence in the
   absolute (resp.\ positive) local model structure on $\cC^{\cI}$. \qed
\end{corollary}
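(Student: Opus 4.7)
The plan is to exploit the characterization of local weak equivalences between cofibrant objects via mapping spaces into locally fibrant objects, and then reduce to the fact established in the proof of Lemma~\ref{lem:fibrant-in-local}.

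First I would observe that since each $F_{\bld{m}}$ is left adjoint to $\Ev_{\bld{m}}$ and levelwise weak equivalences between levelwise fibrant objects are preserved by $\Ev_{\bld{m}}$, the adjunctions $(F_{\bld{m}}, \Ev_{\bld{m}})$ are Quillen with respect to the absolute (or positive, for $\bld{m}$ in $\cI^+$) level model structures. Moreover $\alpha_!$ is left Quillen on $\cC_{\bld m}$. Hence if $Z$ is cofibrant in $\cC_{\bld m}$, then both $F_{\bld n}(\alpha_! Z)$ and $F_{\bld m}(Z)$ are cofibrant in the absolute (resp.\ positive) level model structure on $\cC^{\cI}$, and therefore cofibrant in the local model structure as well.

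Next I would apply the characterization of local equivalences between cofibrant objects: a map $f$ between cofibrant objects is a weak equivalence in the left Bousfield localization of a model category at a set $S$ if and only if the induced map $\mathrm{Map}_{\cC^{\cI}}(f, W)$ of homotopy function complexes is a weak equivalence of simplicial sets for every $S$-local (equivalently, locally fibrant) object $W$; see~\cite[Theorem~3.2.13]{Hirschhorn_model}. So it suffices to show that for every locally fibrant $W$, the map
\[\mathrm{Map}_{\cC^{\cI}}(F_{\bld{m}}(Z), W) \longrightarrow \mathrm{Map}_{\cC^{\cI}}(F_{\bld{n}}(\alpha_! Z), W)\]
induced by $\alpha_Z$ is a weak equivalence.

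Using the Quillen adjunctions $(F_{\bld{m}}, \Ev_{\bld{m}})$ and $(\alpha_!, \alpha^*)$ together with the compatibility of homotopy function complexes with Quillen adjunctions~\cite[Proposition~17.4.15]{Hirschhorn_model}, this map is identified (up to weak equivalence) with the map
\[\mathrm{Map}_{\cC_{\bld{m}}}(Z, W_{\bld{m}}) \longrightarrow \mathrm{Map}_{\cC_{\bld{m}}}(Z, \alpha^*(W_{\bld{n}}))\]
induced by the adjoint structure map $W_{\bld m}\to \alpha^*(W_{\bld n})$. By Lemma~\ref{lem:fibrant-in-local}, this adjoint structure map is a weak equivalence between fibrant objects in $\cC_{\bld{m}}$, so applying $\mathrm{Map}_{\cC_{\bld m}}(Z,-)$ with $Z$ cofibrant yields a weak equivalence of simplicial sets by~\cite[Theorem~17.6.3]{Hirschhorn_model}. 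The positive case is identical, using that all $\alpha$ involved lie in $\cI^+$. No real obstacle is expected; the only thing to check is that the adjunction identifications go through as stated, which is standard.
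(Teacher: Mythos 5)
Your proof is correct and follows essentially the same route as the paper: the paper simply remarks that the argument proving Lemma~\ref{lem:fibrant-in-local} also yields this corollary, and your write-up is exactly that argument re-run with a general cofibrant $Z$, using the conclusion of Lemma~\ref{lem:fibrant-in-local} that the adjoint structure maps of a locally fibrant object are weak equivalences between fibrant objects. The adjunction identifications and the detection of local equivalences on homotopy function complexes into local objects go through as you describe.
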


The identifications of the model structures in the next example
uses the fact that a model structure is determined by its cofibrations and fibrant objects~\cite[Proposition E.1.10]{Joyal-quasi-categories}. 
\begin{example}\phantomsection \label{ex:pseudo-local}
\begin{enumerate}[(i)]
\item In the situation of Example~\ref{ex:pseudo}(i), the absolute and
positive local model structures are the absolute and positive $\cI$-model structures on $\cS^{\cI}$; see~\cite[Proposition 3.2]{Sagave-S_diagram}. The weak equivalences in these model structure are called \emph{$\cI$-equivalences} and are given by the maps $X \to Y$ that induce weak homotopy equivalences $X_{h\cI} \to Y_{h\cI}$ on the (Bousfield--Kan) homotopy colimits of the $\cI$-diagrams $X$ and $Y$.  
\item In the situation of Example~\ref{ex:pseudo}(ii), the absolute
  and positive local model structures are the respective stable model structures
  on $\Spsym{}$;
  see~\cite[Theorem~3.4.4]{HSS} and~\cite[\S 14]{MMSS}.
\item In the situation of Example~\ref{ex:pseudo}(iii), we obtain absolute and positive $\cI$-model structures on the category $\cS^{\cI}_{\cR}$ of $\cI$-diagrams in $\cS_{\cR}$. They can also be constructed by identifying $\cS^{\cI}_{\cR}$ with the category of retractive objects in $\cS^{\cI}$ and applying the argument in the proof of Proposition~\ref{prop:model-str-on-R} to the $\cI$-model structures on $\cS^{\cI}$. Moreover, the absolute local model structure on $\cS^{\cI}_{\cR}$ coincides with the hocolim model structure obtained from \cite[Theorem~5.2]{Dugger_replacing}.
\item In the situation of Example~\ref{ex:pseudo}(iv), the absolute local model structure on $\cS^{\cI}_X$ corresponds to the model structure on the category  $(\cS^{\cI})_X$ of $\cI$-spaces over and under $X$ induced by the $\cI$-model structure on $\cS^{\cI}$. To see this, we note that the explicit description of the $\cI$-fibrations in $\cS^{\cI}$ in terms of homotopy cartesian squares~\cite[\S 3.1]{Sagave-S_diagram} implies that the fibrant objects in $(\cS^{\cI})_X$ match the local objects in $\cS^{\cI}_X$. 
\end{enumerate}
\end{example}
 
\subsection{Local model structures on symmetric spectra in retractive spaces}\label{subsec:local-model-spsymR}
Next we consider the category of symmetric spectra in retractive
spaces $\Spsym{\cR}$ introduced in Definition~\ref{def:spsymR}. An
object is fibrant in the resulting absolute (resp.\ positive) local
model structure if and only if it is absolute (resp.\ positive) level
fibrant and the adjoint structure maps
$(E,X)(\bld{m}) \to \Hom_{\cR}((S^{\bld{n}-\alpha},*),(E,X)(\bld{n}))$ 
are
weak equivalences in $\cS_{\cR}$ 
for all $\alpha$ in $\cI$
(resp.\ $\cI^+$). In view of the definition of the cotensor
in~\eqref{eq:cotensor}, the latter condition means that the horizontal
maps in the following diagram are required to be weak equivalences:
\begin{equation}\label{eq:fibrant-in-SpsymR-testsquare}
\xymatrix@-1pc{ 
E_m \ar[rr] \ar[d] && (E_n)^{S^{\bld{n}-\alpha}} \times_{E_n \times X(\bld{n})^{ S^{\bld{n}-\alpha}}} X(\bld{n}) \ar[d] \\
X(\bld{m}) \ar[rr] && X(\bld{n}) }
\end{equation}
Our absolute and positive local model structures on $\SpsymR$ coincide
with the corresponding model structures on symmetric spectra in
$\cS_{\cR}$ considered elsewhere in the literature (compare
e.g.~\cites{Hovey_symmetric-general, Gorchinskiy-G_positive}).

\begin{proposition}\label{prop:abs-pos-local-equiv-SpsymR}
The weak equivalences in the absolute and positive local model structures
on $\SpsymR$ coincide. 
\end{proposition}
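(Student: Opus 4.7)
The plan is to show that the difference between $S_{\cC}$ and $S^+_{\cC}$ disappears at the level of weak equivalences: every map in $S_{\cC}\setminus S^+_{\cC}$ is already a positive local equivalence. Granting this, a standard property of left Bousfield localizations yields that localizing further at maps that are already equivalences does not enlarge the class of weak equivalences, so the two local model structures agree on this class. Care is needed because the two localizations start from different (absolute vs.\ positive) level model structures; to bridge this I would use Corollary~\ref{cor:compatible-level-model-str} together with the integral model structure of Proposition~\ref{prop:level-Grothendieck-construction}, which lets me compare cofibrations and fibrations level by level.

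The maps in $S_{\cC}\setminus S^+_{\cC}$ are those of the form $\alpha_Z\colon F_{\bld n}(Z\barsm S^{\bld n})\to F_{\bld 0}(Z)$ for the unique map $\alpha\colon \bld 0\to \bld n$ and $Z$ a cofibrant domain or codomain of a generating cofibration of $\cS_{\cR}$. First I would factor $\alpha$ as $\bld 0\to \bld 1\to \bld n$, writing $\alpha_Z$ as a composite. The second factor is a map in $S^+_{\cC}$, hence a positive local equivalence by Corollary~\ref{cor:alpha-Z-weak-equivalence}. This reduces the entire problem to a single case: for each cofibrant $Z$, the map $\gamma_Z\colon F_{\bld 1}(Z\barsm S^1)\to F_{\bld 0}(Z)$ is a positive local equivalence.

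The hard part will be this last step. To handle it I would test against a positive local fibrant $Y$: via the free--evaluation adjunction~\eqref{eq:free-evaluation-adjunction-pseudofunctor}, the induced map of homotopy function complexes in $\Spsym{\cR}$ identifies with $\Map_{\cS_{\cR}}(Z,Y_{\bld 0})\to \Map_{\cS_{\cR}}(Z\barsm S^1,Y_{\bld 1})$, where $Y_{\bld 0}$ is uncontrolled by the positive fibrancy conditions. The technical heart is to show that, nevertheless, the $\Sigma_n$-actions at higher levels of $Y$ force this map to be a weak equivalence. This is a retractive analogue of the Mandell--May--Schwede--Shipley comparison between absolute and positive stable equivalences of symmetric spectra. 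As a shortcut I would appeal to \cite{Hovey_symmetric-general}: under the equivalence $\Spsym{\cR}\simeq \Spsym{}(\cS_{\cR},S^1)$ noted after Definition~\ref{def:spsymR}, the present local model structures are precisely Hovey's stable and positive stable model structures on symmetric spectra in the closed symmetric monoidal model category $(\cS_{\cR},\barsm,S^0)$, whose axioms are verified in Propositions~\ref{prop:model-str-on-R}, \ref{prop:R-closed-sym-monoidal}, and~\ref{prop:pushout-product-for-cScR}. The coincidence of stable equivalences in that framework is established in \cite{Hovey_symmetric-general} (see also \cite{Gorchinskiy-G_positive}), and transports directly to the present setting.
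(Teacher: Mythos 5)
Your proposal ultimately lands on the same argument as the paper: identify the local model structures on $\SpsymR$ with the stable and positive stable model structures on symmetric spectrum objects in $\cS_{\cR}$ and quote the coincidence of absolute and positive stable equivalences for abstract symmetric spectra --- the paper's entire proof is the one-line citation of \cite[Theorem 10]{Gorchinskiy-G_positive}. One correction: that coincidence is \emph{not} established in \cite{Hovey_symmetric-general}, which does not treat positive model structures at all; the correct (and only needed) reference is Gorchinskiy--Guletski\u{\i}, and since you end by invoking it anyway, your preliminary reduction via the maps in $S_{\cC}\setminus S_{\cC}^+$ is scaffolding you can discard.
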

\begin{proof}
This follows from~\cite[Theorem 10]{Gorchinskiy-G_positive}.
\end{proof}

\begin{remark}
  We resist from calling the model structures from
  Proposition~\ref{prop:existence-local-model-structure} \emph{stable}
  since they are not necessarily stable in the sense that suspension
  becomes invertible on the homotopy category. In fact, $\cS^{\cI}$
  and $\Spsym{\cR}$ have no zero objects and cannot be stable in the
  latter sense.  
\end{remark}

\begin{proposition}\label{prop:local-pushout-product}
  The absolute and positive local model structures on $\Spsym{\cR}$ satisfy the
  pushout product axiom with respect to $\barsm$. 
\end{proposition}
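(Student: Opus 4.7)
My plan is to leverage the level pushout product axiom from Proposition~\ref{prop:barsm-pushout-product-level} together with the special structure of the localizing set $S_\cR$. Since left Bousfield localization leaves the cofibrations unchanged, the cofibration half of the axiom is immediate from Proposition~\ref{prop:barsm-pushout-product-level}. The real task is thus to show that if $f$ is a local acyclic cofibration and $g$ is a cofibration, then the pushout product $f \square g$, already known to be a cofibration, is a local weak equivalence.

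By a standard reduction for monoidal left Bousfield localizations, combined with the level pushout product axiom and left properness, it suffices to treat the case where $s = \alpha_Z$ is one of the localizing maps in $S_\cR$ (respectively $S_\cR^+$) and $g = F^{\SpsymR}_{\bld{k}}(i\colon A \to B)$ is a generating cofibration of $\SpsymR$. Using the isomorphism~\eqref{eq:barsm-of-free}, I would identify the pushout product $s \square g$ with the canonical map $R \cup_P Q \to S$ coming from the commutative square
\[
\xymatrix@-1pc{
F^{\SpsymR}_{\bld{n}\concat\bld{k}}(\alpha_!Z \barsm A) \ar[r] \ar[d] & F^{\SpsymR}_{\bld{n}\concat\bld{k}}(\alpha_!Z \barsm B) \ar[d] \\
F^{\SpsymR}_{\bld{m}\concat\bld{k}}(Z \barsm A) \ar[r] & F^{\SpsymR}_{\bld{m}\concat\bld{k}}(Z \barsm B)
}
\]
with corners labelled $P$ (top left), $Q$ (top right), $R$ (bottom left) and $S$ (bottom right), whose horizontal maps are free-functor images of the cofibrations $\alpha_!Z \barsm i$ and $Z \barsm i$ supplied by the pushout product axiom in $\cS_\cR$ (Proposition~\ref{prop:pushout-product-for-cScR}), while whose vertical maps identify, via associativity of $\barsm$ together with~\eqref{eq:reindexing-composite}, with the maps $(\alpha \concat \id_{\bld{k}})_{Z \barsm A}$ and $(\alpha \concat \id_{\bld{k}})_{Z \barsm B}$.

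Since $Z$ is cofibrant by the definition of $S_\cR$ and the sources and targets of the generating cofibrations of $\cS_\cR$ are cofibrant, Proposition~\ref{prop:pushout-product-for-cScR} ensures that $Z \barsm A$ and $Z \barsm B$ are cofibrant in $\cS_\cR$. Corollary~\ref{cor:alpha-Z-weak-equivalence} then tells me that both vertical maps are local weak equivalences. To conclude that $R \cup_P Q \to S$ is a local weak equivalence, I would factor the right vertical as $Q \to R \cup_P Q \to S$: the first map is the cobase change of the local weak equivalence $P \to R$ along the cofibration $P \to Q$, hence a local weak equivalence by the left properness of the local model structure (Proposition~\ref{prop:existence-local-model-structure}), and two-out-of-three then gives the conclusion for $R \cup_P Q \to S$. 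The main obstacle I anticipate is making precise the initial reduction to the generator-level statement in the localized setting; once that is granted, the identification of the square and the gluing argument via left properness are routine.
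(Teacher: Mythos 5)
Your proof takes a genuinely different route from the paper's. The paper disposes of the absolute case in one line by citing \cite[Theorem 8.11]{Hovey_symmetric-general}, and then obtains the positive case by combining the cofibration part of the positive \emph{level} pushout product axiom with the acyclicity part of the absolute local one, using Proposition~\ref{prop:abs-pos-local-equiv-SpsymR} to identify the two classes of local equivalences. You instead re-prove the relevant instance of Hovey's theorem by hand: reduce to the localizing maps against generating cofibrations, compute the pushout product via the isomorphism~\eqref{eq:barsm-of-free}, identify the vertical maps of your square with $(\alpha\concat\id_{\bld k})_{Z\barsm A}$ and $(\alpha\concat\id_{\bld k})_{Z\barsm B}$ exactly as in~\eqref{eq:ID-identification}, and conclude by Corollary~\ref{cor:alpha-Z-weak-equivalence}, left properness, and two-out-of-three. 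That computation is correct (the cofibrancy of $Z\barsm A$ and $Z\barsm B$ needed for Corollary~\ref{cor:alpha-Z-weak-equivalence} does hold, since domains and codomains of the generating cofibrations of $\cS_{\cR}$ are cofibrant), and it has the merit of treating the absolute and positive cases uniformly without appealing to the Gorchinskiy--Guletski\u{\i} result.

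The one point you must not treat as routine is the opening reduction. The acyclic cofibrations of the localized model structure are \emph{not} generated under cobase change, transfinite composition, and retracts by $S_{\cR}\cup J$ alone (they are generated by the larger set of $S$-horns produced by the localization machinery), so a naive cell induction over $S_{\cR}\cup J$ does not establish the acyclicity part of the axiom. The legitimate version of the reduction runs through the adjunction~\eqref{eq:R-closed-sym-monoidal}: one shows that $\Hom_{\cR}(A,W)$ is local whenever $A$ is cofibrant and $W$ is locally fibrant, which by adjunction amounts to $s\barsm A$ being a local equivalence for all $s\in S_{\cR}$, and a further cell induction on $A$ (using left properness and the level pushout product axiom) reduces this to the domains and codomains of generating cofibrations --- which is where your square enters. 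This is precisely the content of the monoidal Bousfield localization criterion (and of Hovey's proof of his Theorem 8.11), so you should either cite such a result explicitly or carry out the adjunction argument; as written, ``standard reduction'' is carrying the entire weight of the proof that the paper delegates to the literature.
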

\begin{proof}
  The absolute case follows from~\cite[Theorem
  8.11]{Hovey_symmetric-general}. Using
  Proposition~\ref{prop:abs-pos-local-equiv-SpsymR}, the positive case
  follows from the pushout product axiom for the absolute local and
  the positive level model structure.
\end{proof}

\begin{lemma}\label{lem:Q-adjunctions-on-local-model-str}
  The left adjoint functors $\iota_t$, $\iota_b, \bS^{\cI}_{\cR}$, and
  $\pi_b$ introduced in~\eqref{eq:various-left-adjoints} are left
  Quillen functors with respect to the absolute and positive local model
  structures.
\end{lemma}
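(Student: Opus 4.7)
All four functors are already left Quillen with respect to the absolute and positive \emph{level} model structures: for $\iota_t$, $\iota_b$, and $\bS^{\cI}_{\cR}$ this follows from Construction~\ref{const:various-adjunctions} via Lemma~\ref{lem:adjunction-on-section-cat}, and for $\pi_b$ it is part of Lemma~\ref{lem:iota-b-left-right-adjoint-I}. Since the local model structures are the left Bousfield localizations of the level ones at the sets $S_{\cC}$ and $S_{\cC}^+$ from~\eqref{eq:maps-we-are-localizing-at}, the standard criterion for Quillen adjunctions to descend to Bousfield localizations (e.g.~\cite[Theorem~3.3.20]{Hirschhorn_model}) reduces the claim, in each of the four cases, to checking that the generators of $S_{\text{source}}$ (respectively $S_{\text{source}}^+$) are sent to local equivalences in the target.

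For $\iota_t$, $\iota_b$, and $\pi_b$ this is almost formal. A direct computation shows $\iota_t\circ F_{\bld{m}}^{\cS^{\cI}} \iso F_{\bld{m}}^{\cS^{\cI}_{\cR}} \circ \iota_t$, and similarly with $\iota_b$. For $\pi_b$ one inspects the construction of $F_{\bld{m}}^{\Spsym{\cR}}(V,K)$ and finds, using~\eqref{eq:barsmiso-one}, that its base $\cI$-space at level $\bld{p}$ is $\coprod_{\bld{m}\to\bld{p}} K$, so that $\pi_b \circ F_{\bld{m}}^{\Spsym{\cR}} \iso F_{\bld{m}}^{\cS^{\cI}} \circ \pi_b$. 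In each case a generator $\alpha_Z$ is therefore sent to $\alpha_{\Phi_{\bld{m}}(Z)}$ for the corresponding levelwise functor $\Phi_{\bld{m}}$; since $\Phi_{\bld{m}}$ is left Quillen, $\Phi_{\bld{m}}(Z)$ is cofibrant, and Corollary~\ref{cor:alpha-Z-weak-equivalence} identifies the image as a local equivalence.

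For $\bS^{\cI}_{\cR}$ a small twist appears: the pseudo-natural transformation uses the isomorphism $(Z\barsm S^{\bld{m}})\barsm S^{\bld{n}-\alpha}\iso Z\barsm S^{\bld{n}}$, and a direct adjunction computation (combining the defining formula $\bS^{\cI}_{\cR}(Z,Y)(\bld{p})=(Z,Y)(\bld{p})\barsm S^{\bld{p}}$ with the explicit description of $F_{\bld{m}}^{\Spsym{\cR}}$) yields a natural isomorphism $\bS^{\cI}_{\cR}\circ F_{\bld{m}}^{\cS^{\cI}_{\cR}} \iso F_{\bld{m}}^{\Spsym{\cR}} \circ (-\barsm S^{\bld{m}})$. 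Consequently $\bS^{\cI}_{\cR}$ carries a generator $\alpha_{(V,K)}$ to the map $\alpha_{(V,K)\barsm S^{\bld{m}}}$. Since $(V,K)\barsm S^{\bld{m}}$ is cofibrant by the pushout--product axiom on $\cS_{\cR}$ (Proposition~\ref{prop:pushout-product-for-cScR}), this is again a local equivalence by Corollary~\ref{cor:alpha-Z-weak-equivalence}. In every case the source level index $\bld{m}$ is preserved, so positive generators map to positive generators and the positive statement follows verbatim. The main obstacle is just to correctly bookkeep the suspension twist introduced by $\bS^{\cI}_{\cR}$; once the displayed natural isomorphism is in hand, the Bousfield localization criterion does the rest.
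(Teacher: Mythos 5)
Your proof is correct and follows essentially the same route as the paper: establish the functors as left Quillen for the level structures, then use the Bousfield-localization criterion together with the identification of the image of a generator $\alpha_Z$ as a generator $\alpha_{Z'}$ for a cofibrant $Z'$ (with the suspension twist $Z'=Z\barsm S^{\bld m}$ in the case of $\bS^{\cI}_{\cR}$) and Corollary~\ref{cor:alpha-Z-weak-equivalence}. The paper only carries out the $\bS^{\cI}_{\cR}$ case explicitly and declares the others analogous, so your write-up is simply a more complete version of the same argument.
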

\begin{proof}
  For the functor $\bS^{\cI}_{\cR}$, we observe that
  $\bS^{\cI}_{\cR}[F_{\bld{n}}^{\cS^{\cI}_{\cR}}(\alpha_! Z) ]\iso
  F_{\bld{n}}^{\Spsym{\cR}}(\alpha_!(Z \barsm S^m))$ where the first
  $\alpha_!$ is part of the pseudofunctor defining $\cS^{\cI}_{\cR}$
  and the second is part of the pseudofunctor defining
  $\Spsym{\cR}$. Since $Z \barsm S^m$ is cofibrant in $\cS_{\cR}$ if
  $Z$ is, Corollary~\ref{cor:alpha-Z-weak-equivalence}
  and~\cite[Proposition 3.3.18(1)]{Hirschhorn_model} show that
  $\bS^{\cI}_{\cR}$ is left Quillen. The other cases are analogous
  (but easier).
\end{proof}

\begin{corollary}\label{cor:S-I-b-left-right-Quillen-locally}
  Both $(\bS^{\cI}_b,\pi_b)$ and $(\pi_b,\bS^{\cI}_b)$ are Quillen
  adjunctions with respect to the absolute and positive local model
  structures.\qed
\end{corollary}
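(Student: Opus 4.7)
The plan is to observe that this corollary follows almost immediately by combining Lemma~\ref{lem:iota-b-left-right-adjoint-I} with Lemma~\ref{lem:Q-adjunctions-on-local-model-str}. The adjunction statements themselves, namely that $\bS^{\cI}_b$ is simultaneously a left and a right adjoint of $\pi_b$, were already established in Lemma~\ref{lem:iota-b-left-right-adjoint-I} and carry over without change, so the only content here is that both adjunctions are compatible with the local model structures.

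First I would handle $(\pi_b, \bS^{\cI}_b)$: by Lemma~\ref{lem:Q-adjunctions-on-local-model-str}, the functor $\pi_b \colon \Spsym{\cR} \to \cS^{\cI}$ is left Quillen with respect to both the absolute and the positive local model structures, which is precisely the statement that $(\pi_b, \bS^{\cI}_b)$ is a Quillen adjunction. Then for $(\bS^{\cI}_b, \pi_b)$, I would recall the factorization $\bS^{\cI}_b = \bS^{\cI}_{\cR} \circ \iota_b$ from Construction~\ref{const:various-adjunctions}. Since both $\iota_b \colon \cS^{\cI} \to \cS^{\cI}_{\cR}$ and $\bS^{\cI}_{\cR} \colon \cS^{\cI}_{\cR} \to \Spsym{\cR}$ are left Quillen with respect to the local model structures by Lemma~\ref{lem:Q-adjunctions-on-local-model-str}, their composite $\bS^{\cI}_b$ is left Quillen as well, and this gives the second Quillen adjunction.

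Since both steps are direct invocations of previously established lemmas, there is no essential obstacle; the only thing one should be mildly careful about is verifying that Lemma~\ref{lem:Q-adjunctions-on-local-model-str} applies uniformly in the absolute and positive cases, but its proof via Corollary~\ref{cor:alpha-Z-weak-equivalence} and~\cite[Proposition 3.3.18(1)]{Hirschhorn_model} treats both cases in parallel, so no additional argument is needed.
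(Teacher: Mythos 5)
Your argument is correct and is exactly the one the paper intends: the corollary is stated with a \qed immediately after Lemma~\ref{lem:Q-adjunctions-on-local-model-str} precisely because $(\pi_b,\bS^{\cI}_b)$ is Quillen since $\pi_b$ is left Quillen by that lemma, and $(\bS^{\cI}_b,\pi_b)$ is Quillen since $\bS^{\cI}_b=\bS^{\cI}_{\cR}\circ\iota_b$ is a composite of the left Quillen functors covered by the same lemma, with the adjunctions themselves supplied by Lemma~\ref{lem:iota-b-left-right-adjoint-I}. No gaps.
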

The absolute and positive $\cI$-model structures on $\cS^{\cI}$ give rise to \emph{injective} model structures on  $\mathrm{Ar}(\cS^{\cI})$ where a map is a cofibration or weak equivalence if and only if its two components have this property in $\cS^{\cI}$.  
\begin{lemma}\label{lem:SIar-OmegaIar-Quillen-adjunction}
The adjunction $\iota_{\mathrm{ar}} \colon \mathrm{Ar}(\cS^{\cI}) \rightleftarrows 
 \cS^{\cI}_{\cR} \colon \pi_{\mathrm{ar}}$ from~\eqref{eq:SIar-OmegaIar-adjunction} is a Quillen adjunction with respect to the absolute or positive model structures on $ \mathrm{Ar}(\cS^{\cI})$ and the respective local model structures on~$\cS^{\cI}_{\cR}$. \qed
\end{lemma}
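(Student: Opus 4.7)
The plan is to show that $\iota_{\mathrm{ar}}$ is a left Quillen functor, from which the Quillen adjunction claim follows immediately. What I would verify is that $\iota_{\mathrm{ar}}$ preserves both cofibrations and acyclic cofibrations, using explicit characterizations of these classes on the two sides. For the injective model structure on $\mathrm{Ar}(\cS^{\cI})$, this is by definition: both cofibrations and weak equivalences are componentwise. For the local model structure on $\cS^{\cI}_{\cR}$, I would invoke the identification in Example~\ref{ex:pseudo-local}(iii) with the model structure on retractive objects in $\cS^{\cI}$ built from the $\cI$-model structure by the argument of Proposition~\ref{prop:model-str-on-R}. This gives cofibrations $(U,K)\to(V,L)$ as those maps whose base $K\to L$ and pushout-corner $U\cup_K L\to V$ are both $\cI$-cofibrations in $\cS^{\cI}$, and weak equivalences as maps whose two components are both $\cI$-equivalences.

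Granting these descriptions, suppose $(A\to B)\to (A'\to B')$ is an injective cofibration, so that both $A\to A'$ and $B\to B'$ are $\cI$-cofibrations. Its image under $\iota_{\mathrm{ar}}$ is $(B\amalg A,B)\to(B'\amalg A',B')$. The base map is $B\to B'$, which is a $\cI$-cofibration by hypothesis, and a direct computation of the pushout shows the pushout-corner map is $\id_{B'}\amalg(A\to A')\colon B'\amalg A\to B'\amalg A'$, which is a $\cI$-cofibration since these are closed under coproducts with identities. Thus $\iota_{\mathrm{ar}}$ preserves cofibrations.

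For acyclic cofibrations I need the further observation that when $A\to A'$ and $B\to B'$ are also $\cI$-equivalences, the image is a componentwise $\cI$-equivalence: the base is again just $B\to B'$, while the total space map $B\amalg A\to B'\amalg A'$ is a disjoint union of $\cI$-equivalences, which is still a $\cI$-equivalence because $(-)_{h\cI}$ commutes with coproducts. Hence the image is both a cofibration and a weak equivalence in the local model structure on $\cS^{\cI}_{\cR}$, and the same argument works verbatim in the positive case, since the characterizations of cofibrations and weak equivalences have the same form after replacing $\cI$-equivalences and $\cI$-cofibrations by their positive counterparts. The only delicate input is the identification of the local model structure on $\cS^{\cI}_{\cR}$ with the retractive-object model structure; once this is in hand, the verification is essentially formal.
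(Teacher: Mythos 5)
Your proof is correct and supplies exactly the verification the paper leaves implicit (the lemma is stated with its proof omitted): using the Reedy-type description of the local model structure on $\cS^{\cI}_{\cR}$ from Example~\ref{ex:pseudo-local}(iii), one checks that $\iota_{\mathrm{ar}}$ preserves cofibrations and acyclic cofibrations, and your identification of the pushout-corner map as $\id_{B'}\amalg(A\to A')$ together with the componentwise description of the weak equivalences is precisely the intended argument. Nothing further is needed.
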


\subsection{The local model structures on $X$-relative symmetric
  spectra} Let $X$ be an $\cI$-space. Then
Proposition~\ref{prop:existence-local-model-structure} gives rise to
absolute and positive local model structures on the category
$\Spsym{X}$ of $X$-relative symmetric spectra. When $K$ is a space,
then these local model structures on
$\Spsym{K} = \Spsym{\const_{\cI}K}$ correspond to the absolute and
positive stable model structure on $\Spsym{}(\cS_K,S^1)$, and the
fibrant objects are the absolute (resp.\ positive) $\Omega$-spectra in
the latter category. For a general base $\cI$-space $X$, an object
$(E,X)$ is fibrant in the absolute (resp.\ positive) local model
structure on $\Spsym{X}$ if and only if it is absolute
(resp.\ positive) level fibrant and the
square~\eqref{eq:fibrant-in-SpsymR-testsquare} is homotopy cartesian
for all $\alpha$ in $\cI$ (resp.\ $\cI^+$). Although their base
$\cI$-space may not be constant, we think of the fibrant objects as
fiberwise (positive) $\Omega$-spectra.

\begin{remark}
  In lack of a symmetric monoidal structure on $\Spsym{X}$, we cannot
  directly apply~\cite[Theorem 10]{Gorchinskiy-G_positive} to show
  that the weak equivalences in the absolute and positive local model
  structures coincide. We will derive this from the corresponding result
  for $\SpsymR$ in Corollary~\ref{cor:absolute-positive-SpsymX} below. 
\end{remark}

\begin{lemma}\label{lem:base-change-local-Q-adjunction}
  If $f\colon X \to Y$ is a map of $\cI$-spaces, then
  $f_!\colon \Spsym{X} \rightleftarrows \Spsym{Y}\colon f^*$ is a
  Quillen adjunction with respect to the absolute and positive local
  model structures. If $f\colon X \to Y$ is an absolute
  (resp.\ positive) level equivalence, then $(f_!,f^*)$ is a Quillen
  equivalence with respect to the absolute (resp.\ positive) local
  model structures.
\end{lemma}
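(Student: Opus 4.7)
The strategy is to build on Lemma~\ref{lem:base-change-Q-adj-for-SpsymX-level} and exploit the fact that the local model structures are left Bousfield localizations of the level model structures. I plan to treat the two assertions separately.

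For the Quillen adjunction part, the main tool will be \cite[Theorem~3.3.20(1)]{Hirschhorn_model}: a Quillen adjunction on the underlying model structures descends to a Quillen adjunction on the Bousfield localizations provided the left adjoint sends the localizing set to weak equivalences in the target localization. Thus I would verify that $f_!$ sends each generator $\alpha_Z \in S_{\Spsym{X}}$ (or in $S_{\Spsym{X}}^+$) to a local weak equivalence in $\Spsym{Y}$. The key computation is a natural isomorphism
\[ f_! \, F_{\bld m}^{\Spsym{X}}(Z) \iso F_{\bld m}^{\Spsym{Y}}\bigl(f(\bld m)_! Z\bigr), \]
for $Z$ an object of $\cS_{X(\bld m)}$, obtained by an adjunction argument in the spirit of Lemma~\ref{lem:free-cR-and-free-relative-X-pushout}. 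Under this identification, $f_!(\alpha_Z)$ corresponds to $\alpha_{f(\bld m)_! Z}$ in $\Spsym{Y}$. Since $f(\bld m)_!$ is left Quillen by Lemma~\ref{lem:extension-restriction-R-homotopical}(i), the object $f(\bld m)_! Z$ is cofibrant, so Corollary~\ref{cor:alpha-Z-weak-equivalence} then yields that $\alpha_{f(\bld m)_! Z}$ is a local weak equivalence.

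For the Quillen equivalence when $f$ is a level equivalence, I plan to first establish the corresponding statement for the level model structures and then transport it to the localizations. The levelwise input is that each $(f(\bld m)_!, f(\bld m)^*)$ is a Quillen equivalence between $\cS_{X(\bld m)}$ and $\cS_{Y(\bld m)}$ by Lemma~\ref{lem:extension-restriction-R-homotopical}(ii). To promote this to $\Spsym{X} \rightleftarrows \Spsym{Y}$, I would verify the standard criterion: $f^*$ reflects level equivalences between level fibrant objects, and the derived unit $(E,X) \to f^*(f_!(E,X))^{\mathrm{fib}}$ is a level equivalence for every level cofibrant $(E,X)$. A preliminary observation, obtained by inspecting the generating cofibrations $F_{\bld m}^{\Spsym{X}}(i)$ together with~\eqref{eq:free-explicit} and using that the structure functors of the pseudofunctor defining $\Spsym{X}$ are left Quillen, is that cofibrations in $\Spsym{X}$ are levelwise cofibrations and (since fibrations are defined levelwise) that level fibrant replacement can be performed levelwise. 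Both halves of the criterion then reduce to the corresponding statements for $(f(\bld m)_!, f(\bld m)^*)$. Finally, combining the level Quillen equivalence with the first part of the proof via the descent theorem for Quillen equivalences under Bousfield localization (the Quillen equivalence variant of \cite[Theorem~3.3.20]{Hirschhorn_model}) yields the local Quillen equivalence. In the positive case one restricts attention to $\bld m$ with $|\bld m|\ge 1$ throughout.

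The main obstacle I anticipate is ensuring that the derived unit in $\Spsym{X}$ for the level model structure can actually be computed levelwise. Concretely, this requires a level fibrant replacement of $f_!(E,X)$ whose values at each $\bld m$ are fibrant replacements in $\cS_{Y(\bld m)}$, which is delicate because the structure maps of a symmetric spectrum in retractive spaces have to be respected. An alternative and potentially cleaner route would be to defer this step to the integral model structure of Harpaz--Prasma developed in preparation for Theorem~\ref{thm:integral-introduction}, from which the Quillen equivalence follows abstractly: an $\cI$-equivalence in the base of a proper relative pseudofunctor induces a Quillen equivalence on fibers.
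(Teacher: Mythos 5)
Your first half — descending the Quillen adjunction to the localizations via the isomorphism $f_!F_{\bld m}^{\Spsym{X}}(Z)\iso F_{\bld m}^{\Spsym{Y}}(f(\bld m)_!Z)$, Corollary~\ref{cor:alpha-Z-weak-equivalence}, and Hirschhorn's recognition criterion — is exactly the paper's argument and is correct.

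For the Quillen equivalence, however, your transport step has a genuine gap. Hirschhorn's localization theorem for Quillen equivalences produces a Quillen equivalence $L_{S}\Spsym{X}\rightleftarrows L_{f_!(S)}\Spsym{Y}$, where the \emph{target} is localized at the image $f_!(S_{\Spsym{X}})=\{\alpha_{f(\bld m)_!Z}\}$ of your localizing set. This is not, on the face of it, the local model structure on $\Spsym{Y}$, which is by definition the localization at $S_{\Spsym{Y}}$, built from cofibrant replacements of (co)domains of generating cofibrations of the categories $\cS_{Y(\bld m)}$. To finish your argument you must show these two localizations of $\Spsym{Y}$ coincide — e.g., that they have the same local objects, which one can check by combining the adjunction isomorphism on homotopy function complexes with Dugger's detection lemma and the fact that $f(\bld m)^*$ preserves and reflects weak equivalences between fibrant objects when $f(\bld m)$ is a weak equivalence. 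This step is true but is precisely where the hypothesis that $f$ is a level equivalence enters on the localization side, and it is absent from your proposal. The paper avoids the issue entirely by working directly in the already-constructed local model structures and checking the derived unit and counit (via Hovey's criterion): the derived counit reduces levelwise to the counit of the Quillen equivalences $(f(\bld m)_!,f(\bld m)^*)$, and for the derived unit the key observation — which is the idea missing from your write-up — is that if $(E,X)$ is locally \emph{bifibrant}, then a level fibrant replacement of $f_!(E,X)$ is already locally fibrant, so the local derived unit can be computed levelwise after all. Your worry about whether level fibrant replacements can be performed levelwise is, by contrast, a non-issue: any fibrant replacement in the level model structure is automatically a levelwise fibrant replacement, since fibrations and weak equivalences there are defined levelwise and cofibrations are levelwise cofibrations. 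The genuine difficulty sits with the \emph{local} fibrant replacement, which your proposal does not engage with.

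Finally, your suggested fallback of deducing the statement from the integral model structure of Harpaz--Prasma is circular in the paper's logical order: the existence of the integral local model structure (Theorem~\ref{thm:local-Grothendieck-construction}) takes this lemma, together with Corollaries~\ref{cor:base-change-local-Q-adjunction} and~\ref{cor:acy-cof-fib-on-SpsymX}, as input for verifying the proper relative pseudofunctor conditions, so it cannot be used to prove the lemma.
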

\begin{proof}
  An adjunction argument shows that the cobase changes
  $f_!\colon \Spsym{X} \to \Spsym{Y}$ and
  $f_!\colon \cS_{X(\bld{m})} \to \cS_{Y(\bld{m})}$ commute with the
  free functors. Since the standard generating cofibrations for
  $\cS_{X(\bld{m})}$ have cofibrant
  domains~\cite{Hirschhorn_over_under}, it follows from
  Corollary~\ref{cor:alpha-Z-weak-equivalence} and~\cite[Proposition
  3.3.18(1)]{Hirschhorn_model} that $f_!$ is left Quillen with respect
  to the local model structures. For the Quillen equivalence
  statement, it is by~\cite[Proposition 1.1.13]{Hovey_model}
  sufficient that the derived unit and counit of the adjunction
  $(f_!,f^*)$ are natural weak equivalences. For the derived counit,
  the claim follows because $(f_!,f^*)$ is a Quillen equivalence in
  all (resp.\ all positive) levels. For the derived unit, it is
  sufficient to show that $(E,X) \to f^*(f_!(E,X)^{\mathrm{loc-fib}})$
  is a weak equivalence when $(E,X)$ is both cofibrant and fibrant in
  the local model structure. The fibrancy condition implies that a
  level fibrant replacement of $f_!(E,X)$ is already a fibrant
  replacement in the local model structure, and so the map in question
  is an absolute (resp.\ a positive) level equivalence because
  $(f_!,f^*)$ is a Quillen equivalence in all (resp.\ all positive)
  levels.
\end{proof}

The less obvious result that $(f_!,f^*)$ is already a Quillen
equivalence if $f$ is an $\cI$-equivalence will be shown in
Corollary~\ref{cor:base-change-local-Q-adjunction} below.

\begin{lemma}
  The absolute and positive local model structures on $\Spsym{X}$ are
  simplicial.
\end{lemma}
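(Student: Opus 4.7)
The plan is to derive this as a direct consequence of Hirschhorn's left Bousfield localization theorem. By Proposition~\ref{prop:SpsymX-simplicial}, the absolute level model structure on $\Spsym{X}$ is a simplicial model category, with simplicial tensor $Q \tensor (E,X) = F_{\bld{0}}^{\Spsym{\cR}}(Q_+) \barsm (E,X)$. The same argument, together with Proposition~\ref{prop:barsm-pushout-product-level} in the positive case, shows that the positive level model structure on $\Spsym{X}$ is simplicial with the same tensor.

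Next I would recall that both the absolute and positive local model structures on $\Spsym{X}$ are by definition the left Bousfield localizations of the corresponding level model structures at $S_{\cC}$ and $S_{\cC}^+$ respectively, where $\cC$ is the Quillen $\cI$-category~\eqref{eq:Quillen-I-cat-for-SpsymX}. The proof of Proposition~\ref{prop:existence-local-model-structure} already verifies that these level model structures on $\Spsym{X}$ are cellular and left proper, which are the hypotheses needed to form the left Bousfield localization.

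The conclusion then follows from~\cite[Theorem 4.1.1]{Hirschhorn_model}, which asserts that the left Bousfield localization of a cellular, left proper, simplicial model category at a set of maps is again a cellular, left proper, simplicial model category with the same simplicial action. Hence the simplicial structure on the level model structure descends to a simplicial structure on the local model structure; in particular, the tensor $Q \tensor -$ still satisfies the pushout product axiom against the local cofibrations and local acyclic cofibrations. There is no real obstacle here beyond verifying the hypotheses of Hirschhorn's theorem, all of which have been established in earlier parts of the paper.
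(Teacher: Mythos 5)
Your proposal is correct and follows exactly the paper's argument: the paper's proof simply cites Proposition~\ref{prop:SpsymX-simplicial} (the level model structures are simplicial) together with~\cite[Theorem 4.1.1(4)]{Hirschhorn_model}, whose hypotheses of cellularity and left properness were already checked in Proposition~\ref{prop:existence-local-model-structure}. You spell out the same steps in slightly more detail, but there is no difference in substance.
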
 
\begin{proof}
  This follows from Proposition~\ref{prop:SpsymX-simplicial}
  and~\cite[Theorem 4.1.1(4)]{Hirschhorn_model}.
\end{proof}
\begin{proposition}\label{prop:SpsymX-stable}
The absolute and positive local model structures on $\Spsym{X}$ are stable. 
\end{proposition}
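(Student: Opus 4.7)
The category $\Spsym{X}$ is pointed: its zero object is $\bS^{\cI}_b[X]$, whose level $\bld{m}$ is the retractive space $(X(\bld{m}),X(\bld{m}))$ by~\eqref{eq:bsIb-and-bSIt-explicit}. By Proposition~\ref{prop:SpsymX-simplicial} the local model structures are simplicial, so the suspension $\Sigma = (S^1,*)\barsm -$ is a left Quillen endofunctor with right adjoint $\Omega = \Hom_{\cR}((S^1,*),-)$, both acting levelwise. Stability amounts to showing that $\mathbb{L}\Sigma$ is an equivalence of $\mathrm{Ho}(\Spsym{X})$.

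My plan is to adapt the classical argument for stability of symmetric spectra~\cite[Theorem~8.11]{Hovey_symmetric-general} to the relative setting, where the base $\cI$-space is fixed while the per-level ambient categories $\cS_{X(\bld{m})}$ vary with $\bld{m}$. The key input is Lemma~\ref{lem:fibrant-in-local}, which characterises local fibrant objects $(F,X)$ by the requirement that all adjoint structure maps $(F,X)(\bld{m})\to \Hom_{\cR}((S^{\bld{n}-\alpha},*),(F,X)(\bld{n}))$ are weak equivalences in $\cS_{X(\bld{m})}$ for $\alpha\colon \bld{m}\to\bld{n}$ in $\cI$ (respectively $\cI^+$). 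Specialising to the case $|\bld{n}-\alpha|=1$ yields the relative $\Omega$-spectrum condition $(F,X)(\bld{m})\simeq \Omega(F,X)(\bld{m}\concat\bld{1})$ internally in $\cS_{X(\bld{m})}$ after pullback along the structure map $X(\bld{m})\to X(\bld{m}\concat\bld{1})$.

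Concretely, I would proceed in two steps. First, verify that $\Sigma$ is left Quillen on the local model structure. At the level stage this is immediate from the pushout-product axiom and the cofibrancy of $(S^1,*)$; descent to the Bousfield localisation follows because $\Sigma$ carries each localising map $\alpha_Z$ of~\eqref{eq:maps-we-are-localizing-at} to the map of the same form with $Z$ replaced by $Z\barsm(S^1,*)$, which is still a local weak equivalence by Corollary~\ref{cor:alpha-Z-weak-equivalence}. Second, show that for every cofibrant $(E,X)$ the derived unit $(E,X)\to\Omega(\Sigma(E,X))^{\mathrm{loc\text{-}fib}}$ is a weak equivalence, by constructing an explicit local fibrant replacement of $\Sigma(E,X)$ via a Bousfield--Friedlander style spectrification: after a levelwise fibrant replacement, set the $\bld{m}$th level of the replacement to $\colim_k \Omega^{k}(-)(\bld{m}\concat\bld{k})$, with the iterated loops pulled back to $\cS_{X(\bld{m})}$ along the structure maps $X(\bld{m})\to X(\bld{m}\concat\bld{k})$ before taking the colimit. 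Applying $\Omega\Sigma$ then amounts to a single shift in the telescope, which is absorbed in the colimit.

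The main obstacle is the careful bookkeeping around the varying base spaces. The iterated loops $\Omega^{k}(-)(\bld{m}\concat\bld{k})$ live in $\cS_{X(\bld{m}\concat\bld{k})}$ and must be base-changed to $\cS_{X(\bld{m})}$ before the filtered colimit can be formed. Lemma~\ref{lem:extension-restriction-R-homotopical}(i) guarantees that these base changes are right Quillen, so they preserve fibrant objects and preserve weak equivalences between fibrant objects; this allows the standard Bousfield--Friedlander telescope argument to go through fibrewise in each $\cS_{X(\bld{m})}$. Once this fibrewise spectrification is controlled, the derived unit unwinds into a shift map of telescopes that is manifestly a weak equivalence, completing the proof of stability in both the absolute and positive cases (the positive case differs only in that the telescope starts at $\bld{m}$ with $|\bld{m}|\geq 1$).
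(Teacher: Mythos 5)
Your first step (suspension is left Quillen for the local model structures) is fine and agrees with the paper, but the second step contains a genuine gap that sinks the argument. You propose to produce a local fibrant replacement of $\Sigma(E,X)$ by the Bousfield--Friedlander telescope $\colim_k \Omega^{k}(-)(\bld{m}\concat\bld{k})$ (with base changes to $\cS_{X(\bld{m})}$), and then to absorb $\Omega\Sigma$ as a shift of the telescope. This works for sequential spectra but fails for symmetric spectrum objects: the canonical map from a symmetric spectrum to its naive spectrification is \emph{not} in general a local (stable) equivalence, because stable equivalences of symmetric spectra are not detected by naive homotopy groups. The standard counterexample already appears over a point: $F_{\bld{1}}(S^1)\to \bS$ is a stable equivalence, yet the telescope computes the naive homotopy groups of $F_{\bld{1}}(S^1)$, which differ from $\pi_*\bS$. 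So although the telescope does satisfy the fiberwise $\Omega$-spectrum condition along the inclusions $\bld{m}\to\bld{m}\concat\bld{1}$, it is not a fibrant replacement in the local model structure (non-semistable objects), and the ``derived unit'' you compute with it is not the actual derived unit. In addition, even granting the unit, you would still need to handle the derived counit (or a conservativity statement for $\mathbb{R}\Omega$), which the proposal does not address.

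The paper avoids the telescope entirely and instead runs the shift-functor argument: it introduces $D = i_!\,(F^{\Spsym{\cR}}_{\bld{1}}(S^0)\barsm -)$, where $i\colon F^{\cS^{\cI}}_{\bld{1}}(*)\boxtimes X \to X$ is the $\cI$-equivalence induced by $\bld{0}\to\bld{1}$, shows that both composites of $D$ with the suspension are isomorphic to $I = i_!\,(F^{\Spsym{\cR}}_{\bld{1}}(S^1)\barsm -)$, checks that $D$ and $I$ are left Quillen for the local structures by evaluating on free objects (using Lemmas~\ref{lem:free-cR-and-free-relative-X-pushout} and~\ref{lem:cobase-change-spym-lax-monoidal}), and then proves that the natural transformation $I\to\id$ is a local equivalence by cell induction, reducing to the maps $(\iota\concat\bld{m})_Z$ from~\eqref{eq:maps-we-are-localizing-at}, which are local equivalences by construction of the localization. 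If you want to keep a spectrification-style argument you would have to first reduce to semistable objects, which is essentially as much work; adapting the shift-functor argument is the cleaner route here.
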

\begin{proof}
  Since the positive case is analogous, we only discuss the absolute
  case.
 
  The suspension on $\Spsym{X}$ is the based tensor with $S^1$, which
  is isomorphic to the functor
  $F^{\Spsym{\cR}}_{\bld{0}}(S^1) \barsm - \colon \Spsym{X} \to
  \Spsym{X}$
  arising from restricting the $\barsm$-product on $\Spsym{\cR}$. The
  latter functor is a left adjoint since $\Spsym{\cR}$ is closed
  monoidal, and left Quillen by the previous lemma. We need to show
  that it induces an equivalence on homotopy categories. The inclusion
  $\iota\colon \bld{0} \to \bld{1}$ induces a map
  $i \colon F^{\cS^{\cI}}_{\bld{1}}(*) \boxtimes X \to
  F^{\cS^{\cI}}_{\bld{0}}(*)\boxtimes X \xrightarrow{\iso} X $. This $i$ is an
  $\cI$-equivalence by~\cite[Proposition 8.2]{Sagave-S_diagram}. We
  consider the composite
  \[D = i_!  (F^{\Spsym{\cR}}_{\bld{1}}(S^0) \barsm - ) \colon
  \Spsym{X} \to \Spsym{X} \ .\]
  Lemma~\ref{lem:cobase-change-spym-lax-monoidal} implies that both
  composites of $F^{\Spsym{\cR}}_{\bld{0}}(S^1) \barsm -$ and $D$ are
  isomorphic to the functor
  $I = i_!  (F^{\Spsym{\cR}}_{\bld{1}}(S^1) \barsm - ) \colon
  \Spsym{X} \to \Spsym{X} $.
  The functors $D$ and $I$ are left adjoint since the symmetric
  monoidal structure of $\Spsym{\cR}$ is closed. They are left Quillen
  with respect to the absolute level model structures by
  Lemma~\ref{lem:base-change-Q-adj-for-SpsymX-level},
  Corollary~\ref{cor:compatible-level-model-str}, and
  Proposition~\ref{prop:barsm-pushout-product-level}. To see that they
  are left Quillen with respect to the absolute local model structures, we
  notice that Lemmas~\ref{lem:free-cR-and-free-relative-X-pushout} and
  \ref{lem:cobase-change-spym-lax-monoidal} as well as the
  isomorphism~\eqref{eq:barsm-of-free} give rise to natural
  isomorphisms
  \begin{equation}\label{eq:ID-identification} \begin{split} & i_! (F_{\bld{1}}^{\Spsym{\cR}}(S^k) \barsm F^{\Spsym{X}}_{\bld{m}}(Z)) \, \iso\,  \left(F^{S^{\cI}}_{\bld{1}\concat \bld{m}}(X(\bld{m})) \to X\right)_!  F^{\Spsym{\cR}}_{\bld{1}\concat \bld{m}}(S^k \barsm Z) \\
    \iso \;& F^{\Spsym{X}}_{\bld{1}\concat \bld{m}}\left(
      ((\iota\concat \bld{m})_* \colon X(\bld{m}) \to X(\bld{1}\concat
      \bld{m}))_!(S^k\barsm Z)\right)\ .
\end{split}
\end{equation}
Hence the functor
$i_! (F_{\bld{1}}^{\Spsym{\cR}}(S^k) \barsm -)$
sends the maps $\alpha_Z$ used to form the local model structures
to
$(1\concat \alpha)_Y$ with $Y =  ((\iota\concat \bld{m})_* \colon X(\bld{m})
  \to X(\bld{1}\concat \bld{m}))_!(S^k\barsm Z)$, 
and $(1\concat \alpha)_Y$ is a local equivalence by
Corollary~\ref{cor:alpha-Z-weak-equivalence}. Combining this
identification for $k=0$ and $k=1$ with~\cite[Proposition
3.3.18(1)]{Hirschhorn_model} implies that $D$ and $I$ are left
Quillen with respect to the absolute local model structures.  

Since $F_{\bld{1}}^{\cS^{\cI}}\!\!(*) \to F_{\bld{0}}^{\cS^{\cI}}\!\!(*)$ is the map of base spaces underlying $\iota_{S^1}
\colon F^{\Spsym{\cR}}_{\bld{1}}(S^1) \to
F^{\Spsym{\cR}}_{\bld{0}}(S^0) = \bS$, 
the latter map and the identification~\eqref{eq:base-change-cobase-change} induce a natural transformation $I \to \id$ of endofunctors of $\Spsym{X}$. 
Since both
functors are left Quillen, a cell induction argument reduces the claim
to showing $I \to \id$ is a local equivalence when evaluated on the
domains and codomains of generating cofibrations. To see this, we note
that the isomorphism~\eqref{eq:ID-identification} implies the
evaluation of $I \to \id$ on $F_{\bld{m}}^{\Spsym{X}}(Z)$ is
isomorphic to the map $(\iota\concat \bld{m})_{Z}$, 
which is a local
equivalence by construction.
\end{proof}
We now consider the diagram 
\[
\xymatrix@-1pc{
\cS^{\cI}_X  \ar@<.4ex>[rrr]\ar@<-.4ex>[d] &&& \Spsym{X}  \ar@<.4ex>[lll]  \ar@<-.4ex>[d] \\ 
\Spsym{}(\cS^{\cI}_X,S^1)  \ar@<.4ex>[rrr]  \ar@<-.4ex>[u]&&& \Spsym{}(\Spsym{X},S^1)  \ar@<.4ex>[lll]\ar@<-.4ex>[u]
}
\]
where the vertical adjunctions are the stabilizations~\cite[Theorem
9.1]{Hovey_symmetric-general} and the horizontal left adjoints are
given by $\bS^{\cI}_{\cR}$ and its induced functor on symmetric spectrum
objects.  The left adjoints and the right adjoints commute up to
isomorphism. 
\begin{lemma}\label{lem:stabilization-quillen-equiv}
  With respect to the absolute and positive local model structures,
  the two adjunctions
  $\Spsym{X}\rightleftarrows \Spsym{}(\Spsym{X},S^1) $ and
  $\Spsym{}(\cS^{\cI}_X,S^1) \rightleftarrows \Spsym{}(\Spsym{X},S^1)$
  are Quillen equivalences. In particular, $\bS^{\cI}_{\cR}$ models
  the stabilization of $\cS^{\cI}_X$. 
\end{lemma}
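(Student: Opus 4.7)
The plan splits the lemma into its two claimed Quillen equivalences, both of which I expect to follow from Hovey's general theory of stabilization \cite{Hovey_symmetric-general}, with the first being essentially a formal consequence and the second being the substantive part.

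For the right vertical adjunction $\Spsym{X} \rightleftarrows \Spsym{}(\Spsym{X},S^1)$, I would verify all hypotheses needed to apply Hovey's criterion that stabilization by an already-invertible suspension is a Quillen equivalence. Namely, $\Spsym{X}$ with its local model structure is left proper and cellular by Proposition~\ref{prop:existence-local-model-structure}, simplicial by Proposition~\ref{prop:SpsymX-simplicial}, and—crucially—stable by Proposition~\ref{prop:SpsymX-stable}, which says $-\barsm S^1$ is already a left Quillen equivalence. Under these hypotheses Hovey's theorem yields the Quillen equivalence directly.

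For the bottom horizontal adjunction, I would first extend $\bS^{\cI}_{\cR}$ (restricted to $X$-relative categories via Lemma~\ref{lem:base-change-local-Q-adjunction}) to a Quillen adjunction between symmetric-spectrum-object categories. This uses that $\bS^{\cI}_{\cR}$ is left Quillen between the local model structures by Lemma~\ref{lem:Q-adjunctions-on-local-model-str} and that it strictly commutes with suspension by $S^1$, which is a consequence of its strong symmetric monoidality (Lemma~\ref{lem:bSIt-symm-mon}). Applying $\bS^{\cI}_{\cR}$ level-by-level then yields the Quillen adjunction, and both the square of left adjoints and that of right adjoints commute up to natural isomorphism.

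The hard part will be upgrading this to a Quillen equivalence: the plan is to check the derived unit and counit by using the commutativity of the square together with the right vertical Quillen equivalence, which reduces the problem to comparing the two stabilization procedures, namely the intrinsic one $\cS^{\cI}_X \to \Spsym{}(\cS^{\cI}_X,S^1)$ built by Hovey's construction versus the ``preassembled'' stabilization $\bS^{\cI}_{\cR}\colon \cS^{\cI}_X \to \Spsym{X}$ built from the section category of our Quillen $\cI$-category. The main obstacle is matching these two procedures, which morally reflects the universal property of $\Spsym{}(\cS^{\cI}_X,S^1)$ as the universal stable model category receiving a left Quillen functor from $\cS^{\cI}_X$ that inverts $-\sm S^1$: since $\Spsym{X}$ is stable and $\bS^{\cI}_{\cR}$ commutes with suspension, there is a canonical left Quillen factorization $\Spsym{}(\cS^{\cI}_X,S^1) \to \Spsym{X}$, and identifying this factorization with the composite of the bottom horizontal and the right vertical inverse will give what is needed. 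The ``in particular'' statement then comes for free by composing the two Quillen equivalences, since the composite $\cS^{\cI}_X \to \Spsym{}(\cS^{\cI}_X,S^1) \xrightarrow{\simeq} \Spsym{}(\Spsym{X},S^1) \xrightarrow{\simeq} \Spsym{X}$ is identified with $\bS^{\cI}_{\cR}$ via the commutative square.
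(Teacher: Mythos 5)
Your treatment of the first adjunction $\Spsym{X}\rightleftarrows \Spsym{}(\Spsym{X},S^1)$ matches the paper exactly: Proposition~\ref{prop:SpsymX-stable} together with Hovey's theorem that stabilizing a model category on which suspension is already a Quillen equivalence yields a Quillen equivalence.

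For the second adjunction there is a genuine gap. Constructing the Quillen adjunction by applying $\bS^{\cI}_{\cR}$ levelwise and checking the commuting squares is fine, but your mechanism for upgrading it to a Quillen equivalence does not work. The universal property of $\Spsym{}(\cS^{\cI}_X,S^1)$ as the universal stable recipient of a suspension-inverting left Quillen functor from $\cS^{\cI}_X$ only yields the \emph{existence} of a left Quillen factorization $\Spsym{}(\cS^{\cI}_X,S^1)\to \Spsym{X}$; it says nothing about that factorization being an equivalence (every stable target receiving such a functor admits one — the terminal model category does, for instance). The claim that this particular factorization is an equivalence is precisely the ``in particular'' clause of the lemma, so your plan is circular at the decisive step: ``identifying the factorization with the composite'' presupposes what is to be proved. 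The paper closes this gap by a concrete argument: it identifies both $\Spsym{}(\cS^{\cI}_X,S^1)$ and $\Spsym{}(\Spsym{X},S^1)$ with section categories of the \emph{same} Quillen $\cI$-category $\bld{m}\mapsto \Spsym{}(\cS_{X(\bld{m})},S^1)$, equipped with two different systems of structure maps (differing by smashing with $S^{\bld{n}-\alpha}$), checks that the stable model structures correspond to the local model structures on these section categories, observes that the comparison adjunction is induced levelwise by $S^{\bld{m}}\barsm -$, which is a Quillen equivalence on each fiber by stability, and then runs the levelwise-to-global argument from the proof of Lemma~\ref{lem:base-change-local-Q-adjunction}. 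Some input of this concrete kind is required to complete your proof.
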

\begin{proof}
  For $\Spsym{X}\rightleftarrows \Spsym{}(\Spsym{X},S^1) $, this
  follows from Proposition~\ref{prop:SpsymX-stable} and~\cite[Theorem
  9.1]{Hovey_symmetric-general}. For the second adjunction, we note
  that the category of symmetric spectrum objects in $\cS^{\cI}_X$ is
  equivalent to the section category of the Quillen $\cI$-category
  $\bld{m}\mapsto \Spsym{}(\cS_{X(\bld{m})},S^1)$ whose structure maps
  are induced by those discussed in
  Example~\ref{ex:pseudo-local}(iv). Inspecting the cofibrations and
  fibrant objects, it follows that the stable model structure on
  $\Spsym{}(\cS^{\cI}_X,S^1)$ corresponds to the local model structure
  associated with this Quillen $\cI$-category where the categories
  $\Spsym{}(\cS_{X(\bld{m})},S^1)$ are equipped with the stable model
  structure. Analogously, we can identify $\Spsym{}(\Spsym{X},S^1)$
  with the section category of
  $\bld{m}\mapsto \Spsym{}(\cS_{X(\bld{m})},S^1)$ where now the
  structure maps are the spectrifications of the structure
  maps~\eqref{eq:Quillen-I-cat-for-SpsymX} for
  $\Spsym{X}$. Again, the stable model structure corresponds to the
  local model structure on the section category. Under these
  identifications, the adjunction in question is induced in level
  $\bld{m}$ by the left adjoints
  $S^{\bld{m}} \barsm - \colon \Spsym{}(\cS_{X(\bld{m})},S^1) \to
  \Spsym{}(\cS_{X(\bld{m})},S^1)$
  in the way explained in Lemma~\ref{lem:adjunction-on-section-cat}.
  By stability, the latter functor participates in a Quillen
  equivalence, and the claim follows by a similar argument as in the
  proof of Lemma~\ref{lem:base-change-local-Q-adjunction}.
\end{proof}
\begin{remark}
The lemma implies that the model category $\Spsym{X}$ we are interested
in is also equivalent to $\Spsym{}(\cS^{\cI}_X,S^1)$. However, the latter
category is more complicated in that it has separate $\cI$- and spectrum directions, and it is less suited for the approach to Thom spectra in Section~\ref{subsec:T_R}
and the analysis of parametrized spectra carried out in \cite[Section~5]{HS-twisted}.
\end{remark}

\begin{corollary}\label{cor:base-change-local-Q-adjunction}
If $f\colon X \to Y$ is an $\cI$-equivalence, then
$f_!\colon \Spsym{X} \rightleftarrows \Spsym{Y}\colon f^*$ is a Quillen
equivalence with respect to the absolute and positive local model structures. 
\end{corollary}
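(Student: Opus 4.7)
My plan is to reduce the statement to the level equivalence case supplied by Lemma~\ref{lem:base-change-local-Q-adjunction} via a factorization argument, and then to handle the remaining case using the stabilization identification of Lemma~\ref{lem:stabilization-quillen-equiv}.

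First, I would factor $f\colon X \to Y$ in the $\cI$-model structure on $\cS^{\cI}$ as $X \xrightarrow{i} X' \xrightarrow{p} Y$ with $i$ an $\cI$-acyclic cofibration and $p$ an $\cI$-acyclic fibration. Since the $\cI$-model structure is a left Bousfield localization of the level model structure, the acyclic fibrations in the two structures coincide, so $p$ is in particular an absolute level acyclic fibration, and hence an absolute (and a fortiori positive) level equivalence. Lemma~\ref{lem:base-change-local-Q-adjunction} therefore provides a Quillen equivalence $(p_!, p^*)$ in both the absolute and the positive local model structures. As Quillen equivalences are closed under composition, it suffices to treat the case in which $f$ is an $\cI$-acyclic cofibration.

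For such $f$, I would transport the problem to the stabilizations $\Spsym{}(\cS^{\cI}_X, S^1)$ and $\Spsym{}(\cS^{\cI}_Y, S^1)$ via the vertical Quillen equivalences supplied by Lemma~\ref{lem:stabilization-quillen-equiv}. Under these identifications, the base change $(f_!, f^*)$ on relative symmetric spectra corresponds to the adjunction induced on spectrum objects by the unstable base change $f_!\colon \cS^{\cI}_X \rightleftarrows \cS^{\cI}_Y \colon f^*$; this compatibility follows from the fact that $f_!$ commutes up to natural isomorphism with $\bS^{\cI}_{\cR}$ and with the free functors entering both constructions (compare~\eqref{eq:base-change-cobase-change} and Lemma~\ref{lem:free-cR-and-free-relative-X-pushout}). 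The key input is then the standard fact, recorded for the $\cI$-model structure in Example~\ref{ex:pseudo-local}(iv), that an $\cI$-equivalence of base $\cI$-spaces $f\colon X \to Y$ induces a Quillen equivalence on the associated categories of objects over and under $X$ and $Y$. Finally, Hovey's construction of symmetric spectrum objects is homotopically functorial in the source category, so a Quillen equivalence of the unstable over-and-under categories lifts to a Quillen equivalence of the corresponding stabilized categories, completing the argument.

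The principal obstacle is the bookkeeping needed to pin down the compatibility of the base change adjunctions with the vertical Quillen equivalences of Lemma~\ref{lem:stabilization-quillen-equiv}, i.e., establishing that in the square
\[
\xymatrix@-.6pc{
\Spsym{X} \ar@<.4ex>[d] \ar[rr]^{f_!} && \Spsym{Y} \ar@<.4ex>[d]\\
\Spsym{}(\cS^{\cI}_X, S^1) \ar@<.4ex>[u] \ar[rr]^{f_!} && \Spsym{}(\cS^{\cI}_Y, S^1) \ar@<.4ex>[u]
}
\]
the two routes agree up to natural isomorphism on left adjoints. This amounts to carefully tracking how pushforward along $f$ interacts with the passage from relative $\cI$-spaces to symmetric spectrum objects thereof; once established, the rest of the argument is an assembly of known results.
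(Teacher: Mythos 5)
Your proposal is correct and follows essentially the same route as the paper: reduce to the unstable Quillen equivalence $\cS^{\cI}_X \rightleftarrows \cS^{\cI}_Y$ (coming from properness of the $\cI$-model structure, as recorded in Example~\ref{ex:pseudo-local}(iv)), lift it to the stabilizations via \cite[Theorem 9.3]{Hovey_symmetric-general}, and transfer back along the Quillen equivalences of Lemma~\ref{lem:stabilization-quillen-equiv} using the compatibility of the base-change adjunctions with these identifications (the paper phrases this last step as 2-out-of-3 for Quillen equivalences). The only difference is your initial factorization of $f$ into an $\cI$-acyclic cofibration followed by an $\cI$-acyclic fibration, which is harmless but unnecessary, since the stabilization argument applies verbatim to an arbitrary $\cI$-equivalence.
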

\begin{proof}
  We know from Lemma~\ref{lem:base-change-local-Q-adjunction} that
  $(f_!,f^*)$ is a Quillen adjunction. By properness of the
  $\cI$-model structures on $\cS^{\cI}$ (see~\cite[Proposition
  3.2]{Sagave-S_diagram}) and the discussion in
  Example~\ref{ex:pseudo-local} (iv), it follows that $f$ induces a
  Quillen equivalence $\cS^{\cI}_X \rightleftarrows
  \cS^{\cI}_Y$.
  By~\cite[Theorem 9.3]{Hovey_symmetric-general}, this Quillen
  equivalence induces a Quillen equivalence on the stabilization. The
  claim follows by the last lemma and 2-out-of-3 for Quillen
  equivalences.
\end{proof}
Let again $X_{h\cI} = \hocolim_{\cI}X$ denote the Bousfield--Kan
homotopy colimit of an $\cI$-space $X\colon \cI \to \cS$ and let
$\overline{X}$ be the bar resolution of $X$, that is, the homotopy
left Kan extension of $X$ along $\mathrm{id}_{\cI}$.  Then the adjoint
of the isomorphism $\colim_{\cI}(\overline{X}) \iso X_{h\cI}$ and the
canonical map $\overline{X} \to X$ provide a zig-zag of
$\cI$-equivalences $\const_{\cI}X_{h\cI} \ot \overline{X} \to X$ (see
e.g.~\cite[\S 4]{Schlichtkrull_Thom-symmetric}).  Using this, the
previous corollary implies: 
\begin{corollary}\label{cor:SpsymX-vs-SpsymXhI}
Let $X$ be an $\cI$-space. Then there is a chain of Quillen equivalences
relating $\Spsym{X}$ and $\Spsym{X_{h\cI}}$ with the absolute local model structures. \qed
\end{corollary}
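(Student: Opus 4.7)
The plan is to apply Corollary~\ref{cor:base-change-local-Q-adjunction} directly to the zig-zag $\const_{\cI}X_{h\cI} \ot \overline{X} \to X$ of $\cI$-equivalences explicitly displayed in the paragraph immediately preceding the statement. The existence of such a zig-zag is provided by the bar resolution $\overline{X}$, whose homotopy colimit is naturally isomorphic to $X_{h\cI}$ and which maps to $X$ by an $\cI$-equivalence, as recalled in the text.

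More concretely, I would argue as follows. Let $f \colon \overline{X} \to X$ and $g \colon \overline{X} \to \const_{\cI}X_{h\cI}$ denote the two $\cI$-equivalences in the zig-zag. Corollary~\ref{cor:base-change-local-Q-adjunction} yields Quillen equivalences
\[
f_! \colon \Spsym{\overline{X}} \rightleftarrows \Spsym{X} \colon f^*,
\qquad
g_! \colon \Spsym{\overline{X}} \rightleftarrows \Spsym{\const_{\cI}X_{h\cI}} \colon g^*,
\]
both with respect to the absolute local model structures. Composing these, one obtains the desired chain of Quillen equivalences relating $\Spsym{X}$ and $\Spsym{\const_{\cI}X_{h\cI}}$. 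Finally, I would invoke the convention, recorded earlier in the paper, that $\Spsym{X_{h\cI}}$ is defined to mean $\Spsym{\const_{\cI}X_{h\cI}}$ for the space $X_{h\cI}$, so the two notations literally denote the same category.

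There is essentially no obstacle here: both inputs (the zig-zag of $\cI$-equivalences and the Quillen equivalence criterion) are already in place. The only thing worth a brief comment in the write-up is the identification $\Spsym{X_{h\cI}} = \Spsym{\const_{\cI}X_{h\cI}}$, so that the target of the chain matches the statement of the corollary, and possibly a reminder that the zig-zag goes through $\overline{X}$ rather than being a single map.
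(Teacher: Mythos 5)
Your argument is correct and coincides with the paper's own proof: the zig-zag $\const_{\cI}X_{h\cI} \ot \overline{X} \to X$ of $\cI$-equivalences through the bar resolution, combined with Corollary~\ref{cor:base-change-local-Q-adjunction} applied to each leg, is exactly what the text does. The remark about the identification $\Spsym{X_{h\cI}} = \Spsym{\const_{\cI}X_{h\cI}}$ is a fine point to make explicit but adds nothing beyond the paper's conventions.
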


\section{Comparison with the local integral model structure}\label{compmodel}
Our next aim is to prove a version of
Proposition~\ref{prop:level-Grothendieck-construction} for the local
model structures, i.e., we show that the $\cI$-model structures on
$\cS^{\cI}$ discussed in Example~\ref{ex:pseudo-local}(i) and the
local model structures on the $\Spsym{X}$ assemble to the local model
structures on $\SpsymR$.

\begin{lemma}\label{lem:equivalent-fibrancy-conditions}
  Let $X$ be an absolute (resp.\ a positive) $\cI$-fibrant $\cI$-space
  and let $(E,X)$ be an object in $\Spsym{X}$. Then $(E,X)$ is fibrant
  in the absolute (resp.\ positive) local model structure on
  $\Spsym{\cR}$ if and only if it is fibrant in the absolute
  (resp.\ positive) local model structure on $\Spsym{X}$.
\end{lemma}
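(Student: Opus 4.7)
The plan is to compare both notions of fibrancy by unpacking them in terms of the single test square~\eqref{eq:fibrant-in-SpsymR-testsquare} and exploiting the hypothesis that $X$ is $\cI$-fibrant.

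First I would check that the level fibrancy conditions agree. The retractive space $(E_m, X(\bld{m}))$ is fibrant in $\cS_{\cR}$ precisely when $X(\bld{m}) \to *$ (automatic) and $E_m \to X(\bld{m})$ are fibrations in $\cS$, and it is fibrant in $\cS_{X(\bld{m})}$ precisely when $E_m \to X(\bld{m})$ is a fibration in $\cS$. Hence $(E,X)$ is absolute (resp.\ positive) level fibrant in $\Spsym{\cR}$ if and only if it is so in $\Spsym{X}$, both conditions reducing to $E_m \to X(\bld{m})$ being a fibration for every $m$ (resp.\ every $m \geq 1$).

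For the local part, I would invoke the two characterizations already recorded in the excerpt: fibrancy in the local model structure on $\Spsym{\cR}$ asks, beyond level fibrancy, that both horizontal maps in~\eqref{eq:fibrant-in-SpsymR-testsquare} be weak equivalences for every $\alpha$ in $\cI$ (resp.\ $\cI^+$), while fibrancy in the local model structure on $\Spsym{X}$ asks that the same square be homotopy cartesian for every such $\alpha$. By hypothesis $X$ is absolute (resp.\ positive) $\cI$-fibrant, so the bottom horizontal $X(\alpha)\colon X(\bld{m}) \to X(\bld{n})$ of the square is a weak equivalence for all relevant $\alpha$.

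The result then follows from the standard fact that in a commutative square with one horizontal row a weak equivalence, being homotopy cartesian is equivalent to the other horizontal row being a weak equivalence (this is a direct consequence of the homotopy invariance of the homotopy pullback). Applied to our square with the bottom row a weak equivalence, the two local conditions collapse to the same requirement on the top row, which combined with the matching level fibrancies yields the lemma. There is no serious obstacle: the argument is essentially a comparison of definitions, and the only external input is the routine homotopy pullback fact just mentioned.
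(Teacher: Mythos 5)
Your argument is essentially the paper's own proof: both unpack local fibrancy in $\Spsym{\cR}$ as ``level fibrant plus both horizontals of~\eqref{eq:fibrant-in-SpsymR-testsquare} are weak equivalences'' and in $\Spsym{X}$ as ``level fibrant plus the square is homotopy cartesian,'' and then use the $\cI$-fibrancy of $X$ to identify the two. One small caveat: the remark that $X(\bld{m})\to *$ is ``automatic'' only holds over topological spaces, while over simplicial sets the fibrancy of $X(\bld{m})$ is not automatic but is supplied by the hypothesis that $X$ is (absolute resp.\ positive) $\cI$-fibrant, so the comparison of level fibrancies still goes through.
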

\begin{proof} An object $(E,X)$ is absolute local fibrant in
  $\Spsym{\cR}$ if and only if it is absolute level fibrant and the
  horizontal maps in the
  square~\eqref{eq:fibrant-in-SpsymR-testsquare} are weak equivalences
  for all $\alpha$ in $\cI$. Under the assumptions on $X$, this holds
  if and only if $(E,X)$ is absolute level fibrant in $\Spsym{X}$
  and~\eqref{eq:fibrant-in-SpsymR-testsquare} it homotopy cartesian
  for all $\alpha$ in $\cI$. The positive case is analogous.
\end{proof}

\begin{lemma}\label{lem:SpsymK-SpsymR-maps-between-free}
  Let $K$ be a cofibrant space, let $(U,K)$ be cofibrant in $\cS_K$, and
  let $\alpha \colon \bld{m} \to \bld{n}$ be a map in $\cI$. Then
  $\alpha_{(U,K)}\colon F^{\Spsym{K}}_{\bld{n}}((U,K)\barsm S^{\bld{n}-\alpha}) \to
  F^{\Spsym{K}}_{\bld{m}}(U,K)$
  is a local weak equivalence in
  $\Spsym{\cR}$.
\end{lemma}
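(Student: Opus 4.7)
The plan is to compare $\alpha_{(U,K)}$ with the analogous map in the Quillen $\cI$-category underlying $\Spsym\cR$ itself (Definition~\ref{def:spsymR}), where Corollary~\ref{cor:alpha-Z-weak-equivalence} applies directly. Since $K$ is cofibrant in $\cS$ and $(U,K)$ is cofibrant in $\cS_K$, the object $(U,K)$ is cofibrant in $\cS_\cR$. Applying Corollary~\ref{cor:alpha-Z-weak-equivalence} to the Quillen $\cI$-category $\bld m \mapsto \cS_\cR$ gives that
\[\alpha^\cR_{(U,K)}\colon F^{\Spsym\cR}_{\bld n}((U,K)\barsm S^{\bld n-\alpha})\longrightarrow F^{\Spsym\cR}_{\bld m}(U,K)\]
is a local equivalence in $\Spsym\cR$. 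This handles the ``$\Spsym\cR$-version'' of the statement; what remains is to transfer the conclusion to the $\Spsym K$-version.

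For the transfer, I would apply Lemma~\ref{lem:free-cR-and-free-relative-X-pushout} with $X=\const_\cI K$ and $\widetilde f=\id_K$: the counit $\eta_m\colon F^{\cS^\cI}_{\bld m}(K)\to\const_\cI K$ of the free-evaluation adjunction on $\cS^\cI$ then satisfies $F^{\Spsym K}_{\bld m}(U,K)\iso(\eta_m)_!F^{\Spsym\cR}_{\bld m}(U,K)$, and analogously for the $\bld n$-th term via $\eta_n$. These identifications assemble into a commutative square in $\Spsym\cR$ with top edge $\alpha^\cR_{(U,K)}$, bottom edge $\alpha_{(U,K)}$, and vertical edges the canonical cobase-change maps $E\to(\eta_m)_!E$ and $E\to(\eta_n)_!E$. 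By two-out-of-three it suffices to show these two vertical maps are local equivalences in $\Spsym\cR$. The key observation here is that $\eta_m$ is an $\cI$-equivalence, because the category $\bld m\!\downarrow\!\cI$ has $\id_{\bld m}$ as initial object and hence contractible nerve, so that $(F^{\cS^\cI}_{\bld m}(K))_{h\cI}\simeq K$.

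To handle each vertical map $E\to f_!E$ I would test against an arbitrary local fibrant $(G,Z)\in\Spsym\cR$: by Lemma~\ref{lem:equivalent-fibrancy-conditions} the base $Z$ is then $\cI$-fibrant, and by Lemma~\ref{lem:SpsymR-Grothendieck-construction} the relevant hom-sets in $\Spsym\cR$ decompose over base maps into $Z$, reducing via the free-evaluation adjunction in $\Spsym\cR$ and the adjunction $(g_{\bld m})_!\dashv(g_{\bld m})^*$ for retractive spaces to a mapping problem in $\cS_K$ involving the pullback along $g_{\bld m}\colon K\to Z(\bld m)$ of the adjoint structure map of $(G,Z)$. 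Since that structure map is a weak equivalence between fibrant retractive spaces, right-properness of $\cS$ (and hence of $\cS_\cR$) makes the pulled-back map a weak equivalence in $\cS_K$, and cofibrancy of $(U,K)$ upgrades this to a weak equivalence of mapping spaces. The main obstacle is promoting this calculation from hom-sets to homotopy function complexes compatibly with all the adjunctions and the Grothendieck-construction description — the simplicial structure of Proposition~\ref{prop:SpsymX-simplicial} (inherited fiberwise by $\Spsym\cR$) and a framing argument should suffice. Care is required not to invoke the integral-equals-local comparison of Theorem~\ref{thm:integral-introduction} circularly, as the present lemma is a building block for that comparison.
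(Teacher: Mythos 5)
Your first step and your reduction are essentially the paper's: the paper also begins by applying Corollary~\ref{cor:alpha-Z-weak-equivalence} to the constant Quillen $\cI$-category with value $\cS_{\cR}$ to see that the $\Spsym{\cR}$-version of the map is a local equivalence, and then uses Lemma~\ref{lem:free-cR-and-free-relative-X-pushout} together with~\eqref{eq:base-change-cobase-change} to realize $F^{\Spsym{K}}_{\bld{k}}(-)$ as the pushout of
\[ F^{\Spsym{\cR}}_{\bld{k}}(-) \longleftarrow \bS^{\cI}_b[F^{\cS^{\cI}}_{\bld{k}}(K)] \longrightarrow \bS^{\cI}_b[\const_{\cI}K], \qquad \bld{k}\in\{\bld{m},\bld{n}\},\]
which is exactly your $(\eta_{\bld{k}})_!$-description. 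Your observation that the relevant maps of free $\cI$-spaces are $\cI$-equivalences is also correct.

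The gap is the step you flag yourself: proving that $E \to (\eta_{\bld{k}})_!E$ is a local equivalence. The mapping-space argument you sketch is the wrong tool and would be hard to close: testing against local objects requires homotopy function complexes computed via (co)simplicial resolutions of the sources, and $(\eta_{\bld{k}})_!E = F^{\Spsym{K}}_{\bld{k}}(\cdots)$ is not obviously cofibrant in $\Spsym{\cR}$ (the map $\bS^{\cI}_b[\eta_{\bld{k}}]$ being pushed out is not a cofibration), so after replacing resolutions the clean decomposition of hom-sets over base maps is lost; making the free-functor adjunctions and the simplicial structure compatible with those resolutions is essentially as much work as the surrounding results of Section~\ref{compmodel}. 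The intended argument is a one-liner using machinery already in place: $\Spsym{\cR}$ is left proper by Proposition~\ref{prop:existence-local-model-structure}; the maps $\bS^{\cI}_b[F^{\cS^{\cI}}_{\bld{k}}(K)] \iso F^{\Spsym{\cR}}_{\bld{k}}(K,K) \to F^{\Spsym{\cR}}_{\bld{k}}(\cdots)$ are cofibrations because $(U,K)$ and $(U,K)\barsm S^{\bld{n}-\alpha}$ are cofibrant; and the middle comparison map $\bS^{\cI}_b[F^{\cS^{\cI}}_{\bld{n}}(K)]\to\bS^{\cI}_b[F^{\cS^{\cI}}_{\bld{m}}(K)]$ is a local equivalence because both $\cI$-spaces are cofibrant ($K$ is cofibrant) and $\bS^{\cI}_b$ is left Quillen by Lemma~\ref{lem:Q-adjunctions-on-local-model-str}. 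The gluing lemma for left proper model categories applied to the two rows above then gives the claim directly. Alternatively, in your formulation, $\bS^{\cI}_b[\eta_{\bld{k}}]$ is a local equivalence (factor through a cofibrant replacement of $\const_{\cI}K$ and use that $\bS^{\cI}_b$ preserves level equivalences), and its cobase change along the cofibration above is a local equivalence by left properness. Either way the argument only uses ingredients established before this lemma, so your worry about circularity with Theorem~\ref{thm:integral-introduction} is resolved for free.
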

\begin{proof}
We consider the commutative diagram 
\[\xymatrix@-1pc{ F^{\Spsym{\cR}}_{\bld{n}}((U,K)\barsm S^{\bld{n}-\alpha})  \ar[d]_-{\sim} && \bS^{\cI}_b[F^{\cS^{\cI}}_{\bld{n}}(K)] \ar[d]_-{\sim}\ar[rr] \ar@{ >->}[ll]  && \bS^{\cI}_b[\const_{\cI}(K)]\ar[d]^-{=} \\ 
 F^{\Spsym{\cR}}_{\bld{m}}(U,K)   && \bS^{\cI}_b[F^{\cS^{\cI}}_{\bld{m}}(K)]\ar[rr] \ar@{ >->}[ll] && \bS^{\cI}_b[\const_{\cI}(K)].}
\]
The left hand vertical map is a local weak equivalence in
$\Spsym{\cR}$ by Corollary~\ref{cor:alpha-Z-weak-equivalence}. The
middle vertical map is because
$F^{\cS^{\cI}}_{\bld{n}}(K) \to F^{\cS^{\cI}}_{\bld{m}}(K)$ is an
$\cI$-equivalence and $\bS^{\cI}_{b}$ is left Quillen by
Lemma~\ref{lem:Q-adjunctions-on-local-model-str}. Since $(U,K)$ and
$(U,K) \barsm S^{\bld{n}-\alpha}$ are cofibrant and $\bS^{\cI}_b[F^{\cS^{\cI}}_{\bld{m}}(K)] \iso F^{\Spsym{\cR}}_{\bld{m}}(K,K)$, the left hand
horizontal maps are cofibrations in $\Spsym{\cR}$. 
Since $\Spsym{\cR}$
is left proper by Proposition~\ref{prop:existence-local-model-structure}, 
Lemma~\ref{lem:free-cR-and-free-relative-X-pushout} implies the claim. 
\end{proof}

\begin{lemma}\label{lem:inclusion-preserves-fib-replacement}
  Let $K$ be cofibrant in $\cS$. With respect to the absolute or
  positive local model structures, the inclusion functor
  $\Spsym{K} \to \Spsym{\cR}$ preserves acyclic cofibrations with
  fibrant codomain.
\end{lemma}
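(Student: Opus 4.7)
The cofibration part is immediate from Corollary~\ref{cor:compatible-level-model-str}: a cofibration in $\Spsym{K}$ is a cofibration in $\Spsym{\cR}$ in the level model structures, and left Bousfield localization preserves the class of cofibrations, so the same holds in the local model structures, both absolute and positive. It therefore suffices to show that $f\colon (E,K)\to(F,K)$ is a local weak equivalence in $\Spsym{\cR}$. My plan is to verify that every generating acyclic cofibration of the local model structure on $\Spsym{K}$ becomes a local acyclic cofibration in $\Spsym{\cR}$ under the inclusion; by the small object argument and closure of acyclic cofibrations under cobase change, transfinite composition, and retract, this will yield the claim for $f$. In fact this approach proves a stronger statement in which the fibrant-codomain hypothesis is unnecessary; the present weaker formulation is what is used in subsequent applications.

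The generating acyclic cofibrations of the local model structure on $\Spsym{K}$ split into two types. The first consists of the generating acyclic cofibrations of the level model structure on $\Spsym{K}$, which are acyclic cofibrations in $\Spsym{\cR}$ already in the level model structure by Corollary~\ref{cor:compatible-level-model-str}, and hence also in the local one. The second type consists of horns associated with the localizing maps $\alpha_Z\in S_{\Spsym{K}}$ from Proposition~\ref{prop:existence-local-model-structure}. For each such $\alpha_Z$, I would factor $\alpha_Z = g\circ h$ in the level model structure on $\Spsym{K}$ as a cofibration $h$ followed by an acyclic fibration $g$. By Lemma~\ref{lem:SpsymK-SpsymR-maps-between-free}, $\alpha_Z$ is a local equivalence in $\Spsym{\cR}$; by Corollary~\ref{cor:compatible-level-model-str}, $g$ is an acyclic fibration in $\Spsym{\cR}$; hence two-out-of-three forces $h$ to be a local acyclic cofibration in $\Spsym{\cR}$. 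The horns associated with $\alpha_Z$ are then the pushout products $h\square(\partial\Delta^n\hookrightarrow\Delta^n)$ formed with the simplicial tensor on $\Spsym{\cR}$ inherited from the closed symmetric monoidal structure, and the pushout product axiom of Proposition~\ref{prop:local-pushout-product} guarantees that these horns are local acyclic cofibrations in $\Spsym{\cR}$.

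Left properness of $\Spsym{\cR}$ (Proposition~\ref{prop:existence-local-model-structure}) together with the small object argument then yields that the inclusion $\Spsym{K}\to\Spsym{\cR}$ sends every acyclic cofibration of the local model structure on $\Spsym{K}$ to a local acyclic cofibration, and applied to $f$ this gives the conclusion. The principal technical point is to articulate enough of a simplicial tensor structure on $\Spsym{\cR}$ to construct and analyze the horns at the localizing maps; this follows the template of Proposition~\ref{prop:SpsymX-simplicial}, relying on the closed symmetric monoidal structure $(\Spsym{\cR},\barsm)$ and its satisfaction of the pushout product axiom in both the absolute and positive local model structures.
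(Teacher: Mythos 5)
Your construction of the ``horns'' and the verification that they become acyclic cofibrations in $\Spsym{\cR}$ is sound and matches the paper's key technical step: the mapping-cylinder factorization $\alpha_Z=g\circ h$, the use of Lemma~\ref{lem:SpsymK-SpsymR-maps-between-free} and two-out-of-three to see that $h$ is a local acyclic cofibration in $\Spsym{\cR}$, and the pushout product axiom of Proposition~\ref{prop:local-pushout-product} to handle $h\square i$. The problem lies in the sentence on which everything else rests: you assert that the generating acyclic cofibrations of the local model structure on $\Spsym{K}$ are the union of the level generating acyclic cofibrations and these horns. For a left Bousfield localization this is false in general. The set $J$ you describe detects the \emph{fibrant objects} of the localization (a map $(E,X)\to \bS^{\cI}_b[\const_{\cI}K]$ has the right lifting property against $J$ if and only if $(E,X)$ is locally fibrant), but it does not generate the acyclic cofibrations; Hirschhorn's localization machinery produces its generating acyclic cofibrations by a non-explicit cardinality argument, and a map with the right lifting property against $J$ need not be a local fibration. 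Your remark that the argument ``proves a stronger statement in which the fibrant-codomain hypothesis is unnecessary'' is precisely the symptom of this error: the hypothesis is there because only the fibrancy-detection property of $J$ is available.

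The paper's proof uses $J$ exactly for what it is good for. One factors a given acyclic cofibration $f\colon (E,K)\to(F,K)$ with fibrant codomain against $J$ via the small object argument, obtaining a relative $J$-cell complex $j$ followed by a map in $J$-inj; the intermediate object is then fibrant because $(F,K)$ is, the relative $J$-cell complex is an acyclic cofibration in $\Spsym{\cR}$ (since the inclusion preserves pushouts and filtered colimits), and the remaining map is a local equivalence between fibrant objects in $\Spsym{K}$, hence a level equivalence, hence a local equivalence in $\Spsym{\cR}$; here one also needs that the fibrant objects of $\Spsym{K}$ and $\Spsym{\cR}$ agree, which is Lemma~\ref{lem:equivalent-fibrancy-conditions} and does not appear in your argument. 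To repair your proposal you would either have to supply this factorization step (reinstating the fibrant-codomain hypothesis), or prove the much stronger claim that $J$-injectives are local fibrations, which is not established in the paper and would require a separate argument in the style of the stable model structure on ordinary symmetric spectra. The appeal to left properness in your last paragraph does no work here: acyclic cofibrations are closed under cobase change in any model category, and properness cannot substitute for the missing generation statement.
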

\begin{proof}
  Given a map $\alpha_Z$ in the set of maps we use to form the local
  model structure on $\Spsym{K}$, we use the mapping cylinder
  construction resulting from the simplicial structure of $\Spsym{K}$
  to factor it into a cofibration $\alpha_Z^c$ followed by an absolute
  level equivalence. We let $J$ be the set of maps in $\Spsym{K}$ that
  is the union of the generating acyclic cofibrations for the absolute
  level model structure and the maps of the form
  $\alpha_Z^c \tensor i$ where $i = (\partial D^n \to \Delta^n)$ runs
  through the generating cofibrations $\sset$ and $\alpha_Z$ runs
  through the maps we are localizing at.  Writing $X = \const_{\cI}K$,
  an object $(E,X)$ is fibrant in $\Spsym{K}$ if and only if
  $(E,X) \to \bS^{\cI}_b[X]$ has the right lifting property with
  respect to $J$ (compare~\cite[Proposition~4.2.4]{Hirschhorn_model}
  for an analogous statement using cosimplicial resolutions). The
  domains of the maps in $J$ are small relative to $J$-cell complexes
  because this property is inherited from the cofibrantly generated
  model category $\cS_K$ (and preserved by forming the mapping
  cylinder). Hence we can apply the small object argument to see that
  the fibrant replacement in the local model structure on $\Spsym{K}$
  is the retract of a $J$-cell complex.

  By Lemma~\ref{lem:SpsymK-SpsymR-maps-between-free} and
  Proposition~\ref{prop:local-pushout-product}, the maps in $J$ are
  acyclic cofibrations in the absolute local model structure on
  $\Spsym{\cR}$. Since the inclusion $\Spsym{X} \to \Spsym{\cR}$
  preserves pushouts and filtered colimits, it follows that $J$-cell
  complexes are also acyclic cofibrations in $\Spsym{\cR}$. The
  claim follows because the fibrant objects in $\Spsym{K}$ and
  $\Spsym{\cR}$ coincide by
  Lemma~\ref{lem:equivalent-fibrancy-conditions}.
\end{proof}

\begin{proposition}\label{prop:model-str-spsymR-spsymX} A map in
  $\Spsym{X}$ is a weak equivalence in the absolute or positive local
  model structure if and only if it is so as a map in $\Spsym{\cR}$.
\end{proposition}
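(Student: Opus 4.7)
The approach reduces the claim to the case where $X$ is $\cI$-fibrant, then uses Lemma~\ref{lem:equivalent-fibrancy-conditions} together with the standard characterization of weak equivalences between fibrant objects in a left Bousfield localization (\cite[Theorem~3.2.13]{Hirschhorn_model}). The technical core is the claim that every local acyclic cofibration in $\Spsym{X}$ is a local weak equivalence in $\Spsym{\cR}$; this claim carries both the reduction step and the final transfer argument. The absolute and positive cases proceed identically.

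For the reduction, choose an acyclic cofibration $j\colon X \to X^f$ with $X^f$ $\cI$-fibrant. By Corollary~\ref{cor:base-change-local-Q-adjunction}, $(j_!, j^*)$ is a Quillen equivalence of local model structures, so a map $f$ in $\Spsym{X}$ is a local weak equivalence iff $j_!$ applied to its cofibrant replacement is one in $\Spsym{X^f}$. Using~\eqref{eq:base-change-cobase-change} with Corollary~\ref{cor:S-I-b-left-right-Quillen-locally}, the same $j_!$ interpreted inside $\Spsym{\cR}$ is cobase change along the local acyclic cofibration $\bS^{\cI}_b[j]$, so by left properness of the local model structure on $\Spsym{\cR}$ it preserves and reflects local weak equivalences with cofibrant source. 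Hence we may assume $X$ is $\cI$-fibrant. Now pick a functorial local fibrant replacement of $f\colon (E,X) \to (F,X)$ in $\Spsym{X}$ to obtain a commutative square with vertical local acyclic cofibrations and bottom map $f'\colon (E',X) \to (F',X)$ between local fibrant objects of $\Spsym{X}$. By Lemma~\ref{lem:equivalent-fibrancy-conditions}, $(E',X)$ and $(F',X)$ are also local fibrant in $\Spsym{\cR}$. The cited characterization of weak equivalences between fibrant objects in a left Bousfield localization, combined with Corollary~\ref{cor:compatible-level-model-str}, then gives that $f'$ is a local weak equivalence in $\Spsym{X}$ iff it is one in $\Spsym{\cR}$. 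Applying the technical claim to the vertical acyclic cofibrations and 2-out-of-3 in both categories transfers the equivalence back to $f$.

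The technical claim that local acyclic cofibrations in $\Spsym{X}$ are local weak equivalences in $\Spsym{\cR}$ is the main obstacle. By left properness and stability under cobase change, transfinite composition, and retract, it suffices to verify it on a generating set. Such a set consists of the generating level acyclic cofibrations of $\Spsym{X}$—for which the claim is immediate from Corollary~\ref{cor:compatible-level-model-str}—together with mapping cylinder factorizations of the maps $\alpha_Z$ from the localizing set $S$ defining the local model structure on $\Spsym{X}$. For each such $\alpha_Z$, Lemma~\ref{lem:free-cR-and-free-relative-X-pushout} expresses the map in $\Spsym{X}$ as a cobase change in $\Spsym{\cR}$ of the corresponding $\alpha_Z$-map in $\Spsym{\cR}$, which is a local weak equivalence by Corollary~\ref{cor:alpha-Z-weak-equivalence}; left properness of the local model structure on $\Spsym{\cR}$ then closes the argument. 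This generalizes Lemma~\ref{lem:SpsymK-SpsymR-maps-between-free} from constant cofibrant base to arbitrary $\cI$-spaces; the subtlety is that the individual levels $X(\bld{m})$ need not be cofibrant in $\cS$, so the transfer cannot be done fiberwise but must proceed entirely through the base-change maps of $\cI$-spaces, where left properness of $\Spsym{\cR}$ absorbs the non-cofibrancy.
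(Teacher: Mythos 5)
Your overall architecture is a legitimate alternative to the paper's (you reduce to an $\cI$-fibrant base and compare fibrant objects directly, whereas the paper reduces through constant-cofibrant, then cofibrant, then general bases), and several steps are sound: the reduction via $j_!$ and cobase change along $\bS^{\cI}_b[j]$, the use of Lemma~\ref{lem:equivalent-fibrancy-conditions} together with the characterization of weak equivalences between local objects, and the observation that your version of Lemma~\ref{lem:SpsymK-SpsymR-maps-between-free} goes through for non-constant, non-cofibrant $X$ because the relevant "cells" $A$ (domains and codomains of generating cofibrations of $\cS$) are cofibrant even when the levels $X(\bld m)$ are not, so the gluing-lemma diagram can be run with middle column $\bS^{\cI}_b[F^{\cS^{\cI}}_{\bld n}(A)]$.

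However, the technical core has a genuine gap. You assert that the local acyclic cofibrations of $\Spsym{X}$ are generated, under cobase change, transfinite composition and retracts, by the level acyclic cofibrations together with (cylinder factorizations of) the maps $\alpha_Z$. This is false for left Bousfield localizations in general: the set you describe (really its closure under pushout-products with $\partial\Delta^n \to \Delta^n$, which you also omit) detects the local \emph{fibrant objects}, but maps with the right lifting property against it are only the "naive" fibrations, which agree with the local fibrations only over fibrant codomains; consequently relative cell complexes on this set form a strictly smaller class than all local acyclic cofibrations. The full class is only known to be generated by Hirschhorn's bounded-subcomplex set, whose members you cannot analyze by the cobase-change argument. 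The statement "every local acyclic cofibration in $\Spsym{X}$ is a local equivalence in $\Spsym{\cR}$" is of course a consequence of the proposition, but using it as input is circular. The repair is exactly the paper's Lemma~\ref{lem:inclusion-preserves-fib-replacement}: claim the transfer only for the fibrant replacement map, construct that replacement by the small object argument applied to the set $J \cup \{\alpha^c_Z \otimes (\partial\Delta^n \to \Delta^n)\}$ (so that it is a retract of a relative cell complex on maps you control, using Proposition~\ref{prop:local-pushout-product} to handle the pushout-products), and feed that into your final 2-out-of-3 step. With that modification your route closes.
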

\begin{proof}
  We consider a map $\varphi\colon (E,X) \to (E',X)$ and prove the
  claim by gradually allowing more and more general cases. When
  $X = \const_{\cI}K$ is the constant $\cI$-diagram on a cofibrant
  space $K$ and both $(E,X)$ and $(E',X)$ are locally fibrant in
  $\Spsym{K}$, then they are also locally fibrant as objects in
  $\Spsym{\cR}$ by Lemma~\ref{lem:equivalent-fibrancy-conditions}, and
  the claim follows since in both categories weak equivalences between
  fibrant objects are level equivalences. When $(E,X)$ and $(E',X)$
  are not necessarily fibrant in $\Spsym{K}$, we apply the fibrant
  replacement in $\Spsym{K}$ to $\varphi$ and use
  Lemma~\ref{lem:inclusion-preserves-fib-replacement} to see that it
  is also a fibrant replacement in $\Spsym{\cR}$.  Hence the claim
  reduces to the previous case.

  In the next step, we assume that $X$ is absolute (resp.\ positive)
  cofibrant as an $\cI$-space. Setting $K = \colim_{\cI}X$, the
  adjunction counit provides an $\cI$-equivalence
  $f\colon X \to \const_{\cI}K$. Now given a map
  $\varphi\colon (E,X) \to (E',X)$ of cofibrant objects in
  $\Spsym{X}$, we apply
  Corollary~\ref{cor:base-change-local-Q-adjunction} to see that
  $(f_!,f^*)$ is a Quillen equivalence and deduce that $\varphi$ is a
  local weak equivalence in $\Spsym{X}$ if and only if $f_!(\varphi)$
  is a local weak equivalence in $\Spsym{K}$. Left properness of the
  level model structure on $\Spsym{\cR}$, the
  identification~\eqref{eq:base-change-cobase-change}, and
  Corollary~\ref{cor:S-I-b-left-right-Quillen-locally} imply that
  $\varphi$ is a local weak equivalence in $\Spsym{\cR}$ if and only
  if $f_!(\varphi)$ is. So we have reduced the claim to the previous
  step. Since the cofibrant replacement in $\Spsym{X}$ is a level
  equivalence, we may drop the cofibrancy assumption on $(E,X)$ and
  $(E',X)$ in the previous argument.

  In the last step, we consider a general $X$ and let
  $f\colon X^c \to X$ be an absolute (resp.\ positive) acyclic
  fibration with absolute (resp.\ positive) cofibrant domain.  Since
  $f$ is a level equivalence and $\cS$ is proper, $(f_!,f^*)$ is a
  Quillen equivalence with respect to the level model
  structures. Hence our test map $\varphi\colon (E,X) \to (E',X)$ is
  level equivalent to the image of a map of cofibrant objects
  $\varphi^c\colon (E^c,X^c) \to (E'^c,X^c)$ in $\Spsym{X^c}$ under
  $f_!$. Since $(f_!,f^*)$ is a Quillen equivalence with respect to
  the local model structures by
  Corollary~\ref{cor:S-I-b-left-right-Quillen-locally}, $\varphi$ is a
  local equivalence in $\Spsym{X}$ if and only $\varphi^c$ is a local
  equivalence in $\Spsym{X^c}$. Since the level model structures on
  $\SpsymR$ are right proper by
  Proposition~\ref{prop:level-model-on-section}, $\varphi$ and
  $\varphi^c$ are level equivalent in $\SpsymR$. This reduces the
  general claim to the previous case.
\end{proof}
\begin{corollary}\label{cor:absolute-positive-SpsymX}
The weak equivalences in the absolute and the positive local model structures on $\Spsym{X}$ coincide.
\end{corollary}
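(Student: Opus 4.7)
The corollary is a direct consequence of the preceding proposition combined with the known agreement of absolute and positive local equivalences on $\Spsym{\cR}$. The plan is to chain these two facts together and do no extra work.

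More specifically, let $\varphi$ be a map in $\Spsym{X}$. By Proposition~\ref{prop:model-str-spsymR-spsymX}, $\varphi$ is a weak equivalence in the absolute local model structure on $\Spsym{X}$ if and only if its image in $\Spsym{\cR}$ is a weak equivalence in the absolute local model structure on $\Spsym{\cR}$. By Proposition~\ref{prop:abs-pos-local-equiv-SpsymR}, this happens if and only if the image is a weak equivalence in the positive local model structure on $\Spsym{\cR}$. Applying Proposition~\ref{prop:model-str-spsymR-spsymX} once more (in the positive version), this is equivalent to $\varphi$ being a weak equivalence in the positive local model structure on $\Spsym{X}$.

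There is essentially no obstacle here, since the hard work was already carried out in establishing Proposition~\ref{prop:model-str-spsymR-spsymX}; the corollary just inherits the absolute/positive coincidence from $\Spsym{\cR}$ through the detection criterion. The only thing to be mindful of is that the statement of Proposition~\ref{prop:model-str-spsymR-spsymX} explicitly covers both the absolute and the positive local model structures, so both directions of the chain are available.
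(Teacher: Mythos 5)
Your argument is exactly the paper's proof: Corollary~\ref{cor:absolute-positive-SpsymX} is obtained by combining Proposition~\ref{prop:model-str-spsymR-spsymX} (detection of local equivalences in $\Spsym{\cR}$, in both the absolute and positive versions) with Proposition~\ref{prop:abs-pos-local-equiv-SpsymR} (coincidence of absolute and positive local equivalences on $\Spsym{\cR}$). The chain of equivalences you spell out is correct and complete.
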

\begin{proof}
This follows by combining Propositions~\ref{prop:abs-pos-local-equiv-SpsymR} and~\ref{prop:model-str-spsymR-spsymX}.
\end{proof}
\begin{corollary}\label{cor:acy-cof-fib-on-SpsymX}
  Let $f\colon X \to Y$ be a map of $\cI$-spaces. If $f$ is an acyclic
  cofibration (resp.\ acyclic fibration) in the absolute $\cI$-model
  structure, then $f_!\colon \Spsym{X} \to \Spsym{Y}$
  (resp.\ $f^*\colon \Spsym{Y}\to \Spsym{X}$) preserves weak
  equivalences of the local model structures. An analogous statement
  holds in the positive case.
\end{corollary}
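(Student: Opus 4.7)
The plan is to reduce both claims to statements about $\bS^{\cI}_b[f]$ in $\SpsymR$ via the identifications $f_!(E,X) \iso (E,X)\cup_{\bS^{\cI}_b[X]}\bS^{\cI}_b[Y]$ and $f^*(F,Y) \iso (F,Y)\times_{\bS^{\cI}_b[Y]} \bS^{\cI}_b[X]$ recorded in~\eqref{eq:base-change-cobase-change}, and then to transport weak equivalences between $\SpsymR$ and $\Spsym{X}$ (resp.~$\Spsym{Y}$) using Proposition~\ref{prop:model-str-spsymR-spsymX}. The key input is Corollary~\ref{cor:S-I-b-left-right-Quillen-locally}, which makes $\bS^{\cI}_b$ \emph{both} left and right Quillen for the absolute and positive local model structures on $\SpsymR$.

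For the acyclic cofibration case, $\bS^{\cI}_b[f]$ is a local acyclic cofibration in $\SpsymR$, and the first identification above exhibits the canonical map $(E,X) \to f_!(E,X)$ as a cobase change of $\bS^{\cI}_b[f]$, hence itself a local acyclic cofibration in $\SpsymR$ (cobase change of acyclic cofibrations being a formal consequence of the lifting characterization). Given a local weak equivalence $\varphi\colon (E,X) \to (E',X)$ in $\Spsym{X}$, Proposition~\ref{prop:model-str-spsymR-spsymX} lets me view it as a local weak equivalence in $\SpsymR$; applying 2-out-of-3 to the naturality square
\[\xymatrix@-1pc{
(E,X)  \ar[rr]^-{\varphi} \ar[d] && (E',X) \ar[d] \\
f_!(E,X) \ar[rr]^-{f_!(\varphi)} && f_!(E',X)
}\]
in $\SpsymR$ then forces $f_!(\varphi)$ to be a local weak equivalence in $\SpsymR$, and Proposition~\ref{prop:model-str-spsymR-spsymX} again translates this back into $\Spsym{Y}$. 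The acyclic fibration case is completely dual: now $\bS^{\cI}_b[f]$ is a local acyclic fibration in $\SpsymR$, so the map $f^*(F,Y) \to (F,Y)$ is a base change of a local acyclic fibration in $\SpsymR$ and hence a local acyclic fibration itself, and the analogous 2-out-of-3 argument combined with Proposition~\ref{prop:model-str-spsymR-spsymX} proves that $f^*$ preserves local weak equivalences.

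I do not anticipate a substantive obstacle. The whole plan is driven by Corollary~\ref{cor:S-I-b-left-right-Quillen-locally}, the formula~\eqref{eq:base-change-cobase-change}, and Proposition~\ref{prop:model-str-spsymR-spsymX}, each of which is stated uniformly for the absolute and positive local model structures, so the positive variant of the corollary is handled by the same argument without modification.
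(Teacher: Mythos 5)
Your proposal is correct and follows essentially the same route as the paper: the paper's proof also deduces that $\bS^{\cI}_b[f]$ is a local acyclic cofibration (resp.\ fibration) from Corollary~\ref{cor:S-I-b-left-right-Quillen-locally}, invokes the identification~\eqref{eq:base-change-cobase-change} to realize $f_!$ (resp.\ $f^*$) as a cobase (resp.\ base) change, and transfers weak equivalences via Proposition~\ref{prop:model-str-spsymR-spsymX}. The paper merely leaves the closure of acyclic (co)fibrations under (co)base change and the 2-out-of-3 step implicit, which you have spelled out correctly.
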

\begin{proof}
  If $f\colon X\to Y$ is an acyclic cofibration, then $\bS^{\cI}_b[f]$
  is an acyclic cofibration in the local model structure on
  $\Spsym{\cR}$ by
  Corollary~\ref{cor:S-I-b-left-right-Quillen-locally}.  The claim
  follows by the first isomorphism in~\eqref{eq:base-change-cobase-change}
  and Proposition~\ref{prop:model-str-spsymR-spsymX}. The statement
  about $f^*$ can be proved by arguing in a dual way.
\end{proof}

Applying Definition~\ref{def:integral-model-str} to the pseudofunctor
$X \mapsto \Spsym{X}$ from~\eqref{eq:SpsymX-pseudofunctor}, the
absolute (resp.\ positive) $\cI$-model structure and the absolute
(resp.\ positive) local model structure on the $\Spsym{X}$ give rise to
absolute (resp.\ positive) local \emph{integral} cofibrations,
fibrations, and weak equivalences on the Grothendieck construction.

\begin{theorem}\label{thm:local-Grothendieck-construction}
  These classes of maps form an absolute (resp.\ positive) integral local
  model structure on the Grothendieck construction.  Under the
  equivalence with $\Spsym{\cR}$, it coincides with the absolute (resp.\ positive)
  local model structures on $\Spsym{\cR}$. 
  \end{theorem}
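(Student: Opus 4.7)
The plan is to apply Harpaz--Prasma's integration theorem \cite[Theorem 3.0.12]{Harpaz-P_Grothendieck-construction} to the pseudofunctor $X \mapsto \Spsym{X}$ equipped with local model structures, and then to identify the resulting integral model structure with the local model structure on $\Spsym{\cR}$ by comparing cofibrations and fibrant objects.

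First I would verify that $X \mapsto \Spsym{X}$, endowed with absolute (respectively positive) local model structures, is a \emph{proper relative} pseudofunctor in the sense of \cite[\S 3]{Harpaz-P_Grothendieck-construction}. The four required conditions, which are the local analogues of Lemma~\ref{lem:extension-restriction-R-homotopical}, are all already in place: $(f_!, f^*)$ is a Quillen adjunction by Lemma~\ref{lem:base-change-local-Q-adjunction}; it is a Quillen equivalence whenever $f$ is a weak equivalence in the absolute (respectively positive) $\cI$-model structure on $\cS^{\cI}$ by Corollary~\ref{cor:base-change-local-Q-adjunction}; and the homotopy invariance of $f_!$ on acyclic cofibrations together with that of $f^*$ on acyclic fibrations is Corollary~\ref{cor:acy-cof-fib-on-SpsymX}. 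Harpaz--Prasma then produces the integral local model structure on the Grothendieck construction.

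The remaining task is to match this with the local model structure on $\Spsym{\cR}$, and for this I would invoke the principle that a model structure is determined by its cofibrations together with its fibrant objects, as used earlier in the paper via \cite[Proposition E.1.10]{Joyal-quasi-categories}. Since left Bousfield localization leaves cofibrations untouched, the integral local cofibrations coincide with the integral \emph{level} cofibrations, which by Proposition~\ref{prop:level-Grothendieck-construction} agree under the equivalence of Lemma~\ref{lem:SpsymR-Grothendieck-construction} with the level cofibrations on $\Spsym{\cR}$; these in turn agree with the local cofibrations on $\Spsym{\cR}$.

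For the fibrant objects, unpacking Definition~\ref{def:integral-model-str} on the map to the terminal object shows that $(X, (E,X))$ is integral local fibrant precisely when $X$ is $\cI$-fibrant and $(E,X)$ is local fibrant in $\Spsym{X}$. Conversely, Lemma~\ref{lem:fibrant-in-local} applied to $\Spsym{\cR}$ together with the description of the cotensor in~\eqref{eq:cotensor} shows that the base-space component of the fibrancy condition for $(E,X)$ forces all structure maps $X(\alpha)$ to be weak equivalences, so that $X$ is automatically $\cI$-fibrant; Lemma~\ref{lem:equivalent-fibrancy-conditions} then identifies the remaining condition with local fibrancy of $(E,X)$ in $\Spsym{X}$. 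Hence the two classes of fibrant objects coincide and the model structures agree. The main obstacle is really deferred into the preparatory results just cited: Corollary~\ref{cor:acy-cof-fib-on-SpsymX} supplies the homotopical content needed for Harpaz--Prasma, and Lemma~\ref{lem:equivalent-fibrancy-conditions} is what makes the fibrant-object comparison transparent.
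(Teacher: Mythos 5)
Your proposal is correct and follows essentially the same route as the paper: existence via Harpaz--Prasma using Lemma~\ref{lem:base-change-local-Q-adjunction}, Corollary~\ref{cor:base-change-local-Q-adjunction}, and Corollary~\ref{cor:acy-cof-fib-on-SpsymX}, then identification by comparing cofibrations (Proposition~\ref{prop:level-Grothendieck-construction}) and fibrant objects (Lemma~\ref{lem:equivalent-fibrancy-conditions}) and invoking \cite[Proposition E.1.10]{Joyal-quasi-categories}. Your extra remark that the base-space row of the test square~\eqref{eq:fibrant-in-SpsymR-testsquare} forces $X$ to be $\cI$-fibrant is a correct filling-in of a detail the paper leaves implicit.
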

\begin{proof}
  Combining Lemma~\ref{lem:base-change-local-Q-adjunction},
  Corollary~\ref{cor:base-change-local-Q-adjunction}, and
  Corollary~\ref{cor:acy-cof-fib-on-SpsymX}, the existence of the
  integral model structure follows from~\cite[Theorem
  3.0.12]{Harpaz-P_Grothendieck-construction}.   For the comparison, we note that the cofibrations and fibrant
  objects of the two model structure coincide by
  Proposition~\ref{prop:level-Grothendieck-construction} and
  Lemma~\ref{lem:equivalent-fibrancy-conditions}. Hence the claim
  follows from~\cite[Proposition E.1.10]{Joyal-quasi-categories}.
\end{proof}
The last theorem and the definition of the integral model structure imply
the next two statements. 
\begin{corollary}\label{cor:local-model-structure-inherited}
A map in $\Spsym{X}$ is a cofibration, fibration, or weak equivalence in the absolute or positive local model structure if and only if it is so as a map in $\Spsym{\cR}$. 
\qed
\end{corollary}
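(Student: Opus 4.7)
The plan is to deduce the corollary directly from Theorem~\ref{thm:local-Grothendieck-construction} by specializing the definition of the integral model structure to maps lying in a single fibre of $\pi_b\colon \Spsym{\cR}\to\cS^{\cI}$. Under the equivalence of Lemma~\ref{lem:SpsymR-Grothendieck-construction}, a morphism in $\Spsym{X}$ corresponds to a morphism in the Grothendieck construction of $X\mapsto \Spsym{X}$ whose underlying map of $\cI$-spaces is $\id_X$, which is trivially a cofibration, fibration, and weak equivalence in the absolute and positive $\cI$-model structures on $\cS^{\cI}$.

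Inspecting Definition~\ref{def:integral-model-str} with $f=\id_X$ then makes each of the three integral conditions collapse to its second clause: a morphism $(E,X)\to(E',X)$ is an integral cofibration (respectively fibration, respectively weak equivalence) if and only if $\id_!(E,X)=(E,X)\to(E',X)$ is a cofibration (respectively fibration, respectively, upon precomposition with a cofibrant replacement of $(E,X)$ in $\Spsym{X}$, a weak equivalence) in $\Spsym{X}$. For the weak equivalence clause one furthermore notes that cofibrant replacement in $\Spsym{X}$ is a weak equivalence there, so the resulting condition is just that the original map is a weak equivalence in $\Spsym{X}$; this also matches the content of Proposition~\ref{prop:model-str-spsymR-spsymX}.

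Combining these observations with Theorem~\ref{thm:local-Grothendieck-construction}, which identifies the integral model structure on the Grothendieck construction with the local model structure on $\Spsym{\cR}$, gives the desired equivalence of the two notions for each of cofibrations, fibrations, and weak equivalences. The positive case follows by the same argument applied to the positive variants. I do not anticipate a genuine obstacle here: the only subtlety is to note that $\id_X$ is automatically cofibrant-fibrant-acyclic in $\cS^{\cI}$ so that the integral conditions reduce to their fibrewise parts, and this is precisely what Theorem~\ref{thm:local-Grothendieck-construction} was set up to exploit.
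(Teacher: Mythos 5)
Your proposal is correct and is essentially the paper's own argument: the corollary is stated as an immediate consequence of Theorem~\ref{thm:local-Grothendieck-construction} together with Definition~\ref{def:integral-model-str} specialized to morphisms whose underlying map of base $\cI$-spaces is $\id_X$, exactly as you describe. The observation that the weak-equivalence clause reduces via two-out-of-three after cofibrant replacement is the only point needing care, and you handle it correctly.
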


\begin{corollary}\label{cor:characterization-local-equivalences}
Let $(E,X) \to (F,Y)$ be a map in $\Spsym{\cR}$ with $f \colon X \to Y$ as map of base $\cI$-spaces. Then the following are equivalent:
\begin{enumerate}[(i)]
\item The map $(E,X) \to (F,Y)$ is a local weak equivalence in $\Spsym{\cR}$.
\item $f$ is an $\cI$-equivalence and a cofibrant replacement $(E^c,X) \to (E,X)$ in $\Spsym{X}$ induces a local weak equivalence $(f_!(E^c),Y) \to (f_!(E),Y) \to (F,Y)$ in $\Spsym{Y}$.
\item $f$ is an $\cI$-equivalence and a fibrant replacement $(F,Y) \to (F^f,Y)$ in $\Spsym{Y}$ induces a local weak equivalence $(E,X) \to (f^*(F),X) \to (f^*(F^f),X)$ in $\Spsym{X}$. \qed
\end{enumerate}
\end{corollary}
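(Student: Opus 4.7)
The plan is to deduce (i)~$\Leftrightarrow$~(ii) directly from Theorem~\ref{thm:local-Grothendieck-construction} together with Definition~\ref{def:integral-model-str}, and then to obtain (i)~$\Leftrightarrow$~(iii) by a formal manipulation using the Quillen equivalence provided by Corollary~\ref{cor:base-change-local-Q-adjunction}. Under the equivalence of Lemma~\ref{lem:SpsymR-Grothendieck-construction} between $\Spsym{\cR}$ and the Grothendieck construction of $X \mapsto \Spsym{X}$, a morphism $(E,X)\to(F,Y)$ corresponds to the pair consisting of $f\colon X \to Y$ in $\cS^{\cI}$ together with its adjoint $f_!(E,X) \to (F,Y)$ in $\Spsym{Y}$. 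By Theorem~\ref{thm:local-Grothendieck-construction}, the local weak equivalences in $\Spsym{\cR}$ correspond to the integral local weak equivalences on the Grothendieck construction, and Definition~\ref{def:integral-model-str} unfolds this condition as precisely (ii), using that the weak equivalences in the $\cI$-model structure on $\cS^{\cI}$ are the $\cI$-equivalences (Example~\ref{ex:pseudo-local}(i)). The same argument applies in both the absolute and positive variants.

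For (i)~$\Rightarrow$~(iii), the already-established equivalence (i)~$\Leftrightarrow$~(ii) shows that $f$ is an $\cI$-equivalence, and Corollary~\ref{cor:base-change-local-Q-adjunction} then gives that $(f_!,f^*)$ is a Quillen equivalence between the local model structures on $\Spsym{X}$ and $\Spsym{Y}$. Let $(E^c,X) \to (E,X)$ be a cofibrant replacement in $\Spsym{X}$ and $(F,Y) \to (F^f,Y)$ a fibrant replacement in $\Spsym{Y}$; both are local weak equivalences in $\Spsym{\cR}$ by Proposition~\ref{prop:model-str-spsymR-spsymX}. Hence the composite $(E^c,X) \to (F^f,Y)$ is a local weak equivalence in $\Spsym{\cR}$, and applying the (i)~$\Rightarrow$~(ii) direction to it shows that its adjoint $f_!(E^c,X) \to (F^f,Y)$ is a local weak equivalence in $\Spsym{Y}$. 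Since $(E^c,X)$ is cofibrant and $(F^f,Y)$ is fibrant, the defining property of a Quillen equivalence yields that the further adjoint $(E^c,X) \to (f^*(F^f),X)$ is a local weak equivalence in $\Spsym{X}$. Two-out-of-three applied to the composite $(E^c,X) \to (E,X) \to (f^*(F^f),X)$ then gives (iii).

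For (iii)~$\Rightarrow$~(i), the Quillen equivalence $(f_!,f^*)$ is again available from Corollary~\ref{cor:base-change-local-Q-adjunction}. Since $(E^c,X) \to (E,X)$ is a local weak equivalence and, by (iii), $(E,X) \to (f^*(F^f),X)$ is one, the composite $(E^c,X) \to (f^*(F^f),X)$ is a local weak equivalence in $\Spsym{X}$ between a cofibrant and a fibrant object. The Quillen equivalence transports this to the statement that the adjoint $f_!(E^c,X) \to (F^f,Y)$ is a local weak equivalence in $\Spsym{Y}$. Two-out-of-three applied to $f_!(E^c,X) \to (F,Y) \to (F^f,Y)$ then shows that $f_!(E^c,X) \to (F,Y)$ is a local weak equivalence in $\Spsym{Y}$, which together with $f$ being an $\cI$-equivalence yields (ii), and hence (i). The whole argument is essentially bookkeeping; there is no serious obstacle, and the only care required is in invoking the Quillen equivalence exactly where the cofibrancy and fibrancy hypotheses are in place.
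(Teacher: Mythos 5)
Your proof is correct and follows the paper's intended argument: the corollary is stated with a \qed because it is meant to follow from Theorem~\ref{thm:local-Grothendieck-construction} together with Definition~\ref{def:integral-model-str} (for the equivalence of (i) and (ii)) and the Quillen equivalence of Corollary~\ref{cor:base-change-local-Q-adjunction} (for passing between the $f_!$- and $f^*$-based characterizations). You have simply written out explicitly the adjunction and two-out-of-three bookkeeping that the paper absorbs into its citation of the integral model structure machinery.
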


We have now proved the main results about the local model structures stated in the introduction:
\begin{proof}[Proof of Theorems~\ref{thm:model-str-introduction} and \ref{thm:integral-introduction}]
Theorem~\ref{thm:model-str-introduction} is a combination of 
Corollary~\ref{cor:local-model-structure-inherited}, Lemma~\ref{lem:base-change-local-Q-adjunction}, Corollaries~\ref{cor:base-change-local-Q-adjunction} and~\ref{cor:SpsymX-vs-SpsymXhI}. Theorem~\ref{thm:integral-introduction} is Theorem \ref{thm:local-Grothendieck-construction}.
\end{proof}
\begin{remark}
Using \cite[Theorem 4.2]{Cagne-M_bifibrations}, Theorem~\ref{thm:local-Grothendieck-construction} also implies that the functors $f_!$ and $f^*$ satisfy the \emph{homotopical Beck--Chevalley condition} formulated in~\cite[Definition 4.1]{Cagne-M_bifibrations}.
\end{remark}

Let $M$ be a commutative $\cI$-space monoid. It is now easy to see
that the symmetric monoidal product on $\Spsym{M}$ discussed
in~\eqref{eq:spsymrel-M-product} is also compatible with the local
model structures:
\begin{proposition}\label{prop:SpsyM-pushout-product} The category 
  $\Spsym{M}$ satisfies the pushout product axiom with respect to the
  absolute and positive local model structures.
\end{proposition}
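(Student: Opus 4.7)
The plan is to reduce the claim to the pushout product axiom already established for $\barsm$ on $\Spsym{\cR}$ (Proposition~\ref{prop:local-pushout-product}), via the external-convolution factorization
\[
\Spsym{M}\times\Spsym{M}\xrightarrow{\barsm}\Spsym{M\boxtimes M}\xrightarrow{\mu_!}\Spsym{M}
\]
from~\eqref{eq:spsymrel-M-product}. The first step is to verify the external pushout product axiom: for cofibrations $f\colon (E,M)\to(E',M)$ and $g\colon (F,M)\to(F',M)$ in $\Spsym{M}$ (absolute or positive local), the pushout product
\[
f\square g\colon (E'\barsm F)\cup_{E\barsm F}(E\barsm F')\to E'\barsm F'
\]
is a cofibration in $\Spsym{M\boxtimes M}$, and it is acyclic if $f$ or $g$ is. By Corollary~\ref{cor:local-model-structure-inherited}, both $f$ and $g$ are (acyclic) cofibrations when viewed in $\Spsym{\cR}$, so Proposition~\ref{prop:local-pushout-product} says the pushout product $f\square g$ formed in $\Spsym{\cR}$ has the desired properties. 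One just needs to observe that this pushout product lies in the subcategory $\Spsym{M\boxtimes M}$: the base $\cI$-space of each of the four corners is $M\boxtimes M$, all the structure maps act as the identity on bases (since $\barsm$ multiplies base $\cI$-spaces), and colimits in $\Spsym{\cR}$ are computed levelwise, so the pushout and its canonical map to $E'\barsm F'$ remain over $M\boxtimes M$; applying Corollary~\ref{cor:local-model-structure-inherited} once more transfers the cofibration/acyclic cofibration conclusion from $\Spsym{\cR}$ to $\Spsym{M\boxtimes M}$.

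The second step is to apply $\mu_!$. Since $\mu_!$ is a left adjoint, it preserves the pushout defining $f\square g$, so the pushout product of $f$ and $g$ for the convolution product on $\Spsym{M}$ is naturally isomorphic to $\mu_!(f\square g)$. By Lemma~\ref{lem:base-change-local-Q-adjunction}, $\mu_!\colon \Spsym{M\boxtimes M}\to\Spsym{M}$ is left Quillen with respect to both the absolute and positive local model structures, hence preserves cofibrations and acyclic cofibrations. Combining with the first step gives the pushout product axiom for the convolution product on $\Spsym{M}$.

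The only genuinely delicate point is the commutation of pushouts and base changes inside the Grothendieck construction, namely that forming the pushout square in $\Spsym{\cR}$ does not leave the fiber $\Spsym{M\boxtimes M}$; I expect this to be routine from the levelwise description of colimits, but it is the step that has to be checked carefully. All other ingredients (pushout product on $\Spsym{\cR}$, detection of cofibrations/weak equivalences in $\Spsym{\cR}$, left Quillenness of $\mu_!$) are already in hand.
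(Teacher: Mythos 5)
Your proposal is correct and follows exactly the paper's argument: the paper's proof is the one-line observation that the pushout product axiom for $\Spsym{\cR}$ (Proposition~\ref{prop:local-pushout-product}), the detection of cofibrations and weak equivalences in $\Spsym{\cR}$ (Corollary~\ref{cor:local-model-structure-inherited}), and the left Quillenness of $\mu_!$ together give the claim. The one point you flag as delicate — that the pushout product formed in $\Spsym{\cR}$ stays in the fiber $\Spsym{M\boxtimes M}$ — is indeed routine and is implicitly used elsewhere in the paper (the inclusion $\Spsym{X}\to\Spsym{\cR}$ preserves pushouts), so your extra care there is fine but not a gap in either argument.
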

\begin{proof}
Since $\mu_!\colon \Spsym{M\boxtimes M} \to \Spsym{M}$ is left Quillen, Corollary~\ref{cor:local-model-structure-inherited} and the pushout product axiom in $\Spsym{\cR}$ provide the pushout product axiom for $\Spsym{M}$. 
\end{proof}
By the discussion following Theorem~\ref{thm:SpsymM-infty-identification} below, the previous proposition provides a symmetric monoidal model for the stabilization of the category of spaces over and under a given $E_{\infty}$ space. 
\begin{remark}
  In view of Corollary~\ref{cor:local-model-structure-inherited}, one
  may wonder if one can simply use the local model structure on
  $\Spsym{\cR}$ to define the local model structures on the
  subcategories $\Spsym{X}$ and avoid many of the intermediate steps
  in our construction. The problem with this approach is that the
  factorizations in $\Spsym{\cR}$ do not necessarily give rise to
  factorizations in $\Spsym{X}$. Moreover, the important property that
  an $\cI$-equivalence $f\colon X \to Y$ induces a Quillen equivalence
  $f_!\colon \Spsym{X} \rightleftarrows \Spsym{Y} \colon f^*$ does not
  appear to be a consequence of the local model structure on $\SpsymR$
  since this would require a form of right properness of
  $\Spsym{\cR}$.
\end{remark}
\subsection{Comparison of simplicial and topological variants} When developing our model structures, we allowed the underlying category of spaces $\cS$ to be either the category $\sset$ of simplicial sets or  the category of compactly generated weak Hausdorff spaces $\tp$. The Quillen adjunction \[|\!-\!| \colon \sset \rightleftarrows \tp \colon \Sing\] relating them induces an adjunction \[|\!-\!| \colon \SpsymR(\sset) \rightleftarrows \SpsymR(\tp)\colon \Sing\] on the associated categories of symmetric spectra in retractive spaces with  $|\!-\!|$ strong symmetric monoidal and $\Sing$ lax  symmetric monoidal. 
\begin{proposition}\label{prop:simp-top-spsymR}
The adjunction $\SpsymR(\sset) \rightleftarrows \SpsymR(\tp)$ is a Quillen equivalence with respect to the absolute and positive level and local model structures. 
\end{proposition}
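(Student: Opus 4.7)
My plan is to handle the level model structures first by reducing to the classical Quillen equivalence $|{-}|\colon \sset \rightleftarrows \tp \colon \Sing$, and then to transfer to the local model structures via the standard compatibility of Quillen equivalences with left Bousfield localizations.

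As a first step I would establish that $(|{-}|, \Sing)$ induces a Quillen equivalence on $\cS_{\cR}$. The Quillen adjunction property is immediate from the component-wise characterization of cofibrations, fibrations and weak equivalences in Proposition~\ref{prop:model-str-on-R}, together with the standard fact that $|{-}|$ preserves the finite limits and colimits used to form the relevant pushout and pullback corners. To upgrade to a Quillen equivalence, one inspects the derived unit and counit separately on base and total space components, appealing to the underlying Quillen equivalence on $\cS$. A key additional input, used later, is that $|{-}|$ is strong symmetric monoidal with respect to $\barsm$; this follows because realization preserves finite products of compactly generated weak Hausdorff spaces as well as all colimits, applied to the defining diagram~\eqref{eq:fiberwise-smash-as-colimt}.

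Second, for the absolute and positive level model structures on $\SpsymR$, I would apply a Quillen-equivalence version of Lemma~\ref{lem:adjunction-on-section-cat}. Both $|{-}|$ and $\Sing$ commute up to natural isomorphism with the structure functors $-\barsm S^{\bld{n}-\alpha}$ of the Quillen $\cI$-category defining $\SpsymR$, via the strong monoidality of $|{-}|$ and the cofibrancy of $S^{\bld{n}-\alpha}$. This yields pseudo-natural transformations between the two Quillen $\cI$-categories. Since each level is a Quillen equivalence and the absolute and positive level model structures on the section category are detected levelwise by Proposition~\ref{prop:level-model-on-section}, the induced adjunction on section categories is itself a Quillen equivalence with respect to both level model structures.

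Third, for the local model structures I would exploit that the cofibrations are inherited from the level model structure by Corollary~\ref{cor:local-model-structure-inherited}, so the adjunction remains Quillen after left Bousfield localization. To upgrade to a Quillen equivalence I would invoke the standard fact that a Quillen equivalence descends to a Quillen equivalence of left Bousfield localizations whenever the left derived functor sends the localizing set on one side into the local equivalences on the other, cf.~\cite{Hirschhorn_model}. Since the generating cofibrations of $\cS_{\cR}(\tp)$ are precisely the realizations of those of $\cS_{\cR}(\sset)$, the simplicial localizing maps $\alpha_Z^{\sset} \in S_{\cC}^{\sset}$ are sent, up to weak equivalence, to the corresponding topological maps $\alpha_Z^{\tp}$, which are local equivalences by construction. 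The positive versions follow by the same argument restricted to morphisms in $\cI^+$. The principal technical obstacle throughout is the careful bookkeeping: one must check at each step that realization actually commutes with the free functors $F_{\bld{m}}$, the fiberwise smash product $\barsm$, the cotensor by spheres from~\eqref{eq:cotensor}, and the various pushouts and pullbacks in $\cS_{\cR}$. The required commutations are classical consequences of the fact that $|{-}|$ preserves finite products in $\tp$ and all colimits, much as deployed in the proof of Lemma~\ref{lem:fiberwise-smash-pullback}, but they have to be tracked explicitly at each level.
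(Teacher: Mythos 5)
Your argument is correct, but note that the paper's entire proof is one line: the statement ``can be checked from the definitions or deduced from Hovey's Theorem 9.3'', i.e., it invokes Hovey's general comparison theorem, which says that a suitable strong monoidal Quillen equivalence between the underlying left proper cellular symmetric monoidal model categories --- here $\cS_{\cR}(\sset)\rightleftarrows\cS_{\cR}(\tp)$, which is your first step --- induces a Quillen equivalence between the associated categories of symmetric spectrum objects with their level and stable model structures. Your three-step route (retractive spaces first, then the level structures via a Quillen-equivalence upgrade of Lemma~\ref{lem:adjunction-on-section-cat}, then the local structures via Hirschhorn's compatibility of Quillen equivalences with left Bousfield localization) is precisely the ``from the definitions'' alternative the paper alludes to. What Hovey's theorem buys is that all the bookkeeping you flag at the end --- compatibility of realization with the free functors, with $-\barsm S^{\bld{n}-\alpha}$, and the identification of the localizing set on the topological side with the derived image of the simplicial one --- is packaged once and for all; what your route buys is an explicit argument independent of Hovey's framework. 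Two small points to tighten: the Quillen-equivalence version of Lemma~\ref{lem:adjunction-on-section-cat} is not stated in the paper and requires the (true, but worth recording) observation that cofibrant objects in the level model structure are levelwise cofibrant, so that derived units and counits are computed levelwise; and the preservation of cofibrations under passage to the local model structure is part of the definition of a left Bousfield localization rather than a consequence of Corollary~\ref{cor:local-model-structure-inherited}, which concerns the inclusion $\Spsym{X}\subset\SpsymR$.
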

\begin{proof}
This can be checked from the definitions or deduced from~\cite[Theorem 9.3]{Hovey_symmetric-general}.
\end{proof}
Now let $X$ be an $\cI$-diagram of simplicial sets, $Y$ an $\cI$-diagram of topological spaces, and $|X| \to Y$ a map with adjoint $X \to \Sing(Y)$. Then the two composites 
\begin{align*}
&\Spsym{X}(\sset) \xrightarrow{|-|}  \Spsym{|X|}(\tp) \xrightarrow{(|X| \to Y)_!} \Spsym{Y}(\tp) \qquad \text{and} \\ & \Spsym{Y}(\tp) \xrightarrow{\Sing} \Spsym{\mathrm{Sing(Y)}}(\sset) \xrightarrow{(X \to \mathrm{Sing(Y)})^*} \Spsym{X}(\sset)
\end{align*}
define an adjunction $\Spsym{X}(\sset) \rightleftarrows \Spsym{Y}(\tp)$. Taking $|X| \to Y$ or its adjoint to be the identity gives adjunctions 
$\Spsym{X}(\sset) \rightleftarrows \Spsym{|X|}(\tp)$ and $\Spsym{\Sing(Y)}(\sset) \rightleftarrows \Spsym{Y}(\tp)$. 
\begin{proposition}\label{prop:simp-top-spsymX}
The last two adjunctions are  Quillen equivalences with respect to the absolute and positive level and local model structures.
\end{proposition}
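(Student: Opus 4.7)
The strategy is to reduce the claim to Proposition~\ref{prop:simp-top-spsymR} (the Quillen equivalence $(|-|, \Sing)\colon \SpsymR(\sset) \rightleftarrows \SpsymR(\tp)$), Corollary~\ref{cor:base-change-local-Q-adjunction} and Lemma~\ref{lem:base-change-local-Q-adjunction} (base change along an $\cI$-equivalence, respectively level equivalence, is a Quillen equivalence on the local, respectively level, model structure), Corollary~\ref{cor:local-model-structure-inherited} (cofibrations, fibrations, and weak equivalences in $\Spsym{X}$ are inherited from $\SpsymR$), and Corollary~\ref{cor:characterization-local-equivalences} (detecting local equivalences in $\SpsymR$ via base and fiber data).

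The Quillen adjunction property of both adjunctions follows by writing each left adjoint as the composite of $|-|\colon \SpsymR(\sset) \to \SpsymR(\tp)$ restricted to the fibers of $\pi_b$ (left Quillen by Proposition~\ref{prop:simp-top-spsymR} and Corollary~\ref{cor:local-model-structure-inherited}) followed by a base change $f_!$ (left Quillen by Lemma~\ref{lem:base-change-local-Q-adjunction}).

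The core work is the Quillen equivalence of the first adjunction $\Spsym{X}(\sset) \rightleftarrows \Spsym{|X|}(\tp)$, whose right adjoint is $\eta^* \Sing$ where $\eta \colon X \to \Sing|X|$ is the unit on $\cS^{\cI}$. Given a cofibrant $E$ and a local fibrant replacement $|E|^f$ in $\Spsym{|X|}(\tp)$, the derived unit is $E \to \eta^* \Sing(|E|^f)$. I would first verify that $\Sing(|E|^f)$ is already locally fibrant in $\Spsym{\Sing|X|}(\sset)$, using that $\Sing$ preserves level fibrations and limits and commutes with the cotensor by the realization of the simplicial spheres $S^{\bld{n}-\alpha}$, so the Omega-spectrum squares are preserved. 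By Proposition~\ref{prop:simp-top-spsymR}, $E \to \Sing(|E|^f)$ is a local equivalence in $\SpsymR(\sset)$ with underlying base map $\eta$, which is a level equivalence and in particular an $\cI$-equivalence; Corollary~\ref{cor:characterization-local-equivalences}(iii) then yields that $E \to \eta^* \Sing(|E|^f)$ is a local equivalence in $\Spsym{X}(\sset)$. For the derived counit at a fibrant $F \in \Spsym{|X|}(\tp)$, the analogous analysis applies once one replaces $|X|$ by a $\cI$-fibrant model in $\cS^{\cI}(\tp)$, so that Lemma~\ref{lem:equivalent-fibrancy-conditions} identifies fibrant objects of $\Spsym{|X|}(\tp)$ with fibrant objects of $\SpsymR(\tp)$ and Proposition~\ref{prop:simp-top-spsymR} becomes directly applicable. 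The level model structure cases are simpler since cofibrations and level equivalences are detected levelwise in $\SpsymR$ and $\Spsym{X}$ alike, and Corollary~\ref{cor:characterization-local-equivalences} is not required.

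The second adjunction $\Spsym{\Sing(Y)}(\sset) \rightleftarrows \Spsym{Y}(\tp)$ then factors as the first adjunction applied with $X = \Sing(Y)$ followed by base change along the counit $|\Sing(Y)| \to Y$, which is a levelwise weak equivalence of $\cI$-spaces and hence induces a Quillen equivalence by Corollary~\ref{cor:base-change-local-Q-adjunction}; 2-out-of-3 for Quillen equivalences yields the conclusion. The main obstacle I anticipate is the fibrancy bookkeeping in the local case: verifying that $\Sing$ preserves local fibrancy even when the base $\cI$-spaces are not themselves $\cI$-fibrant, and handling the interplay between local fibrancy in $\Spsym{X}$ and fibrancy in $\SpsymR$ through an intermediate $\cI$-fibrant replacement of the base.
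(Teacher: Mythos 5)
Your proposal is correct and follows essentially the same route as the paper, whose proof is a one-line citation of Proposition~\ref{prop:simp-top-spsymR} together with Corollaries~\ref{cor:compatible-level-model-str} and~\ref{cor:local-model-structure-inherited}; you have simply spelled out the derived unit/counit verification and the fibrancy bookkeeping that the paper leaves implicit. The extra ingredients you invoke (Corollary~\ref{cor:characterization-local-equivalences}, Lemma~\ref{lem:equivalent-fibrancy-conditions}, and the base-change Quillen equivalences) are exactly the right tools for filling in those details.
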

\begin{proof}
This follows from Proposition~\ref{prop:simp-top-spsymR} 
and Corollaries~\ref{cor:compatible-level-model-str} and~\ref{cor:local-model-structure-inherited}.
\end{proof}
It is also easy to check that these adjunctions respect the convolution product~\eqref{eq:spsymrel-M-product} if the base is a commutative $\cI$-space monoid. 

\subsection{Model structures on parametrized commutative ring spectra}\label{subsec:commutative-monoids}
Next we explain how to lift the previously constructed
local model structures to commutative ring spectra and for this we wish to apply the general theory from \cite{Pavlov-S_symmetric-operads}. Since this theory is only applicable in the simplicial setting, we shall limit ourselves to working simplicially when discussing model structures on commutative ring spectra. Thus, for the rest of this section we specify that the underlying category $\cS$ of spaces be the category $\sset$ of simplicial sets. We briefly comment on the topological setting in Remark~\ref{rem:top-problems-commutative-parametrized-model}.

We write $\cC\SpsymR$ for the category of commutative ring spectra in 
$\SpsymR$, i.e., for commutative monoid objects in $(\SpsymR,\barsm, \bS)$. 
\begin{theorem}\label{thm:positive-local-on-CSpsymR}
The category $\cC\SpsymR$ admits a positive local model structure where
a map is a fibration or weak equivalence if and only if the underlying
map in $\SpsymR$ is. 
\end{theorem}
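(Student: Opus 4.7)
The plan is to transfer the positive local model structure on $\SpsymR$ to commutative monoids using the lifting machinery of Pavlov–Sch\"olbach \cite{Pavlov-S_symmetric-operads}. Their criterion reduces the existence of the desired model structure on $\cC\SpsymR$ to two requirements on $(\SpsymR,\barsm)$: a combinatorial, monoidal model structure satisfying the pushout product axiom, and the \emph{commutative monoid axiom}, which controls the homotopical behaviour of $\Sigma_n$-coinvariants of symmetric $\barsm$-powers.

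First I would note that the simplicial setting ensures combinatoriality, and Proposition~\ref{prop:local-pushout-product} already provides the pushout product axiom. Hence the entire proof reduces to verifying the commutative monoid axiom for the positive local model structure. By a standard reduction it suffices to control $n$-fold symmetric powers of the generating positive cofibrations and acyclic cofibrations, together with their cobase changes.

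The central computation uses the isomorphism~\eqref{eq:barsm-of-free}: for a generating positive cofibration $F^{\Spsym{\cR}}_{\bld{m}}(i)$ with $|\bld{m}|\geq 1$, its $n$-fold $\barsm$-power is $F^{\Spsym{\cR}}_{\bld{mn}}(i^{\barsm n})$, equipped with the diagonal $\Sigma_n$-action permuting the $n$ copies of $i$ and the $n$ blocks of $\bld{mn}=\bld{m}\concat\dots\concat\bld{m}$. Using~\eqref{eq:free-explicit}, evaluating at $\bld{k}$ gives a coproduct indexed by $\cI(\bld{mn},\bld{k})$, and positivity $|\bld{m}|\geq 1$ makes the $\Sigma_n$-action on this indexing set free, so that the $\Sigma_n$-coinvariants are again a free diagram. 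Combining this with the Reedy/pushout product analysis of $\cS_{\cR}$ from Section~\ref{retrsp} shows the symmetric coinvariants of generating positive cofibrations are cofibrations in the positive level structure on $\SpsymR$, hence in the positive local structure as well. For the acyclic case, one treats generating level-acyclic cofibrations by the pushout product axiom applied $\Sigma_n$-equivariantly, and the localization maps $\alpha_Z$ of Corollary~\ref{cor:alpha-Z-weak-equivalence} via Lemma~\ref{lem:SpsymK-SpsymR-maps-between-free} together with the same freeness of the $\Sigma_n$-action on the relevant free functors.

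The main obstacle is organizing the $\Sigma_n$-equivariant filtration of symmetric powers for arbitrary positive cofibrations, not just for generators: one needs to pass from the generating case to general cofibrations via cobase changes and transfinite compositions while preserving $\Sigma_n$-freeness, and further check that coinvariants of symmetric powers of acyclic positive cofibrations are local equivalences rather than merely level equivalences. This is precisely the combinatorial input isolated by Pavlov–Sch\"olbach, and once the freeness analysis above is in hand their transfer theorem applies directly, yielding the positive local model structure on $\cC\SpsymR$ with fibrations and weak equivalences created in $\SpsymR$.
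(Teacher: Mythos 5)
Your proposal is correct in substance and relies on the same external input as the paper, namely the machinery of Pavlov--Scholbach, but it is organized rather differently. The paper's proof does not verify any symmetric-power or commutative-monoid-type axiom for $\SpsymR$ itself: it only checks that the \emph{base category} $\sset_{\cR}$ satisfies the hypotheses of \cite[Definition~2.1]{Pavlov-S_symmetric-operads} (locally presentable, all objects cofibrant, finitely presentable domains and codomains of generating cofibrations, pushout product axiom), observes via the equivalence $\SpsymR \simeq \Spsym{}(\cS_{\cR},S^1)$ that the positive local model structure agrees with the one produced by \cite[Theorem~3.2.1]{Pavlov-S_symmetric-operads}, and then invokes \cite[Theorem~4.1]{Pavlov-S_symmetric-operads} as a black box; that theorem already contains, internally, the entire analysis of $\Sigma_n$-actions on symmetric $\barsm$-powers in the positive structure. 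Your route instead tries to verify the symmetricity conditions directly on $\SpsymR$. Your central computation -- that positivity $|\bld{m}|\geq 1$ makes the $\Sigma_n$-action on $\cI(\bld{m}^{\concat n},\bld{k})$ free, so coinvariants of powers of free objects stay free -- is exactly the standard mechanism, and it is what Pavlov--Scholbach prove in general. The cost of your organization is that the genuinely delicate step, showing that $\Sigma_n$-coinvariants of pushout-product powers of the \emph{local} generating acyclic cofibrations (the mapping-cylinder factorizations of the stabilization maps $\alpha_Z$) are local equivalences and remain so under cobase change, is deferred back to the reference, so your argument partly duplicates and partly presupposes the theorem you are citing. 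If you intend to carry out that step yourself rather than cite it, it is the bulk of the work and your sketch does not yet contain it; if you intend to cite \cite[Theorem~4.1]{Pavlov-S_symmetric-operads} anyway, the cleaner formulation is the paper's: verify the conditions on $\sset_{\cR}$ and let the general theorem handle the rest.
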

\begin{proof}
  We first notice that the absolute and positive model structures can
  also be constructed using~\cite[Theorem
  3.2.1]{Pavlov-S_symmetric-operads}. For this we have to show that
  the category $\sset_{\cR}$ of retractive simplicial sets satisfies
  the requirements of~\cite[Definition~2.1]{Pavlov-S_symmetric-operads}. This holds since $\sset_{\cR}$ is
  locally presentable, all objects are cofibrant, the domains and
  codomains of the generating cofibrations are finitely presentable,
  and $\sset_{\cR}$ satisfies the pushout product axiom. The theorem
  then follows from~\cite[Theorem~4.1]{Pavlov-S_symmetric-operads}.
\end{proof}

\begin{remark}\label{rem:associative-fibrant-repl} In fact, the result
  in~\cite{Pavlov-S_symmetric-operads} shows that the positive local
  model structure on $\SpsymR$ has favorable monoidal
  properties~\cite[Proposition 3.5.1]{Pavlov-S_symmetric-operads} that
  allow it to be lifted to algebras over general colored symmetric
  operads. In particular, there is also a lifted model structure on
  associative parametrized ring spectra. 
\end{remark}

Now let $M$ be a commutative $\cI$-space monoid and consider the
category $\Spsym{M}$ with the symmetric monoidal product~\eqref{eq:spsymrel-M-product}. We write $\cC\Spsym{M}$ for the
category of commutative $M$-relative symmetric ring spectra, i.e., the
commutative monoid objects in $(\Spsym{M},\barsm,\bS_M)$.
\begin{theorem}\label{thm:positive-local-on-CSpsymM}
  The category $\cC\Spsym{M}$ admits a \emph{positive local model
    structure} where a map is a fibration or weak equivalence if and
  only if the underlying map in $\Spsym{M}$~is.
\end{theorem}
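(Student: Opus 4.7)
The approach is to apply the same Pavlov--Scholbach machinery used in the proof of Theorem~\ref{thm:positive-local-on-CSpsymR}. Concretely, I would invoke \cite[Theorem~4.1]{Pavlov-S_symmetric-operads} for the symmetric monoidal model category $(\Spsym{M},\barsm,\bS_M)$ with its positive local model structure, which would produce a lifted model structure on $\cC\Spsym{M}$ whose fibrations and weak equivalences are created by the forgetful functor to $\Spsym{M}$ -- this is exactly the asserted statement.

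Verifying the hypotheses of Pavlov--Scholbach is largely routine and breaks into three parts. Combinatoriality of the positive local model structure on $\Spsym{M}$ is inherited from the level model structure, which is cofibrantly generated by the sets built from generators of $\sset_{M(\bld{m})}$ via the free functors $F^{\Spsym{M}}_{\bld m}$. Tractability follows because the standard generating cofibrations of $\sset_{M(\bld{m})}$ have cofibrant domains (see~\cite{Hirschhorn_over_under}), and so do the $F^{\Spsym{M}}_{\bld m}(i)$. The pushout product axiom is Proposition~\ref{prop:SpsyM-pushout-product}.

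The main obstacle is verifying the symmetric flatness (or $h$-monoidality) condition required by Pavlov--Scholbach for the convolution product; this is the hypothesis controlling the interaction between the model structure and the free commutative monoid functor. The strategy is to factor the convolution product as $\mu_!\circ(-\barsm-)$, to exploit that $\mu_!$ is left Quillen by Lemma~\ref{lem:base-change-local-Q-adjunction}, and thereby reduce to the corresponding property for the external smash product on $\Spsym{\cR}$. Iterating Lemma~\ref{lem:cobase-change-spym-lax-monoidal}, one obtains $\Sigma_n$-equivariant natural isomorphisms identifying the $n$-fold convolution power in $\Spsym{M}$ with the pushforward along the iterated multiplication $M^{\boxtimes n}\to M$ of the $n$-fold external smash power formed in $\Spsym{M^{\boxtimes n}}$. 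Combined with Corollary~\ref{cor:local-model-structure-inherited}, which asserts that cofibrations, acyclic cofibrations, and weak equivalences in these fiber categories are all detected in $\Spsym{\cR}$, this reduces symmetric flatness for $\Spsym{M}$ with its convolution product to the corresponding property for $\Spsym{\cR}$ with its external $\barsm$-product, which is precisely what underlies the proof of Theorem~\ref{thm:positive-local-on-CSpsymR}.

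The careful point in this reduction is the $\Sigma_n$-equivariance of the identification with the pushforward of external smash powers, which depends on the commutativity data of $M$ entering through the coherence isomorphisms of the pseudofunctor $N\mapsto\Spsym{N}$. Once this equivariant identification is made, \cite[Theorem~4.1]{Pavlov-S_symmetric-operads} directly supplies the asserted positive local model structure on $\cC\Spsym{M}$.
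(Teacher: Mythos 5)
Your opening move --- invoking \cite[Theorem~4.1]{Pavlov-S_symmetric-operads} for the symmetric monoidal model category $(\Spsym{M},\barsm,\bS_M)$ --- is exactly what the paper rules out: the remark preceding Theorem~\ref{thm:positive-local-on-CSpsymM} notes that $\Spsym{M}$ is \emph{not} equivalent to the category of symmetric spectrum objects in any base category (its levels live in the varying categories $\cS_{M(\bld m)}$), and the Pavlov--Scholbach results take such a base category as input. So Theorem~4.1 cannot ``directly supply'' the lifted model structure on $\cC\Spsym{M}$, no matter which auxiliary hypotheses you verify. To salvage the strategy you would have to fall back on the general admissibility criteria for arbitrary symmetric monoidal model categories (symmetric h-monoidality plus symmetric flatness), and your verification of those is where the second gap sits: the $n$-fold convolution power is the pushforward along the iterated multiplication $M^{\boxtimes n}\to M$ of the external smash power formed over $M^{\boxtimes n}$, but $\Sigma_n$ acts on $M^{\boxtimes n}$ by permuting factors, so the action is not a morphism in the fiber category $\Spsym{M^{\boxtimes n}}$; the $\Sigma_n$-quotients entering symmetric flatness must be computed in $\SpsymR$ and then compared with the pushforward. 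Corollary~\ref{cor:local-model-structure-inherited} only detects weak equivalences of maps already lying in a fixed fiber; it does not by itself control how the pushforward interacts with these (generally non-cofibrant) quotients. This is precisely the difficulty the paper is organized to avoid.

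The paper's actual argument is different and shorter: it applies the lifting theorem \cite[Theorem~11.3.2]{Hirschhorn_model} to the free/forgetful adjunction $\bC^{M}\colon \Spsym{M}\rightleftarrows \cC\Spsym{M}$, and the only nontrivial point --- that relative $\bC^{M}(J)$-cell complexes are local equivalences --- is reduced to the already established Theorem~\ref{thm:positive-local-on-CSpsymR} via Lemma~\ref{lem:comparison-free-commutative}, which identifies cobase changes of $\bC^{M}(f)$ in $\cC\Spsym{M}$ with cobase changes of $\bC^{\cR}(f)$ in $\cC\SpsymR$, together with the fact that filtered colimits in both categories are created in $\SpsymR$. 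Your instinct to push everything down to $\SpsymR$ is the right one, but the reduction has to be carried out at the level of the free commutative monoid functor and its cell attachments, not at the level of flatness properties of the convolution product itself.
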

The proof of this statement is more difficult because $\Spsym{M}$ not
being equivalent to symmetric spectrum objects in some category
prevents us from applying the results
of~\cite{Pavlov-S_symmetric-operads} directly.  Instead, we rely on
the following lemma. To formulate it, we let
$\bC^{\cR} \colon \SpsymR \to \cC\SpsymR$,
$\bC^{M}\colon \Spsym{M} \to \cC\Spsym{M}$, and
$\bC^{\cI}\colon \cS^{\cI} \to \cC\cS^{\cI}$ be the free functors
which are left adjoint to the respective forgetful functors. We also
note that there is a canonical inclusion functor
$\cC\Spsym{M} \to \cC\SpsymR$ that identifies $\cC\Spsym{M}$ with the
fiber of the projection functor  $\pi_b\colon \cC\SpsymR \to \cC\cS^{\cI}$
and that $\bS^{\cI}_b$ induces a
functor $\bS^{\cI}_b\colon \cC\cS^{\cI} \to \cC\SpsymR$.
\begin{lemma}\label{lem:comparison-free-commutative}
  Let $(A,M)$ be an object in $\cC\Spsym{M}$ and let $f\colon (D,M)
  \to (E,M)$ and $g\colon (D,M) \to (A,M)$ be a maps in
  $\Spsym{M}$. Let $\widetilde g \colon \bC^{\cR}(D,M) \to (A,M)$ and
  $\hat g \colon \bC^{M}(D,M) \to (A,M)$ be the adjoints of $g$ with
  respect to the above adjunctions. Then the cobase change of
  $\bC^M(f)$ along $\hat g$ in $\cC\Spsym{M}$ is isomorphic to the
  cobase change of $\bC^{\cR}(f)$ along $\widetilde{g}$ in
  $\cC\SpsymR$.
\end{lemma}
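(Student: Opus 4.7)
The plan is to identify both pushouts via a common universal property, reducing the comparison to a base-change observation along the iterated multiplication map $\nu\colon \bC^{\cI}(M) \to M$ of $M$ (where $\bC^{\cI}$ denotes the free commutative $\cI$-space monoid functor). First I would analyze the base $\cI$-spaces. Using the explicit formula $\bC^{\cR}(D,M) \iso \bigsqcup_{n\geq 0}((D,M)^{\barsm n})_{\Sigma_n}$ together with the fact that $\pi_b$ preserves colimits and is strong symmetric monoidal, the base $\cI$-space of $\bC^{\cR}(D,M)$ is $\bC^{\cI}(M) = \bigsqcup_{n\geq 0}(M^{\boxtimes n})_{\Sigma_n}$. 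The base component of $\bC^{\cR}(f)$ is then $\id_{\bC^{\cI}(M)}$, while that of $\widetilde g$ is $\nu$, forced by the $(\bC^{\cI},U)$-adjunction and the requirement that its restriction along the unit $M\to\bC^{\cI}(M)$ be $\id_M$.

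Next I would establish a natural isomorphism $\bC^M(-,M) \iso \nu_!\bC^{\cR}(-,M)$ on $\Spsym{M}$. Since $\nu_!$ is a left adjoint it preserves coproducts and $\Sigma_n$-coinvariants, so it suffices to check that $\nu_!(D,M)^{\barsm n} \iso (D,M)^{\barsm_M n}$, where $\barsm_M$ denotes the convolution product~\eqref{eq:spsymrel-M-product} on $\Spsym{M}$. The base of $(D,M)^{\barsm n}$ is $M^{\boxtimes n}$, and restricted to the corresponding summand of $\bC^{\cI}(M)$ the map $\nu$ is the iterated multiplication $\mu^{(n)}\colon M^{\boxtimes n} \to M$. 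Hence $\nu_!(D,M)^{\barsm n} \iso \mu^{(n)}_!(D,M)^{\barsm n}$, which is exactly $(D,M)^{\barsm_M n}$ by the definition of $\barsm_M$. Under the $(\nu_!,\nu^*)$-adjunction lifted to commutative monoids (well-defined since $\nu$ is a map of commutative $\cI$-space monoids and hence $\nu_!$ is strong symmetric monoidal for the convolution products, by the consequences of Lemma~\ref{lem:cobase-change-spym-lax-monoidal}), this identification makes $\widetilde g$ factor as the adjunction unit $\bC^{\cR}(D,M) \to \nu^*\bC^M(D,M)$ followed by $\hat g$.

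Finally I would compare universal properties. Writing $Q = (A,M) \sqcup_{\bC^{\cR}(D,M)} \bC^{\cR}(E,M)$ for the pushout in $\cC\SpsymR$, its base $\cI$-space is the pushout $M\sqcup_{\bC^{\cI}(M)}\bC^{\cI}(M) \iso M$ in $\cC\cS^{\cI}$ (using the base components identified above), so $Q$ lies in $\cC\Spsym{M}$. For any $(B,M)\in\cC\Spsym{M}$, a map $Q\to(B,M)$ in $\cC\Spsym{M}$ is the same as a map in $\cC\SpsymR$ with base component $\id_M$; by the universal property of $Q$, this amounts to giving a map $(A,M)\to(B,M)$ in $\cC\Spsym{M}$ together with a map $\bC^{\cR}(E,M)\to(B,M)$ in $\cC\SpsymR$ (whose base is forced to be $\nu$) agreeing on $\bC^{\cR}(D,M)$. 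By the adjunction and identification from the previous step, the second datum is equivalent to a map $\bC^M(E,M)\to(B,M)$ in $\cC\Spsym{M}$, and the compatibility translates to agreement on $\bC^M(D,M)$. This is exactly the universal property of the pushout $P_M$ in $\cC\Spsym{M}$, so $Q \iso P_M$.

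The main obstacle will be carefully verifying that the $(\nu_!,\nu^*)$-adjunction lifts to commutative monoids and commutes with the free functors, which is precisely what underpins the identification $\bC^M \iso \nu_!\bC^{\cR}$ on $\Spsym{M}$. Here one must ensure that the monoidal coherence isomorphisms of Lemma~\ref{lem:cobase-change-spym-lax-monoidal} interact correctly with the $\Sigma_n$-coinvariants defining the free commutative monoid, so that the desired isomorphism is natural and respects the multiplications rather than just the underlying spectra.
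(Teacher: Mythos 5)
Your proposal is correct and follows essentially the same strategy as the paper: both rest on identifying $\bC^{M}(D,M)$ with the cobase change of $\bC^{\cR}(D,M)$ along (the image under $\bS^{\cI}_b$ of) the counit $\nu\colon\bC^{\cI}(M)\to M$, and then identifying the two pushouts. The only cosmetic differences are that you establish this identification via the explicit coproduct-of-symmetric-powers formula and conclude by checking the universal property of the pushout directly, whereas the paper cites the universal properties of the free functors and then interchanges pushouts in $\cC\SpsymR$.
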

\begin{proof}
The underlying commutative $\cI$-space monoid of $\bC^{\cR}(D,M)$ is
$\bC^{\cI}(M)$. Inspecting the universal properties of the free functors
shows that $\bC^{M}(D,M)$ is isomorphic to $\bC^{\cR}(D,M) \barsm_{\bS^{\cI}_{b}[\bC^{\cI}(M)]} \bS^{\cI}_{b}[M]$, the cobase change of $\bC^{\cR}(D,M)$ along the map
given by applying $\bS^{\cI}_{b}$ to the adjoint $\bC^{\cI}(M) \to M$ of $\id_{M}$. Commuting pushouts in $\cC\SpsymR$, we see that 
\begin{align*} &(A,M) \barsm_{ \bC^{M}(D,M)}  \bC^{M}(E,M)\\ \iso &\left((A,M) \barsm_{\bS^{\cI}_{b}[M]}  \bS^{\cI}_{b}[M] \right)\barsm_{\left(\bC^{\cR}(D,M) \barsm_{\bS^{\cI}_{b}[\bC^{\cI}(M)] } \bS^{\cI}_{b}[M]\right)} \!\! \left(\bC^{\cR}(E,M)  \barsm_{\bS^{\cI}_{b}[\bC^{\cI}(M)] } \bS^{\cI}_{b}[M]\right)\\
\iso & (A,M) \barsm_{ \bC^{\cR}(D,M)}\bC^{\cR}(E,M)\ .
\end{align*}
As the inclusion functor $\cC\Spsym{M} \to \cC\SpsymR$ preserves pushouts, 
the claim follows. 
\end{proof}

\begin{proof}[Proof of Theorem~\ref{thm:positive-local-on-CSpsymM}]
We apply~\cite[Theorem 11.3.2]{Hirschhorn_model} to the free/forgetful adjunction $\bC^{M} \colon \Spsym{M} \rightleftarrows \cC\Spsym{M} \colon U$. Let $J$ be a set of generating acyclic cofibrations for the positive local model structure on $\Spsym{M}$ and let $\bC^{M}(J)$ be its image under $\bC^{M}$. The non-trivial part is to show that relative $\bC^{M}(J)$-cell complexes are local equivalences. Lemma~\ref{lem:comparison-free-commutative} and the fact that filtered colimits in $\cC\Spsym{M}$ and $\cC\SpsymR$ are both created in $\SpsymR$ imply that this follows from the corresponding property for $\cC\SpsymR$ resulting from Theorem~\ref{thm:positive-local-on-CSpsymR}.
\end{proof}
\begin{remark}\label{rem:top-problems-commutative-parametrized-model}
We expect that there are analogous model structures on associative and commutative ring spectra in $\SpsymR$ and $\Spsym{M}$ in the topological setting. However, the construction of such model structures will most likely require an elaborate analysis of $h$-cofibrations that we wish to avoid in the present paper. (Even the associative case is not an immediate consequence of~\cite[Theorem 4.1(3)]{Schwede-S_algebras_modules} since we do not know if the topological  $\cS_{\cR}$, $\SpsymR$ or $\Spsym{M}$ satisfy the monoid axiom.)

Nonetheless, we note that our results suffice to fibrantly replace associative or commutative parametrized ring spectra in the topological $\SpsymR$: combining Lemma \ref{lem:fibrant-in-local} with the fact that the geometric realization $|\!-\!|\colon \sset \to \tp$ preserves fibrations and weak equivalences, it follows that $|\!-\!| \colon  \SpsymR(\sset) \to \SpsymR(\tp)$ preserves locally fibrant objects. Thus applying the singular complex, forming a fibrant replacement, and then passing to the realization gives a topological fibrant replacement functor for associative or commutative parametrized ring spectra that is related to the identity functor by a zig-zag of local equivalences. 
\end{remark}

Since the left adjoint functors $\iota_t \colon \cS^{\cI} \to
\cS^{\cI}_{\cR}$ and $\bS^{\cI}_{\cR} \colon \cS^{\cI}_{\cR} \to \SpsymR$
from Construction~\ref{const:various-adjunctions} are strong symmetric
monoidal, they induce adjunctions
\[\iota_t\colon \cC\cS^{\cI}\rightleftarrows
\cC\cS^{\cI}_{\cR}\colon\pi_t \quad \text{ and } \quad
\bS^{\cI}_{\cR}\colon \cC\cS^{\cI}_{\cR} \rightleftarrows \cC\SpsymR
\colon \Omega^{\cI}_{\cR}.\]
\begin{lemma}\label{lem:various-com-adjunctions}
  These adjunctions and their composite $\bS^{\cI}_{t} \colon
  \cC\cS^{\cI}\rightleftarrows \cC\SpsymR \colon \Omega^{\cI}_{t}$ are
  Quillen adjunctions with respect to the positive local model structures.
\end{lemma}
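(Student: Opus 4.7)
My plan is to reduce each of the three Quillen adjunction claims to the corresponding statement at the level of the underlying categories. First I would invoke Theorem~\ref{thm:positive-local-on-CSpsymR} to see that the positive local model structure on $\cC\SpsymR$ has fibrations and weak equivalences detected by the forgetful functor to $\SpsymR$. I would then establish the analogous lifting property for $\cC\cS^\cI$ (using the existence of the positive $\cI$-model structure on commutative $\cI$-space monoids from \cite{Sagave-S_diagram}) and for $\cC\cS^\cI_\cR$ (by running essentially the same application of \cite[Theorem~4.1]{Pavlov-S_symmetric-operads} used to prove Theorem~\ref{thm:positive-local-on-CSpsymR}, since $\sset$ and $\sset_\cR$ both satisfy the hypotheses needed there and the diagram category $\sset^\cI_\cR$ inherits the relevant properties).

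Next I would note that since $\iota_t$ and $\bS^\cI_\cR$ are strong symmetric monoidal, their right adjoints $\pi_t$ and $\Omega^\cI_\cR$ are lax symmetric monoidal and lift canonically to functors between the commutative-monoid categories that commute strictly with the underlying forgetful functors; the same then holds for their composites $\bS^\cI_t = \bS^\cI_\cR \circ \iota_t$ and $\Omega^\cI_t = \pi_t \circ \Omega^\cI_\cR$. By Lemma~\ref{lem:Q-adjunctions-on-local-model-str}, the underlying left adjoints $\iota_t$, $\bS^\cI_\cR$, and hence also $\bS^\cI_t$ are left Quillen with respect to the positive local model structures on the underlying categories, so the corresponding underlying right adjoints preserve positive local fibrations and acyclic fibrations. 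Combined with the detection property from the previous step, this would immediately yield that the right adjoints $\pi_t$, $\Omega^\cI_\cR$, and $\Omega^\cI_t$ at the commutative-monoid level preserve fibrations and acyclic fibrations, which is exactly what is required.

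The main obstacle will be confirming the lifting property for the model structures on $\cC\cS^\cI$ and $\cC\cS^\cI_\cR$, i.e.\ that fibrations and weak equivalences in those commutative-monoid categories are genuinely created by the forgetful functors; everything else amounts to routine bookkeeping modulo the already-cited results. Once that point is settled the Quillen adjunction assertion for the composite is automatic, since a composite of Quillen adjunctions is a Quillen adjunction.
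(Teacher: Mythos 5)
Your proposal is correct and follows essentially the same route as the paper, whose entire proof is the one-liner ``Arguing with the right adjoints, the claim follows from Lemma~\ref{lem:Q-adjunctions-on-local-model-str}'': you simply spell out the two implicit ingredients, namely that fibrations and weak equivalences in the commutative-monoid categories are created by the forgetful functors, and that the lifted right adjoints commute with those forgetful functors. Your remark that the lifted positive local model structure on $\cC\cS^{\cI}_{\cR}$ needs the same appeal to \cite{Pavlov-S_symmetric-operads} as Theorem~\ref{thm:positive-local-on-CSpsymR} is a fair observation about a point the paper leaves implicit, but it is not a gap in your argument.
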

\begin{proof}
Arguing with the right adjoints, the claim follows from Lemma~\ref{lem:Q-adjunctions-on-local-model-str}. 
\end{proof}

\section{Parametrized homology and cohomology}\label{sec:tw-coho}
In this section we define the parametrized (co)homology theories associated to a parametrized spectrum that were outlined in the introduction. Concrete examples arise from the universal line bundle (see Section~\ref{sec:univ-bundle} and Proposition~\ref{cohom infty comp}) and the twisted $K$-theory spectra studied in~\cite{HS-twisted}.

Key ingredients for the definition of parametrized (co)homology are the adjoints of the derived restriction that we discuss now. If $f\colon Y \to X$ is a map of $\cI$-spaces, then
$f^*\colon \Spsym{X} \to \Spsym{Y}$ is right Quillen with respect to
the absolute local model structures and thus induces a right derived
functor
$\mathbb Rf^* \colon \mathrm{Ho}(\Spsym{X}) \to
\mathrm{Ho}(\Spsym{Y})$
with left adjoint
$\mathbb Lf_!\colon \mathrm{Ho}(\Spsym{Y}) \to
\mathrm{Ho}(\Spsym{X})$.  

\begin{proposition}\label{prop:Rf-upper-star-has-right-adjoint}
The functor $\mathbb Rf^* \colon \mathrm{Ho}(\Spsym{X}) \to \mathrm{Ho}(\Spsym{Y})$ is also a left adjoint. 
\end{proposition}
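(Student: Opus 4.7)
The plan is to apply the adjoint functor theorem for presentable stable $\infty$-categories. The categories $\Spsym{X}$ and $\Spsym{Y}$ are combinatorial model categories: they are cofibrantly generated with locally presentable underlying categories (as section categories of Quillen $\cI$-categories with values in the locally presentable $\cS_{X(\bld{m})}$ and $\cS_{Y(\bld{m})}$), and their local model structures arise as left Bousfield localizations at small sets of maps. Combined with stability from Proposition~\ref{prop:SpsymX-stable}, this shows that the underlying $\infty$-categories $(\Spsym{X})_\infty$ and $(\Spsym{Y})_\infty$ are presentable and stable. In such a setting, an exact functor between them admits a right adjoint as soon as it preserves small coproducts.

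Exactness of $\mathbb R f^*$ is immediate, since it is derived from a right Quillen functor between stable model categories. The essential step is therefore verifying that $\mathbb R f^*$ preserves small coproducts. Note first that $f^*$ already preserves all small colimits at the point-set level: colimits in the section category $\Spsym{X}$ are computed levelwise, and each levelwise component $f(\bld{m})^* \colon \cS_{X(\bld{m})} \to \cS_{Y(\bld{m})}$ is itself a left adjoint, with right adjoint the dependent product guaranteed by local cartesian closedness of $\cS$. To promote this to the derived level, one would take cofibrant replacements in $\Spsym{X}$, apply $f^*$ term-by-term, and invoke the standard fact that coproducts of local weak equivalences in a combinatorial stable model category are again local weak equivalences.

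The main obstacle will be verifying this last compatibility: one must ensure that taking coproducts after applying $f^*$ actually computes the homotopy coproduct in $\Spsym{Y}$, despite $f^*$ not necessarily preserving cofibrancy. A cleaner alternative route would be Neeman's Brown representability for well-generated triangulated categories, applied to $\mathrm{Ho}(\Spsym{Y})$ with compact generators furnished by the images of $F_{\bld{m}}^{\Spsym{Y}}$ applied to generating cofibrations of $\cS_{Y(\bld{m})}$; this would bypass the point-set subtleties entirely once exactness and coproduct-preservation of $\mathbb R f^*$ are in hand. The authors' remark that the right adjoint $\mathbb R f_*$ does not arise from a right Quillen functor is consistent with either approach, since neither strategy produces $\mathbb R f_*$ via derivation of a Quillen adjunction.
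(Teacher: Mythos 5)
Your overall strategy -- presentability plus stability of $(\Spsym{X})_\infty$, exactness of $\mathbb Rf^*$, and then the adjoint functor theorem (or Brown representability) once coproduct preservation is known -- is a genuinely different route from the paper's. The paper instead reduces to the simplicial case and to constant base $\cI$-spaces via Proposition~\ref{prop:simp-top-spsymX} and Corollary~\ref{cor:SpsymX-vs-SpsymXhI}, factors $f$ as an acyclic cofibration followed by a fibration, and observes that pullback along a fibration of constant fibrant bases is \emph{also} left Quillen (Lemmas~\ref{lem:f-upper-star-left-Quillen-SR} and~\ref{lem:f-upper-star-left-Quillen-over-constant}), so that $\mathbb R(\const_\cI h)^*=\mathbb L(\const_\cI h)^*$ is a left adjoint on the nose, while the acyclic cofibration contributes an equivalence of homotopy categories. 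Your route, if completed, is more formal and would generalize; the paper's buys an explicit identification of $\mathbb Rf^*$ as a left derived functor.

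However, as written there is a genuine gap at exactly the point you flag: preservation of infinite coproducts by $\mathbb Rf^*$ does not follow from $f^*$ preserving point-set colimits, and your proposed fix via cofibrant replacement attacks the wrong side. Since $\mathbb Rf^*$ is the \emph{right} derived functor, it is computed by applying $f^*$ to fibrant replacements; a coproduct of fibrant objects is not fibrant, and $f^*$ is only known to preserve weak equivalences between fibrant objects, so the comparison map $\coprod_i f^*(E_i)\to f^*\bigl((\coprod_i E_i)^{\mathrm{fib}}\bigr)$ is not obviously a local equivalence. (Note also that your claim of local presentability is only valid over $\sset$, so you must first invoke Proposition~\ref{prop:simp-top-spsymX} to reduce to the simplicial setting, as the paper does.) Two ways to close the gap: (a) perform the paper's reduction to a Kan fibration between constant fibrant bases, where $f^*$ is also left Quillen and hence preserves homotopy coproducts; or (b) argue on the adjoint side that $\mathbb Lf_!$ preserves compact objects -- it carries the compact generators $F^{\Spsym{Y}}_{\bld m}(Z)$ to objects of the form $F^{\Spsym{X}}_{\bld m}(f(\bld m)_!Z)$ by Lemma~\ref{lem:free-cR-and-free-relative-X-pushout}, which are again compact -- so that $\mathbb Rf^*$ preserves coproducts by Neeman's criterion, after which your Brown representability argument goes through.
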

The proposition will be proved at the end of this section. 
\begin{definition}\label{def:Rf_ast_RGamma}
We write $\mathbb R f_* \colon \mathrm{Ho}(\Spsym{Y}) \to \mathrm{Ho}(\Spsym{X})$ for the right adjoint of $\mathbb Rf^*$ that results from the previous proposition. When $X = *$, we use the notation $\mathbb R \Gamma$ for the functor $\mathbb R (Y\to *)_*\colon \mathrm{Ho}(\Spsym{Y}) \to \mathrm{Ho}(\Spsym{})$ and the notation $\mathbb L\Theta$ for $\mathbb L (Y\to *)_! \colon \mathrm{Ho}(\Spsym{Y}) \to \mathrm{Ho}(\Spsym{})$. 
\end{definition}
We stress that since $f^*\colon \Spsym{X} \to \Spsym{Y}$ is in general
not left Quillen, the functor $\mathbb R f_*$ is not the right derived
functor of a right Quillen functor.  In the context of topological
spaces, an explicit description of $\mathbb R \Gamma$ in a useful special
case is given in Lemma~\ref{gamma-top} below.  We also point out that when
working over simplicial sets, deriving the left adjoint
$\Theta = (Y \to *)_!$ is not really necessary since it preserves
level equivalences and thus sends local equivalences to stable
equivalences.

The following statement will also be shown at the end of this section. 
\begin{proposition}\label{prop:monoidal_pull}
  Given maps of $\cI$-spaces $f\colon X' \to X$ and
  $g\colon Y' \to Y$, the lax monoidal structure map
  $f^*(E,X) \barsm g^*(F,Y) \to (f\boxtimes g)^*((E,X) \barsm (F,Y))$
  from~\eqref{eq:upper-star-lax-monoidal} induces the following
  natural isomorphism in $\mathrm{Ho}(\SpsymR)$:
  \begin{equation}\label{eq:Rf-upper-star-strong-monoidal} (\mathbb R
    f^*) (E,X) \barsm_{\mathbb L} (\mathbb R g^*) (F,Y)
    \xrightarrow{\iso} (\mathbb R (f\boxtimes_{\mathbb L} g)^*) (
    (E,X)\barsm_{\mathbb L}(F,Y))
  \end{equation}
\end{proposition}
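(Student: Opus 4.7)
The plan is to verify that the lax monoidal structure map~\eqref{eq:upper-star-lax-monoidal} becomes a local equivalence on suitable representatives. I begin by choosing cofibrant-fibrant replacements $(E^{cf},X)$ and $(F^{cf},Y)$ in $\Spsym{X}$ and $\Spsym{Y}$ respectively. Since $f^*$ and $g^*$ are right Quillen by Lemma~\ref{lem:base-change-local-Q-adjunction}, the objects $f^*(E^{cf},X)$ and $g^*(F^{cf},Y)$ model $\mathbb R f^*(E,X)$ and $\mathbb R g^*(F,Y)$. To ensure that the point-set $\barsm$-product models the derived external smash product, I would further arrange cofibrancy by invoking Theorem~\ref{thm:local-Grothendieck-construction} and Corollary~\ref{cor:local-model-structure-inherited}, which guarantee that an appropriate replacement in the integral local model structure furnishes representatives that are simultaneously fibrant in the respective $\Spsym{X}$ and $\Spsym{Y}$ and cofibrant enough to feed into the pushout product axiom (Proposition~\ref{prop:local-pushout-product}).

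The essential input is Lemma~\ref{lem:fiberwise-smash-pullback}, which says that on each level $\bld p$, the pullback functor $(f\boxtimes g)^*$ strictly commutes with the fiberwise smash product of retractive spaces. The only obstruction to the spectrum-level map being an isomorphism is therefore the colimit $\colim_{\alpha\colon\bld m\concat\bld n\to\bld p}$ appearing in the definition of $\barsm$ on $\SpsymR$. To handle this, I would argue by cell induction on $(E^{cf},X)$ and $(F^{cf},Y)$, reducing to the case of generators $F^{\Spsym{X}}_{\bld m}(U,K)$ and $F^{\Spsym{Y}}_{\bld n}(V,L)$. On such generators, Lemma~\ref{lem:free-cR-and-free-relative-X-pushout} together with~\eqref{eq:barsm-of-free} collapses the $\barsm$-product to a single free symmetric spectrum on a fiberwise smash product of retractive spaces, so Lemma~\ref{lem:fiberwise-smash-pullback} yields the claim directly. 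The induction step is powered by left properness of the local model structure on $\SpsymR$ and by compatibility of pullback with cobase changes along cofibrations, in combination with the identification~\eqref{eq:base-change-cobase-change} of $f^*$ and $g^*$ as base changes along $\bS^{\cI}_b[f]$ and $\bS^{\cI}_b[g]$.

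The main obstacle is the topological case, since pullbacks in compactly generated weak Hausdorff spaces need not commute with arbitrary colimits. I would address this exactly as in the proof of Lemma~\ref{lem:fiberwise-smash-pullback}: appeal to~\cite[Proposition 1.3]{Lewis_fibre-spaces} to conclude that base change preserves colimits of diagrams of closed inclusions, and verify that all the structure maps and iterated pushouts encountered in $\barsm$-products of suitably cofibrant objects remain closed inclusions. Once the point-set local equivalence is established on cofibrant-fibrant representatives, naturality of the construction and the identification of the derived functors via these representatives yield the asserted isomorphism in $\mathrm{Ho}(\SpsymR)$.
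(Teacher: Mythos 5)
There is a genuine gap, and it sits exactly where the paper's proof does its real work. For a general map of $\cI$-spaces $f$, the restriction $f^*$ is only right Quillen, so $f^*(E^{cf},X)$ is fibrant but in general not cofibrant; to form the derived smash product you must cofibrantly replace it, and nothing in Theorem~\ref{thm:local-Grothendieck-construction} or Corollary~\ref{cor:local-model-structure-inherited} lets you choose that cofibrant replacement so that it remains of the form $f^*(\text{something fibrant})$. Your appeal to the integral model structure to ``arrange cofibrancy'' therefore does not close this gap. The paper resolves it by first reducing (via Proposition~\ref{prop:simp-top-spsymX} and the Quillen equivalences along $\cI$-equivalences, exactly as in the proof of Proposition~\ref{prop:Rf-upper-star-has-right-adjoint}) to the case $f=\const_{\cI}\tilde f$, $g=\const_{\cI}\tilde g$ with $\tilde f,\tilde g$ Kan fibrations of Kan complexes; then Lemma~\ref{lem:const_I_f_left-Quillen-SpsymK} makes $f^*$, $g^*$ and $(f\boxtimes g)^*$ simultaneously left and right Quillen, so they preserve cofibrancy and all weak equivalences, the cofibrant and fibrant replacements in \eqref{eq:upper-star-weak-monoidal} become harmless, and the middle map is an honest isomorphism by Lemma~\ref{lem:fiberwise-smash-pullback} together with the fact that base change preserves pullbacks. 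You never perform this reduction, and without it the homotopical control you need is simply absent.

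The cell-induction step also fails as stated. Its base case rests on $f^*$ carrying free cells to free cells, but Lemma~\ref{lem:free-cR-and-free-relative-X-pushout} concerns $f_!$, not $f^*$; for a non-constant $f$ the levels of $f^*F_{\bld m}^{\SpsymR}(W,P)$ are pullbacks along the varying maps $f(\bld n)$ and do not assemble into a free object (this is precisely the Remark following Lemma~\ref{lem:f-upper-star-left-Quillen-over-constant}). For the same reason the map \eqref{eq:upper-star-lax-monoidal} is genuinely not an isomorphism for general $f,g$: the squares comparing $X'(\bld m)\times Y'(\bld n)\to (X'\boxtimes Y')(\bld p)$ with $X(\bld m)\times Y(\bld n)\to(X\boxtimes Y)(\bld p)$ need not be cartesian, so the colimit over $-\concat-\downarrow\bld p$ does not commute with $(f\boxtimes g)^*$. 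Finally, the topological case should be deduced from the simplicial one through Proposition~\ref{prop:simp-top-spsymX}, since the assertion only concerns homotopy categories; re-running a point-set argument in $\tp$ is unnecessary and your cell induction would additionally require $f^*$ to preserve colimits, which fails in $\tp$ unless the maps involved are open.
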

For objects $(E',X')$ and $(F',Y')$ in $\SpsymR$, the isomorphism~\eqref{eq:Rf-upper-star-strong-monoidal} and the  units and counits for the adjunctions resulting from Proposition~\ref{prop:Rf-upper-star-has-right-adjoint} applied to $f,g$ and $f\boxtimes g$  induce natural maps 
\begin{equation}\label{eq:R-Gamma-monoidal-str}
\mathbb R f_* (E',X') \barsm_{\mathbb L} \mathbb R g_* (F',Y') \to  \mathbb R (f\boxtimes_{\mathbb L} g)_* ((E',X') \barsm_{\mathbb L} (F',Y')) 
\end{equation}
in $\mathrm{Ho}(\SpsymR)$ that are associative, commutative, and unital.

\subsection{\texorpdfstring{$\cI$}{I}-spacification}\label{subsec:I-spacification} 
To be able to define parametrized (co-)homology and Thom spectra from space level data, we now
recall from \cite[\S 4.2]{Schlichtkrull_Thom-symmetric} and \cite[\S
4.1]{Basu_SS_Thom} how one can pass from spaces to $\cI$-spaces. For any $\cI$-space $X$, there is an
\emph{$\cI$-spacification} functor
\begin{equation}\label{eq:I-spacificaton}
P_X\colon \cS/X_{h\cI} \to \cS^{\cI}/X, \quad (\tau \colon K \to X_{h\cI}) \mapsto ( P_X(\tau) \colon P_\tau(K) \to X) 
\end{equation}
that is a homotopy inverse of the homotopy colimit functor. We briefly recall its definition. Writing $\overline{X}$ for the  homotopy left Kan extension of $X$ along $\id_{\cI}$, the canonical map $t\colon \overline{X} \to X$ is a natural level equivalence that we refer to as the bar resolution. There is a natural isomorphism $\colim_{\cI}\overline{X} \iso X_{h\cI}$ with adjoint $\pi\colon \overline{X} \to \const_{\cI}X_{h\cI}$. A map of spaces $\tau\colon K \to X_{h\cI}$ gives rise to a map of $\cI$-spaces
\[ \const_{\cI}K \times_{\const_{\cI}X_{h\cI}} \overline{X} \xrightarrow{\mathrm{pr}} \overline{X} \xrightarrow{t} X. \]
This construction can be viewed as a functor $\cS/X_{h\cI} \to \cS^{\cI}/X$. In the topological case, the homotopy invariant $\cI$-spacification functor~\eqref{eq:I-spacificaton} is defined by precomposing it with the standard Hurewicz fibrant replacement $\Gamma(\tau)\colon \Gamma_\tau(K) \to X_{h\cI}$ of $\tau$ (not to be confused with the meaning of $\Gamma$ in Definition~\ref{def:Rf_ast_RGamma}). In the simplicial case, we replace the $\Gamma$ by the functor sending a map of simplicial sets $\tau \colon K \to X_{h\cI}$ to the map $\Gamma(\tau)$ defined by the right hand pullback square in the diagram 
\[ \xymatrix@-1pc{
\Sing |K| \ar[rr]^-{\sim} && \Sing \Gamma_{|\tau|}(|K|) \ar@{->>}[rr] && \Sing |X_{h\cI}| \\ 
K \ar[u]^{\sim} \ar[rr]^{\sim} && \Gamma_\tau(K) \ar[u]^{\sim}  \ar@{->>}^{\Gamma(\tau)}[rr] && X_{h\cI} \ar[u]^{\sim}. 
}
\] 
The lower left hand map arises from the universal property of the pullback.
Compared to a replacement by fibration obtained from the small object argument, this functor $\Gamma$ has the advantage of being lax monoidal and preserving operad actions. Both in the simplicial and the topological case, the resulting $\cI$-spacification functor $P_X$ then sends weak equivalences to $\cI$-equivalences. When $M$ is a commutative $\cI$-space monoid, $M_{h\cI}$ is an algebra over the Barratt--Eccles operad, and $P_M$ preserves actions of operads augmented over the Barratt--Eccles operad and is lax monoidal.

We also note the following naturality statement for later use. 
\begin{lemma}\label{lem:PM-natural}
If $\rho \colon M \to N$ is a map of commutative $\cI$-space monoids, then there is a natural map $\rho\circ P_M(\tau) \to P_N(\rho_{h\cI} \circ \tau)$ of spaces over $N$ that is an $\cI$-equivalence if $\rho$ is.  \qed
\end{lemma}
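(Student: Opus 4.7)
The plan is to build the natural transformation from the naturality in the three ingredients going into the construction of $P_X$, and then to verify the $\cI$-equivalence claim by reducing it to a statement about fibrant replacements over spaces.

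First, I would unwind the definition of $P_X(\tau)$ as the pullback $\const_{\cI}\Gamma_\tau(K) \times_{\const_{\cI}X_{h\cI}}\overline{X}$ projecting to $X$ via $t\colon \overline{X} \to X$. Each of the three pieces $\overline{(-)}$, $\pi\colon \overline{(-)}\to \const_{\cI}((-)_{h\cI})$, and $\Gamma$ is functorial in its input. Concretely, $\rho\colon M \to N$ induces a map of $\cI$-spaces $\overline{\rho}\colon \overline{M}\to \overline{N}$ compatible with both $t$ and $\pi$, while functoriality of $\Gamma$ applied to the commutative square with horizontal sides $\tau,\rho_{h\cI}\tau$ and vertical side $\rho_{h\cI}$ produces a map $\Gamma_\tau(K)\to \Gamma_{\rho_{h\cI}\tau}(K)$ over $\rho_{h\cI}$. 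Combining these maps on the pullbacks yields the desired natural map $P_M(\tau)\to P_N(\rho_{h\cI}\tau)$, and by construction this map projects to $\rho$ on the base so it is a morphism over $N$ once we compose with $\rho$ on the source.

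Next I would check the $\cI$-equivalence claim. Recall from \cite[\S 4]{Schlichtkrull_Thom-symmetric} that $P_X$ is a homotopy inverse to $\hocolim_\cI$; in particular, for any $\sigma\colon K\to X_{h\cI}$ there is a natural weak equivalence $(P_X(\sigma))_{h\cI}\simeq \Gamma_\sigma(K)$, and $\Gamma_\sigma(K)$ is itself weakly equivalent to $K$ via the canonical map $K\to \Gamma_\sigma(K)$. Applying $\hocolim_\cI$ to the map constructed in the previous paragraph and tracing through these natural equivalences, the resulting map of homotopy colimits is identified with $\Gamma_\tau(K)\to \Gamma_{\rho_{h\cI}\tau}(K)$ under the map of base spaces $\rho_{h\cI}$. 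Since the diagram
\[
\xymatrix@-1pc{
& K \ar[dl]_{\sim} \ar[dr]^{\sim} & \\
\Gamma_\tau(K) \ar[rr] && \Gamma_{\rho_{h\cI}\tau}(K)
}
\]
commutes by naturality of the replacement $K\to \Gamma_{(-)}(K)$, two-out-of-three forces $\Gamma_\tau(K)\to \Gamma_{\rho_{h\cI}\tau}(K)$ to be a weak equivalence, and hence the original map is an $\cI$-equivalence. The hypothesis that $\rho$ be an $\cI$-equivalence is not even needed for this portion; it simply guarantees that the target of the map in question has the same underlying homotopy type as the source at the level of base $\cI$-spaces.

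There is no substantial obstacle in this argument: the proof is an exercise in functoriality, with the only mild subtlety being to identify the map induced on homotopy colimits with a map between fibrant replacements of the same space. The choice of $\Gamma$ as a lax monoidal, homotopy-functorial replacement — rather than one obtained from the small object argument — is what makes the diagram chase above go through cleanly.
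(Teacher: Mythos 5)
Your argument is correct and is exactly the proof the paper leaves implicit (the lemma is stated with a \qed and no proof): the map comes from functoriality of the bar resolution, of $\pi$ and $t$, and of the fibrant replacement $\Gamma$, and the $\cI$-equivalence claim follows from the cited fact that $\hocolim_{\cI}P_{\sigma}(K)\simeq\Gamma_{\sigma}(K)\simeq K$ naturally, plus two-out-of-three. Your observation that the hypothesis on $\rho$ is not actually needed for the conclusion is also correct; the authors only assert the weaker implication.
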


\subsection{Parametrized homology and cohomology}\label{sec: coho}
To define the parametrized (co)ho\-mology groups associated with a parametrized spectrum $(E,X)$, we use the $\cI$-spacification discussed in~\eqref{eq:I-spacificaton}.
 Given a map $\tau \colon K \to X_{h\cI}$, we use the shorthand notation $\tau_{\cI}= P_X(\tau)\colon P_{\tau}(K) \to X$
  and write
 $\mathbb R \tau_{\cI}^*: \mathrm{Ho}(\Spsym{X}) \rightarrow \mathrm{Ho}(\Spsym{P_{\tau}(K)})$ for the induced functor. Moreover, for any  $\cI$-space $Y$ the functors $\mathbb L\Theta, \mathbb R\Gamma \colon \mathrm{Ho}(\Spsym{Y}) \rightarrow \mathrm{Ho}(\Spsym{})$ denote the left and right adjoint, respectively, of $\mathbb R (Y\to *)^*$, the derived pullback functor along the unique map $Y \rightarrow *$.

\begin{definition}\label{def:local-cohomology}
For a parametrized spectrum $(E,X) \in \Spsym{\cR}$ the associated parametrized (co)homology theories are given by 
\begin{align*}(E,X)_n \colon \cS/X_{h\cI} \longrightarrow \mathrm{Ab}, \quad & (\tau \colon K \rightarrow X_{h\cI}) \longmapsto \pi_{n}(\mathbb L\Theta)(\mathbb R \tau_\cI^*) (E,X) \quad\text{ and}\\
(E,X)^n \colon \cS/X_{h\cI} \longrightarrow \mathrm{Ab}, \quad & (\tau \colon K \rightarrow X_{h\cI}) \longmapsto \pi_{-n}(\mathbb R\Gamma)(\mathbb R \tau_\cI^*) (E,X). 
\end{align*}
\end{definition}
The functoriality of parametrized homology (resp.\ cohomology) results from the  adjunction counit of $(\mathbb L f_!, \mathbb R f^*)$ (resp.\ the adjunction unit of $(\mathbb R f^*, \mathbb R f_*)$). 

Instead of directly verifying the usual properties of a (co)homology theory (including the construction of relative terms and boundary maps), let us proceed by comparing these definitions with those of May and Sigurdsson \cite[Definition 20.2.4]{May-S_parametrized}, which they show satisfy a version of the usual axioms for a (co)homology theory. In Proposition~\ref{cohom infty comp} we will also compare Definition~\ref{def:local-cohomology} with the $\infty$-categorical counterparts from \cites{Ando-B-G-H-R_infinity-Thom, Ando-B-G_parametrized}. 
\begin{proposition}\label{prop:comparison-twisted-homology}
For a constant $\cI$-space $X = \const_{\cI} B$, a fiberwise orthogonal spectrum $E \in \Sp_{B}^\mathrm{O}$ (in the sense of 
\cite[Chapter~11]{May-S_parametrized}), and $\tau \colon K \rightarrow X_{h\cI} = B \times B\cI$, there is a canonical isomorphism between our $(E,X)_*(K,\tau)$ and $(E,X)^*(K,\tau)$ on the one hand and the definitions from \cite[Definition 20.2.4]{May-S_parametrized} applied to $K \xrightarrow{\tau} X_{h\cI} \xrightarrow{\mathrm{pr}} B$ on the other.
\end{proposition}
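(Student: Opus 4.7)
The plan is to chain together Quillen equivalences that translate between the $\cI$-parametrized setup and the May--Sigurdsson fiberwise setup, and then to use uniqueness of adjoints to match the derived functors appearing in the two definitions.

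First, since $\bld{0}$ is initial in $\cI$, the classifying space $B\cI$ is contractible and hence the projection $\mathrm{pr}\colon X_{h\cI} = B\times B\cI \to B$ is a weak equivalence. Combining the discussion after Definition~\ref{def:X-relative-sym-spectra} with the standard Quillen equivalence between symmetric and orthogonal spectrum objects, transplanted to the parametrized category $\cS_B$, the category $\Spsym{\const_\cI B} = \Spsym{X}$ becomes Quillen equivalent to the category $\Sp_B^{\mathrm{O}}$ of fiberwise orthogonal spectra over $B$; under this equivalence, the object $(E,X)$ corresponds to the given fiberwise orthogonal spectrum $E$.

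Next, the $\cI$-spacification $\tau_\cI\colon P_\tau(K)\to X$ satisfies $(P_\tau(K))_{h\cI}\simeq K$ with the induced map to $X_{h\cI}$ naturally equivalent to $\tau$, and Corollary~\ref{cor:SpsymX-vs-SpsymXhI} supplies a chain of Quillen equivalences between $\Spsym{P_\tau(K)}$ and $\Spsym{K}$. By direct inspection of the levels of $\tau_\cI$, the derived pullback $\mathbb{R}\tau_\cI^*(E,X)$ is carried through these equivalences to the derived fiberwise pullback of $E$ along $\overline{\tau} = \mathrm{pr}\circ\tau\colon K\to B$; the essential point is that the base $X$ is constant on $B$, so only the $B$-component of $\tau$ is seen by the spectrum. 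The functors $\mathbb{L}\Theta$ and $\mathbb{R}\Gamma$ are, by Definition~\ref{def:Rf_ast_RGamma}, respectively the left and right adjoints of derived pullback along $P_\tau(K)\to *$; under the above equivalences they correspond to the left and right adjoints of derived pullback along $K\to *$, which are precisely the May--Sigurdsson push-forward functors that compute their homology and cohomology. Passing to $\pi_*$ then produces the claimed canonical isomorphisms, and naturality in $\tau$ and $E$ follows from the naturality of the Quillen equivalences and of the $\cI$-spacification.

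The hard part will be the first step: a rigorous parametrized comparison between our symmetric spectra in $\cS_B$ and the fiberwise orthogonal spectra of May--Sigurdsson requires transferring an MMSS-style symmetric-versus-orthogonal Quillen equivalence to the parametrized world. This is not developed in the present paper and would need to be argued separately or cited from the literature. Everything else is a formal consequence of the Quillen equivalences established in Section~\ref{sec:local-model} together with the uniqueness of adjoints.
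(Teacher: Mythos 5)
There is a genuine gap, and it sits at the point your plan treats as routine rather than at the point you flag as hard. You implicitly assume that the May--Sigurdsson (co)homology groups are \emph{defined} by applying $\mathbb L\Theta$, resp.\ $\mathbb R\Gamma$, to a derived pullback of $E$ along $K\to B$. That is not what \cite[Definition 20.2.4]{May-S_parametrized} says: their homology is $\pi_n(\mathbb L\Theta)(\mathbb L\Sigma^\infty_BK_+\sm_B^{\mathbb L}E)$ and their cohomology is $\pi_{-n}(\mathbb R\Gamma)(\mathbb RF_B(\mathbb L\Sigma^\infty_BK_+,E))$, phrased in terms of the \emph{internal} fiberwise smash product over $B$ and the fiberwise function spectrum --- structures that have no direct counterpart in our setup (cf.\ Remark~\ref{rem:internal-hom}). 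The mathematical content of the comparison is precisely the rewriting of these expressions into $\pi_n(\mathbb L\Theta)(\mathbb R\sigma^*E)$ and $\pi_{-n}(\mathbb R\Gamma)(\mathbb R\sigma^*E)$, which requires the identification $\mathbb L\Sigma^\infty_BK_+\cong\mathbb L\sigma_!(\bS_K)$ and the derived projection formulas \cite[Propositions 13.7.4 and 13.7.5]{May-S_parametrized}. Without this step you have not compared the two definitions; you have only compared the two categories.

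On the categorical bridge itself, your route differs from the paper's and is left open where the paper closes it. You propose a parametrized symmetric-versus-orthogonal Quillen equivalence over $B$ and correctly note it is not developed here; the paper avoids ever constructing such an equivalence by passing both sides through the $\infty$-categorical model: Lemma~\ref{lem:SpsymX-infty-functorial} identifies $\mathrm{Ho}(\Spsym{X})$ with the homotopy category of $\Fun(X_{h\cI},\Sp_\infty)$, and \cite[Theorem B.2]{Ando-B-G_parametrized} does the same for $\mathrm{Ho}(\Sp^{\mathrm O}_B)$, so the functors $\mathbb R\sigma^*$, $\mathbb L\Theta$, $\mathbb R\Gamma$ on the two sides match by uniqueness of adjoints. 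Your remaining step --- the commutative diagram relating $\const_\cI K$, $\Gamma_\tau(K)$, $P_\tau(K)$, and $\const_\cI B$, showing that $\mathbb R\tau_\cI^*$ corresponds to derived pullback along $\mathrm{pr}\circ\tau$ --- is essentially the paper's closing argument and is fine. To repair the proposal, replace the hypothetical symmetric/orthogonal Quillen equivalence by the $\infty$-categorical comparison already available, and add the projection-formula reduction of \cite[Definition 20.2.4]{May-S_parametrized} as the first step.
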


\begin{remark} Since in the situation of the proposition the forgetful functor $\Sp^{\mathrm O}_B \rightarrow \Spsym{X}$ induces an equivalence on homotopy categories by the conjunction of \cite[Theorem B.2]{Ando-B-G_parametrized}, our Corollary \ref{cor:SpsymX-vs-SpsymXhI}, and Lemma \ref{lem:SpsymX-infty-identification}, we can find a weakly equivalent orthogonal spectrum to an arbitrary $(E,X) \in \Spsym{X}$. Thus we can deduce the (co)homological consequences of the proposition without the orthogonality assumption. Investing Corollary \ref{cor:SpsymX-vs-SpsymXhI} also for non-constant $X$ we can then also deduce them for arbitrary $(E,X) \in \SpsymR$. We leave the details to the reader.
\end{remark}

\begin{proof}[Proof of Proposition~\ref{prop:comparison-twisted-homology}]
Let us first recall the definitions: For a fiberwise orthogonal parametrized spectrum $E$ over a space $B$ as in \cite[Definition 11.2.3]{May-S_parametrized} and a map $\sigma\colon K \rightarrow B$, May and Sigurdsson set 
\[E_n(K,\sigma) = \pi_n(\mathbb L\Theta)(\mathbb L\Sigma^\infty_BK_+ \sm_B^{\mathbb L} E)\]
and
\[ E^n(K,\sigma) = \pi_{-n}(\mathbb R\Gamma)(\mathbb RF_B(\mathbb L\Sigma^\infty_BK_+,E)),\]
where we have adapted those functors to our notation that have occurred in our presentation. The remaining ones are $\mathbb L\Sigma^\infty_B(-)_+ \colon \mathrm{Ho}(\cS/B) \rightarrow \mathrm{Ho}(\Sp^\mathrm{O}_B)$ which adds a disjoint base section and then takes the suspension spectrum, the functor $\sm_B^{\mathbb L} \colon \mathrm{Ho}(\Sp^\mathrm{O}_B) \times \mathrm{Ho}(\Sp^\mathrm{O}_B) \rightarrow \mathrm{Ho}(\Sp^\mathrm{O}_B)$, which is the fiberwise smash product obtained by internalizing the external smash product by pullback along the diagonal,  and $\mathbb RF_B \colon \mathrm{Ho}(\Sp^\mathrm{O}_B)^\mathrm{op} \times \mathrm{Ho}(\Sp^\mathrm{O}_B) \rightarrow \mathrm{Ho}(\Sp^\mathrm{O}_B)$, which takes fiberwise function spectra (and has no direct counterpart in our setup; compare Remark~\ref{rem:internal-hom}). 

Now, from \cite[Proposition 13.7.4]{May-S_parametrized} we find $\mathbb L\Sigma^\infty_BK_+ \cong \mathbb L\sigma_!(\mathbb S_K)$, where $\bS_K$ denotes the trivially parametrized sphere spectrum over $K$ that is the unit for $\sm_K$. Then the projection formulas \cite[(11.4.5) and (11.4.6)]{May-S_parametrized} (verified for the derived functors in \cite[Proposition 13.7.5]{May-S_parametrized} or investing the comparison theorem \cite[Theorem B.2]{Ando-B-G_parametrized} also in \cite[Proposition 6.8]{Ando-B-G_parametrized}) show that the formulas of May and Sigurdsson can be rewritten as 
\[E_n(K,\sigma) = \pi_{n}(\mathbb L\Theta)(\mathbb R \sigma^*) (E,B) \quad \text{and} \quad E^n(K,\sigma) = \pi_{-n}(\mathbb R\Gamma)(\mathbb R \sigma^*) (E,B).\]

But then the conjunction of our comparison in Lemma~\ref{lem:SpsymX-infty-functorial} with \cite[Theorem B.2]{Ando-B-G_parametrized}, imply that for $X = \mathrm{const}_\cI B$ we may interpret the above formulas in our categories $\Spsym{X}, \Spsym{\const_\cI K}$ and $\Spsym{}$. The commutative diagram
\[\xymatrix@-1pc{ \const_\cI K \ar[rr] \ar[drr]_{\const_\cI \tau}&& \const_\cI \Gamma_{\tau}(K) \ar@{->>}[d]&& P_{\tau}(K) \ar[ll] \ar[d]^{\tau_{\cI}}\\
&&  \const_\cI X_{h\cI}  \ar[rr]^-{\const_\cI\mathrm{pr}} && \const_\cI B = X
}\]
with $\mathrm{pr} \colon X_{h\cI} = B \times B\cI \rightarrow B$ the projection then provides the desired isomorphisms
\[(E,X)_n(K,\tau) \iso E_n(K,\mathrm{pr} \circ \tau) \quad \text{and} \quad (E,X)^n(K,\tau) \iso E^n(K,\mathrm{pr} \circ \tau).\qedhere\]
\end{proof}
Let $(R,M)$ be a parametrized ring spectrum in $\Spsym{\cR}$ with multiplication on base $\cI$-spaces $\mu\colon {M \boxtimes M} \to M$.  Our next aim is to define pairings 
\begin{equation}\label{eq:pairings}\begin{split}\times \colon (R,M)_n(K,\tau) \otimes (R,M)_m(L,\sigma) & \longrightarrow (R,M)_{n+m}(K \times L, \tau \times_\mu \sigma)\\
\times \colon (R,M)^n(K,\tau) \otimes (R,M)^m(L,\sigma) & \longrightarrow (R,M)^{n+m}(K \times L, \tau \times_\mu \sigma)
\end{split}
\end{equation}
where $\tau \times_\mu \sigma$ refers to the composite $K \times L \xrightarrow{\tau \times \sigma} M_{h\cI} \times M_{h\cI} \xrightarrow{\mu_{h\cI}} M_{h\cI}$ in which the second map is the multiplication of the monoid $M_{h\cI}$ in spaces. It follows from Remarks~\ref{rem:associative-fibrant-repl} and~\ref{rem:top-problems-commutative-parametrized-model} that we may assume without loss of generality that $(R,M)$ is fibrant. Furthermore, if $(R,M)$ is commutative, we may assume that it is fibrant as a commutative parametrized ring spectrum by Theorem~\ref{thm:positive-local-on-CSpsymR} and Remark~\ref{rem:top-problems-commutative-parametrized-model}. 
\begin{construction}\label{constr:pairings}
We observe that there is a natural chain of maps
\begin{multline}\label{eq:multiplication-I-spacificaton}
\tau_{\cI}^*(R,M) \barsm \sigma_{\cI}^*(R,M) \to (\tau_{\cI} \boxtimes \sigma_{\cI})^*((R,M) \barsm (R,M))\\ \to (\tau_{\cI} \boxtimes \sigma_{\cI})^* \mu^* (R,M) \to (\tau \times_{\mu} \sigma)^*(R,M)
\end{multline}
in $\SpsymR$ where the first map is an instance of~\eqref{eq:upper-star-lax-monoidal}, the second map is the canonical map induced by $\mu$, and the last map is induced by the monoidal structure map of the $\cI$-spacification (see \cite[Proposition 4.17]{Schlichtkrull_Thom-symmetric} or \cite[Lemma 4.5]{Basu_SS_Thom}). 
Precomposing this chain with cofibrant replacements of the $\barsm$-factors in the source and using that $(R,M)$ is assumed to be fibrant gives a map 
\[ \mathbb R \tau_{\cI}^*(R,M) \barsm_{\mathbb L} \mathbb R \sigma_{\cI}^*(R,M) \to  \mathbb R(\tau \times_{\mu} \sigma)^*_{\cI}(R,M) \] 
on the homotopy category level. Precomposing it with the lax monoidal structure of $\mathbb L \Theta$ resulting from Lemma~\ref{lem:cobase-change-spym-lax-monoidal} and passing to homotopy groups induces the first pairing in~\eqref{eq:pairings}. Using the monoidal structure map for  $\mathbb R \Gamma$ resulting from~\eqref{eq:R-Gamma-monoidal-str} instead of that for $\mathbb L \Theta$ provides the analogous pairing in cohomology. Independence from the choices made during the construction, associativity and the fact that the unit of $(R,M)$ gives the unit $1 \in E_0(*,u)$ for the above product are now readily checked. 
\end{construction} 
\begin{remark}Proposition~\ref{prop:comparison-pairings} compares these pairings with the $\infty$-categorical variants from~\cites{Ando-B-G-H-R_infinity-Thom, Ando-B-G_parametrized}. 
\end{remark}

Now we assume in addition that $(R,M)$ is commutative and check that these products are graded commutative. In order to give meaning to this, we first define an explicit twist homomorphism
\begin{equation}\label{eq:tw-homomorphism}
\mathrm{tw}\colon (R,M)_*(K\times L,\tau\times_{\mu}\sigma)\to (R,M)_*(L \times K, \sigma \times_\mu \tau).
\end{equation}
Since $M$ is supposed to be commutative, $M_{h\cI}$ inherits the structure of an $E_{\infty}$ space with a canonical action of the Barratt-Eccles operad. Hence there is an essentially unique homotopy $M_{h\cI}\times M_{h\cI}\times I\to M_{h\cI}$ starting at the multiplication $\mu_{h\cI}$ and ending at $\mu_{h\cI}\circ \mathrm{tw}$. After precomposing with $\tau\times\sigma$, we get a homotopy $H$
from $\tau\times_{\mu}\sigma$ to $\sigma \times_\mu \tau\circ \mathrm{tw}$. Now we pull back $(R,M)$ via the $\cI$-spacification of $H$ to obtain a chain of local equivalences
\begin{equation}\label{eq:tw-chain}
(\tau\times_{\mu}\sigma)_{\cI}^*(R,M)\xrightarrow{i_0}H_{\cI}^*(R,M) \xleftarrow{i_1} (\sigma \times_\mu \tau\circ \mathrm{tw})_{\cI}^*(R,M)
\xrightarrow{\mathrm{tw}}(\sigma \times_\mu \tau)_{\cI}^*(R,M)
\end{equation}
in which $i_0$ and $i_1$ denote the endpoint inclusions. Applying $\mathbb L\Theta$ we get a diagram of stable equivalences and \eqref{eq:tw-homomorphism} is the induced map of homotopy groups. Clearly the latter does not depend on the choice of $H$. 

\begin{proposition}
The square
\[\xymatrix@-1pc{
(R,M)_n(K,\tau) \otimes (R,M)_m(L,\sigma) \ar[r]^-\times \ar[d]_{\mathrm{tw}} & (R,M)_{n+m}(K \times L, \tau \times_\mu \sigma) \ar[d]_{\mathrm{tw}} \\
            (R,M)_m(L,\sigma) \otimes (R,M)_n(K,\tau) \ar[r]^-\times &        (R,M)_{m+n}(L \times K, \sigma \times_\mu \tau),
}\]
commutes up to the sign $(-1)^{nm}$. An analogous statement holds for parametrized cohomology groups.
\end{proposition}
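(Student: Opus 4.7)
The plan is to reduce the sign $(-1)^{nm}$ to the standard Koszul sign arising from the symmetry isomorphism of the symmetric monoidal category $(\SpsymR, \barsm)$, exploiting the assumed commutativity of $(R,M)$ and the essential uniqueness of the homotopy $H$ appearing in~\eqref{eq:tw-chain}.

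First, I would establish the base case: for any objects $(E,X), (F,Y) \in \SpsymR$, the symmetry isomorphism $\mathrm{tw}_\barsm \colon (E,X) \barsm (F,Y) \to (F,Y) \barsm (E,X)$ induces on $\pi_* \mathbb L\Theta$ the Koszul-twisted map $x \otimes y \mapsto (-1)^{|x||y|}\, y \otimes x$. Because $\Theta = (M \to \ucI)_!$ is strong symmetric monoidal (see the discussion following Lemma~\ref{lem:bSIt-symm-mon}) and preserves cofibrations and weak equivalences between cofibrant objects, this reduces to the standard graded commutativity of the smash product on $\mathrm{Ho}(\Spsym{})$.

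Next, I would exploit commutativity to match the two paths through the defining chain~\eqref{eq:multiplication-I-spacificaton}. Commutativity of $(R,M)$ as a parametrized ring spectrum gives $\mu \circ \mathrm{tw}_\barsm = \mu$ on $(R,M) \barsm (R,M)$, while commutativity of $M_{h\cI}$ as an $E_\infty$-space yields the essentially unique homotopy between $\mu_{h\cI}$ and $\mu_{h\cI} \circ \mathrm{tw}$. Passing through the lax monoidal $\cI$-spacification $P_M$ of Section~\ref{subsec:I-spacification}, this is precisely the homotopy $H$ used to define the twist homomorphism~\eqref{eq:tw-homomorphism}. Putting this together, the diagram
\[\xymatrix@-1pc{
\tau_\cI^*(R,M) \barsm \sigma_\cI^*(R,M) \ar[d]_-{\mathrm{tw}_\barsm} \ar[rr] && (\tau \times_\mu \sigma)_\cI^*(R,M) \ar[d] \\
\sigma_\cI^*(R,M) \barsm \tau_\cI^*(R,M) \ar[rr] && (\sigma \times_\mu \tau)_\cI^*(R,M)
}\]
commutes in $\mathrm{Ho}(\SpsymR)$, where the horizontal arrows are obtained by precomposing~\eqref{eq:multiplication-I-spacificaton} with cofibrant replacements and the right vertical map represents~\eqref{eq:tw-homomorphism} on the spectrum level. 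Applying $\mathbb L\Theta$, passing to homotopy groups, and invoking the base case yields the claimed sign relation.

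The cohomological statement follows by the same argument, using the lax monoidal structure map~\eqref{eq:R-Gamma-monoidal-str} for $\mathbb R\Gamma$ in place of the strong monoidal structure on $\mathbb L\Theta$ (the sign enters identically through the twist of $\mathrm{tw}_\barsm$ on $\mathbb R\tau_\cI^*(R,M) \barsm_{\mathbb L} \mathbb R\sigma_\cI^*(R,M)$). The main obstacle is bookkeeping: one must verify that the homotopy produced at the base level from the $E_\infty$-structure on $M_{h\cI}$ lifts compatibly to the total-space level after pulling back the fibrant $(R,M)$, so that the right vertical arrow above is indeed induced by the twist chain~\eqref{eq:tw-chain}. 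This is where the assumption that $(R,M)$ is fibrant as a commutative parametrized ring spectrum (Theorem~\ref{thm:positive-local-on-CSpsymR}, together with Remark~\ref{rem:top-problems-commutative-parametrized-model} in the topological case) is essential, as it lets us realize the base-level coherence homotopy by an actual zig-zag of local equivalences in $\Spsym{\cR}$.
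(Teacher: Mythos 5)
Your overall strategy coincides with the paper's: reduce the sign $(-1)^{nm}$ to the Koszul sign of the symmetry isomorphism of $\barsm$, use commutativity of $(R,M)$ to identify the two composites $\tau_\cI^*(R,M)\barsm\sigma_\cI^*(R,M)\to(\tau_\cI\boxtimes_\mu\sigma_\cI)^*(R,M)$, and use the essentially unique $E_\infty$ homotopy on $M_{h\cI}$ to compare the two base maps. The base case and the appeal to strong monoidality of $\Theta$ are fine, and the first square of the paper's proof is exactly your "commutativity of $(R,M)$ gives $\mu\circ\mathrm{tw}_\barsm=\mu$" step. However, there is a genuine gap at the central step, which you have located but not closed: the commutativity in $\mathrm{Ho}(\SpsymR)$ of your displayed square, whose right-hand vertical arrow is supposed to "represent~\eqref{eq:tw-chain} on the spectrum level". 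This is not bookkeeping, and fibrancy of $(R,M)$ does not supply it. Fibrancy only makes the pullbacks in~\eqref{eq:tw-chain} homotopy invariant (so that $i_0$ and $i_1$ are local equivalences); it does not relate the object $H_\cI^*(R,M)$ --- the pullback along the $\cI$-spacification of the space-level homotopy $H\colon K\times L\times I\to M_{h\cI}$ --- to any cylinder object on $(\tau_\cI\boxtimes_\mu\sigma_\cI)^*(R,M)$, which is what you need in order to compare the twist chain with the symmetry isomorphism of $\barsm$ acting on $\tau_\cI^*(R,M)\barsm\sigma_\cI^*(R,M)$.

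The missing input is a comparison of $\cI$-spacifications: a map of $\cI$-spaces over $M$ from $P_\tau(K)\boxtimes P_\sigma(L)\times I$ (augmented to $M$ by the constant homotopy on $\tau_\cI\boxtimes_\mu\sigma_\cI$) to $P_H(K\times L\times I)$, restricting at the two endpoints to the maps underlying~\eqref{eq:multiplication-I-spacificaton} for the pairs $(\tau,\sigma)$ and $(\sigma,\tau\circ\mathrm{tw})$. The paper obtains this from the fact, taken from the proof of \cite[Lemma~6.7]{Schlichtkrull_Thom-symmetric}, that the bar resolution maps $\pi\colon\overline M\to\const_{\cI}M_{h\cI}$ and $t\colon\overline M\to M$ are compatible with the Barratt--Eccles operad actions; this lifts the space-level homotopy $\mu_{h\cI}\simeq\mu_{h\cI}\circ\mathrm{tw}$ to a homotopy $\overline M\boxtimes\overline M\times I\to\overline M$ whose composite with $t$ is constant, and pulling back $(R,M)$ along the resulting ladder of augmentations and applying $\mathbb L\Theta$ (identifying $\Theta(\bS^{\cI}_t[\const_{\cI}I])$ with $\bS\wedge I_+$) yields the homotopy-commutative square you assert. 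Without this construction, or an equivalent argument producing the same comparison, the square --- and hence the proposition --- remains unproven.
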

\begin{proof}
It suffices to consider the topological setting. Let us write $\tau_{\cI}\boxtimes_{\mu}\!\sigma_{\cI}$ for the composition of $\tau_{\cI}\boxtimes\sigma_{\cI}$ with the multiplication $\mu\colon M\boxtimes M\to M$. The commutativity assumption on $(R,M)$ implies that the first square in the diagram 
\[
\xymatrix@-1pc{
\tau_\cI^*(R,M) \barsm \sigma_\cI^*(R,M)\ar[r] \ar[d]_{\mathrm{tw}}&(\tau_\cI \boxtimes_{\mu}\! \sigma_\cI)^*(R,M) \ar[r] \ar[d]_{\mathrm{tw}}& (\tau \times_{\mu} \sigma)_{\cI}^*(R,M)\\
\sigma_\cI^*(R,M) \barsm\tau_\cI^*(R,M)\ar[r] & (\sigma_\cI \boxtimes_{\mu}\! \tau_\cI)^*(R,M) \ar[r] & (\sigma \times_{\mu} \tau)_{\cI}^*(R,M) 
}
\]
is commutative. Here the horizontal maps are defined as in \eqref{eq:multiplication-I-spacificaton}. It follows from the proof of \cite[Lemma~6.7]{Schlichtkrull_Thom-symmetric} that the maps $\pi\colon\overline M\to \const_{\cI}M_{h\cI}$ and $t\colon \overline M\to M$ going into the definition of the $\cI$-spacification functor are compatible with the actions of the Barratt-Eccles operad on these $\cI$-spaces. Hence there is a commutative diagram of homotopies 
\[
\xymatrix@-1pc{
\overline M\boxtimes \overline M \times I\ar[r] \ar[d]_{\pi\boxtimes \pi\times I}& \overline M\ar[d]_{\pi}\\
\const_{\cI}(M_{h\cI}\times M_{h\cI}\times I)\ar[r] & \const_{\cI} M_{h\cI}
}
\]
where the bottom homotopy is the one used to define the homotopy $H$ in \eqref{eq:tw-chain} and the upper homotopy starts at $\bar\mu$ and ends at $\bar\mu\circ \mathrm{tw}$. Furthermore, the composition of  
the upper homotopy with $t$ is the constant homotopy on $t\circ\bar\mu$. Using both of these homotopies, we get a natural map of $\cI$-spaces
\[
P_{\tau}(K)\boxtimes P_{\sigma}(L)\times I\to P_H(K\times L\times I),
\]
where the notation $P_{\tau}(K)$ etc.\ denote the domains for the $\cI$-spacifications as in \eqref{eq:I-spacificaton}. This is in fact a map of $\cI$-spaces over $M$ when we augment the left hand side via the constant homotopy on $\tau_{\cI}\boxtimes_{\mu}\!\sigma_{\cI}$. Pulling back $(R,M)$ along these augmentations, we end up with the commutative diagram
\[
\xymatrix@-1pc{
(\tau_{\cI}\boxtimes_{\mu}\!\sigma_{\cI})^*(R,M) \ar[r] \ar[d]_{i_0}& (\tau\times_{\mu}\sigma)_{\cI}^*(R,M)\ar[d]_{i_0}\\
(\tau_{\cI}\boxtimes_{\mu}\!\sigma_{\cI})^*(R,M) \barsm \bS^{\cI}_t[\const_{\cI}\!I] \ar[r] & H_{\cI}^*(R,M)\\
(\tau_{\cI}\boxtimes_{\mu}\!\sigma_{\cI})^*(R,M) \ar[r] \ar[u]^{i_1} \ar[d]_{\mathrm{tw}} & (\sigma \times_\mu \tau\circ \mathrm{tw})_{\cI}^*(R,M)\ar[u]^{i_1}\ar[d]_{\mathrm{tw}}\\
(\sigma_\cI \boxtimes_{\mu}\! \tau_\cI)^*(R,M) \ar[r] & (\sigma \times_\mu \tau)_{\cI}^*(R,M).
}
\]
Applying $\mathbb L\Theta$ and identifying $\Theta(\bS^{\cI}_t[\const_{\cI}\!I])$ with $\bS\wedge I_+$, we get a homotopy commutative diagram from which we deduce the statement in the proposition. The cohomological statement follows by applying $\mathbb R \Gamma$ instead of $\mathbb L \Theta$. 
\end{proof}

\subsection{Derived restriction as a left adjoint}\label{subsec:derived-restriction} We now begin to prepare for the proofs of Propositions~\ref{prop:Rf-upper-star-has-right-adjoint} and~\ref{prop:monoidal_pull}. These proofs will rely on the following three lemmas which require us to work over simplicial sets and do not have direct topological counterparts. This will not lead to limitations for the propositions since they make statements about the homotopy category. 

\begin{lemma}\label{lem:f-upper-star-left-Quillen-SR}
  If $f \colon L \to K$ is a Kan fibration in $\sset$, then the restriction functor   $f^*\colon \sset_{\cR}/\iota_b(K) \to
  \sset_{\cR}/\iota_b(L)$
  preserves weak equivalences and is left Quillen.
\end{lemma}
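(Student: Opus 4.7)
The plan is to exploit the identification of $\sset_\cR/\iota_b(K)$ with the category of retractive objects in the slice $\sset/K$. Since $\iota_b$ preserves limits by Lemma~\ref{lem:iota-b-left-right-adjoint}, an object of $\sset_\cR/\iota_b(K)$ consists of a retractive space $(U,M)$ together with a map $M \to K$ (which uniquely determines the map $U \to K$ via the retraction $U \to M$), and the functor $f^*$ sends $(U,M)$ to the levelwise pullback $(U\times_K L,\, M\times_K L)$. Under this identification, $f^*$ is the functor induced by the slice-level pullback $f^*\colon \sset/K \to \sset/L$ applied to both the total and the base components.

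I would first dispose of the preservation of weak equivalences. A weak equivalence in $\sset_\cR/\iota_b(K)$ is detected by the weak equivalences of total and base simplicial sets, both of which now live over $K$. Since $f$ is a Kan fibration, right properness of $\sset$ implies that $f^*\colon \sset/K \to \sset/L$ preserves weak equivalences, and hence so does $f^*$ on the retractive categories.

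Next, for the left Quillen property, I would construct a right adjoint to $f^*$. Since $\sset$ is locally cartesian closed, the slice-level $f^*\colon \sset/K \to \sset/L$ admits a right adjoint $f_*$ (the dependent product). Applying $f_*$ levelwise to a retractive object yields again a retractive object, since $f_*$ preserves limits and therefore the retraction identity; this provides the desired right adjoint on $\sset_\cR/\iota_b(L)$. For cofibrations, a cofibration in $\sset_\cR/\iota_b(K)$ is a map $(U,M) \to (U',M')$ such that both $M \to M'$ and $U \cup_M M' \to U'$ are monomorphisms. Monomorphisms are preserved by pullback in $\sset$, and, crucially, the pushout $U \cup_M M'$ is preserved by $f^*$ because the latter is a \emph{left} adjoint at the slice level. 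Hence $f^*$ preserves cofibrations, and combined with the preservation of weak equivalences just established, it also preserves acyclic cofibrations, which finishes the proof.

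The main obstacle is the pushout-preservation step in the cofibration clause, whose resolution rests on the conceptually non-trivial observation that pullback along any map in $\sset$ is itself a left adjoint (via local cartesian closedness), not merely a right adjoint. This is precisely the feature that distinguishes the simplicial setting from the topological one and accounts for the remark in the lemma that the statement has no direct topological counterpart.
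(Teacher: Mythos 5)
Your proof is correct and follows essentially the same route as the paper's: the right adjoint comes from local cartesian closedness of $\sset$ (the set-level statement), cofibrations are preserved because base change preserves monomorphisms and colimits (hence the pushout $U\cup_M M'$ appearing in the definition of cofibrations in $\sset_{\cR}$), and weak equivalences are preserved by right properness of $\sset$ since $f$ is a Kan fibration. The paper's proof is just a terser version of the same three observations, and your closing remark correctly isolates why the argument is genuinely simplicial.
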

\begin{proof}
  Since we are working over simplicial sets, $f^*$ has a right adjoint
  by the corresponding statement for the category of sets. Since base
  change preserves colimits and monomorphisms of sets, $f^*$ preserves
  cofibrations and colimits of simplicial sets and hence cofibrations
  in $\sset_{\cR}$ by their definition. Since $\sset$ is right proper, base change along the Kan
  fibration $f$ preserves weak equivalences. Thus $f^*$ is  left Quillen. 
\end{proof}

\begin{lemma}\label{lem:f-upper-star-left-Quillen-over-constant}
  The functor
  $(\const_{\cI}f)^*\colon \SpsymR/\bS^{\cI}_b[\const_{\cI}\!K] \to
  \SpsymR/\bS^{\cI}_b[\const_{\cI}\!L]$
  is left Quillen with respect to the absolute local model structures
  provided that $f \colon L \to K$ is a Kan fibration of Kan
  complexes.
\end{lemma}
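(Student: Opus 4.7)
The approach parallels Lemma~\ref{lem:f-upper-star-left-Quillen-SR}, lifting the unstable argument for retractive simplicial sets to the category of symmetric spectra in retractive spaces. The simplicial-set assumption is essential throughout, both for the existence of a right adjoint to $(\const_{\cI}f)^*$ and for the nice pullback behavior that follows from $\sset$ being locally Cartesian closed.

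First, I would establish that $(\const_{\cI}f)^*$ admits a right adjoint. By Lemma~\ref{lem:f-upper-star-left-Quillen-SR}, the unstable pullback $f^* \colon \sset_{\cR}/\iota_b(K) \to \sset_{\cR}/\iota_b(L)$ has a right adjoint in the slice categories of $\sset_{\cR}$. This extends levelwise to a right adjoint for $(\const_{\cI}f)^*$ on the slices of $\SpsymR$, with spectrum structure maps produced from the unit of the adjunction and the original structure maps.

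Next, I would verify that $(\const_{\cI}f)^*$ is left Quillen for the absolute level model structure. Cofibrations in the slice over $\bS^{\cI}_b[\const_{\cI}K]$ are cofibrations in $\SpsymR$, which by Proposition~\ref{prop:level-model-on-section} are generated from generating cofibrations of $\sset_{\cR}$ via the free functors $F^{\SpsymR}_{\bld m}$. Since $(\const_{\cI}f)^*$ is levelwise the pullback functor $f^*$ on the slices of $\sset_{\cR}$, which preserves cofibrations, colimits, and weak equivalences by Lemma~\ref{lem:f-upper-star-left-Quillen-SR} (right properness supplies the latter since $f$ is a Kan fibration), the functor $(\const_{\cI}f)^*$ preserves cofibrations and level weak equivalences, and is in particular left Quillen for the level structure.

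To descend to the local model structure, I would invoke the standard criterion that it suffices to send the localizing maps in the source slice to local weak equivalences in the target slice. The relevant localizing maps are $\alpha_Z \colon F^{\SpsymR}_{\bld n}(\alpha_! Z) \to F^{\SpsymR}_{\bld m}(Z)$ for $\alpha$ in $\cI$ and $Z$ a cofibrant object of $\sset_{\cR}/\iota_b K$. Using that pullback in $\sset$ commutes with coproducts and, via Lemma~\ref{lem:fiberwise-smash-pullback}, with the fiberwise smash product, together with the identity $\iota_b(K) \barsm S^{\bld{n}-\alpha} \iso \iota_b(K)$ from~\eqref{eq:barsmiso-zero} which controls the augmentation to $\bS^{\cI}_b[\const_{\cI}K]$ levelwise, I would produce a natural isomorphism $(\const_{\cI}f)^*(\alpha_Z) \iso \alpha_{f^*Z}$. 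Since $f^*Z$ is cofibrant in $\sset_{\cR}$ by Lemma~\ref{lem:f-upper-star-left-Quillen-SR}, Corollary~\ref{cor:alpha-Z-weak-equivalence} then yields that $\alpha_{f^*Z}$ is a local weak equivalence in $\SpsymR$, and hence in the target slice by Corollary~\ref{cor:local-model-structure-inherited}.

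The main obstacle is precisely this last identification $(\const_{\cI}f)^*(F^{\SpsymR}_{\bld m}(Z)) \iso F^{\SpsymR}_{\bld m}(f^*Z)$. Although $F^{\SpsymR}_{\bld m}$ is a left adjoint and left adjoints do not preserve pullbacks in general, the pullback here is along the very special map $\bS^{\cI}_b[\const_{\cI}f]$, which is levelwise $\iota_b(f)$. The absorption identity~\eqref{eq:barsmiso-zero} ensures that the base of $F^{\SpsymR}_{\bld m}(Z)$ at each level is simply $\iota_b(K)$ (up to the relevant coproduct), so the pullback reduces to a levelwise application of the unstable pullback $f^*$, which indeed commutes with the free functor constructions by the simplicial pullback properties.
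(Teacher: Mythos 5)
Your overall strategy is the one the paper uses: the heart of the matter is the natural isomorphism $(\const_{\cI}f)^* F_{\bld{m}}^{\SpsymR}(W,P) \iso F_{\bld{m}}^{\SpsymR}(f^*(W,P))$ for objects augmented over $\bS^{\cI}_b[\const_{\cI}K]$, and you correctly identify both this isomorphism and the reason it holds (the pullback is along a map that is levelwise $\iota_b(f)$, so everything reduces to the unstable base change of Lemma~\ref{lem:f-upper-star-left-Quillen-SR}, which commutes with the coproducts and $-\barsm S^{\bld n - \alpha}$ appearing in the free functors). The treatment of the level model structure on the slice is also fine.

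There is, however, a genuine gap in the passage to the local model structure. You ``invoke the standard criterion that it suffices to send the localizing maps in the source slice to local weak equivalences,'' but that criterion (\cite[Proposition 3.3.18(1)]{Hirschhorn_model}) applies to a left Quillen functor out of a left Bousfield localization at an \emph{identified set of maps}. The local model structure on $\SpsymR/\bS^{\cI}_b[\const_{\cI}K]$ is not defined that way: it is the slice of the local model structure on $\SpsymR$, with cofibrations, fibrations and weak equivalences detected in $\SpsymR$. The slice of a Bousfield localization is not automatically the Bousfield localization of the slice, so one must first show that $\SpsymR/\bS^{\cI}_b[\const_{\cI}K]$ with its local structure \emph{is} the localization of the level slice structure at the maps $\alpha_Z$ equipped with all possible augmentations. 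The paper supplies this by citing \cite[Proposition~3.4]{schulz-logarithmic}, and that result needs the base object $\bS^{\cI}_b[\const_{\cI}K]$ to be fibrant in $\SpsymR$ --- which is exactly where the hypothesis that $K$ is a Kan complex (and not merely that $f$ is a Kan fibration) enters. Your argument never uses the Kan-complex assumption on $K$, which is the symptom of this missing step; as written, the claim that the local weak equivalences in the slice are detected by the maps $\alpha_Z$ is unjustified. (A minor further point: your appeal to Corollary~\ref{cor:local-model-structure-inherited} at the end concerns the fiber categories $\Spsym{X}$, not the overcategories; for the slice the detection of weak equivalences in $\SpsymR$ is true by definition, so nothing is lost, but the citation is misplaced.)
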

\begin{proof}
The functor $(\const_{\cI}f)^*$ is a left adjoint by the corresponding statement
for $\cS_{\cR}$ established in Lemma~\ref{lem:f-upper-star-left-Quillen-SR}. 
Let $\bld{m},\bld{n}$ be objects of $\cI$, let $(W,P) \to \iota_b(K)$ be a map in $\cS_{\cR}$, 
and let $F_{\bld{m}}^{\SpsymR}(W,P)\to \bS^{\cI}_b[\const_{\cI}\!K]$ be the resulting object in $\SpsymR/\bS^{\cI}_b[\const_{\cI}\!K] $. Then there is a natural isomorphism 
\begin{multline}\label{eq:base-change-of-free}
\left((\const_{\cI}f)^* F_{\bld{m}}^{\SpsymR}(W,P)\right)(\bld{n}) = (\const_{\cI}f)^* \left( \textstyle \coprod_{\alpha \in \cI(\bld{m},\bld{n})} (W,P) \barsm S^{\bld{n}-\alpha}\right)\\ \xrightarrow{\iso}  \textstyle \coprod_{\alpha \in \cI(\bld{m},\bld{n})} f^* (W,P)\barsm S^{\bld{n}-\alpha}  =  
F_{\bld{m}}^{\SpsymR}(f^*(W,P)) (\bld{n}) 
\end{multline}
where the coproducts are taken in $\cS_{\cR}$ and the base change on the right
hand side is formed along $(W,P) \to \iota_b(K)$. Since the
cofibrations and generating acyclic cofibrations of the absolute level
model structure on $\SpsymR/\bS^{\cI}_b[\const_{\cI}\!K]$ are obtained from those of
$\SpsymR$ by allowing all possible augmentations~\cite{Hirschhorn_over_under}, the claim for the
level model structure follows from the isomorphism~\eqref{eq:base-change-of-free} and
Lemma~\ref{lem:f-upper-star-left-Quillen-SR}. Since we assume $K$ to be fibrant, $\bS^{\cI}_b[\const_{\cI}\!K]$ is fibrant in $\SpsymR$ so that we can use \cite[Proposition~3.4]{schulz-logarithmic} to deduce that the local model structure on $ \SpsymR/\bS^{\cI}_b[\const_{\cI}\!K] $ can be viewed as the left Bousfield localization at a set of maps whose domains and codomains are of the form $F_{\bld{m}}^{\SpsymR}(W,P)\to \bS^{\cI}_b[\const_{\cI}\!K]$. So~\eqref{eq:base-change-of-free} implies  that $(\const_{\cI}f)^*$ is also left Quillen with respect to the local model structure. 
\end{proof}
\begin{remark}
  The preceding lemma does not hold in general if we consider the base change
  along $\bS^{\cI}_b[g]$ for an arbitrary map of $\cI$-spaces $g$ since
  in this case, the different levels of
  $g^* F_{\bld{m}}^{\SpsymR}(W,P)$ are coproducts over
  $g(\bld{m})^*(W,P)$ which may vary in $\bld{m}$.
\end{remark}

Recall that if $K$ is a space, $\Spsym{K} = \Spsym{\const_{\cI}K}$ is the stabilization of $\cS_K$. 
\begin{lemma}\label{lem:const_I_f_left-Quillen-SpsymK}
  The functor $(\const_{\cI}f)^*\colon \Spsym{K} \to \Spsym{L}$ is
  left Quillen if $f \colon L \to K$ is a fibration of Kan complexes.
\end{lemma}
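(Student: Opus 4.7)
The plan is to deduce the statement from Lemma~\ref{lem:f-upper-star-left-Quillen-over-constant} by comparing $\Spsym{K}$ with the slice category $\SpsymR/\bS^{\cI}_b[\const_{\cI}K]$. Projecting to the base $\cI$-space yields an inclusion functor $\iota\colon \Spsym{K} \to \SpsymR/\bS^{\cI}_b[\const_{\cI}K]$, and since the functor $(\const_{\cI}f)^*$ is in either case induced by the same pullback in $\SpsymR$, it commutes with $\iota$ in the evident square with $\iota\colon \Spsym{L} \to \SpsymR/\bS^{\cI}_b[\const_{\cI}L]$.

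To transfer the Quillen property, I would use that the cofibrations and trivial cofibrations in the absolute local model structure on $\Spsym{K}$ are exactly the maps that are cofibrations, respectively trivial cofibrations, in $\SpsymR$ by Corollary~\ref{cor:local-model-structure-inherited}; the slice model structure on $\SpsymR/\bS^{\cI}_b[\const_{\cI}K]$ detects the same classes by general slice-category considerations. Hence $\iota$ both preserves and reflects cofibrations and trivial cofibrations. Since $f\colon L\to K$ is a fibration of Kan complexes, the objects $\bS^{\cI}_b[\const_{\cI}K]$ and $\bS^{\cI}_b[\const_{\cI}L]$ are locally fibrant in $\SpsymR$, and Lemma~\ref{lem:f-upper-star-left-Quillen-over-constant} applies to the bottom arrow of the square. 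Transferring back through $\iota$ gives that $(\const_{\cI}f)^*\colon\Spsym{K}\to\Spsym{L}$ preserves cofibrations and trivial cofibrations.

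It remains to produce a right adjoint, which I would do via the adjoint functor theorem rather than by descending it from the slice category. Colimits in $\Spsym{K}$ are computed levelwise in $\sset_{K}$, and colimits in $\sset_K$ are in turn created by the forgetful functor to $\sset/K$. Since $\sset$ is locally cartesian closed, the pullback $f^*\colon \sset/K\to\sset/L$ preserves all colimits, whence so does $(\const_{\cI}f)^*\colon\Spsym{K}\to\Spsym{L}$. Both section categories are locally presentable, so the adjoint functor theorem yields the required right adjoint. I expect this last step to be the main subtlety, since the right adjoint produced by Lemma~\ref{lem:f-upper-star-left-Quillen-over-constant} on the slice category need not preserve the additional structure that cuts out $\Spsym{L}$, so it cannot be restricted directly.
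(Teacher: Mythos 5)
Your proposal is correct and follows essentially the same route as the paper, which likewise deduces the homotopical statement by combining Lemma~\ref{lem:f-upper-star-left-Quillen-over-constant} with Corollary~\ref{cor:local-model-structure-inherited} (the slice-category mediation you spell out is implicit there), and notes that the right adjoint exists because one is working over simplicial sets. Producing that adjoint via local presentability and colimit preservation rather than via the explicit base-change right adjoint coming from local cartesian closedness of $\sset$ is an inessential variation.
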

\begin{proof}
The functor $(\const_{\cI}f)^*$ is a left adjoint since we are working with simplicial sets. 
The homotopical statement follows from Lemma~\ref{lem:f-upper-star-left-Quillen-over-constant}
and Corollary~\ref{cor:local-model-structure-inherited} (or by adapting the argument in Lemma~\ref{lem:f-upper-star-left-Quillen-over-constant} to $\Spsym{K}$ and $\Spsym{L}$). 
\end{proof}
We have now developed enough tools to verify the statements about $f^*\colon \Spsym{X} \to \Spsym{Y}$ and its monoidal behavior made in the beginning of this section. 
\begin{proof}[Proof of Proposition~\ref{prop:Rf-upper-star-has-right-adjoint}]
  Using Proposition~\ref{prop:simp-top-spsymX}, it suffices to verify
  the claim in the simplicial case.  Since $X$ is $\cI$-equivalent to
  $X' = \const_{\cI}(X^{\mathrm{fib}}(\bld{0}))$, the Quillen
  equivalences relating $\Spsym{X}$ to $\Spsym{X'}$ allow us to assume
  that $f$ is of the form $\const_{\cI}g$ for a map of Kan complexes
  $g\colon L \to K$.  We factor $g$ as an acyclic cofibration
  $k \colon L \to P$ followed by a fibration $h \colon P \to X$. Then
  $(\const_{\cI}h)^*$ is left Quillen by
  Lemma~\ref{lem:f-upper-star-left-Quillen-over-constant}, and
  applying $(\const_{\cI}h)^*$ to objects that are both cofibrant and
  fibrant shows that
  $\mathbb L (\const_{\cI}h)^* = \mathbb R (\const_{\cI}h)^*$.  Since
  $(\const_{\cI}k)^*$ participates in a Quillen equivalence by
  Lemma~\ref{lem:base-change-local-Q-adjunction},
  $\mathbb R (\const_{\cI}k)^*$ is an equivalence of categories.
  Hence
  \[\mathbb R (\const_{\cI}g)^* = \mathbb R(\const_{\cI}k)^* \circ \mathbb R (\const_{\cI}h)^* =  \mathbb R(\const_{\cI}k)^* \circ \mathbb L (\const_{\cI}h)^*\] is a left adjoint.
\end{proof}

Given maps of cofibrant $\cI$-spaces $f\colon X' \to X$ and $g\colon Y' \to Y$ as well as cofibrant and fibrant objects $(E,X)$ in $\Spsym{X}$ and $(F,Y)$ in $\Spsym{Y}$, there is a chain of maps 
\begin{multline}\label{eq:upper-star-weak-monoidal} 
f^*(E,X)^{\mathrm{cof}} \barsm g^*(F,Y)^{\mathrm{cof}} \to f^*(E,X) \barsm g^*(F,Y)  \\ \to (f\boxtimes g)^*((E,X) \barsm (F,Y)) \to (f\boxtimes g)^*((E,X) \barsm (F,Y))^{\mathrm{fib}}
\end{multline}
induced by cofibrant replacements in $\Spsym{X'}$ and $\Spsym{Y'}$, the map~\eqref{eq:upper-star-lax-monoidal},  and  a fibrant replacement in $\Spsym{X\boxtimes Y}$.

\begin{proof}[Proof of Proposition~\ref{prop:monoidal_pull}]
For the statement of the proposition, it is sufficient to show that the map~\eqref{eq:upper-star-weak-monoidal} is a local equivalence. Arguing in $\SpsymR$ with arguments analogous to those in the proof of Proposition~\ref{prop:Rf-upper-star-has-right-adjoint}, we may assume that $f = \const_{\cI}\tilde f$ and $g = \const_{\cI} \tilde g$ where $\tilde f \colon K' \to K$ and $\tilde g \colon L' \to L$ are Kan fibrations of Kan complexes. Then $f\boxtimes g \iso \const_{\cI}(\tilde f \times \tilde g)$, and it follows from Lemma~\ref{lem:const_I_f_left-Quillen-SpsymK} that the first and the last map in~\eqref{eq:upper-star-weak-monoidal} are local equivalences. Working over a constant base, Lemma~\ref{lem:fiberwise-smash-pullback} and the fact that base change preserves pullbacks show that the map~\eqref{eq:upper-star-lax-monoidal} is even an isomorphism.  
\end{proof}

We conclude with an explicit description of the topological version of
$\mathbb R\Gamma = \mathbb R (Y\to *)_*$ and its monoidal structure in
an important special case.  This will become relevant
in~\cite{HS-twisted}.

By \cite[Proposition 1.5]{Lewis_fibre-spaces} the functor
$f^*\colon \Spsym{\const_{\cI}K} \rightarrow \Spsym{\const_{\cI}L}$
admits a right adjoint if and only if the map
$f\colon L \rightarrow K$ is open. This is certainly the case for the
map $r \colon K \rightarrow *$ and the adjoint $\Gamma$ is given by
sending a parametrized spectrum $(E,\const_{\cI}K)$ to the spectrum
whose $m$th level is given by the section space of the projection
$E(\bld m) \rightarrow K$.

\begin{lemma}\label{gamma-top}
Suppose that $K$ is a cell complex or more generally cofibrant. Then $\Gamma \colon \Spsym{\const_{\cI}K} \rightarrow \Spsym{}$ preserves (positive) level equivalences between parametrized spectra whose projections are Serre-fibrations. In particular, $\Gamma$ preserves local equivalences between (positive) locally fibrant spectra and carries these to (positive) fibrant spectra.
\end{lemma}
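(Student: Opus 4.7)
The plan is to first reduce everything to a levelwise statement in $\cS_K$, and then exploit the explicit description of the cotensor $\Hom_{\cR}((S^k,*),-)$ to verify the $\Omega$-spectrum condition.

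Since $\Gamma$ is defined levelwise as the section space functor $\Gamma_K\colon \cS_K\to \cS_*$, the first assertion reduces to showing that $\Gamma_K$ preserves weak equivalences between retractive spaces $(U,K)$ whose projection $U\to K$ is a Serre fibration. This is a classical fact. One argument: the underlying (unpointed) space of $\Gamma_K(U,K)$ is $\Map_K(K,U)$, and $\Map_K(K,-)\colon \cS/K\to \cS$ is the right adjoint to $A\mapsto (A\times K\to K)$; the latter is left Quillen for the standard model structure on $\cS/K$, so $\Map_K(K,-)$ preserves weak equivalences between fibrant objects, namely Serre fibrations over the cofibrant base $K$. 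Equivalently, Dold's theorem (requiring cofibrancy of $K$) asserts that such a weak equivalence is a fiberwise homotopy equivalence, which then induces a homotopy equivalence of section spaces.

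For the ``in particular'' part, a (positive) locally fibrant object in $\Spsym{\const_\cI K}$ is in particular (positive) level fibrant by Lemma~\ref{lem:equivalent-fibrancy-conditions}, which via the model structure on $\cS_K$ translates into each $E(\bld m)\to K$ (for $|\bld m|\geq 1$ in the positive case) being a Serre fibration. Since local weak equivalences between locally fibrant objects are level equivalences, by a standard feature of left Bousfield localizations, the first part of the lemma applies and gives a level equivalence on $\Gamma$. To see that $\Gamma(E)$ is itself a (positive) $\Omega$-spectrum, I would use the natural identification $\Gamma_K\bigl(\Hom_{\cR}((S^k,*),(F,K))\bigr) \iso \Omega^k \Gamma_K(F,K)$. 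This follows from the description at the end of Section~\ref{subsec:retractive-parametrized-spaces}: points in the total space of the cotensor are pointed maps $S^k\to F$ whose image lies in a single fiber, sending the basepoint to the section, so a section of such a cotensor over $K$ is precisely a based loop in $\Gamma_K(F,K)$. Feeding in the fibrancy condition for $(E,\const_\cI K)$, which provides weak equivalences $E(\bld m)\to \Hom_{\cR}((S^{\bld n-\alpha},*),E(\bld n))$ between objects with Serre fibration projection to $K$, and applying $\Gamma_K$, yields weak equivalences $\Gamma(E)_m \to \Omega^{|\bld n-\alpha|}\Gamma(E)_n$, verifying the (positive) $\Omega$-spectrum condition.

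The main obstacle is the point-set check that $\Hom_{\cR}((S^k,*),(F,K))\to K$ is again a Serre fibration when $F\to K$ is, so that the first part of the lemma applies to it. This requires either a direct verification of the relative lifting property (using that $S^k$ is CW and that $F\to K$ has the homotopy lifting property against $D^n\times I$), or an appeal to the standard fact that fiberwise mapping spaces of Serre fibrations over a cofibrant base are again Serre fibrations. Once this technical point is in hand, the pieces assemble as outlined.
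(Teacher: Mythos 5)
Your argument is correct and follows essentially the same route as the paper: the paper phrases the first part via the fiber sequence $\Gamma(B,E)\to E^B\to B^B$ together with the fact that $(-)^B$ preserves weak equivalences for cofibrant $B$, which is the same fact you package as the Quillen adjunction $(-\times K)\dashv \Map_K(K,-)$, and the second part is exactly the paper's observation that $\Gamma$ commutes with fiberwise loops by adjunction. The one technical point you flag --- that $\Hom_{\cR}((S^k,*),(F,K))\to K$ is again a Serre fibration --- is indeed needed and is supplied by Proposition~\ref{prop:S_K-simplicial}: cotensoring a fibrant object of $\cS_K$ with a cofibrant (simplicial) set preserves fibrancy.
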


\begin{proof}
This is immediate from the fiber sequence $\Gamma(B,E) \rightarrow E^B \rightarrow B^B$, whenever $E \rightarrow B$ is a Serre-fibration and $B$ is a cell complex, and the fact that $(-)^B$ preserves weak-equivalences. The last claim follows since $\Gamma$ commutes with taking (fiberwise) loops by adjunction.
\end{proof}

Thus $\mathbb R\Gamma(E,\const_{\cI}K)$ is represented by $\Gamma(E^\mathrm{fib},\const_{\cI}K)$, the value of $\Gamma$ on a locally fibrant replacement of $E$. Furthermore, by construction, the map 
\[\mathbb R \Gamma (E,\const_{\cI}K) \sm_{\mathbb L} \mathbb R \Gamma (F,\const_{\cI}L) \to  \mathbb R \Gamma (E\barsm_{\mathbb L} F,\const_{\cI}K \times L) \]
is represented by the natural map
\begin{equation}\label{eq:Gamma-monoidal}\Gamma (E,\const_{\cI}K) \sm \Gamma (F,\const_{\cI}L) \longrightarrow \Gamma (E\barsm F,\const_{\cI}K \times L)
\end{equation}
taking products of sections, whenever $(E,\const_{\cI}K)$ and $(F,\const_{\cI}L)$ are bifibrant.

A similar description still applies when we are presented with a (positive) level equivalence $f \colon X \rightarrow \const_{\cI}K$. For $(E,X) \in \Spsym{X}$ we then find
\begin{equation}\label{eq:RGamma-topological} \mathbb R\Gamma (E,X) \iso \mathbb R\Gamma \mathbb Rf_* (E,X) \iso \Gamma (\mathbb Lf_! (E,X))^{\mathrm{fib}},
\end{equation}
an observation which is made use of in the comparison of operator algebraic and homotopical twisted $K$-theory in~\cite[Proposition~6.2]{HS-twisted}. 

\section{The universal line bundle}\label{sec:univ-bundle}
In this section, we construct an important example of a commutative para\-metrized ring spectrum, namely the universal line bundle $\gamma_R$ associated with a commutative symmetric ring spectrum $R$. We are interested in $\gamma_R$ for several reasons. In Section~\ref{subsec:infty-thom}, we show that it represents its $\infty$-categorical counterpart studied in \cites{Ando-B-G-H-R_infinity-Thom, Ando-B-G_parametrized}. This leads to a multiplicative comparison of the parametrized (co)homology groups from Section~\ref{sec:tw-coho} with the $\infty$-categorical ones from~\cite{Ando-B-G_parametrized}. The universal line bundle also allows us to relate multiplicative point set level Thom spectrum functors to $\infty$-categorical ones (see Theorem~\ref{thm:Thom-spectrum-comparison}). Lastly, it also plays a prominent role in the multiplicative comparison of twisted $K$-theory spectra in~\cite{HS-twisted}.  

\subsection{The construction of the universal line bundle}\label{unibund}
In the following, we use the notion of commutative $\cI$-space monoids from Definition~\ref{def:commutative-I-space-monoid}, the positive $\cI$-model structure on the resulting category of commutative $\cI$-space monoids $\cC\cS^{\cI}$~\cite[\S 3]{Sagave-S_diagram}, and the adjunction $\bS^{\cI}\colon \cC\cS^{\cI}\rightleftarrows \cC\Spsym{}\colon \Omega^{\cI}$ relating them to commutative symmetric ring spectra~\cite[(3.9)]{Sagave-S_diagram}. Moreover, we say that a commutative $\cI$-space monoid $M$ is \emph{grouplike} if the monoid $\pi_0(M_{h\cI})$ is a group~\cite[\S 3.17]{Sagave-S_diagram}.

Let $R$ be a positive fibrant commutative symmetric ring spectrum. Its multiplicative $E_{\infty}$ space is modeled by the commutative $\cI$-space monoid $\Omega^{\cI}(R)$, and its units $\GLoneIof{R}$ are given by the sub commutative $\cI$-space monoid of invertible path components of $\Omega^{\cI}(R)$. The fibrancy condition on $R$ is needed to ensure that $\Omega^{\cI}(R) $ and $\GLoneIof{R}$ capture a well-defined homotopy type. It can be enforced by applying a fibrant replacement to $R$ (and could be relaxed to only asking $R$ to be positive level fibrant and semistable \cite[Remark 2.6]{Basu_SS_Thom}). 

We let $G = (\GLoneIof{R})^{\mathrm{cof}}$ be a cofibrant replacement in the positive $\cI$-model structure on $\cC\cS^{\cI}$. The adjoint of $G \to \GLoneIof{R} \to \Omega^{\cI}(R)$ is a map of
commutative symmetric ring spectra $\bS^{\cI}[G] \to R$. Via the strong symmetric monoidal functor $\bS^{\cI}_t$ from
Lemma~\ref{lem:various-com-adjunctions}, $G$ also gives rise to a
commutative monoid $\bS^{\cI}_t[G] = \bS^{\cI}_{\cR}[\iota_t(G)] =
\bS^{\cI}_{\cR}[G\amalg G,G]$ in $\SpsymR$ whose base commutative
$\cI$-space monoid is $G$. The unique map $G \to \ucI$ induces a
commutative monoid map $(G\amalg G,G) \to (\ucI\amalg G,\ucI)$
in $\cS^{\cI}_{\cR}$, and the composite
\[ \bS^{\cI}_t[G] \to \bS^{\cI}_{\cR}[\ucI\amalg G,\ucI]
\xrightarrow{\iso} \bS^{\cI}[G] \to R\]
allows us to view $R$ as a commutative $ \bS^{\cI}_t[G]$-algebra in
$\SpsymR$. We may also view $\bS = \bS^{\cI}_{t}[\ucI]$ as a
commutative $ \bS^{\cI}_t[G]$-algebra via the map induced by
$G \to \ucI$. Altogether, this allows us to form the two-sided bar construction
\[ B^{\barsm}(\bS, \bS^{\cI}_t[G],R) = | [q] \mapsto \bS \barsm
\bS^{\cI}_t[G]^{\barsm q} \barsm R|.\]
Being the realization of a simplicial object in $\cC\SpsymR$, it is
itself a commutative parametrized ring spectrum. Its underlying
commutative $\cI$-space monoid is $BG = B(\ucI,G,\ucI)$, the bar
construction of $G$ with respect to $\boxtimes$. As explained
in~\cite[\S 2.9]{Basu_SS_Thom}, $BG$ classifies $G$-modules. Its underlying $E_{\infty}$ space $(BG)_{h\cI} \simeq B(G_{h\cI})$ models the usual classifying space $ B \GLoneof{R}$ of the units of $R$. 

\begin{definition}\label{def:gamma_R}
  Let $R$ be a positive fibrant commutative symmetric ring spectrum in
  simplicial sets. Its \emph{universal line bundle} is defined to be
  $\gamma_R = \!(B^{\barsm}(\bS, \bS^{\cI}_t[G],R))^{\mathrm{fib}}$, a
  fibrant replacement of $B^{\barsm}(\bS, \bS^{\cI}_t[G],R)$ in the
  positive local model structure on $\cC\Spsym{BG}$.
\end{definition}
It follows from Lemmas~\ref{lem:realization} and~\ref{lem:product-with-cofibrant-in-SpsymR} below and the fact that $G$ is flat as an $\cI$-space~\cite[Proposition 3.15(i)]{Sagave-S_diagram} that a stable equivalence $R \to R'$ of positive fibrant commutative symmetric ring spectra induces a local equivalence $\gamma_R \to \gamma_R'$. 
\begin{remark}\label{rem:associative-gamma-R}The above construction can also be carried out for not-necessarily commutative ring spectra $R$, by using associative cofibrant and fibrant replacements instead of commutative ones. In this case, $\gamma_R$ is only an $R$-module and no longer a parametrized ring spectrum. The constructions from Section \ref{sec: coho} still produce twisted $R$-(co)homology functors, but these are no longer equipped with products.
\end{remark}

\begin{remark}
  For a positive fibrant commutative symmetric ring spectrum in
  topological spaces, we cannot directly implement
  Definition~\ref{def:gamma_R} because we have not established the
  topological version of the model structure on $\cC\Spsym{BG}$ (and
  the topological counterparts of Lemmas~\ref{lem:realization}
  and~\ref{lem:product-with-cofibrant-in-SpsymR} below). Rather than
  going through this, we content ourselves with the following
  construction: Given a positive fibrant commutative symmetric ring
  spectrum $R$ in topological spaces, we apply the above construction
  to $\Sing R$ and define $\gamma_R$ to be $| \gamma_{\Sing R}|$. Then
  the realization of the simplicial $BG$ models the topological one by
  the discussion in Section~\ref{subsec:Thom-via-classifying} below,
  and $| \gamma_{\Sing R}|$ is locally fibrant by
  Remark~\ref{rem:top-problems-commutative-parametrized-model}.
\end{remark}

We again work over simplicial sets and let $E$ be an $R$-module spectrum. Then  we can view $E$ as an
$\bS^{\cI}_t[G]$-module by restriction along $\bS^{\cI}_t[G] \to R$
and generalize $\gamma_R$ by considering
$\gamma_E = (B^{\barsm}(\bS, \bS^{\cI}_t[G],E))^{\mathrm{fib}}$. Here
the fibrant replacement is taken in a lifted model structure on
$B^{\barsm}(\bS, \bS^{\cI}_t[G],R)$-module spectra that exists
by~\cite[Proposition 3.4.2]{Pavlov-S_symmetric-operads}. Based on
this notion, we now describe the behavior of universal bundles under pullback. On the one hand this is crucial for the applications in~\cite{HS-twisted}, and on the other it shows that the fiber of $\gamma_E$ over the basepoint of $BG$ is just $E$ itself, as should be expected. 

\begin{proposition}\label{prop:Thom-of-BH-BG} We work in simplicial sets and let $H \to G$ be a map of $\cI$-space monoids with $H$ flat and
  grouplike. Then the canonical map
\[ B^{\barsm}(\bS, \bS^{\cI}_t[H],E) \to (BH \to BG)^*(\gamma_E) \] 
is a local equivalence of parametrized spectra.  When
$E = R$ and $H$ is commutative, it is a local equivalence of commutative $BH$-relative
parametrized ring spectra.
\end{proposition}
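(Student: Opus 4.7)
The plan is to recognize the comparison map as the realization of a levelwise isomorphism of simplicial objects in $\SpsymR$ and then pass to realizations using the homotopical behavior of pullback. The map in question is the canonical $BH$-relative factorization of the composite $B^{\barsm}(\bS, \bS^{\cI}_t[H],E) \to B^{\barsm}(\bS, \bS^{\cI}_t[G],E) \to \gamma_E$ through the pullback along $BH \to BG$, where the first map is induced by $H \to G$ and the second is the fibrant-replacement map. At simplicial degree $q$, we must compare $\bS \barsm \bS^{\cI}_t[H]^{\barsm q} \barsm E$ with the pullback $(H^{\boxtimes q} \to G^{\boxtimes q})^*(\bS \barsm \bS^{\cI}_t[G]^{\barsm q} \barsm E)$. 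The explicit formula $\bS^{\cI}_t[X]_{\bld{m}} = (X(\bld{m}) \times S^{\bld{m}}, X(\bld{m}))$ from~\eqref{eq:bsIb-and-bSIt-explicit} shows that $\bS^{\cI}_t$ sends maps of $\cI$-spaces to pullback squares in $\cS^{\cI}_{\cR}$. Combining this with the strong monoidality of $\bS^{\cI}_t$ (Lemma~\ref{lem:various-com-adjunctions}), the pullback compatibility of $\barsm$ from Lemma~\ref{lem:fiberwise-smash-pullback}, and the fact that $\bS$ and $E$ live over the unit $\ucI$ (so that they pull back trivially) yields a levelwise isomorphism of simplicial parametrized spectra.

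It remains to pass from this levelwise isomorphism to a local equivalence after realization. On the one hand, the pullback functor $(BH \to BG)^*$ commutes with the point-set realization of the simplicial bar construction here: the simplicial object is built functorially from $\bS^{\cI}_t$ applied to simplicial $\cI$-spaces, and a direct levelwise check using the formula above together with flatness of $H$ shows that the pullback is compatible with the simplicial realization. On the other hand, the fibrant-replacement map $B^{\barsm}(\bS, \bS^{\cI}_t[G],E) \to \gamma_E$ is a local equivalence, and its pullback along $BH \to BG$ remains a local equivalence: since $\mathbb{R}(BH \to BG)^*$ is a left adjoint by Proposition~\ref{prop:Rf-upper-star-has-right-adjoint}, it preserves local equivalences, while the point-set pullback agrees with the derived one on the fibrant object $\gamma_E$. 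Combining these two identifications gives the asserted local equivalence.

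The main technical obstacle is showing that the point-set pullback commutes with the realization of the bar construction, since pullback is only a right adjoint and need not preserve colimits in general. Here the plan is to exploit the concrete structure of $\bS^{\cI}_t[-]$ to reduce the comparison to a pullback question in $\cS^{\cI}$ that is handled by the flatness of $H$; the grouplike assumption on $H$ then ensures that $BH$ has the homotopy type of a classifying space, making the principal-bundle picture underlying the comparison unambiguous. When $E = R$ and $H$ is commutative, every step above is functorial with respect to commutative multiplicative structure, using Theorem~\ref{thm:positive-local-on-CSpsymM} together with its analog for $\cC\Spsym{BH}$ to deduce the statement in the category of commutative $BH$-relative parametrized ring spectra.
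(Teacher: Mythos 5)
Your central step fails: the claimed degreewise isomorphism
\[ \bS \barsm \bS^{\cI}_t[H]^{\barsm q}\barsm E \;\iso\; (H^{\boxtimes q}\to G^{\boxtimes q})^*\bigl(\bS\barsm\bS^{\cI}_t[G]^{\barsm q}\barsm E\bigr) \]
is false for a general map of $\cI$-spaces and a general $E$. The total space of $\bS^{\cI}_t[Y^{\boxtimes q}]\barsm E$ in level $\bld p$ is a colimit over $\alpha\colon\bld m\concat\bld n\to\bld p$ of terms living over $Y^{\boxtimes q}(\bld m)$ for \emph{varying} $\bld m$; pulling back along $H^{\boxtimes q}(\bld p)\to G^{\boxtimes q}(\bld p)$ commutes with this colimit (universality in $\sset$), but it replaces each term by its pullback over $H^{\boxtimes q}(\bld p)\times_{G^{\boxtimes q}(\bld p)}G^{\boxtimes q}(\bld m)$, which agrees with the term for $H$ only if every structure-map square of $H^{\boxtimes q}\to G^{\boxtimes q}$ is cartesian. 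Already for $q=0$ and $Y=\ucI$ one sees the failure: $\bS^{\cI}_t[X]\barsm E$ has level-$\bld p$ total space built from $X(\bld m)\times S^{\bld m}\barsm E_{\bld n}$, whereas $(X\to\ucI)^*(\bS\barsm E)$ has level-$\bld p$ total space $X(\bld p)\times E_{\bld p}$; taking $X=F^{\cS^{\cI}}_{\bld 1}(*)$ gives $S^1\sm E_0$ versus $E_1$ in level~$\bld 1$. This is exactly the phenomenon flagged in the remark following Lemma~\ref{lem:f-upper-star-left-Quillen-over-constant}, and it is why the paper only proves the weaker Lemma~\ref{lem:commuting-f-upper-star-barsm}: a \emph{local equivalence}, not an isomorphism, and only after reducing to constant $\cI$-spaces via an absolute $\cI$-fibration hypothesis. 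A related problem occurs in your second step: $BH\to BG$ is not an $\cI$-fibration, so applying the point-set $(BH\to BG)^*$ to the (non-fibrant) source of the fibrant-replacement map need not compute the derived pullback, and $f^*$ need not preserve that local equivalence; one needs the weak right properness of Proposition~\ref{prop:weak-cogluing} together with a factorization of $BH\to BG$ into an acyclic cofibration followed by a fibration.

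A symptom of the gap is that your argument never genuinely uses that $H$ is grouplike, whereas the proposition is false without that hypothesis. The paper's proof replaces your pullback computation by a homotopical one: after making $E$ flat, it factors $BH\to BG$ as $BH\xrightarrow{\sim}Y\twoheadrightarrow BG$, rewrites both sides as bar constructions of the form $B^{\barsm}(\bS^{\cI}_{\cR}[-\amalg -],\bS^{\cI}[G],E)$, and uses (i) Lemma~\ref{lem:commuting-f-upper-star-barsm} plus Proposition~\ref{prop:weak-cogluing} to commute the pullback along the fibration $Y\twoheadrightarrow BG$ past the bar construction up to local equivalence, and (ii) the fact that $B(\ucI,H,G)\to h^*B(\ucI,G,G)$ is an $\cI$-equivalence \emph{because $G$ and $H$ are grouplike}, fed into the homotopy invariance statements of Lemmas~\ref{lem:realization}, \ref{lem:product-with-cofibrant-in-SpsymR} and~\ref{lem:bSIR-full-hty-inv} and Corollary~\ref{cor:characterization-local-equivalences}. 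Your proposal would need to be rebuilt along these lines; the strict commutation of pullback with the bar construction that it rests on is not available.
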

The proof requires some preparation and will be given at the end of this section.

\subsection{Homotopy invariance properties} We now establish a series of lemmas needed for the homotopy invariance of $\gamma_R$, the proof of Proposition~\ref{prop:Thom-of-BH-BG}, and the next section. For this we work again only over $\cI$-spaces and (parametrized) symmetric spectra of simplicial sets.

The realization of simplicial objects in $\SpsymR$ can 
be defined by diagonalizing along the two simplicial directions and immediately lifts to a realization
functor $\mathrm{Fun}(\Delta^{op},\cC\SpsymR) \to  \cC\SpsymR$. 
\begin{lemma}\label{lem:realization}
Let $\varphi\colon (E,X)_{\bullet} \to (F,Y)_{\bullet}$ be a natural transformation between simplicial objects
in $\SpsymR$ with each $(E,X)_{q} \to (F,Y)_{q}$ a local equivalence. Then the realization of $\varphi$
is a local equivalence. 
\end{lemma}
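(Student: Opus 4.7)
The plan is to combine two ingredients: realization preserves level equivalences in $\SpsymR$, and a functorial Reedy cofibrant replacement strategy that lets us invoke the left Quillen property of realization for the relevant cofibrant objects.

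First I would observe that realization preserves level equivalences in $\SpsymR$. Since colimits in $\SpsymR$ are computed level-wise in the $\cI$-direction, the $\bld{m}$-level of $|(E,X)_\bullet|$ is the diagonal of the bi-simplicial retractive space $\{(E,X)_q(\bld{m})\}_q$, which splits into a bi-simplicial pair of simplicial sets on the base and total. The classical bi-simplicial realization lemma, that a level-wise weak equivalence of bi-simplicial sets induces a weak equivalence on diagonals, then implies that level equivalences in $\SpsymR$ realize to level equivalences.

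Next, to reduce to the case where $(E,X)_\bullet$ and $(F,Y)_\bullet$ are Reedy cofibrant, I would use the Reedy model structure on $\Fun(\Delta^{\op},\SpsymR)$ induced by the absolute level model structure on $\SpsymR$, whose weak equivalences are the level-wise level equivalences. Since cofibrations in the level and local model structures on $\SpsymR$ agree (the latter being a left Bousfield localization of the former), the Reedy cofibrations coincide with those of the analogous Reedy structure built from the local model structure. Functorial Reedy cofibrant replacement then yields $(E,X)_\bullet^c \to (E,X)_\bullet$ and $(F,Y)_\bullet^c \to (F,Y)_\bullet$ that are level-wise level equivalences with Reedy cofibrant sources, together with a map $\varphi^c$ that remains a level-wise local equivalence. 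By the first step, the realizations of the replacement maps are level equivalences, hence local equivalences, so two-out-of-three reduces the problem to showing that $|\varphi^c|$ is a local equivalence.

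For this final step I would invoke that the local model structure on $\SpsymR$ is simplicial; this extends Proposition~\ref{prop:SpsymX-simplicial} to $\SpsymR$ via the integral identification in Theorem~\ref{thm:local-Grothendieck-construction}, together with the pushout product axiom in Proposition~\ref{prop:local-pushout-product}. In any simplicial model category, realization is a left Quillen functor from the Reedy model structure on simplicial objects, so by Ken Brown's lemma it preserves Reedy weak equivalences between Reedy cofibrant objects. Applied to $\varphi^c$ this yields the desired local equivalence. The hard part will be coordinating the two Reedy structures on $\Fun(\Delta^{\op},\SpsymR)$: the replacement must be carried out in the Reedy-level structure (so that the realizations of the replacement maps are level equivalences and hence local equivalences, via the first step), while the final comparison uses the Reedy-local structure (where realization preserves level-wise local equivalences between Reedy cofibrant objects). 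The coincidence of cofibrations in the two model structures on $\SpsymR$ is what bridges the two.
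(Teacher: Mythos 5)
Your proposal is correct and follows essentially the same route as the paper: Reedy cofibrant replacement, the bisimplicial realization lemma to see that the replacement maps realize to level (hence local) equivalences, and the left Quillen property of realization on Reedy cofibrant objects for the remaining map. The bookkeeping with the two Reedy structures that you flag as the "hard part" is handled exactly as you suggest (cofibrations, and hence acyclic fibrations, of the level and local structures on $\SpsymR$ coincide), and the paper simply works throughout with the Reedy structure induced by the local model structure.
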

\begin{proof}
  We consider the Reedy model structure on  $\mathrm{Fun}(\Delta^{op},\SpsymR)$ induced by the absolute local
  model structure. The realization of a Reedy cofibrant replacement is a level equivalence by applying the
  realization lemma for simplicial sets. The claim follows because realization
  preserves weak equivalences between Reedy cofibrant objects.
\end{proof}

We say that an ordinary symmetric spectrum $E$ is \emph{flat} if it is cofibrant in the flat (or $S$-) model structure on symmetric spectra (see~\cite{Shipley_convenient} and~\cite{Schwede_SymSp}). This notion is useful because $E \sm -$ preserves stable equivalences if $E$ is flat and the underlying symmetric spectra of cofibrant objects in the positive stable model structure on $\cC\Spsym{}$ are flat. Analogously, there is the notion of a flat $\cI$-space such that $X \boxtimes -$ preserves $\cI$-equivalences if $X$ is flat and underlying $\cI$-spaces of cofibrant commutative $\cI$-space monoids are flat~\cite[\S 3.8]{Sagave-S_diagram}.

\begin{lemma}\label{lem:product-with-cofibrant-in-SpsymR} 
Let $(E,X)$ be cofibrant in $\SpsymR$, let $F$ be a flat symmetric spectrum,
and let $Z$ be a flat $\cI$-space. Then  $(E,X)\barsm - $, $F\barsm -$, and $\bS^{\cI}_t[Z]\barsm - $
preserve local equivalences as functors $\SpsymR \to \SpsymR $.
\end{lemma}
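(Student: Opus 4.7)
The plan is to reduce all three claims to a common principle: any functor $T\colon \SpsymR \to \SpsymR$ which is left Quillen for the absolute local model structure \emph{and} preserves all absolute level equivalences automatically preserves all local equivalences. To see this, factor a local equivalence $\phi$ as a local acyclic cofibration followed by a local acyclic fibration; the first factor is preserved because $T$ is locally left Quillen (Proposition~\ref{prop:local-pushout-product}), while the second factor is in fact a level acyclic fibration---cofibrations, and hence acyclic fibrations, are unchanged under left Bousfield localization---and so is preserved by the level-equivalence hypothesis. Both factors of $T(\phi)$ are thus local equivalences.

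To apply this to $T = (E,X) \barsm -$, I invoke Proposition~\ref{prop:local-pushout-product} for the local Quillen property and need only verify preservation of level equivalences. Inspecting the generating cofibrations in Proposition~\ref{prop:level-model-on-section}, every cofibrant $(E,X) \in \SpsymR$ has each level $(E_m, X(\bld{m}))$ cofibrant in $\cS_{\cR}$. Since the level-$p$ value of the $\barsm$-product is a colimit over $-\concat- \downarrow \bld{p}$ of iterated fiberwise smash products as in~\eqref{eq:fiberwise-smash-as-iterated-colimt}, the task reduces to the statement that for cofibrant $(U,K) \in \cS_{\cR}$ (equivalently, $K$ cofibrant and $K \hookrightarrow U$ a cofibration), the functor $(U,K) \barsm -\colon \cS_{\cR} \to \cS_{\cR}$ preserves all weak equivalences. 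This follows from the cofibrancy of $K$ and $U$ as spaces (Cartesian products with them preserve weak equivalences in $\cS$), the iterated-pushout formula~\eqref{eq:fiberwise-smash-as-iterated-colimt}, left properness of $\cS$, and---in the topological setting---the closed-inclusion analysis carried out in the proof of Lemma~\ref{lem:fiberwise-smash-pullback} to ensure the relevant pushouts are homotopy pushouts.

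The remaining two claims follow by the same reduction together with flatness. By definition, flatness of $F$ means that $F \sm -$ preserves stable equivalences of symmetric spectra, and flatness of $Z$ means that $Z \boxtimes -$ preserves $\cI$-equivalences of $\cI$-spaces (cited from \cite{Sagave-S_diagram}). Tracking these properties through the level-$p$ formulas for $F \barsm -$ (whose base $\cI$-space $\ucI$ is trivial, so the $\barsm$ splits into the spectrum direction and a Cartesian product of bases) and for $\bS^{\cI}_t[Z] \barsm -$ (where $Z$ enters the base direction via $\boxtimes$ and the total-space direction as a Cartesian factor, cf.\ \eqref{eq:bsIb-and-bSIt-explicit}) yields preservation of level equivalences in $\SpsymR$. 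Combined with the local Quillen property of Proposition~\ref{prop:local-pushout-product} and the general reduction above, this gives the claim.

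The main obstacle is the first reduction: proving that $(U, K) \barsm -$ preserves all weak equivalences in $\cS_{\cR}$ when $(U,K)$ is cofibrant. This is the ``flatness'' statement at the level of retractive spaces, and while immediate for simplicial sets, it requires careful use in the topological setting of the closed-inclusion bookkeeping developed in Section~\ref{retrsp}, together with a cube-lemma-style argument ensuring that the iterated pushout \eqref{eq:fiberwise-smash-as-iterated-colimt} is a homotopy pushout in each variable separately.
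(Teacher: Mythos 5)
Your global reduction (a functor that is left Quillen for the local model structure and preserves level equivalences preserves all local equivalences, via the factorization into a local acyclic cofibration followed by a level acyclic fibration) is sound. The gap is in the step you flag as the ``main obstacle'', but it is not where you locate it. The level-$p$ value of $(E,X)\barsm(F,Y)$ is a colimit over the comma category $-\concat-\downarrow\bld{p}$, and this colimit is not a homotopy colimit: you cannot deduce that it preserves weak equivalences by checking each term of the diagram separately. Consequently, knowing that each level $(E_m,X(\bld{m}))$ is cofibrant in $\cS_{\cR}$ (plus the fact that $(U,K)\barsm-$ preserves weak equivalences in $\cS_{\cR}$) does not imply that $(E,X)\barsm-$ preserves level equivalences. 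This is exactly the familiar phenomenon for ordinary symmetric spectra, where levelwise cofibrancy is strictly weaker than flatness and the smash product is not homotopy invariant without the stronger hypothesis; the free symmetric group actions encoded in an actual cell structure are essential. A repair along your lines would require a cell induction over the generating cofibrations $F_{\bld{m}}(i)$, using~\eqref{eq:barsm-of-free}, the pushout product axiom, and left properness --- a substantially different argument from the levelwise one you sketch. (Your worry about closed inclusions in $\tp$ is moot: the lemma is stated and used only over simplicial sets.)

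There is a second, independent problem with the cases $F\barsm-$ and $\bS^{\cI}_t[Z]\barsm-$: a flat symmetric spectrum $F$ and the object $\bS^{\cI}_t[Z]$ for flat $Z$ are in general \emph{not} cofibrant in $\SpsymR$, so Proposition~\ref{prop:local-pushout-product} does not make these functors left Quillen for the local model structure, and the first half of your general principle is unavailable for them. The paper's proof avoids both issues by citing Pavlov--Scholbach for the existence of a \emph{flat} absolute local model structure on $\SpsymR$ with more cofibrations and the same weak equivalences; the $\barsm$-product with a cofibrant object of that structure preserves local equivalences by their results, projectively cofibrant objects are flat, and one checks on generating cofibrations that the inclusion $\Spsym{}\to\SpsymR$ and the functor $\bS^{\cI}_t$ carry flat objects to flat objects. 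Any self-contained replacement would have to reconstruct that flatness theory.
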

\begin{proof}
  By~\cite[Propositions 2.3.10 and 3.3.6]{Pavlov-S_symmetric-operads}
  the category $\SpsymR$ also has a \emph{flat} absolute local model
  structure with more cofibrations and with weak equivalences the
  local equivalences. We call the cofibrant objects in this model
  structure \emph{flat} and notice that the $\barsm$-product with flat
  objects preserves local equivalences by the flatness statement
  subsumed in~\cite[Proposition
  3.4.2]{Pavlov-S_symmetric-operads}. Hence $(E,X) \barsm -$ preserves
  local equivalences. One can check on the generating cofibrations
  that both $\bS^{\cI}_t\colon \cS^{\cI} \to \SpsymR$ and the
  inclusion functor $\Spsym{} \to \SpsymR$ preserve the cofibrations
  of the flat model structures and thus flat objects.
\end{proof}
\begin{corollary}\label{cor:I-equivalence-between-flat-to-local-equiv}
  If $(E,X)$ is an object in $\SpsymR$ and $f\colon Y \to Y'$ is an
  $\cI$-equivalence between flat $\cI$-spaces, then
  $\bS^{\cI}_t[f]\barsm (E,X)$ is a local equivalence.
\end{corollary}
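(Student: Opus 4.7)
The plan is to reduce, via two applications of Lemma~\ref{lem:product-with-cofibrant-in-SpsymR}, to the claim that $\bS^{\cI}_t[f]$ itself is a local equivalence in $\SpsymR$.

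First, I would choose an absolute local cofibrant replacement $(E,X)^c \to (E,X)$ in $\SpsymR$ and form the commutative square
\[\xymatrix@-.5pc{
\bS^{\cI}_t[Y]\barsm (E,X)^c \ar[r]^{\bS^{\cI}_t[f]\barsm\id} \ar[d] & \bS^{\cI}_t[Y']\barsm (E,X)^c \ar[d]\\
\bS^{\cI}_t[Y]\barsm (E,X) \ar[r]^{\bS^{\cI}_t[f]\barsm\id} & \bS^{\cI}_t[Y']\barsm (E,X)
}\]
in which the verticals are local equivalences by Lemma~\ref{lem:product-with-cofibrant-in-SpsymR}, since $Y$ and $Y'$ are flat. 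Two-out-of-three reduces the problem to showing the top row is a local equivalence, and since $(E,X)^c$ is cofibrant, the functor $(E,X)^c\barsm-$ preserves local equivalences by the same lemma. Hence it suffices to prove that $\bS^{\cI}_t[f]$ is a local equivalence in $\SpsymR$.

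To establish this, I would descend to a setting where Ken Brown's lemma applies. Taking absolute level cofibrant replacements $g\colon Y^c \to Y$ and $g'\colon {Y'}^c \to Y'$ in $\cS^{\cI}$ gives level acyclic fibrations, and the cofibrancy of $Y^c$ allows one to lift $fg$ through $g'$ to a map $f^c\colon Y^c \to {Y'}^c$ making the obvious square commute; two-out-of-three then shows that $f^c$ is an $\cI$-equivalence between (absolute level, hence absolute $\cI$-) cofibrant $\cI$-spaces. Since $\bS^{\cI}_t$ is left Quillen for the absolute local model structures by Lemma~\ref{lem:Q-adjunctions-on-local-model-str}, Ken Brown's lemma gives that $\bS^{\cI}_t[f^c]$ is a local equivalence. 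On the other hand, the levelwise formula $\bS^{\cI}_t[X]_{\bld{m}} = (X(\bld{m})\times S^{\bld{m}}, X(\bld{m}))$ from~\eqref{eq:bsIb-and-bSIt-explicit} shows directly that $\bS^{\cI}_t$ carries level equivalences of $\cI$-spaces to level equivalences in $\SpsymR$, so $\bS^{\cI}_t[g]$ and $\bS^{\cI}_t[g']$ are local equivalences. A final two-out-of-three on the image of the lifting square then yields that $\bS^{\cI}_t[f]$ is a local equivalence, completing the argument.

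The main obstacle will be the gap between the flatness hypothesis on $Y$ and $Y'$ and the cofibrancy needed for Ken Brown; the flatness is used only in the very first step to handle $(E,X)$, and the cofibrancy issue is then isolated and bridged in the last step via the explicit levelwise description of $\bS^{\cI}_t$, without having to invoke any flat model structure on $\cS^{\cI}$ directly.
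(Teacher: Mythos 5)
Your proposal is correct and follows the paper's own (one-line) argument for this corollary: cofibrantly replace $(E,X)$, apply Lemma~\ref{lem:product-with-cofibrant-in-SpsymR} to the two vertical maps of your square using flatness of $Y$ and $Y'$, and conclude by two-out-of-three. Your second paragraph supplies a self-contained justification of the step the paper leaves implicit --- that $\bS^{\cI}_t[f]$ is itself a local equivalence --- via projective cofibrant replacements and Ken Brown's lemma rather than via the flat model structures alluded to in the proof of that lemma; this part of the argument is also sound.
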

\begin{proof}
Taking a cofibrant replacement of $(E,X)$, this follows from the previous lemma by two out of three for local equivalences.  
\end{proof}

Our next aim is to obtain homotopy invariance results for restriction functors beyond what can be deduced directly from Lemma~\ref{lem:f-upper-star-left-Quillen-over-constant}.
\begin{lemma}\label{lem:f-upper-star-preserves-we}
  If $f\colon Y \to X$ is a fibration between fibrant objects in the
  absolute $\cI$-model structure on $\cS^{\cI}$, then
  $f^*\colon \SpsymR/\bS^{\cI}_b[X] \to \SpsymR/\bS^{\cI}_b[Y]$ preserves local equivalences.
\end{lemma}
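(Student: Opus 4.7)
The plan is to reduce to the simplicial setting via Proposition~\ref{prop:simp-top-spsymR}, to verify that $f^*$ preserves level equivalences and sends locally fibrant objects to locally fibrant objects, and then to combine these via a fibrant-replacement argument.

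First I would note that, by the explicit description of $\cI$-fibrations in \cite[\S 3.1]{Sagave-S_diagram}, the map $f(\bld{m})\colon Y(\bld{m}) \to X(\bld{m})$ is a Kan fibration between Kan complexes for each $\bld{m}$. By right properness of $\sset$ (compare Lemma~\ref{lem:f-upper-star-left-Quillen-SR}), pullback in $\sset_{\cR}/\iota_b(X(\bld{m}))$ along $f(\bld{m})$ preserves weak equivalences. Since level weak equivalences in $\SpsymR$ are detected levelwise and the pullback in $\SpsymR$ is performed levelwise, $f^*$ preserves level weak equivalences on the slice.

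Next, I would verify that $f^*$ sends locally fibrant objects to locally fibrant objects. Level fibrancy is preserved because the pullback of a level fibration is again a level fibration. For the homotopy-cartesian squares~\eqref{eq:fibrant-in-SpsymR-testsquare} characterizing local fibrancy, I would argue that $f^*$ commutes with the limits defining the retractive cotensor~\eqref{eq:cotensor} and preserves level weak equivalences (by the previous step), so these squares remain homotopy-cartesian after pullback.

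Finally, to conclude: given a local equivalence $\varphi\colon (E,X_1) \to (E',X_2)$ in the slice, functorial local fibrant replacement in $\SpsymR$ produces a commutative square whose bottom arrow $\varphi^{\mathrm{fib}}$ is a level equivalence between locally fibrant objects, so that $f^*\varphi^{\mathrm{fib}}$ is again a level equivalence between locally fibrant objects by the first two steps. The main obstacle is to show that the vertical fibrant-replacement arrows are preserved as local equivalences by $f^*$; my strategy is to exploit that in the simplicial setting $f^*$ admits not only a left adjoint $f_!$ but also a right adjoint given by the dependent product, and therefore commutes with the pushouts and transfinite colimits entering the small-object-argument construction of the fibrant replacement. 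Combined with an adaptation of the computation~\eqref{eq:base-change-of-free} to verify preservation of a suitable set of generating acyclic cofibrations for the absolute local model structure on the slice, this forces $f^*$ of the fibrant-replacement maps to be local equivalences, and the result follows by two-out-of-three.
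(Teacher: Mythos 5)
There is a genuine gap at the step you yourself flag as the main obstacle. Your plan is to show that $f^*$ sends the generating acyclic cofibrations of the local model structure on the slice to local equivalences by ``an adaptation of the computation~\eqref{eq:base-change-of-free}''. But that computation is specific to base change along $\const_{\cI}$ of a map of spaces: the isomorphism $(\const_{\cI}f)^*F^{\SpsymR}_{\bld{m}}(W,P) \iso F^{\SpsymR}_{\bld{m}}(f^*(W,P))$ uses that the same pullback functor is applied in every level $\bld{n}$. For a general map of $\cI$-spaces $g\colon Y \to X$, the level $\bld{n}$ of $\bS^{\cI}_b[g]^*F^{\SpsymR}_{\bld{m}}(W,P)$ is a coproduct of objects $g(\bld{n})^*\bigl((W,P)\barsm S^{\bld{n}-\alpha}\bigr)$ whose base change varies with $\bld{n}$, and the result is not a free object; the paper explicitly warns about this in the remark following Lemma~\ref{lem:f-upper-star-left-Quillen-over-constant}. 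So the identification of $f^*$ applied to the localizing maps (or to relative $J$-cell complexes) breaks down precisely in the generality in which you need it, and without it your two-out-of-three argument on the fibrant replacements does not close. (Trying to verify instead directly that these pullbacks are local equivalences would amount to a weak right-properness statement, namely Proposition~\ref{prop:weak-cogluing}, whose proof in the paper relies on the present lemma, so you would risk circularity.)

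The missing idea is a preliminary reduction: since $X$ and $Y$ are assumed absolute $\cI$-fibrant, they are naturally level equivalent to constant $\cI$-spaces, so one may assume $f = \const_{\cI}\tilde f$ for a Kan fibration $\tilde f$ of Kan complexes. After this reduction the whole argument collapses: $f^*$ is left Quillen for the local model structures by Lemma~\ref{lem:f-upper-star-left-Quillen-over-constant} (this is where \eqref{eq:base-change-of-free} legitimately applies) and right Quillen by general slice-category nonsense, and a functor that is simultaneously left and right Quillen preserves all weak equivalences. Your steps 1 and 2 are correct but are subsumed by the right-Quillen observation, and your opening reduction to simplicial sets is unnecessary, as the lemma lives in a section where $\cS=\sset$ throughout.
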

\begin{proof}
Since absolute $\cI$-fibrant $\cI$-spaces are naturally level equivalent to constant $\cI$-spaces, 
we may assume that $f$ is a fibration of fibrant and constant $\cI$-spaces. Then $f^*$ is left Quillen
by Lemma~\ref{lem:f-upper-star-left-Quillen-over-constant} and right Quillen by general model category theory. Hence $f^*$ preserves weak equivalences.
\end{proof}

We now consider the following commutative diagram in $\SpsymR$ where the right hand
horizontal maps are the identity on the base: 
\[\xymatrix@-1pc{ \bS^{\cI}_b[Y] \ar[rr]^{\bS^{\cI}_b[f]} \ar[d]_{\bS^{\cI}_{b}[p]}&& \bS^{\cI}_b[X]  \ar[d]_{\bS^{\cI}_{b}[q]} && (E,X)\ar[ll] \ar[d]\\
 \bS^{\cI}_b[Y']  \ar[rr]^{\bS^{\cI}_b[g]} && \bS^{\cI}_b[X'] && (E',X')\ar[ll]
}\]

The next proposition uses the description of $f^*$ from~\eqref{eq:base-change-cobase-change} and essentially states that the local model structure satisfies a weak form of right properness where the fibrations are only allowed to be in the image of $\bS^{\cI}_b$.  Its proof is based on Bousfield's observation that it is sufficient to check right properness of model categories on fibrations between fibrant objects~\cite[Lemma~9.4]{Bousfield_telescopic}. 
\begin{proposition}\label{prop:weak-cogluing}
If $f$ and $g$ are absolute $\cI$-fibrations, $p$ and $q$ are $\cI$-equivalences, and $(E,X) \to (E',X')$ is a local equivalence, then the induced map of pullbacks  $f^*(E,X) \to g^*(E',X')$ is a local equivalence. The same statement holds when working over topological spaces. 
\end{proposition}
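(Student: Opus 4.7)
The plan is to reduce this cogluing-type statement to Lemma~\ref{lem:f-upper-star-preserves-we}, which handles pullbacks along $\cI$-fibrations between $\cI$-fibrant objects. First, by Corollary~\ref{cor:S-I-b-left-right-Quillen-locally}, the functor $\bS^{\cI}_b$ is simultaneously left and right Quillen for the absolute local model structures, and thus preserves fibrations and all weak equivalences (the latter by factoring any weak equivalence into an acyclic cofibration followed by an acyclic fibration). Consequently $\bS^{\cI}_b[f]$ and $\bS^{\cI}_b[g]$ are local fibrations in $\SpsymR$, while $\bS^{\cI}_b[p]$ and $\bS^{\cI}_b[q]$ are local equivalences. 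The proposition thus asserts a restricted form of right properness for the local model structure on $\SpsymR$---namely for pullbacks along maps of the form $\bS^{\cI}_b[f]$ with $f$ an $\cI$-fibration---and Bousfield's observation \cite[Lemma~9.4]{Bousfield_telescopic} is precisely that such a restricted right properness need only be verified in the case where the base $\cI$-spaces are $\cI$-fibrant.

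To carry out this reduction, I would invoke the right properness of the $\cI$-model structure on $\cS^{\cI}$ established in \cite[Proposition~3.2]{Sagave-S_diagram}. Choose $\cI$-fibrant replacements $X \to \tilde X$ and $X' \to \tilde X'$ as acyclic cofibrations, together with an extension $\tilde q\colon \tilde X \to \tilde X'$ of $q$ obtained by lifting. Factoring the compositions $Y \to X \to \tilde X$ and $Y' \to X' \to \tilde X'$ as acyclic cofibrations $Y \to \tilde Y$, $Y' \to \tilde Y'$ followed by fibrations $\tilde f\colon \tilde Y \to \tilde X$, $\tilde g\colon \tilde Y' \to \tilde X'$ yields the desired fibrant cube, with $\tilde p\colon \tilde Y \to \tilde Y'$ constructed via a further lift. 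On the parametrized side, I would push $(E,X)$ and $(E',X')$ forward along $\bS^{\cI}_b[X\to\tilde X]$ and $\bS^{\cI}_b[X'\to\tilde X']$. Since these are acyclic cofibrations in $\SpsymR$ by Corollary~\ref{cor:S-I-b-left-right-Quillen-locally} and the local model structure on $\SpsymR$ is left proper (Proposition~\ref{prop:existence-local-model-structure}), the maps $(E,X) \to (\tilde E,\tilde X)$ and $(E',X') \to (\tilde E',\tilde X')$ are local equivalences; by two-out-of-three, so is $(\tilde E,\tilde X) \to (\tilde E',\tilde X')$.

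In the reduced $\cI$-fibrant case, Lemma~\ref{lem:f-upper-star-preserves-we} applies directly to both $\tilde f$ and $\tilde g$, so the pullback functors along $\bS^{\cI}_b[\tilde f]$ and $\bS^{\cI}_b[\tilde g]$ preserve local equivalences. Together with a standard cogluing argument using the pasting law for pullbacks and the fact that $\bS^{\cI}_b$ commutes with pullbacks (being a right adjoint), this yields that $\tilde f^*(\tilde E,\tilde X) \to \tilde g^*(\tilde E',\tilde X')$ is a local equivalence. The main technical obstacle will be the comparison between the original and the replaced pullbacks: one needs to show $f^*(E,X) \to \tilde f^*(\tilde E, \tilde X)$ is a local equivalence (and analogously on the primed side), which uses right properness of $\cS^{\cI}$ to relate $Y$ to the pullback $X \times_{\tilde X} \tilde Y$, once more invoking Lemma~\ref{lem:f-upper-star-preserves-we} in the reduced case. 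The topological version follows by applying $\Sing$ to transfer the setup to simplicial sets through Proposition~\ref{prop:simp-top-spsymR} and passing back via $|{-}|$.
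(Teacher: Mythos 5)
Your overall strategy matches the paper's: reduce the topological case via $\Sing$, then use Bousfield's observation to replace everything by $\cI$-fibrations between $\cI$-fibrant base $\cI$-spaces, where Lemma~\ref{lem:f-upper-star-preserves-we} and the right properness of the $\cI$-model structure on $\cS^{\cI}$ carry the argument. However, there is a genuine gap in your final assembly. You set $(\tilde E,\tilde X)=(X\to\tilde X)_!(E,X)$ and $(\tilde E',\tilde X')=(X'\to\tilde X')_!(E',X')$ and conclude that $\tilde f^*(\tilde E,\tilde X)\to\tilde g^*(\tilde E',\tilde X')$ is a local equivalence because $\tilde f^*$ and $\tilde g^*$ preserve local equivalences. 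But these two pullbacks live over different bases, and comparing them necessarily factors through the base change of $(\tilde E',\tilde X')$ along $\tilde q\colon\tilde X\to\tilde X'$: the map in question is $\tilde f^*$ applied to $(\tilde E,\tilde X)\to\tilde q^*(\tilde E',\tilde X')$. Since $\tilde q$ is only an $\cI$-equivalence and not a fibration, Lemma~\ref{lem:f-upper-star-preserves-we} says nothing about $\tilde q^*$; by Corollary~\ref{cor:characterization-local-equivalences} the map $(\tilde E,\tilde X)\to\tilde q^*(\tilde E',\tilde X')$ is a local equivalence only after a fibrant replacement of the target in $\Spsym{\tilde X'}$, and your pushforwards $(X'\to\tilde X')_!(E',X')$ are in general not locally fibrant.

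The missing ingredient is a compatible fibrant replacement of the parametrized spectra over the new bases, which is precisely what the paper's right-hand square supplies: it factors $(E,X)\to\bS^{\cI}_b[\tilde X]$ as a local equivalence followed by a fibration $(\hat E,\hat X)\twoheadrightarrow\bS^{\cI}_b[\tilde X]$, so that in the reduced diagram every map being pulled back along is a fibration with fibrant codomain and the cogluing goes through. Your intermediate comparison $f^*(E,X)\to\tilde f^*(\tilde E,\tilde X)$, which you rightly single out as the delicate point, can be completed with exactly the tools you name: it is the composite of $u^*$ applied to the weak equivalence $\bS^{\cI}_b[Y]\to\bS^{\cI}_b[X\to\tilde X]^*\bS^{\cI}_b[\tilde Y]$ between fibrant objects of $\SpsymR/\bS^{\cI}_b[X]$ (where $u\colon(E,X)\to\bS^{\cI}_b[X]$ is the retraction and $u^*$ is right Quillen), followed by $\tilde f^*$ applied to the acyclic cofibration $(E,X)\to(X\to\tilde X)_!(E,X)$; this is the paper's left-hand column. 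So the repair is local: insert a fibrant replacement over $\bS^{\cI}_b[\tilde X]$ (resp.\ $\bS^{\cI}_b[\tilde X']$) before invoking homotopy invariance of the reduced pullback.
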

Setting $p=\id$ and $q=\id$ in the proposition implies that the statements of Lemmas~\ref{lem:f-upper-star-left-Quillen-over-constant} and~\ref{lem:f-upper-star-preserves-we} hold without the fibrancy conditions on the objects. 
\begin{proof}
Since the topological statement follows from the simplicial one by applying the singular complex, it suffices to verify the latter. By choosing a replacement of $f$ by an $\cI$-fibration between $\cI$-fibrant objects $\tilde f \colon \tilde Y \to \tilde X$, Lemma~\ref{lem:iota-b-left-right-adjoint-I} provides  the left hand square in the following commutative diagram: 

\[\xymatrix@-1pc{
\bS^{\cI}_b[Y] \ar@{->>}[rrr]^{\bS^{\cI}_b[f]}  \ar[d]_{\bS^{\cI}_b[j]}^{\sim}&&& \bS^{\cI}_b[X]  \ar[d]_{\bS^{\cI}_b[i]}^{\sim}&&& (E,X) \ar[lll]_{u} \ar[d]_{k}^{\sim}\\
\bS^{\cI}_b[\tilde Y] \ar@{->>}[rrr]^{\bS^{\cI}_b[\tilde f]} &&& \bS^{\cI}_b[\tilde X]  &&& (\hat E,\hat X) \ar@{->>}[lll]_{v}
}\]
The right hand square is obtained by factoring $(E,X) \to  \bS^{\cI}_b[\tilde X]$ as a local equivalence followed by a fibration. We get the following sequence of maps where $(-)^*$ denotes the base change along the respective map in $\SpsymR$: 
\[ \bS^{\cI}_b[f]^*(E,X) \iso u^*(\bS^{\cI}_b[Y]) \xrightarrow{\sim} u^*(\bS^{\cI}_b[i]^*(\bS^{\cI}_b[\tilde Y])) \xrightarrow{\iso} \bS^{\cI}_b[\tilde f]^*(E,X) \xrightarrow{\sim} (\hat E,\hat X) \]
Here the first map is a local equivalence since $u^*$ is right Quillen when viewed as a  functor $\Spsym{R}/\bS^{\cI}_b[X] \to \Spsym{R}/(E,X)$ and $\bS^{\cI}_b[Y] \to \bS^{\cI}_b[i]^*(\bS^{\cI}_b[\tilde Y])$ is a weak equivalence between fibrant objects in $\Spsym{R}/\bS^{\cI}_b[X]$ because the $\cI$-model structure on $\cS^{\cI}$ is right proper. The last map is a local equivalence by Lemma~\ref{lem:f-upper-star-preserves-we}. 
Hence we have shown that the pullback of the top row is locally equivalent to the pullback of the bottom row, 
and the latter is homotopy invariant since both maps are fibrations with fibrant codomain. Since this construction can be arranged to be natural with respect to $(E,X) \to (E',X')$ and $Y \to Y'$, the claim follows. 
\end{proof}

\begin{lemma}\label{lem:bSIR-full-hty-inv}
Let $f \colon (Z,Y) \to (Z',Y')$ be a map in $\cS^{\cI}_{\cR}$ such that both
$Z\to Z'$ and $Y \to Y'$ are absolute level (resp.\ $\cI$-) equivalences in $\cS^{\cI}$. Then
$\bS^{\cI}_{\cR}[f]$ is an  absolute level (resp.\ local) equivalence in $\SpsymR$. 
\end{lemma}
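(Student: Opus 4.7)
The hypotheses on $f$ are precisely that $f$ is a level (resp.\ local) equivalence in the absolute level (resp.\ local) model structure on $\cS^{\cI}_{\cR}$ as described in Example~\ref{ex:pseudo-local}(iii), since weak equivalences in $\cS_{\cR}$ are defined componentwise. I will treat the two cases in turn.

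For the level case, I would observe that a level equivalence in $\SpsymR$ is detected in each level $\bld m$, and that $\bS^{\cI}_{\cR}[f]_{\bld m}$ is obtained by applying $-\barsm (S^{\bld m},*) \colon \cS_{\cR} \to \cS_{\cR}$ to $f(\bld m)$, which is a weak equivalence in $\cS_{\cR}$. It therefore suffices to show that $-\barsm (S^{\bld m},*)$ preserves weak equivalences. This functor is left Quillen by Proposition~\ref{prop:pushout-product-for-cScR}. When $\cS = \sset$, every section $Y \to Z$ of a retractive space is a split monomorphism (split by its retraction), hence a level-wise injection, hence a cofibration in $\sset$; thus every object of $\sset_{\cR}$ is cofibrant, and Ken Brown's lemma implies that $-\barsm(S^{\bld m},*)$ preserves all weak equivalences. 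When $\cS = \tp$, I would either deduce the topological statement from the simplicial one using the Quillen equivalence of Proposition~\ref{prop:simp-top-spsymR} together with the fact that $|-|$ commutes with $\barsm$ (since it preserves products and pushouts), or argue directly using that a section of a retractive cgwh-space is a closed inclusion with retraction, making the pushout description of Definition~\ref{def:fiberwise-smash} sufficiently homotopy-invariant via an h-cofibration analysis to apply the gluing lemma.

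For the local case, I would take cofibrant replacements $q\colon (Z^c,Y^c)\to (Z,Y)$ and $q'\colon (Z'^c,Y'^c)\to (Z',Y')$ in $\cS^{\cI}_{\cR}$; the cofibrations in the level and local model structures coincide, so a single replacement serves both. As acyclic fibrations in the level model structure, $q$ and $q'$ are in particular level equivalences, so the level case gives that $\bS^{\cI}_{\cR}[q]$ and $\bS^{\cI}_{\cR}[q']$ are level equivalences, hence local equivalences. A lift $f^c \colon (Z^c,Y^c) \to (Z'^c,Y'^c)$ of $f$ is a local equivalence between cofibrant objects, and Ken Brown's lemma applied to the left Quillen functor $\bS^{\cI}_{\cR}$ for the local model structures (Lemma~\ref{lem:Q-adjunctions-on-local-model-str}) gives that $\bS^{\cI}_{\cR}[f^c]$ is a local equivalence. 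Two-out-of-three applied to the commutative square
\[
\xymatrix@-1pc{
\bS^{\cI}_{\cR}[(Z^c,Y^c)] \ar[r]^{\bS^{\cI}_{\cR}[f^c]} \ar[d]_{\bS^{\cI}_{\cR}[q]} & \bS^{\cI}_{\cR}[(Z'^c,Y'^c)] \ar[d]^{\bS^{\cI}_{\cR}[q']} \\
\bS^{\cI}_{\cR}[(Z,Y)] \ar[r]^{\bS^{\cI}_{\cR}[f]} & \bS^{\cI}_{\cR}[(Z',Y')]
}
\]
in $\SpsymR$ then concludes.

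The main obstacle is the topological level case: the clean ``all objects cofibrant'' argument specific to simplicial sets does not apply in $\tp$, and one must either verify the compatibility of $\bS^{\cI}_{\cR}$ with the simplicial-topological Quillen equivalence carefully enough to reduce the question, or invest in a direct cgwh-argument showing that the relevant pushouts are sufficiently well-behaved because sections of retractive cgwh-spaces are closed inclusions with retractions.
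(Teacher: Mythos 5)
Your proof is correct and follows the paper's argument exactly: the level case via $-\barsm (S^{\bld m},*)$ preserving weak equivalences over $\sset$ (every object of $\sset_{\cR}$ being cofibrant, since sections are split monomorphisms), and the local case via cofibrant replacement together with the fact that $\bS^{\cI}_{\cR}$ is left Quillen for the local model structures. Your ``main obstacle'' concerning $\tp$ is moot, since the lemma sits in the subsection on homotopy invariance properties, which explicitly restricts to simplicial sets.
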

\begin{proof}
  Since $-\barsm S^m \colon \cS_{\cR} \to \cS_{\cR}$ preserves weak
  equivalences as we work over $\sset$, $\bS^{\cI}_{\cR}$ preserves level equivalences. Since
  $\bS^{\cI}_{\cR}$ is left Quillen with respect to the local model
  structures, arguing with a cofibrant replacement shows the second
  claim.s
\end{proof}

\begin{lemma}\label{lem:commuting-f-upper-star-barsm}
Let $f\colon Y \to X$ be an absolute $\cI$-fibration in $\cS^{\cI}$, let $Z \to X$ be a map of $\cI$-spaces, and let $E$ be a flat symmetric spectrum. Then the canonical map 
\[ \bS^{\cI}_{\cR}[Y \amalg f^*(Z),Y] \barsm E \to f^*(\bS^{\cI}_{\cR}[X \amalg Z,X] \barsm E) \]
is a local equivalence. 
\end{lemma}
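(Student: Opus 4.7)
Plan: The plan is to verify that the canonical comparison map is in fact a levelwise isomorphism in $\SpsymR$, from which the local equivalence claim follows immediately.

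At level $\bld{p}$, both sides arise as colimits in $\cS_{\cR}$ over the comma category $-\concat - \downarrow \bld{p}$ with typical terms of the form
\[ (W(\bld{m}) \amalg A(\bld{m}), W(\bld{m})) \barsm S^{\bld{m}} \barsm E_n \barsm S^{\bld{p}-\alpha}, \]
where $(W, A) = (X, Z)$ on the right hand side (before applying $f^*$) and $(W, A) = (Y, f^*Z)$ on the left hand side. Since $f(\bld{m})^*$ preserves coproducts of spaces, one has a natural isomorphism $f(\bld{m})^*(X(\bld{m}) \amalg Z(\bld{m})) \iso Y(\bld{m}) \amalg f(\bld{m})^*Z(\bld{m})$; combined with Lemma~\ref{lem:fiberwise-smash-pullback}, which asserts that base change commutes with the fiberwise smash product, this gives a natural isomorphism between each term on the left hand side and $f(\bld{p})^*$ applied to the corresponding term on the right hand side.

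It then remains to commute $f(\bld{p})^*$ through the colimit over $-\concat-\downarrow\bld{p}$ that defines the $\barsm$-product. In the simplicial case, base change along a map of simplicial sets is both a left and a right adjoint, so it preserves all colimits and the comparison map is already an isomorphism. In the topological case, we follow the template of the proof of Lemma~\ref{lem:fiberwise-smash-pullback}: by \cite[Proposition~1.3]{Lewis_fibre-spaces}, base change along a map of compactly generated weak Hausdorff spaces preserves colimits in compactly generated spaces. The subtlety that CG and CGWH colimits may differ is handled by observing that all the maps in the colimit diagram are closed inclusions -- structure maps of retractive spaces are closed inclusions by \cite[Lemma~1.6.2]{May-S_parametrized}, and the flatness hypothesis on $E$ ensures that its structure maps $E_n \sm S^1 \to E_{n+1}$ are closed inclusions as well -- so the two notions of colimit agree and base change commutes through them. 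In particular, the fibration assumption on $f$ guarantees that pullback along $f(\bld{p})$ is compatible with the homotopical structure in the sense needed to apply this package uniformly.

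The main obstacle is the topological bookkeeping: one must carefully check that every map appearing in the colimit diagrams computing $\barsm$ is a closed inclusion so that Lewis's result applies to each pushout and transfinite composition individually. Once this is verified, the comparison map is a levelwise isomorphism of retractive spaces over $Y(\bld{p})$, hence an isomorphism in $\SpsymR$, and in particular a local equivalence.
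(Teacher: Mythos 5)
There is a genuine gap: the comparison map is \emph{not} a levelwise isomorphism in general, and the step where you identify "each term on the left hand side" with "$f(\bld{p})^*$ applied to the corresponding term on the right hand side" is where the argument breaks. At level $\bld{p}$, the term of the colimit indexed by $\alpha\colon \bld{m}\concat\bld{n}\to\bld{p}$ sits over the base $X(\bld{m})$ and maps into the colimit via the structure map $X(\alpha|_{\bld{m}})\colon X(\bld{m})\to X(\bld{p})$. The left-hand side pulls this term back along $f(\bld{m})\colon Y(\bld{m})\to X(\bld{m})$, whereas commuting $f(\bld{p})^*$ past the colimit would produce the pullback along $f(\bld{p})\colon Y(\bld{p})\to X(\bld{p})$. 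These agree only when the naturality squares of $f$ over the morphisms of $\cI$ are cartesian on the nose, which fails for a general $\cI$-fibration (take $X=\ucI$ and $Y$ a fibrant but non-constant $\cI$-space: the left side glues the $Y(\bld{m})$'s, the right side is a product with $Y(\bld{p})$). This is exactly the phenomenon flagged in the remark following Lemma~\ref{lem:f-upper-star-left-Quillen-over-constant}. A tell-tale sign is that your argument never genuinely uses the hypotheses that $f$ is an absolute $\cI$-fibration and that $E$ is flat; if the map were an isomorphism these would be superfluous and the conclusion would be far stronger than a local equivalence. (Your appeal to flatness to get closed inclusions is also off: flatness of $E$ is a cofibrancy condition in the flat model structure, used so that $-\barsm E$ preserves local equivalences, not a point-set condition on structure maps.)

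The paper's proof instead first \emph{reduces to the case of constant $\cI$-spaces}, where the level-mixing problem disappears and the map really is an isomorphism by Lemma~\ref{lem:fiberwise-smash-pullback} and the explicit description of $\bS^{\cI}_{\cR}$. The reduction replaces $Y\to X \leftarrow Z$ by a levelwise-equivalent diagram of constant $\cI$-spaces with the fibration condition preserved, and then uses the homotopy invariance of $\bS^{\cI}_{\cR}$ (Lemma~\ref{lem:bSIR-full-hty-inv}), of $-\barsm E$ for flat $E$ (Lemma~\ref{lem:product-with-cofibrant-in-SpsymR}), and the weak right-properness statement for pullbacks along $\cI$-fibrations (Proposition~\ref{prop:weak-cogluing}) to transport the local equivalence back. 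This is where the two hypotheses of the lemma are actually consumed. To repair your proof you would need to carry out this reduction (or otherwise supply a homotopical argument comparing the $f(\bld{m})$-pullbacks with the $f(\bld{p})$-pullback); the point-set analysis of colimits in compactly generated spaces is not the obstacle.
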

\begin{proof}
Arguing with the absolute $\cI$-model structure on $\cS^{\cI}$ and the Quillen equivalence $\colim_{\cI}\colon \cS^{\cI}\rightleftarrows \cS \colon \const_{\cI}$, we can construct a commutative diagram 
\[\xymatrix@-1pc{
Y \ar@{->>}[d] && Y^{c} \ar@{->>}[d] \ar[ll]_-{\sim} \ar[rr]^-{\sim} && \const_{\cI}(L) \ar@{->>}[d] \\
X  && X^{c}  \ar[ll]_-{\sim} \ar[rr]^-{\sim} && \const_{\cI}(K)  \\
Z \ar[u] && Z^{c} \ar[u] \ar[ll]_-{\sim} \ar[rr]^-{\sim} && \const_{\cI}(P) \ar[u] 
}\]
with the $\cI$-fibrations and $\cI$-equivalences as indicated. Arguing with this diagram, Lemma~\ref{lem:product-with-cofibrant-in-SpsymR}, Lemma~\ref{lem:bSIR-full-hty-inv}, and Proposition~\ref{prop:weak-cogluing} reduce the claim to the case where all $\cI$-spaces are constant. In this situation, the map in question is an isomorphism by Lemma~\ref{lem:fiberwise-smash-pullback} and the explicit description of $\bS^{\cI}_{\cR}$ in Construction~\ref{const:various-adjunctions}.
\end{proof}

\begin{proof}[Proof of Proposition~\ref{prop:Thom-of-BH-BG}]
  By Lemmas~\ref{lem:realization}
  and~\ref{lem:product-with-cofibrant-in-SpsymR}, both sides send
  cofibrant replacements of $R$ and $E$ to local equivalences. Thus we
  may assume $E$ to be a cofibrant module over a cofibrant commutative
  ring spectrum and therefore to be flat as a symmetric spectrum. Next
  we choose a factorization of $f\colon BH \to BG$ into an acyclic
  cofibration $g\colon BH \to Y$ followed by a fibration
  $h \colon Y \to BG$ in the absolute $\cI$-model structure and
  consider the following diagram explained below:
\[\xymatrix@-1pc{
    B^{\barsm}(\bS^{\cI}_{\cR}[BH \amalg B(\ucI,H,G), BH], \bS^{\cI}[G], E) \ar[d]_-{\sim} \ar[rr]^-{\sim} && B^{\barsm}(\bS, \bS^{\cI}_t[H],E) \ar[dd]\\
    g^*((B^{\barsm}(\bS^{\cI}_{\cR}[Y \amalg h^*B(\ucI,G,G), Y], \bS^{\cI}[G], E))^{\mathrm{fib}}) \ar[d]_-{\sim}&& \\
    f^* ((B^{\barsm}(\bS^{\cI}_{\cR}[BG \amalg B(\ucI,G,G),BG],
    \bS^{\cI}[G], E))^{\mathrm{fib}}) \ar[rr]^-{\sim} && f^*(\gamma_E)
  }\] The top horizontal map arises by identifying
$\bS^{\cI}_{\cR}[BH \amalg B(\ucI,H,G)]$ with the bar construction
$B^{\barsm}(\bS, \bS^{\cI}_t[H], \bS^{\cI}[G])$, commuting bar
constructions, and using the map induced by the canonical stable
equivalence $B(\bS^{\cI}[G],\bS^{\cI}[G],E) \to E$.  The resulting map
is a local equivalence since $B^{\barsm}(\bS,\bS^{\cI}_t[H],-)$
preserves local equivalences by Lemmas~\ref{lem:realization}
and~\ref{lem:product-with-cofibrant-in-SpsymR}. The lower horizontal
map arises in the same way by setting $H = G$ and taking fibrant
replacements and base change along the right Quillen functor $f^*$ in
addition. The map
\[ B^{\barsm}(\bS^{\cI}_{\cR}[Y \amalg h^*B(\ucI,G,G), Y], \bS^{\cI}[G], E) \to B^{\barsm}(\bS^{\cI}_{\cR}[BG \amalg B(\ucI,G,G), BG], \bS^{\cI}[G], E)\]
is a local equivalence by Lemmas~\ref{lem:realization} and~\ref{lem:commuting-f-upper-star-barsm}. Since
$h^*(\gamma_E)$ is fibrant, we can extend the resulting local equivalence to $h^*(\gamma_E)$ over a fibrant replacement of the domain and apply $g^*$ to get the lower left hand vertical local equivalence. The upper left hand vertical equivalence arises from the fact that $  B(\ucI,H,G) \to h^*B(\ucI,G,G)$ is an $\cI$-equivalence since $G$ and $H$ are grouplike~\cite[Proof of Proposition 3.15]{Basu_SS_Thom}, the homotopy invariance of $\bS^{\cI}_{\cR}$ established in Lemma~\ref{lem:bSIR-full-hty-inv} and that of $B(-, \bS^{\cI}[G], E)$ resulting from Lemmas~\ref{lem:realization} and~\ref{lem:product-with-cofibrant-in-SpsymR}, and from Corollary~\ref{cor:characterization-local-equivalences}. It follows that the right hand vertical map is a local equivalence. 
\end{proof}

\section{Point-set level Thom spectrum functors}\label{sec:thom-comparison}
We now explain how our approach to parametrized spectra gives rise to a multiplicative $R$-module Thom spectrum functor. As an application, we compare it to various other  approaches to generalized Thom spectra. 
\subsection{Generalized Thom spectra via universal bundles}\label{subsec:T_R} Let $R$ be a commutative ring spectrum in simplicial sets that is positive fibrant or, more generally, level fibrant and semistable (cf. \cite[Remark 2.6]{Basu_SS_Thom}). We now write $(\Spsym{})_R$ for the category of (right) $R$-modules in $\Spsym{}$ and $(\SpsymR)_R$ for the category of (right) $R$-modules in $\SpsymR$. Via the composite $R \to B^{\barsm}(\bS, \bS^{\cI}_t[G],R) \to \gamma_R$, we can view the universal line bundle $\gamma_R$ as a commutative $R$-algebra, i.e., a commutative monoid with respect to the resulting product $\barsm_R$ in $(\SpsymR)_R$. We obtain a Thom spectrum functor 
\[ T_{\cR}^{\cI} \colon \cS^{\cI}/BG \to (\SpsymR)_R \to \Mod_R,\quad (f\colon X \to BG)
  \mapsto \Theta(f^*\gamma_R) = (X \to \ucI)_! (f^*\gamma_R).\]
This functor takes values in right $R$-modules since $\gamma_R$ is a right
$R$-module and both base change and the collapse of base space functor
$\Theta$ preserve right $R$-module structures as follows from the monoidality in Lemma~\ref{lem:cobase-change-spym-lax-monoidal} and \eqref{eq:upper-star-lax-monoidal}.
 Precomposing $T_{\cR}^{\cI}$ with the $\cI$-spacification $P_{BG}$ provides a space level Thom spectrum functor
\begin{equation}\label{eq:TIR} T_{\cR} \colon \cS/(BG)_{h\cI} \to \Mod_R, \quad(\tau \colon K \to (BG)_{h\cI}) \mapsto T^{\cI}_{\cR}(\tau_{\cI}) \end{equation}
that sends weak equivalences to stable equivalences and preserves actions of operads augmented over the Barratt--Eccles operad. Since $\gamma_R$ is fibrant and $\Theta$ coincides with its left derived functors, the homotopy groups $\pi_n(T_{\cR}(\tau))$ are just the parametrized homology groups associated with the universal line bundle $\gamma_R$ and the map $\tau$. 

\begin{proposition}
  The functor $T_{\cR}^{\cI}$ is lax symmetric monoidal and sends
  $\cI$-equiv\-alences over $BG$ to stable equivalences of
  $R$-modules. It preserves colimits, tensors with simplicial sets,
  and actions of operads in simplicial sets.
\end{proposition}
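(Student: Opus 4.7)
The plan is to decompose $T_{\cR}^{\cI}$ as the pullback $(f\colon X \to BG) \mapsto f^*\gamma_R$ landing in $\SpsymR$ via the Grothendieck description of Lemma~\ref{lem:SpsymR-Grothendieck-construction}, followed by the collapse-of-base functor $\Theta$, and then to read off each of the four claimed properties step-by-step. The target will be promoted to $\Mod_R$ using that $\Theta$ is strong symmetric monoidal (by the discussion after Lemma~\ref{lem:cobase-change-spym-lax-monoidal}) and hence preserves $R$-module structures, since $\Theta(R) = R$ identifies $R$ with its image under collapsing its trivial base.

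For lax symmetric monoidality I would combine the pullback comparison map~\eqref{eq:upper-star-lax-monoidal} with the commutative monoid structure on $\gamma_R$ in $\Spsym{BG}$ (adjointly, a map $\gamma_R \barsm \gamma_R \to \mu_{BG}^*\gamma_R$ in $\Spsym{BG\boxtimes BG}$) to produce a natural map $f^*\gamma_R \barsm g^*\gamma_R \to (\mu_{BG}\circ(f\boxtimes g))^*\gamma_R$, the right-hand side being the pullback of $\gamma_R$ along the monoidal product of $f$ and $g$ in $\cS^{\cI}/BG$. Applying the strong monoidal $\Theta$ and descending along $\barsm_R$ will then yield the lax monoidal structure maps in $\Mod_R$; unitality reduces to identifying $T_{\cR}^{\cI}$ of the unit $\ucI \to BG$ with $R$, as in the argument around Proposition~\ref{prop:Thom-of-BH-BG}. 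Preservation of operad actions in simplicial sets will follow formally, since any lax symmetric monoidal functor that respects tensors with simplicial sets sends $\cO$-algebras to $\cO$-algebras.

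The colimit and tensor claims will be reduced to the facts that pullback of simplicial sets preserves colimits (compare the set-level input to Lemma~\ref{lem:f-upper-star-left-Quillen-SR}), so that $f \mapsto f^*\gamma_R$ is colimit-preserving when viewed through the Grothendieck description of $\SpsymR$; and that $\Theta = (X\to \ucI)_!$ is a left adjoint on each slice $\Spsym{X}$ and therefore preserves colimits. Tensors with simplicial sets are colimits of a particular shape, so they are preserved automatically.

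The main obstacle will be the homotopy invariance. Given an $\cI$-equivalence $\varphi\colon X \to Y$ over $BG$ with legs $p = q\varphi$ and $q$, I would identify the induced map $T_{\cR}^{\cI}(p) \to T_{\cR}^{\cI}(q)$ as $\Theta_Y = (Y\to\ucI)_!$ applied to the counit $\varphi_!\varphi^*(q^*\gamma_R) \to q^*\gamma_R$. By Theorem~\ref{thm:positive-local-on-CSpsymM} the fibrant replacement defining $\gamma_R$ leaves it positive locally fibrant in $\Spsym{BG}$, and the right Quillen $q^*$ from Lemma~\ref{lem:base-change-local-Q-adjunction} preserves this. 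Corollary~\ref{cor:base-change-local-Q-adjunction} will then ensure that $(\varphi_!,\varphi^*)$ is a Quillen equivalence, so the derived counit is a local equivalence; applying the left Quillen $\Theta_Y$ to a cofibrant replacement of $\varphi^*(q^*\gamma_R)$ should produce the desired stable equivalence of $R$-modules.
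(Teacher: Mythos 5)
Your overall strategy coincides with the paper's: factor $T^{\cI}_{\cR}$ as $f\mapsto f^*\gamma_R$ followed by $\Theta$, obtain lax symmetric monoidality from the commutative $R$-algebra structure on $\gamma_R$ together with~\eqref{eq:upper-star-lax-monoidal} and the strong monoidality of $\Theta$, and get colimits and tensors from the fact that each constituent functor preserves them. That part of your proposal matches the paper's proof essentially verbatim.

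The one place you diverge is the homotopy invariance, and there your argument as written has a small gap. You establish that the \emph{derived} counit $\varphi_!\bigl((\varphi^*q^*\gamma_R)^{\mathrm{cof}}\bigr)\to q^*\gamma_R$ is a local equivalence and then apply $\Theta$ to it; but $T^{\cI}_{\cR}$ is a point-set functor that performs no cofibrant replacement, so what you need is that $\Theta_X(\varphi^*q^*\gamma_R)\to\Theta_Y(q^*\gamma_R)$ itself is a stable equivalence. To close this you must additionally observe that, over simplicial sets, $\Theta=(X\to\ucI)_!$ preserves level equivalences (as noted after Definition~\ref{def:Rf_ast_RGamma}), so that $\Theta_X$ applied to the cofibrant replacement $(\varphi^*q^*\gamma_R)^{\mathrm{cof}}\to\varphi^*q^*\gamma_R$ is a stable equivalence and two-out-of-three finishes the argument. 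The paper avoids the issue entirely by invoking Corollary~\ref{cor:characterization-local-equivalences}(iii): since $\gamma_R$ is fibrant and $q^*$ is right Quillen, $q^*\gamma_R$ is fibrant in $\Spsym{Y}$, so no fibrant replacement of the target and no cofibrant replacement of the source is needed to see that $p^*\gamma_R=\varphi^*q^*\gamma_R\to q^*\gamma_R$ is already a local equivalence in $\SpsymR$, after which $\Theta$ sends it to a stable equivalence. Either route works once the missing observation is supplied.
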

\begin{proof}
We get a natural map $f^*(\gamma_R) \barsm_R g^*(\gamma_R) \to (f\boxtimes g)^*(\gamma_R)$ since $\gamma_R$ is a commutative $R$-algebra. This exhibits $\cS^{\cI}/BG \to  (\SpsymR)_R,f\mapsto  f^*(\gamma_R)$ as a lax symmetric monoidal functor. Since $\Theta$ is strong symmetric monoidal by Lemma~\ref{lem:cobase-change-spym-lax-monoidal}, it follows that $T_{\cR}^{\cI}$ is lax symmetric monoidal. For the homotopy invariance, we note that an $\cI$-equivalence $g\colon Y \to X$ and a map $f\colon X \to BG$ give rise to a map 
$(fg)^*(\gamma_R) = g^* f^*(\gamma_R) \to f^*(\gamma_R)$ that is a local equivalence by the fibrancy assertion on $\gamma_R$ and Corollary~\ref{cor:characterization-local-equivalences}. The functor $\Theta$ maps this local equivalence to a stable equivalence $T_{\cR}^{\cI}(fg) \to T_{\cR}^{\cI}(f)$. Compatibility with the tensor and colimits follows since the individual functors have this property. 
\end{proof}
If $R \to R'$ is a stable equivalence between positive fibrant objects, we get a natural stable equivalence between the resulting Thom spectrum functors that is induced by the above local equivalence $\gamma_R \to \gamma_R'$. 

\subsection{Generalized Thom spectra via classifying spaces for \texorpdfstring{$G$}{G}-modules}\label{subsec:Thom-via-classifying} We begin by reviewing the Thom spectrum functor introduced in~\cite{Basu_SS_Thom}. In the latter paper the focus is on the topological setting, but the analogous construction works equally well in the simplicial setting, cf.\ \cite[Remark 3.7]{Basu_SS_Thom}. Thus, in the following discussion, the underlying category of spaces $\cS$ can be either $\sset$ or $\tp$.

Let $R$ be a positive fibrant and flat commutative symmetric ring spectrum, write $\GLoneIof{R}$ for its $\cI$-space
units, and let $G \to \GLoneIof{R}$ be a cofibrant replacement in
$\cC\cS^{\cI}$. We define $EG$ by choosing a factorization of the form
\begin{equation}\label{eq:def-EG-as-cof-rep} \xymatrix@1{B(\ucI,G,G) \ar@{ >->}[r]^-{\sim} & EG \ar@{->>}[r] &BG }
\end{equation}
in the positive model structure of $\cC\cS^{\cI}$. Now let
$U\colon \cS^{\cI}/BG \to \Mod_G$ be the functor to $G$-modules in
$\cS^{\cI}$ sending a map $f\colon X \to BG$ to the pullback $U(f)$
of the diagram $X \xrightarrow{f} BG \ot EG$ where both $X$ and $BG$
carry the trivial $G$-module structure. The fibrant replacement
in~\eqref{eq:def-EG-as-cof-rep} ensures that $U$ preserves
$\cI$-equivalences.

The $\cI$-space version of the Thom spectrum functor \cite[Definition 3.6]{Basu_SS_Thom} is the
composite
\begin{equation}\label{eq:def-Uf} T^{\cI}_{EG}\colon \cS^{\cI}/BG \xrightarrow{U} \Mod_G \xrightarrow{\bS^{\cI}} \Mod_{\bS^{\cI}[G]} \xrightarrow{B(-,\bS^{\cI}[G],R)} \Mod_R 
\end{equation}
where we use the subscript $EG$ to distinguish it from
$T^{\cI}_{\cR}$. Precomposing
$T^{\cI}_{EG}$ with the $\cI$-spacification~\eqref{eq:I-spacificaton}
defines a space level Thom spectrum functor
\[T_{EG}\colon \cS/{BG}_{h\cI} \to \Mod_R\] with favorable properties; see~\cite[\S 4.6]{Basu_SS_Thom}.
It is proved in \cite[Proposition~4.6]{Basu_SS_Thom} that $T_{EG}$ is homotopy invariant by our assumption that $R$ is flat.
\begin{remark} In~\cite{Basu_SS_Thom} the commutative $\cI$-space monoids $G$ and $EG$ were defined using the so-called 
\emph{flat} $\cI$-model structure on $\cC\cS^{\cI}$. For the definition of the Thom spectrum functors
 $T^{\cI}_{EG}$ and $T^{\cI}_{EG}$ we may equally well work with the \emph{projective}
  $\cI$-model structure used in the present paper since the latter model structure has fewer cofibrations.
\end{remark}

We now explain why the simplicial and topological versions are equivalent.
Firstly, geometric realization and singular complex induce Quillen equivalences between the simplicial and topological versions of commutative symmetric ring spectra and commutative $\cI$-space monoids. Up to isomorphism, geometric realization commutes with $\bS^{\cI}$ and thus $\Sing$ commutes with $\Omega^{\cI}$. Moreover, geometric realization preserves positive fibrant objects. When $R$ is a topological positive fibrant commutative symmetric ring spectrum, then $\Sing(\GLoneIof{R}) \iso \GLoneIof{\Sing(R)}$. If $R$ is a positive fibrant commutative symmetric ring spectrum in simplicial sets and $G \to \GLoneIof{R}$ is a cofibrant replacement of its units, then the adjoint of $\bS^{\cI}[|G|] \xrightarrow{\iso} | \bS^{\cI}[G] | \to |R|$ exhibits $|G|$ as a cofibrant replacement of $\GLoneIof{|R|}$ since its image under $\Sing$ participates as the upper left hand horizontal arrow in the commutative diagram
\[\xymatrix@-1pc{
    \Sing|G| \ar[rr] &&  \Sing \Omega^{\cI}(|R|) \ar[rr]^-{\iso}&&  \Omega^{\cI} \Sing|R| \\
    G \ar[u]^{\sim} \ar[rrrr] &&&& \Omega^{\cI}(R).\ar[u]_{\sim} 
  }
\]
Hence $|BG|$ models $B (\GLoneIof{|R|})^{\mathrm{cof}}$. Since realization also preserves positive $\cI$-fibrations, $|EG|$ models its topological counterpart for $|R|$. 
\begin{proposition}\label{prop:TEG-top-simp}
Let $R$ be a positive fibrant commutative symmetric ring spectrum in simplicial sets and let
$f \colon X \to BG$ be a map of $\cI$-spaces, also in simplicial sets. Defining the topological Thom spectrum functor $T_{|EG|}^{\cI}$ for $|R|$ using $|G|$ and $|EG|$ as explained above, there is a natural isomorphism $|T_{EG}^{\cI}(f)| \iso T_{|EG|}^{\cI}(|f|)$. It induces a monoidal natural stable equivalence $|T_{EG}(\tau)| \to T_{|EG|}(|\tau|)$ of space level Thom spectra preserving actions of operads augmented over the Barratt--Eccles operad. 
\end{proposition}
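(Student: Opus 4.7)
The plan is to first establish the $\cI$-space level isomorphism and then bootstrap to the space-level statement via a comparison of spacification functors. For the first part, unwinding~\eqref{eq:def-Uf} gives $T^{\cI}_{EG}(f) = B(\bS^{\cI}[X \times_{BG} EG], \bS^{\cI}[G], R)$. Geometric realization preserves pullbacks of simplicial sets, commutes with $\bS^{\cI}$ up to canonical isomorphism (as recalled just before the proposition), and respects both the simplicial realization over $\Delta^{\op}$ and the objectwise smash products entering the two-sided bar construction. Assembling these natural isomorphisms yields the desired iso $|T^{\cI}_{EG}(f)| \iso T^{\cI}_{|EG|}(|f|)$.

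For the space-level statement, write $T_{EG}(\tau) = T^{\cI}_{EG}(P_{BG}(\tau))$ and similarly for $T_{|EG|}$. Using the first part, it suffices to construct a natural $\cI$-equivalence $|P_{BG}(\tau)| \to P_{|BG|}(|\tau|)$ of $\cI$-spaces over $|BG|$ and then invoke the homotopy invariance of $T^{\cI}_{|EG|}$ from~\cite[Proposition~4.6]{Basu_SS_Thom}, whose flatness hypothesis is inherited by $|R|$ from $R$. This comparison map is assembled from three pieces: the canonical identification $|\overline{BG}| \iso \overline{|BG|}$ of bar resolutions (both are formed by the same homotopy colimit recipe which $|-|$ preserves); the preservation of the defining pullback of $P$ by $|-|$; and the map $|\Gamma_\tau(K)| \to \Gamma_{|\tau|}(|K|)$, which exists by the very definition of the simplicial $\Gamma$ as a pullback of $\Sing(\Gamma_{|\tau|}(|K|))$ along $\tau$ and is a weak equivalence of Hurewicz fibrant replacements of $|\tau|$. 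The composite is an $\cI$-equivalence, so $T^{\cI}_{|EG|}$ carries it to the desired stable equivalence.

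For the monoidal and operadic compatibility, I would observe that each ingredient of the comparison is itself lax monoidal and operad-preserving: the bar resolution is lax monoidal, $|-|$ is strong symmetric monoidal, $\Sing$ is lax symmetric monoidal, and by construction in Section~\ref{subsec:I-spacification} the functor $\Gamma$ is arranged to be lax monoidal and to preserve actions of operads augmented over the Barratt--Eccles operad. Since $T^{\cI}_{EG}$ is itself lax symmetric monoidal and operadic, the induced space-level natural stable equivalence inherits these properties.

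The main obstacle I anticipate is the coherent bookkeeping needed to thread a single natural transformation through all these pieces while keeping track of lax monoidality and operad actions. Each individual step is either a standard property of $|-|$ or a feature built into the design of $\Gamma$, but verifying that the structure maps chase correctly through the relevant diagrams requires care — particularly because the simplicial $\Gamma$ was specifically set up via pulling back its topological counterpart precisely to enable such a comparison.
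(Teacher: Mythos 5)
Your proposal is correct and follows essentially the same route as the paper: the $\cI$-level isomorphism comes from geometric realization preserving pullbacks and being strong symmetric monoidal for $\cI$-spaces and symmetric spectra, and the space-level statement comes from the natural $\cI$-equivalence $|P_{BG}(\tau)| \to P_{|BG|}(|\tau|)$ induced by the $(|-|,\Sing)$-adjunction together with the homotopy invariance of the topological Thom spectrum functor. Your write-up is simply a more explicit unpacking (bar resolution, pullback, and $\Gamma$-comparison separately) of the paper's two-sentence argument.
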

\begin{proof}
The statement for $T^{\cI}_{EG}$ follows since geometric realization preserves pullback and is strong symmetric monoidal both for $\cI$-spaces and symmetric spectra. The space level version results from the natural $\cI$-equivalence $|P_{BG}(\tau)| \to P_{|BG|}(|\tau|)$ induced by the adjunction $(|-|,\Sing)$. 
\end{proof}
Conversely, let $R$ be a topological positive fibrant commutative symmetric ring spectrum and $G \to \GLoneIof{(\Sing R)^{\mathrm{cof}}}$ be  a cofibrant replacement of the units of a cofibrant replacement of $\Sing R$ in commutative ring spectra. Given any map $f\colon X \to |BG|$, a homotopy pullback construction provides a map $f'\colon X' \to BG$ such that $|X'|$ and $X$ are weakly equivalent over $|BG|$ so that
$|T^{\cI}_{EG}(f')| \simeq T^{\cI}_{EG}(f) \simeq T^{\cI}(f)$ as modules over $|(\Sing R)^{\mathrm{cof}}| \simeq R$. This shows that the topological Thom spectrum functor can be expressed in terms of the simplicial one. 

\subsection{Comparing $R$-module Thom spectra}
Our next aim is to compare the simplicial version of $T^{\cI}_{EG}$ to the Thom spectrum functor
$T^{\cI}_{\cR}$ of Section~\ref{subsec:T_R}. For this we consider the following commutative diagram in $\Spsym{BG}$ explained below: 
\begin{equation}\label{ref:comparison-bundles}\xymatrix@-1.2pc{
 B^{\barsm}(\bS^{\cI}_\cR[BG\! \amalg\! B(\ucI,G,G), BG], \bS^{\cI}[G], R) \ar[d]_{\sim} \ar[rr]^-{\sim}  &&  B^{\barsm}(\bS^{\cI}_\cR[BG\! \amalg\! EG, BG], \bS^{\cI}[G], R) \ar[d]^v_{\sim}\\ 
 \bS^{\cI}_\cR[BG\! \amalg\! B(\ucI,G,G), BG] \barsm_{\bS^{\cI}[G]} R\ar[d]_{\iso} \ar@{ >->}[rr]^-{\sim} &&  \bS^{\cI}_\cR[BG\! \amalg\! EG, BG] \barsm_{\bS^{\cI}[G]} R \ar@{..>}[d]_{\sim}^u\\ 
B^{\barsm}(\bS, \bS^{\cI}_t[G],R) \ar[rr]^-{\sim} && \gamma_R.
}
\end{equation}
The two upper horizontal maps are induced by  $B(\ucI,G,G) \to EG$. The upper one is a local equivalence by Lemmas~\ref{lem:realization} and~\ref{lem:bSIR-full-hty-inv}.  To analyze the second, we again use the functor $\bS^{\cI}_{\mathrm{ar}} = \bS^{\cI}_{\cR}\circ \iota_{\mathrm{ar}}\colon  \mathrm{Ar}(\cS^{\cI}) \to \SpsymR$ from~\eqref{eq:SIar-OmegaIar-adjunction}. It induces a functor 
\[ (G \to *) \downarrow \mathrm{Ar}(\cC\cS^{\cI}) \to \bS^{\cI}[G]\downarrow \cC\SpsymR \]
from commutative $ (G \to *)$-algebras in $\mathrm{Ar}(\cS^{\cI})$ to commutative $\bS^{\cI}[G]$-algebras in $\SpsymR$. With respect to the injective model structure on $\mathrm{Ar}(\cC\cS^{\cI})$ (cf. Lemma~\ref{lem:SIar-OmegaIar-Quillen-adjunction}), it sends the acyclic cofibration $B(*,G,G) \to EG$ over $BG$ to an acyclic cofibration of commutative $\bS^{\cI}[G]$-algebras in $\SpsymR$. Extending the latter map along $\bS^{\cI}[G] \to R$ shows that the middle horizontal map in the diagram is an acyclic cofibration in $\cC\SpsymR$. The upper vertical maps are instances
of the natural map from the two sided bar construction to the
relative $\barsm$-product. The lower left hand isomorphism
results from commuting $\bS^{\cI}_{\cR}$ with the bar construction (compare the argument in the proof of Proposition~\ref{prop:Thom-of-BH-BG}). The left hand vertical composite can be identified with the map
\[ B^{\barsm}(\bS, \bS^{\cI}_t[G], B^{\barsm}( \bS^{\cI}[G],
  \bS^{\cI}[G], R)) \to B^{\barsm}(\bS, \bS^{\cI}_t[G], R)\] and is
thus a weak equivalence by Lemmas~\ref{lem:realization}
and~\ref{lem:product-with-cofibrant-in-SpsymR}. So $v$ is a local
equivalence by two out of three. The lower horizontal map is the
fibrant replacement defining~$\gamma_R$. Lastly, $u$ arises as a lift
in the positive local model structure on $\cC\SpsymR$.

Given a map of $\cI$-spaces $f\colon X \to BG$, we get a natural 
map 
\begin{multline}\label{eq:EG-R-comparison} B^{\barsm}(\bS^{\cI}_\cR[X \amalg f^*(EG), X], \bS^{\cI}[G], R)\\ \to  f^*  B^{\barsm}(\bS^{\cI}_\cR[BG \amalg EG, BG], \bS^{\cI}[G], R)\to f^*(\gamma_R)
\end{multline}
where the first map results from the universal property of $f^*$ and the second map is induced by the composite $uv$ in~\eqref{ref:comparison-bundles}.
\begin{lemma}\label{lem:local-equiv-for-T_I_EG-T_I_R}
The composite map in~\eqref{eq:EG-R-comparison} is a local equivalence in $\Spsym{X}$.
\end{lemma}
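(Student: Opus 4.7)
The plan is to first reduce to the case where $f \colon X \to BG$ is an absolute $\cI$-fibration, and then handle that case via the lemmas about base change and bar constructions developed earlier. For the reduction, I would factor $f = h \circ g$ in the absolute $\cI$-model structure on $\cS^{\cI}$ as an acyclic cofibration $g \colon X \to X'$ followed by an $\cI$-fibration $h \colon X' \to BG$. There is then a natural commutative square in $\SpsymR$ whose horizontal arrows are the two maps in question for $f$ and for $h$ respectively, and whose vertical arrows compare source and target for $f$ to those for $h$ (over $g$). The target comparison $f^*(\gamma_R) \to h^*(\gamma_R)$ is a local equivalence by Corollary~\ref{cor:characterization-local-equivalences}, since $g$ is an $\cI$-equivalence and $\gamma_R$ is locally fibrant. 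For the source comparison, right properness of the $\cI$-model structure on $\cS^{\cI}$ gives that $f^*(EG) \to h^*(EG)$ is an $\cI$-equivalence; then Lemma~\ref{lem:bSIR-full-hty-inv} yields a local equivalence on $\bS^{\cI}_\cR[-\amalg -,-]$, and Lemmas~\ref{lem:product-with-cofibrant-in-SpsymR} and~\ref{lem:realization} propagate this through the degreewise smash with $\bS^{\cI}[G]^{\barsm q} \barsm R$ and the realization of the bar construction.

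Once $f$ is an absolute $\cI$-fibration, the first constituent map becomes in simplicial degree $q$ precisely the comparison map
\[\bS^{\cI}_\cR[X \amalg f^*(EG), X] \barsm E_q \longrightarrow f^*\bigl(\bS^{\cI}_\cR[BG \amalg EG, BG] \barsm E_q\bigr)\]
of Lemma~\ref{lem:commuting-f-upper-star-barsm}, with $Z = EG$ and $E_q = \bS^{\cI}[G]^{\barsm q} \barsm R$. The spectrum $E_q$ is flat since $G$ is flat as an $\cI$-space (being cofibrant in $\cC\cS^{\cI}$) and $R$ may be replaced by a flat model without disturbing the result (by Lemma~\ref{lem:product-with-cofibrant-in-SpsymR}). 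Lemma~\ref{lem:commuting-f-upper-star-barsm} then gives a local equivalence in every simplicial degree, and Lemma~\ref{lem:realization} promotes this to a local equivalence of realizations.

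The second constituent map is $f^*$ applied to the composite $uv$ from diagram~\eqref{ref:comparison-bundles}, which is a local equivalence by construction. Because $f$ is an $\cI$-fibration, Proposition~\ref{prop:weak-cogluing}, specialized to $p = q = \id$, shows that $f^*$ preserves this local equivalence. Composing the two steps yields the required local equivalence. The main delicate point will be in the reduction step, where compatibility of the flatness, fibrancy, and right-properness ingredients has to be checked carefully to ensure that the bookkeeping does not interfere with the fibrant replacement defining $\gamma_R$ or the identifications used in diagram~\eqref{ref:comparison-bundles}; once the fibration case is isolated, the result is essentially a direct application of the earlier lemmas.
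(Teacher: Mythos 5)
Your proposal is correct and follows essentially the same route as the paper: the fibration case is handled by exactly the same combination of Lemmas~\ref{lem:realization} and~\ref{lem:commuting-f-upper-star-barsm} (using flatness of $\bS^{\cI}[G]$ and $R$) for the first map and Proposition~\ref{prop:weak-cogluing} applied to $uv$ for the second. Your reduction to the fibration case via the factorization $f = h\circ g$ and the two vertical comparison equivalences is just a more explicit writing-out of the paper's observation that both sides of~\eqref{eq:EG-R-comparison} send $\cI$-equivalences over $BG$ to local equivalences.
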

\begin{proof}
  We first suppose that $f\colon X \to BG$ is an absolute
  $\cI$-fibration. Then the first map in~\eqref{eq:EG-R-comparison} is
  a local equivalence by Lemmas~\ref{lem:realization}
  and~\ref{lem:commuting-f-upper-star-barsm} since both $\bS^{\cI}[G]$
  and $R$ are flat. The second map in~\eqref{eq:EG-R-comparison} is a
  local equivalence by Proposition~\ref{prop:weak-cogluing} and the
  above observation that $uv$ is a local equivalence.  Since
  $U(f) = f^*(EG)$ is a homotopy pullback, both sides send
  $\cI$-equivalences to local equivalences, and the map is a local
  equivalence for all $f$.\end{proof}
  
Applying the collapse of base space functor $\Theta$ to~\eqref{eq:EG-R-comparison} provides maps
\begin{equation}\label{eq:TEG-TR-maps}
T^{\cI}_{EG}(f) \to T^{\cI}_{\cR}(f) \quad \text{ and } \quad T_{EG}(\tau) \to T_{\cR}(\tau)
\end{equation} 
where the second is obtained from the first by precomposing with the $\cI$-spacification.
\begin{proposition}\label{prop:TEG-vs-TR} The first map $T^{\cI}_{EG} \to T^{\cI}_{\cR}$ in~\eqref{eq:TEG-TR-maps} is a natural lax symmetric monoidal stable equivalence of functors $\bS^{\cI}/BG \to \Mod_R$, and the second is a natural lax monoidal stable equivalence $T_{EG} \to T_{\cR}$ of functors $\cS/(BG)_{h\cI} \to \Mod_R$ that respects actions of operads augmented over the Barratt--Eccles operad. \end{proposition}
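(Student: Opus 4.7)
The heart of the proof is already contained in Lemma~\ref{lem:local-equiv-for-T_I_EG-T_I_R}, which provides a natural local equivalence in $\Spsym{X}$ between the source and the pullback $f^*(\gamma_R)$. My plan is first to apply $\Theta = (X \to \ucI)_!$ to this local equivalence and then to upgrade the resulting stable equivalence to a lax symmetric monoidal natural transformation.

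For the first part, I would observe that the functor $\Theta\colon \Spsym{X} \to \Spsym{}$ is strong symmetric monoidal (the discussion following Lemma~\ref{lem:bSIt-symm-mon}) and left Quillen with respect to the positive local model structures, so it preserves local equivalences between sufficiently cofibrant objects. The source of \eqref{eq:EG-R-comparison} is a two-sided bar construction, which is a geometric realization of a Reedy cofibrant simplicial object built from free constructions, and is therefore cofibrant enough that $\Theta$ sends the whole map \eqref{eq:EG-R-comparison} to a stable equivalence of $R$-modules after resolving the right-hand side $f^*(\gamma_R)$ by a cofibrant replacement. Concretely, the two-step factorization $T^{\cI}_{EG}(f) \xleftarrow{\sim} \text{(cofibrant replacement)} \to T^{\cI}_\cR(f)$ combined with Lemma~\ref{lem:local-equiv-for-T_I_EG-T_I_R} suffices.

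For the lax symmetric monoidal structure, the key observation is that the entire comparison diagram \eqref{ref:comparison-bundles} lives in $\cC\SpsymR$, i.e., in commutative parametrized ring spectra over $BG$. Hence for maps $f\colon X \to BG$ and $g \colon Y \to BG$, the pullback $f^*\gamma_R \barsm_R g^*\gamma_R \to (f\boxtimes g)^*(\gamma_R \barsm_R \gamma_R) \to (f\boxtimes g)^*(\gamma_R)$ constructed via Proposition~\ref{prop:monoidal_pull} (or its lax version \eqref{eq:upper-star-lax-monoidal}) together with the commutative $R$-algebra structure on $\gamma_R$ endows $T^{\cI}_\cR$ with a lax symmetric monoidal structure after applying $\Theta$. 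An analogous lax monoidal structure on $T^{\cI}_{EG}$ was established in~\cite{Basu_SS_Thom}, and by construction \eqref{eq:EG-R-comparison} is compatible with the algebra structures coming from \eqref{ref:comparison-bundles}, so the induced natural transformation is lax symmetric monoidal.

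Finally, the space level statement follows by precomposing with the $\cI$-spacification $P_{BG}\colon \cS/(BG)_{h\cI} \to \cS^\cI/BG$ recalled in Section~\ref{subsec:I-spacification}, which is itself lax monoidal and preserves actions of operads augmented over the Barratt--Eccles operad. The preservation of such operad actions by the comparison map then follows because every construction entering \eqref{ref:comparison-bundles} (smash products, bar constructions, base change, and fibrant replacement in $\cC\Spsym{BG}$) is compatible with algebra structures over such operads. The main technical obstacle I anticipate is ensuring cofibrancy is sufficient to derive $\Theta$ on the local equivalence from Lemma~\ref{lem:local-equiv-for-T_I_EG-T_I_R}; in particular, one may have to replace the middle row of \eqref{ref:comparison-bundles} by a cofibrant resolution in $\cC\SpsymR$ to carry the multiplicative comparison all the way down to $\Mod_R$ without losing the monoidal structure.
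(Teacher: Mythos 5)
Your proposal follows essentially the same route as the paper: monoidality comes from the fact that the composite $uv$ in~\eqref{ref:comparison-bundles} is a map of commutative parametrized ring spectra, the stable equivalence comes from applying $\Theta$ to the local equivalence of Lemma~\ref{lem:local-equiv-for-T_I_EG-T_I_R}, and the space-level statement follows from the properties of the $\cI$-spacification. The cofibrancy concern you raise at the end is moot here: since this section works over simplicial sets, $\Theta$ preserves level equivalences and therefore sends local equivalences to stable equivalences without any cofibrant replacement (as noted after Definition~\ref{def:Rf_ast_RGamma}), which is exactly the ``homotopy invariance of $\Theta$'' the paper invokes.
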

\begin{proof}
  Since the composite $uv$ in~\eqref{ref:comparison-bundles} is a map of parametrized commutative ring spectra, $T^{\cI}_{EG}(-) \to T^{\cI}_{\cR}(-)$ is a lax symmetric monoidal transformation. By the homotopy invariance of $\Theta$ and Lemma~\ref{lem:local-equiv-for-T_I_EG-T_I_R}, it is a stable equivalence. The second statement then follows from the properties of the $\cI$-spacification discussed in Section~\ref{subsec:I-spacification}.
\end{proof}

\subsection{Thom spectra over the sphere spectrum}\label{subsec:Thom-over-S} We now consider the case of the sphere spectrum $\mathbb S$, work over topological spaces, and write $F = \GLoneIof(\bS)$ for the units of~$\bS$. (Since $\bS$ is semistable,~\cite[Lemma 2.5]{Basu_SS_Thom} implies that we do not need to replace it fibrantly before forming $\GLoneIof(\bS)$ and applying our Thom spectrum functor constructions.)
In this case, $F(\bld{m}) \subseteq \Omega^{\cI}(\bS)(\bld{m}) = \Omega^m(S^m)$ is the space of self-homotopy equivalences of $S^m$ which is a monoid under composition. The multiplications of the $F(\bld{m})$ assemble to an associative and unital multiplication map $F \times F \to F$ in $\cS^{\cI}$. The canonical $F(\bld{m})$-action on the $S^{m}$ assemble to an action $F \times \mathbb S \to \mathbb S$ in $\SpsymR$ where $\times$ now denotes the action introduced in~\eqref{eq:SI-tensor-SpsymR}. This action and the multiplication of $F$ allow us to form the two-sided bar construction \[ B^{\times}(\ucI,F,\mathbb S) = \big|[q] \mapsto \ucI\times F^{\times q} \times \mathbb S \big|\] in $\SpsymR$. Its evaluation at $\bld{m}$ is the classifying space for sectioned fibrations with fiber equivalent to $S^m$ which was considered in \cite[Section~IX]{LMS} (see also \cite[\S 2]{Schlichtkrull_Thom-symmetric}). Writing $T_{BF}^{\cI} \colon \tp^{\cI}/BF \to \Spsym{}$ for the Thom spectrum functor introduced in \cite[Definition 3.3]{Schlichtkrull_Thom-symmetric}, we thus obtain a natural isomorphism $T_{BF}^{\cI}(f) \iso \Theta(f^* B^{\times}(\ucI,F,\mathbb S))$. It follows from \cite[Corollaries~4.13 and 6.9]{Schlichtkrull_Thom-symmetric} that the space level counterpart $T_{BF} = T^{\cI}_{BF}\circ P_{BF}$ is a monoidal homotopy functor on $\tp^{\cI}/BF_{h\cI}$ that respects actions by operads augmented over the Barratt-Eccles operad.

Our goal is to compare $T_{BF}$ to the Thom spectrum functor $T_{\cR}$ in~\eqref{eq:TIR} with $R=\Sing\bS$. We first
observe that the restriction of the multiplication $F \times F \to F$
of $F$ along the map $F \boxtimes F \to {F \times F}$ arising from
Construction~\ref{const:rhoZYEX} (or from~\cite[Section~2.24]{Sagave-S_group-compl}) provides the commutative $\cI$-space
monoid structure of $F$. A cofibrant replacement
$G \to F = \GLoneIof(\bS)$ in commutative $\cI$-space monoids and the
maps from Construction~\ref{const:rhoZYEX} induce comparison maps
$\bS^{\cI}_t[G]^{\barsm q} \barsm \bS \to F^{\times q} \times \bS$ for
every $q \geq 0$. Using~\eqref{eq:distributivity}, one can check that
these are compatible with the simplicial structure maps and induce a well-defined map of commutative parametrized ring spectra
\begin{equation}\label{eq:barsm-times-F-comparison}
B^{\barsm}(\bS,\bS^{\cI}_t[G],\bS) \to  B^{\times}(\ucI,F,\bS).
\end{equation}
We write $\rho  \colon BG \to B^{\times}(F)$ for the underlying map of commutative $\cI$-space monoids. 
\begin{proposition}\label{prop:barsm-times-F-comparison}
The map $B^{\barsm}(\bS,\bS^{\cI}_t[G],\bS) \to  B^{\times}(\ucI,F,\bS)$ is a local equivalence. 
\end{proposition}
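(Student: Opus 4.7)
The plan is to reduce the statement to a simplicial-degreewise comparison and then to an application of the local equivalence criterion for the natural transformation $\rho_{Y,(E,X)}$ from Construction~\ref{const:rhoZYEX}. First, since both sides are realizations of simplicial objects in $\SpsymR$, Lemma~\ref{lem:realization} reduces the claim to showing that for every $q \geq 0$ the simplicial level map
\[
\bS^{\cI}_t[G]^{\barsm q} \barsm \bS \longrightarrow F^{\times q} \times \bS
\]
is a local equivalence. Using the monoidality of $\bS^{\cI}_t$ from Lemma~\ref{lem:SIar-OmegaIar-adjunction-monoidal}, the source is canonically isomorphic to $\bS^{\cI}_t[G^{\boxtimes q}] \barsm \bS$, and by the explicit construction of~\eqref{eq:barsm-times-F-comparison} together with the compatibility square~\eqref{eq:distributivity}, the map factors as
\[
\bS^{\cI}_t[G^{\boxtimes q}] \barsm \bS \xrightarrow{\rho_{G^{\boxtimes q},\bS}} G^{\boxtimes q} \times \bS \longrightarrow F^{\times q} \times \bS,
\]
where the second map is induced levelwise by $G \to F$ and by the natural comparison $\boxtimes \to \times$ of Construction~\ref{const:rhoZYEX} on base $\cI$-spaces.

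Next I would show that each of these two factors is a local equivalence. For the first, I invoke the cited \cite[Proposition~4.1]{HS-twisted}, which guarantees that $\rho_{Y,(E,X)}$ is a local equivalence under suitable flatness hypotheses on $Y$ and $(E,X)$; these are satisfied in our case because $G$ is cofibrant in $\cC\cS^{\cI}$ (hence $G^{\boxtimes q}$ is flat by \cite[Proposition 3.15]{Sagave-S_diagram}), and $\bS$ is a flat symmetric spectrum. For the second factor, the map $G^{\boxtimes q} \to F^{\boxtimes q}$ is an $\cI$-equivalence of flat $\cI$-spaces since $G \to F$ is a cofibrant replacement and $-\boxtimes -$ is homotopical on flat inputs, while the natural map $F^{\boxtimes q} \to F^{\times q}$ is an $\cI$-equivalence by \cite[Proposition 2.27]{Sagave-S_group-compl} since $F$ is grouplike. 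Composing, $G^{\boxtimes q} \to F^{\times q}$ is an $\cI$-equivalence of flat $\cI$-spaces, and Corollary~\ref{cor:I-equivalence-between-flat-to-local-equiv} applied to $\bS^{\cI}_t$ together with a second application of the $\rho$-comparison identifies $G^{\boxtimes q} \times \bS \to F^{\times q} \times \bS$ with the image of this $\cI$-equivalence under a left Quillen functor, hence with a local equivalence.

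The main obstacle is the second step: the target $F^{\times q} \times \bS$ is defined using the cartesian product $\times$ on $\cS^{\cI}$ rather than $\boxtimes$, so one has to pass back and forth between the two monoidal structures via the $\rho$-map and verify that all the necessary flatness assumptions needed to invoke \cite[Proposition~4.1]{HS-twisted} hold in this intermediate step. Once this is handled, the passage to realizations is routine, and the compatibility of the factorization with the simplicial structure maps—verifiable from the commutative square~\eqref{eq:distributivity}—ensures that the resulting level-wise local equivalences assemble into the desired local equivalence of bar constructions.
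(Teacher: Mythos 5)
Your strategy---reduce via Lemma~\ref{lem:realization} to the simplicial levels and show each map $\bS^{\cI}_t[G]^{\barsm q}\barsm\bS\to F^{\times q}\times\bS$ is a local equivalence---is genuinely different from the paper's, and it has two concrete gaps. First, Proposition~\ref{prop:barsm-times-F-comparison} lives in the topological setting of Section~\ref{subsec:Thom-over-S}, whereas Lemma~\ref{lem:realization} and Corollary~\ref{cor:I-equivalence-between-flat-to-local-equiv} are established only over simplicial sets. Your degreewise maps are local equivalences but not level equivalences, so even after checking that the simplicial objects are good (the paper's first step), the topological realization lemma does not apply to them; a topological analogue of Lemma~\ref{lem:realization} for degreewise \emph{local} equivalences is exactly the kind of $h$-cofibration analysis the authors avoid (cf.\ Remark~\ref{rem:top-problems-commutative-parametrized-model}). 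Second, your justification of the key degreewise step fails: $F^{\times q}$ is neither cofibrant nor known to be flat as an $\cI$-space, so neither Corollary~\ref{cor:I-equivalence-between-flat-to-local-equiv} nor ``image under a left Quillen functor'' applies to $G^{\boxtimes q}\times\bS\to F^{\times q}\times\bS$; and your intermediate claim that $F^{\boxtimes q}\to F^{\times q}$ is an $\cI$-equivalence ``since $F$ is grouplike'' misreads \cite[Proposition~2.27]{Sagave-S_group-compl}, which requires flatness and semistability---properties $F$ does not have (which is why $G$ is introduced at all). The correct factorization, used in the paper for the base $\cI$-spaces, is $G^{\boxtimes q}\to G^{\times q}\to F^{\times q}$, with the first map an $\cI$-equivalence because $G$ is flat and semistable and the second a level equivalence.

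The paper argues degreewise only on the base $\cI$-spaces (showing $\rho\colon BG\to B^{\times}(F)$ is an $\cI$-equivalence) and then compares the two parametrized spectra after locally fibrant replacement by identifying their fibers over the unit of $BG$: one side via Proposition~\ref{prop:Thom-of-BH-BG}, the other via the classical Thom-spectrum result \cite[Theorem~1.4]{Schlichtkrull_Thom-symmetric}; connectivity of each $BG(\bld n)$ and the long exact sequences of the Serre fibrations then upgrade this to a level equivalence. That a non-formal input about the fibers of $B^{\times}(\ucI,F,\bS)$ is required is a good sanity check that the purely formal degreewise argument cannot close topologically as you have written it, although a version of your argument would go through over simplicial sets, where Lemma~\ref{lem:realization} holds for arbitrary simplicial objects and Lemma~\ref{lem:bSIR-full-hty-inv} gives homotopy invariance of $\bS^{\cI}_t$ on all $\cI$-equivalences without any flatness hypothesis.
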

\begin{proof}
  As a first step, we show that the degeneracy maps in the underlying
  simplicial objects are levelwise $h$-cofibrations on the base and
  the total spaces.  For $B^{\barsm}(\bS,\bS^{\cI}_t[G],\bS)$, this
  follows from~\cite[Proposition 12.7 and Lemma 7.7]{Sagave-S_diagram}
  and the explicit description of $\bS^{\cI}_t$
  in~\eqref{eq:bsIb-and-bSIt-explicit}. For $B^{\times}(\ucI,F,\bS)$
  this holds because $F$ is
  well-based~\cite[Theorem~2.1]{Lewis_when-cofibration}.

  Next we show that $\rho$ is an $\cI$-equivalence. Since we checked
  that the underlying simplicial object of base $\cI$-spaces is good,
  it is sufficient to show that for fixed $q$, the map of $\cI$-spaces
  $G^{\boxtimes q} \to F^{\times q}$ is an $\cI$-equivalence. For
  $q=2$, it can be identified as the composite of the morphism
  $\rho_{G,G}\colon G\boxtimes G \to {G\times G}$ considered
  in~\cite[Section~2.24]{Sagave-S_group-compl} and the level
  equivalence $G\times G \to F\times F$ induced by $\rho$. Since $G$
  is semistable and flat by construction, $\rho_{G,G}$ is an
  $\cI$-equivalence by~\cite[Proposition
  2.27]{Sagave-S_group-compl}. The assertion for $q>2$ follows by an
  inductive argument based on~\cite[Proposition~2.27]{Sagave-S_group-compl}.

  To check that~\eqref{eq:barsm-times-F-comparison} is a local
  equivalence, we form a commutative diagram
  \[\xymatrix@-1pc{
    B^{\barsm}(\bS,\bS^{\cI}_t[G],\bS) \ar@{ >->}[d]^{\sim} \ar[rrrr] && &&   B^{\times}(\ucI,F,\bS) \ar@{ >->}[d]^{\sim} \\
    (E,BG) \ar[rr] && \rho^*(E',B^{\times}(F)) \ar[rr] && (E',B^{\times}(F)) }\]
  where the vertical maps are fibrant replacements in the absolute
  local model structures on $\Spsym{BG}$ and $\Spsym{B^{\times}(F)}$ and the
  lower left hand horizontal map arises by extending the resulting map 
  $B^{\barsm}(\bS,\bS^{\cI}_t[G],\bS) \to \rho^*(E',B^{\times}(F))$ in
  $\Spsym{BG}$ over the left hand acyclic cofibration. By
  Corollary~\ref{cor:characterization-local-equivalences}, it is
  sufficient to show that the map of fibrant objects
  $ (E,BG) \to \rho^*(E',B^{\times}(F))$ is a local equivalence in~$\Spsym{BG}$.

  Let $\iota \colon * \to BG$ be the unit. As in the discussion
  preceding Proposition~\ref{prop:TEG-top-simp}, we may assume that
  $G$ is the realization of a cofibrant replacement of
  $\GLoneIof{(\Sing \bS)}$ and that $\bS^{\cI}[G] \to \bS$ factors
  through $|\Sing \bS| \to \bS$. Together with the above statement
  about $h$-cofibrations, this implies that the map
  $B^{\barsm}(\bS,\bS^{\cI}_t[G],|\Sing \bS|) \to
  B^{\barsm}(\bS,\bS^{\cI}_t[G],\bS)$
  is an absolute level equivalence. Using this, it follows from
  Proposition~\ref{prop:Thom-of-BH-BG} that the canonical map
  $\bS \to \iota^*(E,BG)$ is a stable equivalence. To get an
  analogous statement for $\rho^*(E',B^{\times}(F))$, we use the absolute
  $\cI$-model structure and the standard levelwise replacement by a
  Hurewicz fibration to factor $\rho\iota$ as an $\cI$-equivalence
  $j\colon * \to X$ followed by a map $q\colon X \to B^{\times}(F)$ that is both
  an absolute $\cI$-fibration and a levelwise Hurewicz fibration. This
  factorization give rise to a commutative square
\[\xymatrix@-1pc{
(\rho\iota)^*(B^{\times}(\ucI,F,\bS)) \ar[d] \ar[rr] && q^*(B^{\times}(\ucI,F,\bS)) \ar[d] \\
(\rho\iota)^*(E',B^{\times}(F)) \ar[rr] && q^*(E',B^{\times}(F)).
}\]
The right hand vertical map is a local equivalence by Proposition~\ref{prop:weak-cogluing} and thus a stable equivalence after applying $\mathbb L \Theta$. The top horizontal map is a stable equivalence after applying $\mathbb L \Theta$ by~\cite[Theorem 1.4]{Schlichtkrull_Thom-symmetric} (where the \emph{$T$-goodness} assumption is taken care of by~\cite[Lemmas 2.2 and 2.3]{Schlichtkrull_Thom-symmetric}). Since $j$ is an $\cI$-equivalence and $(E',B^{\times}(F))$ is fibrant, Corollary~\ref{cor:characterization-local-equivalences} implies that the bottom horizontal map is a local equivalence. Hence $\bS \iso (\rho\iota)^* (B^{\times}(\ucI,F,\bS)) \to (\rho\iota)^*(E',B^{\times}(F))$ is a stable equivalence. 

It follows that  $ (E,BG) \to \rho^*(E',B^{\times}(F))$ induces a stable equivalence after pullback along $\iota$. Since the fibers are $\Omega$-spectra, the induced map of fibers is even a level equivalence. By inspecting the simplicial object defining $BG$, each $BG(\bld{n})$ is connected. The long exact sequences of homotopy groups of the Serre fibrations $E_n \to BG(\bld{n})$ and $\rho(\bld{n})^*E_n' \to BG(\bld{n})$ show that $(E,G) \to \rho^*(E',B^{\times}(F))$ is a level equivalence and hence a local equivalence.  
\end{proof}
We now explain how the proposition leads to a comparison of Thom spectrum functors. Let $R = \Sing \bS$ and let $G \to \GLoneIof{R}$ be a cofibrant replacement of its $\cI$-space units. (This is homotopically meaningful since $R$ is level fibrant and semistable.) By the discussion before Proposition~\ref{prop:TEG-top-simp}, we get a map $\bS^{\cI}[|G|] \to \bS$ and note that $|G|$ can be used as a model for the cofibrant replacement of the units of the sphere spectrum in topological spaces. 

Next we consider $\overline\gamma_{\bS}$, the bar resolution of $B^{\barsm}(\bS,\bS^{\cI}_t[G],R)$. It is the object in $\SpsymR$ defined by 
\[ (\overline\gamma_{\bS})_{n} = \hocolim_{\alpha\colon \bld{k} \to \bld{n}} B^{\barsm}(\bS,\bS^{\cI}_t[G],R)_k \barsm S^{\bld{n}-\alpha} \]
where the homotopy colimit is taken over the category $\cI\downarrow \bld{n}$.  The base $\cI$-space of $\overline\gamma_{\bS}$ is the bar resolution of $BG$ used in the $\cI$-spacification. As in the case of symmetric spectra (see~\cite[Section 7.3]{Blumberg-C-S_THH-Thom}), there is a canonical level equivalence $\overline\gamma_{\bS} \to B^{\barsm}(\bS,\bS^{\cI}_t[G],R)$. Applying realization, composing this map with the fibrant replacement defining $\gamma_R$, and using Proposition~\ref{prop:barsm-times-F-comparison} gives a zig-zag of local equivalences 
\begin{equation}\label{eq:gamma_R-equivalent-to-BF}
|\gamma_R| \ot |\overline\gamma_{\bS}| \to B^{\times}(*,F,\bS). 
\end{equation}
Let $\tau \colon K \to BG_{h\cI}$ be a map of simplicial sets. We note 
that its $\cI$-spacification $\tau_{\cI} \colon P_{\tau}(K) \to BG$ factors by
definition as an absolute $\cI$-fibration $\overline\tau_{\cI}\colon  P_{\tau}(K) \to \overline{BG}$ and the canonical map $t\colon \overline{BG} \to BG$. This factorization, the maps~\eqref{eq:gamma_R-equivalent-to-BF}, and the map from Lemma~\ref{lem:PM-natural} give rise to a zig-zag
\begin{equation}\label{eq:T_R-vs_T_BF}
|T_{\cR}(\tau)| = |T^{\cI}_{\cR}(\tau_{\cI})| \ot |\Theta(\overline\tau_I^* (\overline\gamma_{\bS}))|
\to {T}^{\cI}_{BF}((\rho_{h\cI}\circ |\tau|)_{\cI}) =  T_{BF}(\rho_{h\cI} \circ |\tau|).
\end{equation}
Here we use the bar resolution in the middle term since $\overline{\tau}_{\cI}$ being a fibration ensures that $\overline\tau_I^* (\overline\gamma_{\bS})$ captures a well-defined homotopy type without $\overline\gamma_{\bS}$ being locally fibrant. 
\begin{proposition}\label{prop:T_R-vs_T_BF}
The maps in~\eqref{eq:T_R-vs_T_BF} are natural monoidal stable equivalences that respect actions of operads augmented over the Barratt--Eccles operad. 
\end{proposition}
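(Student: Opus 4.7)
My plan is to verify each of the two maps in~\eqref{eq:T_R-vs_T_BF} is a stable equivalence by exploiting the zig-zag of local equivalences~\eqref{eq:gamma_R-equivalent-to-BF}, and then address the multiplicative and operadic compatibility.

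For the left-pointing map, I would first observe that the composite $\overline\gamma_{\bS} \to B^{\barsm}(\bS,\bS^{\cI}_t[G],R) \to \gamma_R$ is a local equivalence in $\SpsymR$ whose map on base $\cI$-spaces is $t\colon \overline{BG} \to BG$, an $\cI$-equivalence. By Corollary~\ref{cor:characterization-local-equivalences}(iii), this yields a local equivalence $\overline\gamma_{\bS} \to t^*(\gamma_R)$ in $\Spsym{\overline{BG}}$, with $t^*(\gamma_R)$ locally fibrant by Lemma~\ref{lem:base-change-local-Q-adjunction}. Since $\overline{\tau}_{\cI}$ is levelwise a Hurewicz fibration by construction of the $\cI$-spacification in Section~\ref{subsec:I-spacification}, a variant of the argument in Lemma~\ref{lem:f-upper-star-preserves-we} (or an adaptation of Proposition~\ref{prop:weak-cogluing} to this setting) shows that $\overline{\tau}_{\cI}^*$ preserves this local equivalence. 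Applying realization and then $\Theta$, whose homotopy invariance on bifibrant objects is standard, yields the first stable equivalence.

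For the right-pointing map, the starting point is Proposition~\ref{prop:barsm-times-F-comparison}, which provides a local equivalence $B^{\barsm}(\bS,\bS^{\cI}_t[G],\bS) \to B^{\times}(\ucI,F,\bS)$ with $\rho\colon BG \to B^{\times}(F)$ as the underlying $\cI$-equivalence on base $\cI$-space monoids. Composing with the level equivalence $|\overline\gamma_{\bS}| \to |B^{\barsm}(\bS,\bS^{\cI}_t[G],R)|$ and the map induced by the counit $|\Sing\bS| \to \bS$ produces the second half of~\eqref{eq:gamma_R-equivalent-to-BF} as a local equivalence of topological parametrized spectra. Pulling back along $|\overline{\tau}_{\cI}|$ and using Lemma~\ref{lem:PM-natural} to compare $\rho\circ|\overline{\tau}_{\cI}|$ with the $\cI$-spacification of $\rho_{h\cI}\circ|\tau|$ up to natural $\cI$-equivalence (valid since $\rho$ is an $\cI$-equivalence), Corollary~\ref{cor:characterization-local-equivalences} and the homotopy invariance of $\Theta$ then deliver the second stable equivalence. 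The multiplicative structure is automatic: the maps in~\eqref{eq:gamma_R-equivalent-to-BF} are maps of commutative parametrized ring spectra by construction, pullback is lax symmetric monoidal via~\eqref{eq:upper-star-lax-monoidal}, and $\Theta$ is strong symmetric monoidal by Lemma~\ref{lem:cobase-change-spym-lax-monoidal}. Since $P_X$ preserves actions of operads augmented over the Barratt--Eccles operad (Section~\ref{subsec:I-spacification}) and the naturality in Lemma~\ref{lem:PM-natural} is compatible with these actions, the operadic compatibility follows as well.

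The main obstacle will be the interplay between the simplicial and topological settings: $\gamma_R$ is constructed simplicially via Theorem~\ref{thm:positive-local-on-CSpsymR}, whereas $B^{\times}(\ucI,F,\bS)$ is inherently topological, so careful bookkeeping is needed to verify that realization preserves the bar resolution and fibrant replacement in the form required here. A second delicate point is rigorously establishing that $\overline{\tau}_{\cI}^*$ preserves the relevant local equivalences in the absence of $\cI$-fibrancy of $\overline{BG}$; here one may proceed either by a direct adaptation of Proposition~\ref{prop:weak-cogluing} to the Hurewicz fibration context, or by mimicking the argument in Lemma~\ref{lem:local-equiv-for-T_I_EG-T_I_R} to bypass the fibrancy issue via a cofibrant replacement in the slice.
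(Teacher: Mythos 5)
Your treatment of the first (left-pointing) map is essentially the paper's argument: you use that $\overline\gamma_{\bS}\to\gamma_R$ is a local equivalence over the $\cI$-equivalence $t$, reduce via Corollary~\ref{cor:characterization-local-equivalences} and Proposition~\ref{prop:weak-cogluing} to a local equivalence $\overline\tau_{\cI}^*(\overline\gamma_{\bS})\to\tau_{\cI}^*(\gamma_R)$ in the simplicial setting, and then use that the simplicial $\Theta$ sends local equivalences to stable equivalences. That part is fine (note that Proposition~\ref{prop:weak-cogluing} with $p=q=\id$ applies directly, since $\overline\tau_{\cI}$ is an absolute $\cI$-fibration by construction; no separate ``adaptation'' is needed). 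The claims about monoidality and operad actions are also handled as in the paper.

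There is, however, a genuine gap in your argument for the second (right-pointing) map, and it sits exactly at the point you wave at with ``the homotopy invariance of $\Theta$.'' That map lives entirely in the topological setting, where $\Theta=(X\to\ucI)_!$ does \emph{not} preserve local equivalences between arbitrary objects --- the paper is explicit that this homotopy invariance is a feature of the simplicial $\Theta$ only. After pulling back the local equivalence $|\overline\gamma_{\bS}|\to B^{\times}(\ucI,F,\bS)$ along the relevant $\cI$-fibrations (which does require Proposition~\ref{prop:weak-cogluing} rather than Corollary~\ref{cor:characterization-local-equivalences}, since neither $|\overline\gamma_{\bS}|$ nor $B^{\times}(\ucI,F,\bS)$ is locally fibrant), you are left with a local equivalence between topological parametrized spectra that are neither cofibrant nor fibrant, and you must still argue that $\Theta$ of this map is a stable equivalence. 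On the target side this is precisely the homotopy invariance of the \emph{classical} Thom spectrum functor $f\mapsto\Theta(f^*B^{\times}(\ucI,F,\bS))$ on pullbacks along levelwise Hurewicz fibrations, which the paper imports as an external input from \cite[Theorem~1.4]{Schlichtkrull_Thom-symmetric} (resting on the $T$-goodness results there and ultimately on \cite{LMS}); this is the same ingredient already needed in the proof of Proposition~\ref{prop:barsm-times-F-comparison}, which your argument quotes but whose mechanism you do not reproduce here. Without this input the final step does not go through, so your proposal needs to be completed by factoring $(\rho_{h\cI}\circ|\tau|)_{\cI}$ as an $\cI$-equivalence followed by a map that is both an absolute $\cI$-fibration and a levelwise Hurewicz fibration and then invoking \cite[Theorem~1.4]{Schlichtkrull_Thom-symmetric} to see that $\Theta$ computes its derived functor on these particular pullbacks.
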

\begin{proof}
The claim about monoidality and operad actions is clear since all constructions involved preserve these structures. To see that the first map in~\eqref{eq:T_R-vs_T_BF} is a stable equivalence, we factor $\tau_{\cI}$ as an $\cI$-equivalence followed by an absolute fibration and use Corollary~\ref{cor:characterization-local-equivalences}, Proposition~\ref{prop:weak-cogluing}, and the fact that the $\Theta$ in simplicial sets sends local equivalences to stable equivalences. For the second map, we argue as in the proof of the previous proposition, factor $(\rho_{h\cI}\circ |\tau|)_{\cI}$ as an $\cI$-equivalence followed by a map that is both an absolute $\cI$-fibration and a levelwise Hurewicz fibration, and apply Proposition~\ref{prop:barsm-times-F-comparison}, the $\cI$-equivalence part of Lemma~\ref{lem:PM-natural}, Proposition~\ref{prop:weak-cogluing}, and~\cite[Theorem~1.4]{Schlichtkrull_Thom-symmetric}. 
\end{proof} 
Together with the $\infty$-categorical comparison to be proved in Section~\ref{subsec:infty-thom}, these results now combine to the statement of  Theorem~\ref{thm:Thom-spectrum-comparison}:
\begin{proof}[Proof of Theorem~\ref{thm:Thom-spectrum-comparison}]
This is a combination of Propositions~\ref{prop:TEG-top-simp},~\ref{prop:TEG-vs-TR}, and~\ref{prop:T_R-vs_T_BF}, the $\infty$-categorical comparison in Lemma~\ref{lem:SpsymX-infty-functorial} and Proposition~\ref{prop:gamma_comp} below.
\end{proof}

\section{Comparison to \texorpdfstring{$\infty$}{infinity}-categorical parametrized spectra}\label{sec:infty-categorical}
If $\cC$ is a model category, we write $\cC_{\infty}$ for the underlying $\infty$-category of $\cC$. In particular, we write $\cS_{\infty}$ (resp.\ $\Sp_{\infty}$) for the $\infty$-category of spaces (resp.\ spectra). When $\cC$ is a symmetric monoidal model category, then $\cC_{\infty}$ inherits the structure of a symmetric monoidal $\infty$-category from $\cC$~\cite[Example 4.1.7.6]{Lurie_HA}. More specifically, the localization functor $\cC \rightarrow \cC_\infty$ is lax symmetric monoidal and strong symmetric monoidal when restricted to cofibrant objects, see \cite[Proposition~3.2.2]{Hinich_Dwyer-Kan} and also \cite[Appendix A]{Nikolaus-S_tc}.

\subsection{Comparison of categories}
The next lemma is closely related to \cite[Proposition B.1 and Theorem~B.4]{Ando-B-G_parametrized}. 
\begin{lemma}\label{lem:SpsymX-infty-identification}
  Let $X$ be an $\cI$-space. Then the $\infty$-category
  $(\Spsym{X})_{\infty}$ resulting from the local model structure on
  $\Spsym{X}$ is equivalent to $\mathrm{Fun}(X_{h\cI}, \Sp_{\infty})$,
  the $\infty$-category of functors from the underlying
  $\infty$-groupoid of $X_{h\cI}$ to the $\infty$-category of spectra.
\end{lemma}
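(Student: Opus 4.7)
The plan is to reduce to the case of a constant base $\cI$-space and then invoke known identifications of stabilizations. First, using Corollary~\ref{cor:SpsymX-vs-SpsymXhI}, the category $\Spsym{X}$ is Quillen equivalent to $\Spsym{\const_{\cI}X_{h\cI}}$ via a chain of base-change Quillen equivalences induced by the $\cI$-equivalences $\const_{\cI}X_{h\cI} \leftarrow \overline{X} \to X$. Since Quillen equivalences induce equivalences on underlying $\infty$-categories, we may assume $X = \const_{\cI}K$ for some Kan complex $K = X_{h\cI}$.

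Next, for $X = \const_{\cI}K$ the category $\Spsym{X}$ is equivalent to the category $\Spsym{}(\cS_K, S^1)$ of symmetric spectrum objects in $\cS_K$ with suspension $-\barsm S^1$, and under this equivalence the local model structure corresponds to the stable model structure (as noted after Definition~\ref{def:X-relative-sym-spectra}). Combining this with Lemma~\ref{lem:stabilization-quillen-equiv}, which identifies $\Spsym{X}$ with the stabilization of $\cS^{\cI}_X$, and the Quillen equivalence $\cS^{\cI}_{\const_{\cI}K} \simeq \cS_K$ (which reduces to the underlying fact that $\cS^{\cI} \to \cS$ via $\hocolim_{\cI}$ is a Quillen equivalence on slices), we conclude that $(\Spsym{X})_{\infty}$ is the $\infty$-categorical stabilization of $(\cS_K)_{\infty}$.

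To finish, we identify $(\cS_K)_{\infty}$ and its stabilization. The underlying $\infty$-category of $\cS_K = (\cS/K)_*$ is the pointed slice $(\cS_{\infty})_{K//K}$. Straightening/unstraightening for the right fibration over $\cS_{\infty}$ classified by the identity functor gives an equivalence $(\cS_{\infty})_{/K} \simeq \Fun(K, \cS_{\infty})$ (here $K$ is viewed as its underlying $\infty$-groupoid); pointing at $\id_K$ on one side corresponds to taking pointwise basepoints on the other, producing $(\cS_{\infty})_{K//K} \simeq \Fun(K, (\cS_{\infty})_*)$. Stabilization commutes with $\Fun(K,-)$ by \cite[Proposition 1.4.2.22]{Lurie_HA} (or its slice-category incarnation, e.g.\ \cite[\S 7.3.4]{Lurie_HA}), yielding $\mathrm{Sp}((\cS_{\infty})_{K//K}) \simeq \Fun(K, \Sp_{\infty})$, which is by construction the fiber of $T\cS_{\infty} \to \cS_{\infty}$ over $K$.

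The main obstacle is ensuring that Hovey's model-categorical stabilization procedure, as used in Lemma~\ref{lem:stabilization-quillen-equiv}, really computes the $\infty$-categorical stabilization of $(\cS_K)_\infty$; this is a now-standard comparison (see e.g.\ Robalo, or the discussion of stabilization in terms of $\Omega$-spectrum objects in \cite{Lurie_HA}), but one should carefully track through the Quillen equivalence $\cS^{\cI}_K \to \cS_K$ on pointed slices to see that the induced equivalence on stabilizations is compatible with the base-change reduction performed in the first step. Once this bookkeeping is in place, the equivalence of $\infty$-categories follows.
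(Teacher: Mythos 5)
Your proposal is correct and follows essentially the same route as the paper: reduce to the constant base $\const_{\cI}X_{h\cI}$ via Corollary~\ref{cor:SpsymX-vs-SpsymXhI}, identify the slice over $X_{h\cI}$ with $\Fun(X_{h\cI},\cS_\infty)$ by straightening, and use that stabilization commutes both with functor categories and with passage from (Hovey-style) model-categorical stabilizations to underlying $\infty$-categories. The "bookkeeping obstacle" you flag is exactly the point the paper disposes of by citing \cite[Corollary 10.4]{Hovey_symmetric-general} together with the compatibility of stabilization with underlying $\infty$-categories.
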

\begin{proof}
  By~\cite[Remark 1.4.2.9]{Lurie_HA}, stabilization commutes with the
  passage to presheaf categories. Hence $\mathrm{Fun}(X_{h\cI},
  \Sp_{\infty})$ is equivalent to $\Sp(\mathrm{Fun}(X_{h\cI},
  \cS_{\infty}))$, the stabilization of the category of space valued
  functors on $X_{h\cI}$. The $\infty$-category
  $\mathrm{Fun}(X_{h\cI}, \cS_{\infty})$ is equivalent to
  $\cS_\infty/X_{h\cI} \simeq (\cS/X_{h\cI})_{\infty}$ by \cite[Theorem 2.2.1.2]{Lurie_HTT}. The fact that stabilization commutes with the passage to underlying $\infty$-categories and \cite[Corollary 10.4]{Hovey_symmetric-general} imply that $\Sp((\cS/X_{h\cI})_{\infty})$ is equivalent to   $(\Spsym{X_{h\cI}})_{\infty}$.
  The claim follows from the Quillen equivalence between $\Spsym{X}$ and $\Spsym{X_{h\cI}}$ established in Corollary~\ref{cor:SpsymX-vs-SpsymXhI}.
\end{proof}
In the next step, we show that the equivalence in
the last lemma is natural so that we can identify 
the functors between $\Spsym{X}$ and $\Spsym{Y}$ induced by
$f\colon X \to Y$ with their $\infty$-categorical counterparts including
coherences between compositions.

\begin{lemma}\label{lem:SpsymX-infty-functorial} The functors
  $(\cS^{\cI})_\infty\! \to\!\mathrm{Cat}_{\infty}$ given by $(\Spsym{(-)}\!)_{\infty}$  and  
  $\mathrm{Fun}((-)_{h\cI}, \Sp_{\infty})$ are equivalent.
\end{lemma}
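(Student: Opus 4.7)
The plan is to realize both $\infty$-functors as classifying maps for canonically constructed cocartesian fibrations over $(\cS^{\cI})_\infty$ and then identify those fibrations by a fiberwise argument. First, by Theorem \ref{thm:local-Grothendieck-construction}, the local model structure on $\SpsymR$ is the Harpaz--Prasma integral model structure associated to the pseudofunctor $X \mapsto \Spsym{X}$. A general principle about such integral model structures is that they localize to cocartesian fibrations of $\infty$-categories whose cocartesian transport is the left derived pushforward. Applying this, the projection $\pi_b\colon \SpsymR \to \cS^{\cI}$ yields a cocartesian fibration $(\pi_b)_\infty\colon (\SpsymR)_\infty \to (\cS^{\cI})_\infty$ with fiber $(\Spsym{X})_\infty$ over $X$ and cocartesian transport $\mathbb{L}f_!$ over $f\colon X \to Y$. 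By straightening, this fibration classifies precisely the functor $(\Spsym{(-)})_\infty$.

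Dually, the functor $\Fun((-)_{h\cI},\Sp_\infty)$ factors as $(-)_{h\cI}\colon (\cS^{\cI})_\infty \to \cS_\infty$ followed by $\Fun(-,\Sp_\infty)\colon \cS_\infty \to \mathrm{Cat}_\infty$, and the latter is classified by the tangent $\infty$-category $T\cS_\infty \to \cS_\infty$. Hence the composite is classified by the pulled-back cocartesian fibration $(\cS^{\cI})_\infty \times_{\cS_\infty} T\cS_\infty \to (\cS^{\cI})_\infty$, whose cocartesian transport over $f$ is $(f_{h\cI})_!$. An equivalence between our two functors therefore amounts to an equivalence between these two cocartesian fibrations over $(\cS^{\cI})_\infty$, which by standard $\infty$-categorical criteria is detected fiberwise once a cocartesian functor between them has been exhibited.

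The fiberwise equivalence is supplied by Lemma \ref{lem:SpsymX-infty-identification}, which is built from the zig-zag $\const_{\cI}X_{h\cI} \leftarrow \overline{X} \to X$ of $\cI$-equivalences appearing in Corollary \ref{cor:SpsymX-vs-SpsymXhI} together with the stabilization comparison \cite[Corollary 10.4]{Hovey_symmetric-general}. Since each step of the zig-zag is natural in $X$, they assemble into a map of cocartesian fibrations. Compatibility with cocartesian edges reduces to checking that under the comparison $\mathbb{L}f_!$ corresponds to $(f_{h\cI})_!$, which is immediate from the identification \eqref{eq:base-change-cobase-change} of $f_!$ with cobase change along $\bS^{\cI}_b[f]$ together with the matching description of $(f_{h\cI})_!$ as base change.

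The main obstacle is rigorously carrying out the first step: establishing that the integral model structure on $\SpsymR$ localizes to a cocartesian fibration of $\infty$-categories classifying $(\Spsym{(-)})_\infty$ with the expected fibers and transport. This is plausible on general grounds but needs care; one route is to strictify the pseudofunctor $X \mapsto \Spsym{X}$ and then invoke a relative nerve construction as in \cite{Lurie_HTT}, another is to verify the cocartesian lifting property of $(\pi_b)_\infty$ directly, using that the Quillen adjunctions $(f_!,f^*)$ descend to $\infty$-categorical adjunctions on the fibers and that $f_!$ of a cofibrant object computes the correct derived pushforward by Lemma \ref{lem:base-change-local-Q-adjunction} and Corollary \ref{cor:base-change-local-Q-adjunction}.
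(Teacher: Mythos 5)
Your overall architecture --- realize both functors as classifying cocartesian fibrations over $(\cS^{\cI})_\infty$ and compare the fibrations --- matches the paper's opening move ("show that they classify equivalent bicartesian fibrations"), and your first two steps are sound: the passage from the integral model structure of Theorem~\ref{thm:local-Grothendieck-construction} to a cocartesian fibration with fibers $(\Spsym{X})_\infty$ and transport $\mathbb{L}f_!$ is exactly what \cite[Proposition~3.1.2]{Harpaz-P_Grothendieck-construction} provides, and the paper itself invokes this in Proposition~\ref{prop:infty comp}.

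The gap is in your third step. You assert that the fiberwise equivalences of Lemma~\ref{lem:SpsymX-infty-identification} "assemble into a map of cocartesian fibrations" because "each step of the zig-zag is natural in $X$." That assembly is precisely the hard part, and it is not justified by pointwise naturality. The equivalence of Lemma~\ref{lem:SpsymX-infty-identification} is a chain of abstract identifications (stabilization commutes with presheaves, unstraightening, \cite[Corollary~10.4]{Hovey_symmetric-general}, and a zig-zag of Quillen equivalences whose legs point in opposite directions), each involving choices; promoting them to a single functor over $(\cS^{\cI})_\infty$ that carries cocartesian edges to cocartesian edges requires a coherence argument that the proposal does not supply. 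Likewise, "checking that $\mathbb{L}f_!$ corresponds to $(f_{h\cI})_!$" presupposes that the comparison functor has already been built. The paper circumvents exactly this difficulty by a different mechanism: it exhibits both $(\SpsymR)_\infty \to \Ar(\cS^\cI)_\infty \to (\cS^\cI)_\infty$ (via $\Omega^{\cI}_{\mathrm{ar}}$ and $\pi_b$) and $T\cS_\infty \to \Ar(\cS_\infty) \to \cS_\infty$ as \emph{stable envelopes} in the sense of \cite[Definition~7.3.1.1]{Lurie_HA} of equivalent presentable fibrations, and then invokes the uniqueness of stable envelopes \cite[Proposition~7.3.1.7]{Lurie_HA}. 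This reduces the comparison to the unstable statement that $\Ar(\cS^{\cI})_\infty \to (\cS^{\cI})_\infty$ pulls back from $\mathrm{ev}_1 \colon \Ar(\cS_\infty) \to \cS_\infty$, which follows from the Quillen equivalence $\colim_{\cI} \dashv \const_{\cI}$ and unstraightening. To repair your argument you would either need to carry out a genuine rectification (e.g.\ via a relative nerve, as you suggest in passing) or replace the hand-built comparison by a universal-property argument of the kind the paper uses.
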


\begin{proof}
  Our strategy is to show that they classify equivalent bicartesian
  fibrations. For this we first show that (up to replacement by
  categorical fibrations) the functors
  \[ (\SpsymR)_{\infty} \xrightarrow{u} \Ar(\cS^\cI)_{\infty}
  \xrightarrow{p} (\cS^{\cI})_{\infty}\]
  induced by the right Quillen functors
  $\Omega^\cI_\mathrm{ar} = \pi^\cI_{\mathrm{ar}} \circ
  \Omega^{\cI}_{\cR}$
  and $\pi_b$ (see
  Corollary~\ref{cor:S-I-b-left-right-Quillen-locally}
  and Lemmas~\ref{lem:Q-adjunctions-on-local-model-str} and~\ref{lem:SIar-OmegaIar-Quillen-adjunction}) exhibit $u$ as the
  stable envelope of $p$ in the sense of \cite[Definition~7.3.1.1]{Lurie_HA}. For \cite[Definition~7.3.1.1(1)]{Lurie_HA}, we
  note that both $p$ and~$pu$ are presentable fibrations because the
  fibers are the underlying $\infty$-categories of combinatorial model
  categories (using the simplicial version of our categories) and they
  are Cartesian and coCartesian by~\cite[Proposition
  3.1.2]{Harpaz-P_Grothendieck-construction} and (the dual
  of)~\cite[Corollary 5.2.2.5]{Lurie_HTT}. For the condition
  \cite[Definition 7.3.1.1(2)]{Lurie_HA} we need the following
  observation: By \cite[Proposition 2.4.4.3]{Lurie_HTT} a morphism
  $(f,g)\colon (E,X) \rightarrow (E',X')$ in $(\SpsymR)_{\infty}$ is
  $pu$-cartesian if (after (co)fibrant replacement of source and
  target, respectively) the induced map
  $(f,\mathrm{id}) \colon (E,X) \rightarrow g^*(E',X')$ is a local
  equivalence, and similarly for the case of $p$-cartesian
  morphisms. But this condition is preserved by $u$ since
  $g^* \pi^\cI_{\mathrm{ar}}\Omega^\cI_{X'} \cong \Omega^\cI_{X}
  \pi^\cI_{\mathrm{ar}} g^*$
  for any map $g \colon X \rightarrow X'$ and all functors involved
  are right Quillen. For \cite[Definition 7.3.1.1(3)]{Lurie_HA}, we
  note that \cite[Example 7.3.1.4]{Lurie_HA} and
  Lemma~\ref{lem:stabilization-quillen-equiv} imply that $u$ restricts
  to a stable envelope on fibers.

   Now, \[ \int_{\cS_{\infty}}\!\!\!\mathrm{Fun}(-,\Sp_{\infty})
\xrightarrow{\int \Omega^\infty} \int_{\cS_{\infty}}\!\!\!\mathrm{Fun}(-,(\cS_{\infty})_\ast) \xrightarrow{\mathrm{pr}} \cS_{\infty}\]
is also well-known to be a stable envelope. Thus the uniqueness theorem  \cite[Proposition 7.3.1.7(3)]{Lurie_HA} reduces the claim to the corresponding space level statement. The latter is obtained from the Quillen equivalence $\colim_{\cI} \colon \cS \rightleftarrows \cS^\cI\colon \const_\cI$ and the equivalences 
\[\xymatrix@-1pc{\mathrm{Ar}(\cS_{\infty}) \ar[drr]_{\mathrm{ev}_1}\ar[rr]^-\simeq && \int_{\cS_{\infty}}\!\!\!\cS_{\infty}/-  \ar[rr]^-\simeq \ar[d]^{\pi_b} && \int_{\cS_{\infty}}\!\!\!\mathrm{Fun}(-,\cS_{\infty}) \ar[lld]\\
  && \cS_\infty && }\]
of bicartesian fibrations where the right equivalence follows from the unstraightening equivalence \cite[Theorem 2.2.1.2]{Lurie_HTT}, and the left is immediate from the main theorem of~\cite{Harpaz-P_Grothendieck-construction} together with the compatibility between diagram categories of model and $\infty$-categories.
\end{proof}
Next we give an $\infty$-categorical interpretation of the category of symmetric spectra in retractive spaces $\SpsymR$. To do so let us denote 
\[T\cS_\infty = \int_{\cS_{\infty}}\!\!\!\mathrm{Fun}(-,\Sp_{\infty}),\]
the $\infty$-categorical Grothendieck construction of $\mathrm{Fun}(-,\Sp_{\infty}) \colon \cS_{\infty} \to \mathrm{Cat}_{\infty}$, which is a model for the tangent bundle of the category $\cS_\infty$ as in \cite[Section 7.3.1]{Lurie_HA}.

\begin{proposition}\label{prop:infty comp}
  The $\infty$-category $(\SpsymR)_{\infty}$ resulting from the
  local model structure on $\SpsymR$ is canonically equivalent to $T\cS_\infty$. 
\end{proposition}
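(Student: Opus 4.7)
The plan is to assemble this from results already in hand. First I would identify $(\SpsymR)_{\infty}$ with an $\infty$-categorical Grothendieck construction using Theorem~\ref{thm:local-Grothendieck-construction}: since the local model structure on $\SpsymR$ agrees with the integral model structure coming from the pseudofunctor $X \mapsto \Spsym{X}$ on $\cS^{\cI}$, general compatibility between Harpaz--Prasma's integral model structures and $\infty$-categorical Grothendieck constructions (which was in fact already verified in the proof of Lemma~\ref{lem:SpsymX-infty-functorial}, where the right Quillen functor $\pi_b$ was shown to model a presentable fibration whose cartesian edges correspond to the expected ones) produces a canonical equivalence
\[(\SpsymR)_{\infty} \simeq \int_{(\cS^{\cI})_{\infty}}\!\!\!(\Spsym{(-)})_{\infty}.\]

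Next I would feed in Lemma~\ref{lem:SpsymX-infty-functorial}, which identifies the functor $(\Spsym{(-)})_{\infty} \colon (\cS^{\cI})_{\infty} \to \mathrm{Cat}_{\infty}$ with $\mathrm{Fun}((-)_{h\cI}, \Sp_{\infty})$, to rewrite the above as
\[\int_{(\cS^{\cI})_{\infty}}\!\!\!\mathrm{Fun}((-)_{h\cI}, \Sp_{\infty}).\]
Finally, the Quillen equivalence $\hocolim_{\cI}\colon \cS^{\cI} \rightleftarrows \cS\colon \const_{\cI}$ (Example~\ref{ex:pseudo-local}(i)) induces an equivalence $(-)_{h\cI}\colon (\cS^{\cI})_{\infty}\xrightarrow{\simeq}\cS_{\infty}$, and pulling the diagram $\mathrm{Fun}(-,\Sp_\infty)$ on $\cS_{\infty}$ back along this equivalence recovers the integrand, so the last Grothendieck construction is equivalent to $\int_{\cS_{\infty}}\mathrm{Fun}(-,\Sp_{\infty}) = T\cS_{\infty}$.

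The only real obstacle is the first step: making sure that passing from the integral model structure of Harpaz--Prasma to the underlying $\infty$-category yields the expected $\infty$-categorical Grothendieck construction. As remarked, this is essentially what the proof of Lemma~\ref{lem:SpsymX-infty-functorial} already carries out by verifying that the Quillen adjunctions $(f_!, f^*)$ induce cartesian and cocartesian edges for the map $(\SpsymR)_\infty \to (\cS^{\cI})_\infty$ and identifying its fibers with $(\Spsym{X})_\infty$; combined with \cite[Proposition~2.4.4.3]{Lurie_HTT} and the standard (un)straightening machinery, this yields the required identification. All other steps are formal consequences of what has been established.
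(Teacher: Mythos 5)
Your argument is correct and follows essentially the same route as the paper's proof: identify $(\SpsymR)_{\infty}$ with the $\infty$-categorical Grothendieck construction of $(\Spsym{(-)})_{\infty}$ via Theorem~\ref{thm:local-Grothendieck-construction} and the Harpaz--Prasma compatibility, rewrite the integrand using Lemma~\ref{lem:SpsymX-infty-functorial}, and conclude with the Quillen equivalence between $\cS^{\cI}$ and $\cS$. The paper handles your ``only real obstacle'' exactly as you suggest, by citing \cite[Proposition~3.1.2]{Harpaz-P_Grothendieck-construction} together with the fibration analysis already carried out in the proof of Lemma~\ref{lem:SpsymX-infty-functorial}.
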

\begin{proof} 
  Combining~\cite[Proposition
  3.1.2]{Harpaz-P_Grothendieck-construction},
  Theorem~\ref{thm:local-Grothendieck-construction}, and
  Lemma~\ref{lem:SpsymX-infty-identification}, we see that the
  $\infty$-category $(\SpsymR)_{\infty}$ is equivalent to the
  $\infty$-categorical Grothendieck construction of
  $(\Spsym{(-)})_{\infty}\colon (\cS^{\cI})_{\infty} \to
  \mathrm{Cat}_{\infty}$.
  By Lemma~\ref{lem:SpsymX-infty-functorial}, we can identify it with the
  $\infty$-categorical Grothendieck construction of
  $\mathrm{Fun}(-,\Sp_{\infty})\colon (\cS^{\cI})_{\infty} \to
  \mathrm{Cat}_{\infty}$.
  The claim follows because $\cS^{\cI}$ and $\cS$ are Quillen
  equivalent~\cite[Theorem~3.3]{Sagave-S_diagram}.
\end{proof}

\subsection{Comparison of symmetric monoidal categories}
When $\cC$ is a symmetric monoidal $\infty$-category, then the Day
convolution product gives rise to a symmetric monoidal structure on
$\mathrm{Fun}(\cC,\cS_{\infty})$, see~\cite{Gla}.
Using~\cite[Theorem 5.1]{Gepner-G-N_universality} and the equivalence
$\mathrm{Sp}(\mathrm{Fun}(\cC,\cS_{\infty})) \simeq
\mathrm{Fun}(\cC,\Sp_{\infty})$, we thus get a uniquely determined
symmetric monoidal structure on $\mathrm{Fun}(\cC,\Sp_{\infty})$.

If $M$ is a commutative $\cI$-space monoid, then $M_{h\cI}$ inherits
an action of the Barratt--Eccles operad~\cite[Proposition
6.5]{Schlichtkrull_Thom-symmetric}.  By~\cite[Proposition
4.1]{Nikolaus_S-presentably}, the underlying $\infty$-groupoid of
$M_{h\cI}$ represents a symmetric monoidal $\infty$-groupoid, and
$\mathrm{Fun}(M_{h\cI}, \Sp_{\infty})$ is a symmetric monoidal
$\infty$-category by the above discussion.

\begin{theorem}\label{thm:SpsymM-infty-identification}
  Let $M$ be a commutative $\cI$-space monoid. Then the symmetric
  monoidal $\infty$-category $(\Spsym{M})_{\infty}$ resulting from the
  absolute or positive local model structures on $\Spsym{M}$ and
  $\mathrm{Fun}(M_{h\cI}, \Sp_{\infty})$ are equivalent as symmetric
  monoidal $\infty$-categories.
\end{theorem}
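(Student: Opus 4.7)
\emph{Strategy.} The plan is to deduce this from the symmetric monoidal refinement of the underlying equivalence $(\SpsymR)_\infty \simeq T\cS_\infty$ of Proposition~\ref{prop:infty comp}. More precisely, once one knows that this equivalence is symmetric monoidal when $\SpsymR$ is equipped with the external $\barsm$-product of Proposition~\ref{prop:barsm-pushout-product-level} and $T\cS_\infty$ is equipped with the external smash product arising from the fiberwise stabilization of the cartesian symmetric monoidal structure on $\cS_\infty$, the present statement should follow by internalizing along the multiplication of $M$ on both sides. I will take this external comparison---the first assertion of Theorem~\ref{thm:infty-categorical-products-intro}---as input, noting that it will require the stabilization machinery for fibrations of $\infty$-operads promised in the introduction as an extension of \cite{nik-stable}.

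\emph{Internalization.} By construction, the convolution product on $\Spsym{M}$ factors as
\[ \Spsym{M}\times \Spsym{M}\xrightarrow{\barsm} \Spsym{M\boxtimes M} \xrightarrow{\mu_!} \Spsym{M}. \]
Under the natural equivalence of Lemma~\ref{lem:SpsymX-infty-functorial} applied to the multiplication $\mu\colon M\boxtimes M \to M$, the derived functor $\mathbb L\mu_!$ corresponds to left Kan extension along the induced map $(M\boxtimes M)_{h\cI}\to M_{h\cI}$; since $M_{h\cI}$ is the underlying $E_\infty$-space of $M$, this map is equivalent to the multiplication $M_{h\cI}\times M_{h\cI}\to M_{h\cI}$. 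Combining this with the external symmetric monoidal comparison, the convolution product on $(\Spsym{M})_\infty$ is identified with the composite
\[ \Fun(M_{h\cI},\Sp_\infty)\times \Fun(M_{h\cI},\Sp_\infty) \to \Fun(M_{h\cI}\times M_{h\cI},\Sp_\infty) \to \Fun(M_{h\cI},\Sp_\infty), \]
in which the first arrow is the external smash product of functors and the second is left Kan extension along multiplication. By the universal characterization of Day convolution due to Glasman \cite{Gla}, this composite is precisely the Day convolution product.

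\emph{Main obstacle.} The genuine difficulty lies not in identifying the products object-by-object but in producing the required higher coherences: one must promote the bifibration $(\SpsymR)_\infty \to (\cS^\cI)_\infty$ to a bifibration of symmetric monoidal $\infty$-categories with the cartesian structure on the base and the external smash on the total space, and then exhibit $\mu$ as the multiplication of a commutative algebra object in the base so that $\mathbb L\mu_!$ inherits a canonical symmetric monoidal structure. This is where the operadic stabilization results extending \cite{nik-stable} become indispensable; once they are in place, the identification of the internalized external product with Day convolution is formal from the universal property of left Kan extension along symmetric monoidal functors.
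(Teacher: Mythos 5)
Your strategy is sound, but it is genuinely different from the paper's. The paper proves this theorem \emph{before} and independently of the external monoidal comparison $(\SpsymR)_\infty \simeq T\cS_\infty$: it regards $M \mapsto (\Spsym{M})_\infty$ and $M \mapsto \Fun(M_{h\cI},\Sp_\infty)$ as two functors $(\cC\cS^{\cI})_\infty \to \mathrm{SymMonCat}_\infty$ lifting the same functor to $\mathrm{Cat}_\infty$ (Lemma~\ref{lem:SpsymX-infty-functorial}) and then adapts the rigidity argument of \cite{Nikolaus_S-presentably}*{Proposition 2.4} to promote the natural equivalence of underlying functors to a symmetric monoidal one; no operadic stabilization is needed for that. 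Your route instead takes the fibered comparison of Theorem~\ref{thm: comp-mult-total} (which is where the extension of \cite{nik-stable}, i.e.\ Proposition~\ref{fibremon}, enters) as input and internalizes over $M$. This is legitimate --- Theorem~\ref{thm: comp-mult-total} is proved without appeal to the present statement, so there is no circularity --- and it has the conceptual advantage of explaining why Day convolution appears, namely as the internalization of the external smash product along the multiplication. What it costs, beyond the coherence issue you already flag, are two points you should make explicit: (i) you must match $M$ with $M_{h\cI}$ as commutative algebra objects under the base equivalence $(\cC\cS^{\cI})_\infty \simeq \mathrm{Alg}_{E_\infty}(\cS_\infty)$, which is the content of the paper's Lemma~\ref{Gl cpmp} and rests on Hinich's monoidality of the localization functor on cofibrant objects; and (ii) you must check that the symmetric monoidal structure which the model structure of Proposition~\ref{prop:SpsyM-pushout-product} induces on $(\Spsym{M})_\infty$ agrees with the fiber of the cocartesian fibration $(\SpsymR)^{\barsm}_\infty \to \cS_\infty^\times$ over the algebra $M$, i.e.\ that passing to underlying $\infty$-categories commutes with the internalization $\mu_!\circ(-\barsm-)$. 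Neither step is automatic, though both are within reach of the paper's tools; with them supplied, your identification of the internalized product on $\Fun(M_{h\cI},\Sp_\infty)$ with Day convolution via \cite{Gla} completes the argument.
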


\begin{remark}
There is a functorial rigidification $(-)^\mathrm{rig}$ of $E_{\infty}$ spaces (in their incarnation as spaces with an action of the Barratt-Eccles operad) to
commutative $\cI$-space monoids, so that $M
\simeq (M^{\mathrm{rig}})_{h\cI}$ as $E_{\infty}$
spaces~\cite[Corollary 3.7]{Sagave-S_diagram}. In this situation, the
previous theorem implies that $\Spsym{M^{\mathrm{rig}}}$ represents
the symmetric monoidal $\infty$-category $\mathrm{Fun}(M,
\Sp_{\infty})$. 

The existence of a rigidification of $\mathrm{Fun}(M,
\Sp_{\infty})$ is also implied by ~\cite[Theorem
1.1]{Nikolaus_S-presentably}, but the above construction provides a smaller model.
\end{remark}

\begin{proof}[Proof of Theorem~\ref{thm:SpsymM-infty-identification}]
  We show that
  the assignments $M \mapsto \mathrm{Fun}(M_{h\cI},\Sp_{\infty})$ and
  $M \mapsto (\Spsym{M})_{\infty}$ are equivalent as functors
  $(\cC\cS^{\cI})_{\infty} \to \mathrm{SymMonCat}_{\infty}$. By
  Lemma \ref{lem:SpsymX-infty-functorial}, they are equivalent as
  functors to $\mathrm{Cat}_{\infty}$. So it remains to compare the
  symmetric monoidal structures. For this, we adapt the argument given
  in the proof of \cite[Proposition 2.4]{Nikolaus_S-presentably}. The
  only change that is necessary to apply it in the case at hand is
  that we have to argue with the forgetful functor
  $U \colon (\cC\cS^{\cI})_{\infty} \to \cS_{\infty}$ rather than with the one
  $\mathrm{SymMonCat}_{\infty} \to \mathrm{Cat}_{\infty}$.
\end{proof}

In order to compare our construction of the universal bundle $\gamma_R$ with that of \cite{Ando-B-G-H-R_infinity-Thom}, we also have to compare the monoidal structures on $\SpsymR$ and $T\cS_\infty$. We are, however, not aware of a construction of the requisite symmetric monoidal structures on tangent categories in the literature (\cite[Example 7.3.1.15]{Lurie_HA} only gives the cartesian monoidal structure on $T\cS_\infty$). To describe it,
consider therefore the following general situation: 
Suppose we are given cocartesian fibrations of $\infty$-operads
\[\xymatrix@-1pc{\cC^\otimes \ar[r]^-p\ar[rd] & \cD^\otimes \ar[d]^\chi \\
                               & \NFin.}\]
admitting finite operadic limits. In particular, by definition $\cC$ and $\cD$ are then symmetric monoidal $\infty$-categories (see \cite[Example 2.1.2.18]{Lurie_HA}). The example of relevance for us is $\cD^\otimes = \cS_\infty^\times$ and $\cC^{\tensor} = \Ar(\cS_\infty)^\times$ with $p$ the projection to the base space. We are then looking for a left exact map of $\infty$-operads $u \colon \cE^\otimes \rightarrow \cC^\otimes$, i.e., a lax symmetric monoidal functor, that exhibits $\cE^\otimes$ as a fiberwise stabilization of $p$ in the following sense. We require that $\cE$ is a stable $\cD$-monoidal $\infty$-category in the sense of \cite[Definition 7.3.4.1]{Lurie_HA} and for all other stable $\cD$-monoidal $\infty$-categories $\cB$, postcomposition with $u$ induces an equivalence
\[\mathrm{Op}^\mathrm{lex}_{/\cD}(\cB,\cE) \longrightarrow \mathrm{Op}^\mathrm{lex}_{/\cD}(\cB,\cC)\]
where we have used $\mathrm{Op}^\mathrm{lex}$ to denote the category of operads with finite operadic limits and operad maps preserving these (as in \cite[Definition 2.3]{nik-stable}). The case $\cD = pt$ (i.e., $\cD^\otimes = \NFin, \chi = \id$) is extensively discussed in both \cite[Sections 6.2.4 - 6.2.6]{Lurie_HA} and \cite[Section 4]{nik-stable}. The construction in \cite[Section 6.2.5]{Lurie_HA} provides a candidate $u \colon \mathrm{St}_\chi(p) \rightarrow \cC^\otimes$ for such a fiberwise stabilization also in our case. We will review this construction in the next proof.  

Already for $\chi = \id$, however, the construction in general does not provide a map $u$ such that $pu$ is cocartesian, but rather only locally so, for example if $\cC$ is the category of pointed spaces under the cartesian product, see \cite[Example~6.2.4.17]{Lurie_HA}. 

A useful criterion for $pu$ to be cocartesian is established in \cite[Proposition~4.9]{nik-stable}. This result readily generalizes to give the following:

\begin{proposition}\label{fibremon}
  Let $p \colon \cC^\otimes \rightarrow \cD^\otimes$ be a cocartesian fibration of $\infty$-operads between symmetric monoidal $\infty$-categories. Assume further that $\cC$ is differentiable, and that for any multimorphism $(d_1, \dots, d_n) \rightarrow d'$ in $\cD^\otimes$ the induced functor $\cC_{d_1} \times \dots \times \cC_{d_n} \rightarrow \cC_{d'}$ commutes with colimits in each variable.

Then $pu \colon \mathrm{St}_\chi(p) \rightarrow \cD^\otimes$ makes $\mathrm{St}_\chi(p)$ a cocartesian fibration of $\infty$-operads. In particular, the composite $\chi pu$ makes $\mathrm{St}_\chi(p)$ into a symmetric monoidal $\infty$-category and $u \colon \mathrm{St}_\chi(p) \rightarrow \cC^\otimes$ is a map of $\infty$-operads, i.e., it is lax symmetric monoidal. Furthermore, the $\cD$-algebra structure on $\mathrm{St}_\chi(p)$ satisfies the same commutation with colimits as that of $\cC$, in particular the symmetric monoidal structure commutes with colimits in each variable. Finally, $u$ exhibits $\mathrm{St}_\chi(p)$ as a fiberwise stabilization of $p$ in the sense above.
\end{proposition}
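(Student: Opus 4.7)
My plan is to deduce the proposition by relativizing the argument of \cite[Proposition 4.9]{nik-stable} fiberwise over $\cD^\otimes$. First I would unpack Lurie's construction from \cite[Section 6.2.5]{Lurie_HA}: by definition $\mathrm{St}_\chi(p)$ is built so that for each colour $d \in \cD$ the fiber $\mathrm{St}_\chi(p)_d$ is the stable $\infty$-category $\Sp(\cC_d)$, and the functor $u \colon \mathrm{St}_\chi(p) \to \cC^\otimes$ is fiberwise the canonical lax functor $\Omega^\infty \colon \Sp(\cC_d) \to \cC_d$ composed with the inclusion of fibers. Since $\cC$ is differentiable by assumption, each $\cC_d$ is differentiable and hence $\Sp(\cC_d)$ exists as a presentable stable $\infty$-category. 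This immediately gives the fiberwise universal property of $u$ and the existence of finite operadic limits in $\mathrm{St}_\chi(p)$.

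The main work is to show that $pu$ is a cocartesian fibration of $\infty$-operads. By the edgewise criterion for cocartesian fibrations \cite[Proposition 2.4.2.8]{Lurie_HTT}, it suffices to produce, for every multimorphism $\phi \colon (d_1,\dots,d_n) \to d'$ in $\cD^\otimes$, a functor $\phi_! \colon \Sp(\cC_{d_1}) \times \dots \times \Sp(\cC_{d_n}) \to \Sp(\cC_{d'})$ together with a lax-natural comparison to the corresponding functor on $\cC$ that witnesses cocartesianness. The candidate is obtained from the cocartesian lift $\phi_! \colon \cC_{d_1} \times \dots \times \cC_{d_n} \to \cC_{d'}$ of $\phi$ in $\cC^\otimes$: by assumption it preserves colimits in each variable separately, and the differentiability of the targets lets us invoke the universal property of multivariable stabilization (the relative version of \cite[Corollary 1.4.2.23]{Lurie_HA}) to obtain a multivariable exact functor $\phi_!^\Sp$ on stabilizations together with a commuting transformation to $\phi_! \circ (\Omega^\infty \times \dots \times \Omega^\infty)$. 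The coherent dependence of this construction on $\phi$ (necessary to assemble everything into a functor of $\infty$-operads) is precisely what the proof of \cite[Proposition 4.9]{nik-stable} establishes in the absolute case, and the same argument applies here: the only thing one uses about $\NFin$ in that argument is that it is an $\infty$-operad whose multi-mapping spaces are discrete, and the argument goes through verbatim if one replaces maps in $\NFin$ by multimorphisms in $\cD^\otimes$ and uses that $\chi$ itself is a fibration of $\infty$-operads.

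Once $pu$ is a cocartesian fibration of $\infty$-operads, the composite $\chi p u$ is automatically a cocartesian fibration of $\infty$-operads (being a composite of such), and hence makes $\mathrm{St}_\chi(p)$ a symmetric monoidal $\infty$-category. The lax symmetric monoidality of $u$ is built into the construction in the previous step, since $u$ was defined to be compatible with the cocartesian edges of $\phi_!^\Sp$ up to the natural transformations to $\phi_!$. Commutation of the monoidal operations with colimits in each variable is inherited from $\cC$ because $\phi_!^\Sp$, being the stabilization of a colimit-preserving functor, itself preserves colimits in each variable (stabilization commutes with colimits of presentable categories).

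Finally, for the universal property exhibiting $\mathrm{St}_\chi(p)$ as a fiberwise stabilization, I would invoke the fiberwise universal property. Given a stable $\cD$-monoidal $\infty$-category $\cB$, a left exact $\cD$-lax map $\cB \to \cC$ is determined by its restrictions $\cB_d \to \cC_d$ compatible with the multifunctors $\phi_!$, and for each stable $\cB_d$ these factor uniquely through $\Omega^\infty \colon \Sp(\cC_d) \to \cC_d$. The compatibility with multifunctors then lifts uniquely through the $\phi_!^\Sp$'s by the multivariable universal property used above. The main obstacle I anticipate is the coherence bookkeeping: verifying that the pointwise lifts $\phi_!^\Sp$ assemble into a genuine $\infty$-operadic structure (and not just a collection of compatible functors between fibers), which forces one to actually reproduce the simplicial-set level construction of \cite[Section 6.2.5]{Lurie_HA} rather than just invoking it as a black box.
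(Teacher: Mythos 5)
Your overall target is right, but the route you propose has a genuine gap at its central step, and you in fact flag it yourself at the end: you never explain how the fiberwise stabilizations and the multivariable functors $\phi_!^{\Sp}$ are assembled into actual $pu$-cocartesian edges of the simplicial set $\mathrm{St}_\chi(p)$. Reducing via the edge criterion \cite[Proposition 2.4.2.8]{Lurie_HTT} does not let you \emph{define} the pushforward by a universal property of the fibers $\Sp(\cC_d)$; you must exhibit, inside $\mathrm{St}_\chi(p)$ as constructed in \cite[Construction 6.2.5.20]{Lurie_HA} (i.e.\ in terms of reduced excisive functors $(\cS^{\mathrm{fin}}_*)^\wedge_{\chi q(v)}\to\cC^\otimes_{q(v)}$), an edge over $\phi$ with the required lifting property. "The same argument applies verbatim with $\NFin$ replaced by $\cD^\otimes$" is precisely the assertion that needs proof, and your closing admission that one is "forced to actually reproduce the simplicial-set level construction" concedes that the proof is not complete.

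The paper's proof fills exactly this hole by a two-step argument that you may want to adopt. First, the larger simplicial set $D\mathrm{St}_\chi(p)$ of not-necessarily-excisive product functors is shown to be a cocartesian fibration over $\cD^\otimes$ by the Day-convolution argument of \cite[Lemma 2.10]{Gla}; here the cocartesian edges are produced explicitly (left Kan extension/convolution), and the Segal condition for $\chi q$ is immediate. Second, $\mathrm{St}_\chi(p)\subseteq D\mathrm{St}_\chi(p)$ is realized as a localization compatible with the operad structure, using the excisive-approximation localization functor of \cite[Theorem 6.1.1.10]{Lurie_HA} together with \cite[Proposition 2.2.1.9]{Lurie_HA}; the compatibility hypothesis is where the chain rule for first derivatives \cite[Theorem 6.2.1.22]{Lurie_HA} and your colimit hypothesis on the multifunctors of $\cC^\otimes$ enter. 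This mechanism hands you the cocartesian edges of $\mathrm{St}_\chi(p)$, the lax monoidality of $u$ (via \cite[Corollary 3.8]{nik-stable} and \cite[Proposition 2.2.1.9(3)]{Lurie_HA}), and the colimit statement, without any ad hoc coherence bookkeeping. Your final paragraph on the universal property has the same defect (a "determined by fibers plus compatibility" claim); the clean argument instead invokes \cite[Theorem 6.2.6.6(3)]{Lurie_HA} and checks, following the proof of \cite[Theorem 6.2.6.2]{Lurie_HA}, that a left exact decomposition-stable functor $F$ into $\mathrm{St}_\chi(p)$ is a map of $\infty$-operads if and only if $uF$ is.
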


Note that as cocartesian fibrations, both $p$ and $pu$ automatically preserve cocartesian lift of morphisms in $\NFin$, and are therefore strong symmetric monoidal.

\begin{remark}
Contrary to another claim in \cite[Example~6.2.4.17]{Lurie_HA}, \cite[Proposition 4.9]{nik-stable} or \cite[Examples~6.2.1.5 or 6.2.3.28]{Lurie_HA} imply that the composite $pu$ is a cocartesian fibration for $\cC^\otimes = (\cS_*)_\infty^\wedge$, the category of pointed spaces with the smash product.
\end{remark} 
\begin{proof}[Proof of Proposition~\ref{fibremon}]
Let us recall the definition of $\mathrm{St}_\chi(p)$ from \cite[Construction 6.2.5.20]{Lurie_HA}. Consider first the simplicial set $q \colon P\mathrm{St}_\chi(p) \rightarrow \cD^\otimes$ given by the universal property that for all simplicial sets $K$ over $\cD^\otimes$, we have
\[\Hom_{\cD^\otimes}(K,P\mathrm{St}_\chi(p)) \iso \Hom_{\cD^\otimes}(K \times_\NFin (\cS^{\mathrm{fin}}_*)^\wedge, \cC^\otimes).\]
Then $D\mathrm{St}_\chi(p)$ is given as the full simplicial subset of $P\mathrm{St}_\chi(p)$ spanned by those vertices $v$ that correspond to product functors
\[\prod_{\chi q(v) \setminus \{\ast\}} \cS^{\mathrm{fin}}_* \simeq (\cS^{\mathrm{fin}}_*)^\wedge_{\chi q(v)} \longrightarrow \cC^\otimes_{q(v)} \simeq \prod_{i \in \chi q(v) \setminus \{\ast\}} \cC_{q_i(v)},\]
and $\mathrm{St}_\chi(p)$ is spanned by those vertices corresponding to product functors whose factors are reduced and excisive. The structure map $u \colon D\mathrm{St}_\chi(p) \subseteq P\mathrm{St}_\chi(p) \rightarrow \cC^\otimes$ is given by taking $K = P\mathrm{St}_\chi(p) \rightarrow \cD^\otimes$ in the universal property above and precomposing the map corresponding to the identity with the section $\NFin \rightarrow (\cS^{\mathrm{fin}}_*)^\wedge$ that witnesses the $E_\infty$-structure of the unit $S^0$. By construction we have $q = pu$.

Now the restriction of $q$ to $D\mathrm{St}_\chi(p)$ is a cocartesian fibration just as in \cite[Lemma 2.10]{Gla} (which treats the case $\cD = pt$). It follows that in order to recognize $D\mathrm{St}_\chi(p)$ as a symmetric monoidal category, we only have to verify the Segal condition for $\chi q$, but this is immediate (compare \cite[Proposition 2.11]{Gla}). By definition $q$ is then a cocartesian fibration of operads. Part (2) of \cite[Proposition 2.2.1.9]{Lurie_HA} (which should have $\mathcal O^\otimes$ instead of $\mathrm{NFin}_*$ as the target of $p|\cD^\otimes$) applied to the localization functor provided by \cite[Theorem 6.1.1.10]{Lurie_HA} (precomposed with reduction) therefore implies the same for the restriction of $q$ to $\mathrm{St}_\chi(p)$, as desired; the assumptions of \cite[Theorem 2.1.1.9]{Lurie_HA} follow from the chain rule for the first derivative \cite[Theorem 6.2.1.22]{Lurie_HA}. That $u$ is lax symmetric monoidal follows just as in \cite[Corollary 3.8]{nik-stable}, and for the restriction to $\mathrm{St}_\chi(p)$ it then follows from \cite[Proposition 2.2.1.9 (3)]{Lurie_HA}. Commutation with colimits for the operad structure on $D\mathrm{St}_\chi(p)$ can be verified as in \cite[Lemma 2.13]{Gla} and for $\mathrm{St}_\chi(p)$ it follows from \cite[Proposition 2.2.1.9 (3)]{Lurie_HA}.

Finally, to see that $u$ indeed exhibits $\mathrm{St}_\chi(p)$ as a stabilization of $p$, we note that $\mathrm{St}_\chi(p)$ is stable by part (3) of \cite[Theorem 6.2.5.25]{Lurie_HA}, and in particular $u$ is left exact. To finish the proof we invoke part (3) of \cite[Theorem 6.2.6.6]{Lurie_HA} and need to check that a left exact decomposition-stable functor $F \colon \cE^\otimes \rightarrow \mathrm{St}_\chi(p)$ with fiberwise stable source is a map of $\infty$-operads if and only $uF \colon \cE^\otimes \rightarrow \cC^\otimes$ is. But this can be shown almost verbatim as in the proof of \cite[Theorem 6.2.6.2]{Lurie_HA} on the same page.
\end{proof}

We apply the above to the cocartesian fibration of operads $\pi_c \colon \Ar(\cS_\infty)^\times \longrightarrow \cS_\infty^\times$
projecting to the codomain. It produces a symmetric monoidal structure on $T\cS_\infty$ that models the exterior smash product $\barsm$.

\begin{theorem}\label{thm: comp-mult-total}
The categories $(\SpsymR)_{\infty}$ and $T\cS_\infty$ are canonically equivalent as stable $(\cS_\infty)^\times$-monoidal categories, and thus, in particular, as symmetric monoidal $\infty$-categories.
\end{theorem}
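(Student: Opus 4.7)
The plan is to use the universal property of fiberwise stabilization established in Proposition~\ref{fibremon}, exhibiting both $(\SpsymR)_\infty$ and $T\cS_\infty$ as its output when applied to the codomain projection $\pi_c \colon \Ar(\cS_\infty)^\times \to \cS_\infty^\times$. By definition, $T\cS_\infty$ together with the canonical lax monoidal functor $u \colon T\cS_\infty \to \Ar(\cS_\infty)^\times$ of Proposition~\ref{fibremon} is the universal stable $\cS_\infty^\times$-monoidal fiberwise stabilization of~$\pi_c$; the hypotheses of the proposition are satisfied since $\Ar(\cS_\infty)$ is presentable hence differentiable and the cartesian product on $\cS_\infty$ preserves colimits separately.

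Next I would verify the same universal property for $(\SpsymR)_\infty$. The projection $\pi_b \colon \SpsymR \to \cS^\cI$ is strong symmetric monoidal from $\barsm$ to $\boxtimes$, and under the equivalence $\cS^\cI_\infty \simeq \cS_\infty$ the $\boxtimes$-product corresponds to the cartesian product (see~\cite[Proposition 2.27]{Sagave-S_group-compl}). Combining this with the fact that the base change functors $f_!$ of Theorem~\ref{thm:model-str-introduction} are left Quillen, one obtains that $(\SpsymR)_\infty \to \cS_\infty^\times$ is a cocartesian fibration of $\infty$-operads; the fibers are stable and their $\cS_\infty^\times$-monoidal structure is compatible with colimits by Proposition~\ref{prop:SpsymX-stable} and Proposition~\ref{prop:SpsyM-pushout-product}. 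Lemma~\ref{lem:SIar-OmegaIar-adjunction-monoidal} provides a lax symmetric monoidal functor
\begin{equation*}
\Omega^\cI_{\mathrm{ar}} \colon (\SpsymR)_\infty \to (\Ar(\cS^\cI))_\infty \simeq \Ar(\cS_\infty)^\times
\end{equation*}
over $\cS_\infty^\times$, refining the one considered in the proof of Proposition~\ref{prop:infty comp}.

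The main obstacle will be verifying that $\Omega^\cI_{\mathrm{ar}}$ exhibits $(\SpsymR)_\infty$ as a fiberwise stabilization of $\pi_c$ \emph{as a map of $\infty$-operads}, not merely as a functor over $\cS_\infty$. By the characterization used in the proof of Proposition~\ref{fibremon} (which ultimately invokes \cite[Theorem 6.2.6.6]{Lurie_HA}), this reduces to two checks: firstly, that on underlying $\infty$-categories $\Omega^\cI_{\mathrm{ar}}$ exhibits $(\SpsymR)_\infty$ as a fiberwise stabilization, which is precisely what was shown in the proof of Proposition~\ref{prop:infty comp} via Lemma~\ref{lem:stabilization-quillen-equiv}; and secondly, that the induced multilinear maps on fibers
\begin{equation*}
(\Spsym{X_1})_\infty \times \dots \times (\Spsym{X_n})_\infty \to (\Spsym{X_1 \boxtimes \cdots \boxtimes X_n})_\infty
\end{equation*}
are the universal multilinear stabilizations of the corresponding space-level functors. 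I expect the latter to follow from Theorem~\ref{thm:SpsymM-infty-identification} applied to the free commutative $\cI$-space monoid on each $X_i$ and to $X_1\boxtimes\cdots\boxtimes X_n$, combined with the uniqueness of multivariable stabilization~\cite[Remark 6.2.3.16]{Lurie_HA}.

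Once this verification is complete, the universal property of $T\cS_\infty$ yields a canonical lax symmetric monoidal functor $(\SpsymR)_\infty \to T\cS_\infty$ over $\cS_\infty^\times$. Proposition~\ref{prop:infty comp} shows that it is an equivalence on underlying $\infty$-categories, and since both sides are cocartesian fibrations of $\infty$-operads over $\cS_\infty^\times$, the functor is automatically strong symmetric monoidal, completing the identification as stable $\cS_\infty^\times$-monoidal categories. Restricting along the unit of $\cS_\infty^\times$ (or any composition of cocartesian lifts) then gives the equivalence of plain symmetric monoidal $\infty$-categories.
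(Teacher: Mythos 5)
Your overall strategy diverges from the paper's. The paper uses the universal property of $T\cS_\infty$ in one direction only: it feeds the left exact lax symmetric monoidal functor $\Omega^{\cI}_{\mathrm{ar}}\colon (\SpsymR)_\infty^{\barsm} \to \Ar(\cS_\infty)^\times$ into Proposition~\ref{fibremon} to obtain a lax symmetric monoidal comparison $c\colon (\SpsymR)_\infty \to T\cS_\infty$, identifies $c$ on underlying $\infty$-categories with the equivalence of Proposition~\ref{prop:infty comp}, and then proves by hand that $c$ is strong monoidal: it constructs a strong symmetric monoidal left adjoint $\Sigma^\infty_+$ to $\Omega^\infty\colon T\cS_\infty \to \Ar(\cS_\infty)$, checks the map of units, and reduces the binary structure maps to pairs of suspension spectra by a colimit argument. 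You instead propose to show that $(\SpsymR)_\infty$ itself satisfies the universal property of the fiberwise stabilization of $\pi_c$ and to conclude by uniqueness. That is legitimate in principle, but it shifts the entire burden onto a verification the paper deliberately avoids.

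The gap is in that verification. Your reduction of the universal property to ``underlying stabilization plus universality of the multilinear fiber maps'' is not supported by what you cite: the recognition statement behind Proposition~\ref{fibremon} (Lurie's 6.2.6.6 together with the argument of 6.2.6.2) is proved using the explicit model of $\mathrm{St}_\chi(p)$ as reduced excisive functors, so it is a property of that construction rather than an abstract criterion applicable to $(\SpsymR)_\infty$ off the shelf. Moreover, the second of your two checks --- that $(\Spsym{X_1})_\infty \times \cdots \times (\Spsym{X_n})_\infty \to (\Spsym{X_1\boxtimes\cdots\boxtimes X_n})_\infty$ is the universal multilinear stabilization --- is essentially equivalent to the strong monoidality of $c$, i.e.\ to the statement being proved; deducing it from Theorem~\ref{thm:SpsymM-infty-identification} applied to free commutative $\cI$-space monoids would require a genuine argument, since that theorem identifies the internalized product $\mu_!\circ\barsm$ on a single fiber and does not obviously disassemble into a statement about the external product. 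Finally, your closing assertion that the comparison functor is ``automatically strong symmetric monoidal'' because both sides are cocartesian fibrations is false as a general principle: a lax monoidal functor that is an equivalence of underlying $\infty$-categories need not preserve cocartesian edges. It becomes automatic only once the functor is known to be invertible as a map of $\infty$-operads, which is exactly what your unverified universal property would supply. As written, the proposal assumes its hardest step.
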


\begin{proof}
To avoid confusion we shall denote the monoidal structure on $T\cS_\infty$ by the generic $\otimes$ with unit $\mathbf{1}$ in the present proof.

Let $(\cS^\cI)^\boxtimes_\infty$ denote the $\infty$-operad underlying the symmetric monoidal structure on $\cS^\cI$ given by the $\boxtimes$-product. By Propositions \ref{prop:SpsymX-stable} and \ref{prop:local-pushout-product}, $\pi_b\colon (\SpsymR)_\infty \to (\cS^\cI)^\boxtimes_\infty $ exhibits $(\SpsymR)_\infty^{\barsm}$ as a stable $(\cS^\cI)^\boxtimes_\infty$-monoidal category.  By Lemmas~\ref{lem:SIar-OmegaIar-adjunction-monoidal}, \ref{lem:Q-adjunctions-on-local-model-str}, and~\ref{lem:SIar-OmegaIar-Quillen-adjunction}, the functor $(\Omega^\cI_{\mathrm{ar}})_\infty \colon (\SpsymR)^\barsm_\infty \rightarrow \Ar(\cS^\cI)^\boxtimes_\infty$ is left exact and a map of $\infty$-operads, i.e., lax symmetric monoidal. From the equivalence $(\cS^\cI)^\boxtimes_\infty \simeq \cS^\times_\infty$ and Proposition \ref{fibremon}, we therefore obtain a left exact map of $\infty$-operads
\[c \colon (\SpsymR)^\barsm_\infty \longrightarrow T\cS_\infty^\otimes,\]
i.e., a lax symmetric monoidal functor. By the universal property of stabilizations it agrees with that from Proposition \ref{prop:infty comp} when restricted to $(\SpsymR)_\infty$. Once we show that $c$ is in fact strong symmetric monoidal, then \cite[Remark 2.1.3.8]{Lurie_HA} implies that it is an equivalence of symmetric monoidal $\infty$-categories. 

To see the latter we need to verify that the canonical maps
\[\mathbf{1} \rightarrow c(\mathbb S) \quad \quad \text{and} \quad\quad c(X) \otimes c(Y) \rightarrow c(X \barsm Y)\]
are equivalences, where $\mathbf{1}$ denotes the unit of $T\cS_\infty$. To do so we note that the functor $\Omega^\infty \colon T\cS_\infty \rightarrow \Ar(\cS_\infty)$ admits a strong symmetric monoidal adjoint $\Sigma^\infty_+$. To construct it recall the map of $\infty$-operads $u \colon D\mathrm{St}_\chi(t) \rightarrow \mathrm{Ar}(\cS_\infty)^\times$ from the previous proof. We then invoke \cite[Corollary 7.3.2.12]{Lurie_HA} for $u$ and \cite[Proposition 2.2.1.9]{Lurie_HA} for the restriction to the localization $T\cS_\infty = \mathrm{St}_\chi(\mathrm{ev}_1)$ of $D\mathrm{St}_\chi(\mathrm{ev}_1)$. Their assumptions are verified just as in \cite[Corollary 3.8 and Proposition 4.9]{nik-stable}. This argument immediately shows that the map between unit objects is an equivalence as the sphere $\bS \in \SpsymR$ is also given by the left adjoint $\bS^\cI_\cR \circ \iota^\cI_{\mathrm{ar}} \colon \mathrm{Ar}(\cS^\cI) \rightarrow \SpsymR$ evaluated on the unit $* \rightarrow *$ of $\mathrm{Ar}(\cS^\cI)$. The second claim follows since the class of pairs of spectra, for which the map in question is an equivalence is closed under colimits in either variable and contains pairs of suspension spectra by the monoidality of $\Sigma^\infty_+$.
\end{proof}

\begin{remark}
When applied to the example of the target fibration 
\[\Ar(\mathrm{Alg}_{E_\infty}(\mathrm{Sp}_\infty))^\otimes \longrightarrow \mathrm{Alg}_{E_\infty}(\mathrm{Sp}_\infty)^\otimes\]
considered in detail in \cite[Section 7.3]{Lurie_HA}, the symmetric monoidal structure from Proposition \ref{fibremon} on $T(\mathrm{Alg}_{E_\infty}(\mathrm{Sp}_\infty))$ is also readily identified: Under the equivalence of $T(\mathrm{Alg}_{E_\infty}(\mathrm{Sp}_\infty))$ with the category of all modules over $E_\infty$-ring spectra \cite[Theorem 7.3.4.18]{Lurie_HA} it corresponds to the smash product on both the rings and the modules.
\end{remark}

We have now verified all parts of the $\infty$-categorical theorem
from the introduction:
\begin{proof}[Proof of Theorem~\ref{thm:infty-categorical-products-intro}]
This is a combination of Lemma~\ref{lem:SpsymX-infty-functorial}, Proposition~\ref{prop:infty comp}, and Theorems~\ref{thm:SpsymM-infty-identification} and~\ref{thm: comp-mult-total}. 
\end{proof}

\subsection{Comparison of universal bundles}\label{subsec:infty-thom}
In order to formulate the comparison we will denote objects and functors corresponding on the infinity categorical side to objects of the same nature as those introduced in the previous sections by the same name without the decoration $\cI$. 

Let us then first recall the definition of $B\GLoneof{R}$ in the $\infty$-categorical setting. For an $E_\infty$-ring spectrum $R$ the invertible $R$-module spectra and their equivalences span a sub-$\infty$-groupoid $\mathrm{Pic}(R) \in \cS_\infty$ in the $\infty$-category of all $R$-module spectra. This groupoid inherits a symmetric monoidal structure from the tensor product of $R$-modules, making it an $E_\infty$-space with unit $R$. The component of the unit $R$ is usually denoted by $\mathrm{Pic}_0(R)$ and as a mere space is clearly also defined for an $E_1$-ring spectrum $R$. 

\begin{lemma}
Let $R$ be an $E_1$ (resp.\ $E_{\infty}$) ring spectrum, and let $(\Omega^\infty R)^\times$ be the subspace of $\Omega^\infty R$ corresponding to the  units in the multiplicative $E_1$ (resp.\ $E_{\infty}$) structure. Then there are canonical equivalences 
\[\mathrm{Pic}_0(R) \simeq B\mathrm{Aut}_R(R) \simeq B(\Omega^\infty R)^\times\]
in $\cS_\infty$ (resp.  $\mathrm{Alg}_{E_\infty}(\cS_\infty)$). 
\end{lemma}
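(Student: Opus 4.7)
The plan is to establish the two equivalences separately and then address the multiplicative refinement.

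For the second equivalence $B\mathrm{Aut}_R(R) \simeq B(\Omega^\infty R)^\times$, I would start from the fact that $R$ is the free $R$-module on one generator, so the adjunction between $R$-modules and spectra yields $\mathrm{Map}_{\mathrm{Mod}_R}(R,R) \simeq \Omega^\infty R$ as an $E_1$-space, where the multiplicative structure on the left is composition and on the right comes from the ring structure (this is the standard identification, cf.\ \cite[Proposition 7.1.2.6]{Lurie_HA}). Restricting to the subspace of self-equivalences on the left picks out exactly the group of $\pi_0$-units on the right, yielding $\mathrm{Aut}_R(R) \simeq (\Omega^\infty R)^\times$ as group-like $E_1$-spaces. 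Applying the classifying space functor $B$ then gives the equivalence.

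For the first equivalence $\mathrm{Pic}_0(R) \simeq B\mathrm{Aut}_R(R)$, I would use that $\mathrm{Pic}(R)$ is, by definition, the maximal sub-$\infty$-groupoid of $\mathrm{Mod}_R$ spanned by invertible objects. The component of $R$ is therefore the connected $\infty$-groupoid with automorphism space $\mathrm{Aut}_R(R)$, which is canonically equivalent to $B\mathrm{Aut}_R(R)$. This is just the general fact that a pointed connected $\infty$-groupoid $(X, x)$ is equivalent to $B\Omega_x X$, together with $\Omega_R \mathrm{Pic}(R) \simeq \mathrm{Aut}_R(R)$.

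For the $E_\infty$-refinement in the commutative case, the main issue is to upgrade these equivalences to equivalences of $E_\infty$-spaces, where the $E_\infty$-structure on $\mathrm{Pic}_0(R)$ comes from the symmetric monoidal tensor product $\otimes_R$, the structure on $B(\Omega^\infty R)^\times$ comes from the multiplicative $E_\infty$-structure of $R$, and the structure on $B\mathrm{Aut}_R(R)$ arises from both. The key observation is that the adjunction $\mathrm{Map}_{\mathrm{Mod}_R}(R,-) \simeq \Omega^\infty(-)$ is lax symmetric monoidal, and hence the induced identification of endomorphism $E_1$-algebras $\mathrm{End}_R(R) \simeq R$ enhances to an identification of $E_\infty$-algebras when $R$ is $E_\infty$. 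Upon restricting to automorphisms (which on both sides is a symmetric monoidal operation preserving the multiplicative $E_\infty$-structure) and applying $B$, we obtain the desired equivalence in $\mathrm{Alg}_{E_\infty}(\cS_\infty)$.

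I expect the main obstacle to be the careful bookkeeping of the $E_\infty$-structures, in particular verifying that the symmetric monoidal structure on $\mathrm{Pic}(R)$ induced from $\otimes_R$ agrees, under the above equivalences, with the multiplicative $E_\infty$-structure of $R$ (rather than getting some twist). This is essentially the statement that the tensor product of $R$-module endomorphisms $f \colon R \to R$ and $g \colon R \to R$ corresponds to the product $f \cdot g \in \Omega^\infty R$, which in turn is a consequence of the unitality of the tensor product and the identification $R \otimes_R R \simeq R$ as bimodules. Once these coherences are in place, passage to the unit component and to $B$ is formal.
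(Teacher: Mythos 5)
Your proposal is correct and follows essentially the same route as the paper: the identification $\mathrm{Pic}_0(R)\simeq B\mathrm{Aut}_R(R)$ via the general fact that a pointed connected $\infty$-groupoid is the classifying space of its loop space, the identification $\mathrm{Aut}_R(R)\simeq(\Omega^\infty R)^\times$ via the free-module adjunction and the endomorphism-ring identification, and the $E_\infty$-refinement via (lax) monoidality of that adjunction. The only difference is one of emphasis: the two coherence points you defer to a citation or flag as "the main obstacle" --- namely that the composition $E_1$-structure on $\mathrm{End}_R(R)$ agrees with the ring multiplication on $\Omega^\infty R$ already in the $E_1$-case, and that the loop-concatenation structure on $\Omega_R\mathrm{Pic}_0(R)$ agrees with composition in $\mathrm{Aut}_R(R)$ --- are exactly where the paper spends its effort, invoking \cite[Corollary 4.7.1.41 and Remark 7.1.2.2]{Lurie_HA} for the former and a short argument about the (essentially unique) cogroup structure on $S^1$ for the latter.
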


We shall refer to any of these equivalent spaces as $B\GLoneof{R}$.

\begin{proof}
By definition of mapping space there is, for every pair of objects $x,y$ of an $\infty$-category $\cC$, a cartesian diagram in $\mathrm{Cat}_\infty$
\[\xymatrix{\mathrm{Hom}_\cC(x,y) \ar[r]\ar[d] & \cC/y \ar[d]\\
                \ast \ar[r]^-x & \cC.}\]
If now $\cC \in \cS_\infty$ is an $\infty$-groupoid and $x = y$, $\cC/y$ is a contractible $\infty$-groupoid and we obtain an equivalence $\Omega_x \cC \simeq \mathrm{Aut}_\cC(x)$ and consequently $\cC_x \simeq B\mathrm{Aut}_\cC(x)$, where $\cC_x$ denotes the path component of $x$ in $\cC$. Furthermore, if $\cC$ is symmetric monoidal and $x$ its unit, then $\cC/x$ inherits a symmetric monoidal structure from $\cC$, the diagram defines an $E_\infty$-structure on $\mathrm{Aut}_\cC(x)$ and then becomes cartesian in $\mathrm{Alg}_{E_\infty}(\cS_\infty)$. Applied to $\cC = \mathrm{Pic}_0(R)$ and $x = R$, we obtain the first desired equivalence. 

For the second we have to distinguish the two cases: If $R$ is an $E_\infty$-ring spectrum, it arises from the adjunction equivalence
\[\Omega^\infty(R) \simeq \mathrm{Hom}_{\mathbb S}(\mathbb S,R) \xrightarrow{-\wedge R}  \mathrm{Hom}_{R}(R,R \sm R)\xrightarrow{\mu}\mathrm{End}_{R}(R) \]
since the functor $- \wedge R$ is symmetric monoidal and the multiplication of $R$ is an $E_\infty$-map. 

In the case of an $E_1$-ring spectrum $R$ the middle term does not carry an evident multiplication so we have to argue differently. We can obtain a map of $E_1$-ring spectra $R \rightarrow \mathrm{end}_R(R)$ from \cite[Corollary 4.7.1.41 and  Remark 7.1.2.2]{Lurie_HA}; here we apply the corollary to $R$ considered as a left $R$-module spectrum in the $\infty$-category of right $R$-module spectra, which is tensored over and consequently enriched in the $\infty$-category of spectra, see \cite[Proposition 4.2.1.33 and Remark 4.8.2.20]{Lurie_HA}. Applying the (symmetric) monoidal functor $\Omega^\infty \simeq \mathrm{Hom}_{\mathbb S}(\mathbb S,-)$ to the above arrow produces a map $\Omega^\infty R \rightarrow \mathrm{End}_R(R)$ of $E_1$-spaces, which equals the above composite, and is therefore an equivalence. 

It remains to check that the $E_1$-structure constructed on $\mathrm{Aut}_R(R)$ above via the identification with $\Omega\mathrm{Pic}_0(R)$ agrees with the restriction of that just constructed on $\mathrm{End}_R(R)$ to its units. But this is clear, since lifts of the functor $\Omega \colon */\cS^{\geq 1}_\infty \rightarrow \cS_\infty$ to $E_1$-monoids correspond to comonoid structures on $S^1$. Under the equivalence of $*/\cS_\infty^{\geq 1}$ with $E_1$-groups, these correspond to cogroup structures on the integers, of which there are only two, corresponding to forwards and backwards concatenation of loops (and the above constructions evidently give the same concatenation map up to homotopy).
\end{proof}

\begin{remark}
For an $E_n$-ring spectrum with $n \geq 2$, the category of left $R$-modules is $E_{n-1}$-monoidal so $\mathrm{Pic}(R)$ is an $E_{n-1}$-space in this case, as is $B\Omega^\infty R$. The method we gave for the $E_\infty$-case identifies these as $E_{n-2}$-spaces and one can check that the argument we gave for $E_1$-ring spectra identifies the remaining $E_1$-structures in a compatible fashion. In total one thus obtains an identification as $E_{n-2} \otimes E_1 \simeq E_{n-1}$-spaces. 
\end{remark} 

The space $B\GLoneof{R}$ comes equipped with the canonical functor \[F_R \colon B\GLoneof{R} \simeq \mathrm{Pic}_0(R) \longrightarrow R\text{-}\mathrm{Mod} \longrightarrow  \Sp_\infty,\] which witnesses the action of $\GLoneof{R}$ on $R$.

\begin{proposition}\label{prop:gamma_comp}
For any symmetric ring spectrum $R$, the equivalence from Proposition \ref{prop:infty comp} carries $\gamma_R \in \SpsymR$ to the image under the inclusion \[\mathrm{Fun}(B\GLoneof{R},\Sp_{\infty}) \longrightarrow \int_{\cS_{\infty}}\!\!\!\mathrm{Fun}(-,\Sp_{\infty}) = T\cS_\infty\] of the functor $F_R$.
If $R$ is commutative, then  $F_R$ is a lax symmetric monoidal functor, and the same is true as $E_\infty$-monoid objects of $T\cS_\infty$.
\end{proposition}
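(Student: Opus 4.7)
The plan is to invoke the symmetric monoidal equivalence of Theorem~\ref{thm:SpsymM-infty-identification} applied to $M = BG$, identifying $(\Spsym{BG})_\infty$ with $\mathrm{Fun}(B\GLoneof{R},\Sp_\infty)$ equipped with Day convolution; under the further inclusion into $T\cS_\infty$ of Proposition~\ref{prop:infty comp}, this is the restriction to the fiber over $B\GLoneof{R} \simeq (BG)_{h\cI}$ of the equivalence in question. Since $B\GLoneof{R}$ is a pointed connected space, any functor from it to $\Sp_\infty$ is determined up to equivalence by the value at the basepoint together with the induced $\GLoneof{R}$-action, and a lax symmetric monoidal refinement by a compatible $E_\infty$-structure on this data. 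So the problem reduces to matching (a) the fiber over the basepoint, (b) the $\GLoneof{R}$-action on it, and (c) the resulting $E_\infty$-multiplication on both sides.

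For (a), I would apply Proposition~\ref{prop:Thom-of-BH-BG} with $H$ taken to be the trivial commutative $\cI$-space monoid $\ucI$; then $BH = \ucI$ and $\iota\colon \ucI \to BG$ is the basepoint inclusion, yielding a local equivalence $B^{\barsm}(\bS, \bS^{\cI}_t[\ucI], R) \xrightarrow{\sim} \iota^\ast \gamma_R$ whose left-hand side is canonically equivalent to $R$. Under Theorem~\ref{thm:SpsymM-infty-identification} the derived functor $\mathbb R\iota^\ast$ corresponds to evaluation at the basepoint, so the underlying functor $\Phi$ corresponding to $\gamma_R$ satisfies $\Phi(\ast) \simeq R = F_R(\ast)$. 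For (b), the observation is that both actions are controlled by the same $E_\infty$-ring map $\bS^{\cI}[G] \to R$ adjoint to $G \to \Omega^{\cI}(R)$: on the parametrized side this map is precisely what enters the bar construction $B^{\barsm}(\bS, \bS^{\cI}_t[G], R)$ defining $\gamma_R$, so the holonomy action of $\Omega BG \simeq \GLoneof{R}$ on the fiber is the multiplicative action on $R$; on the $\infty$-categorical side, $F_R$ is by definition the functor classifying the action of $\mathrm{Aut}_R(R) \simeq \GLoneof{R}$ on $R$ by left multiplication. I would make this rigorous by invoking naturality in $H$ of Proposition~\ref{prop:Thom-of-BH-BG} applied to the map $\ucI \hookrightarrow G$ of commutative $\cI$-space monoids.

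For (c), Theorem~\ref{thm:SpsymM-infty-identification} gives a symmetric monoidal equivalence between $(\Spsym{BG})_\infty$ with convolution smash product and $\mathrm{Fun}(B\GLoneof{R},\Sp_\infty)$ with Day convolution, and commutative algebras for the latter are precisely lax symmetric monoidal functors. Hence the commutative monoid structure on $\gamma_R$ in $\cC\Spsym{BG}$ produces a lax symmetric monoidal refinement of $\Phi$, whose underlying $E_\infty$-ring structure on $R$ is by construction the one coming from the ring structure of $R$ itself, and thus agrees with the canonical lax symmetric monoidal structure on $F_R$ arising from the tensor product in $R$-$\mathrm{Mod}$. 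The main obstacle is step (b): although the matching of actions is transparent on the level of homotopy groups or via the bar construction, making the identification fully $\infty$-categorical requires carefully tracking coherences, so that the loop-space action extracted from $\gamma_R$ through the straightening/unstraightening equivalence of Lemma~\ref{lem:SpsymX-infty-functorial} is genuinely equivalent, as an $E_\infty$-group action, to the multiplicative action of $\mathrm{Aut}_R(R)$ on $R$.
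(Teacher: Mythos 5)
Your overall route is genuinely different from the paper's: you reduce to the fibre over the basepoint plus the induced $\GLoneof{R}$-action (using that $B\GLoneof{R}$ is pointed and connected), whereas the paper identifies \emph{both} $\gamma_R$ and $F_R$ with the relative smash product $\bS\barsm_{\bS^{\cI}_t[\GLoneof{R}]}R$, i.e.\ with a colimit characterized by a universal property, and then matches the two sides via the global symmetric monoidal equivalence of Theorem~\ref{thm: comp-mult-total} together with Lemma~\ref{Gl cpmp}. The point of the paper's colimit characterization is precisely that it avoids ever having to compare coherent actions; your reduction puts that comparison at the centre of the argument.

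And that is where the gap is. Your step (b) --- the identification of the coherent $\GLoneof{R}$-action on the fibre of $\gamma_R$ with the tautological action of $\mathrm{Aut}_R(R)$ on $R$ defining $F_R$ --- is the entire mathematical content of the proposition, and you leave it as a sketch (indeed you flag it yourself as the main obstacle). The proposed repair, ``naturality in $H$ of Proposition~\ref{prop:Thom-of-BH-BG} applied to $\ucI\hookrightarrow G$,'' does not supply it: that proposition only controls the pullback of $\gamma_R$ along maps of classifying spaces $BH\to BG$, so applying it to $\ucI\to G$ identifies the fibre (your step (a), which is fine), but the action is encoded in the monodromy of the straightened functor over loops in $(BG)_{h\cI}\simeq B\GLoneof{R}$, which is invisible to maps of base commutative $\cI$-space monoids. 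To complete your approach you would need an explicit device extracting the action from the unstraightening in Lemma~\ref{lem:SpsymX-infty-functorial} and matching it, as a coherent (indeed $E_\infty$-) action, with left multiplication of $\GLoneof{R}$ on $R$; this is essentially what the paper's colimit argument (the computation $\colim G_R\simeq F_R$ via \cite[Proposition 7.3.1.12]{Lurie_HA} and the comparison with \cite[Proposition 3.26]{Ando-B-G-H-R_infinity-Thom}) is doing in disguise. The same issue recurs in your step (c), where the agreement of the two $E_\infty$-structures on the fibre is asserted ``by construction'' rather than argued.
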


As preparation we record:

\begin{lemma}\label{Gl cpmp}
Let $R$ be a positive fibrant symmetric ring spectrum and $G$ a cofibrant replacement of $\GLoneIof{R}$. Then the image of $BG$ under the equivalence $\cS_\infty^\cI \simeq \cS_\infty$ is $B\GLoneof{R}$ and the evaluation map $\bS[G] \rightarrow R$ corresponds to the canonical map $\bS[\GLoneof{R}] \rightarrow R$. When $R$ is commutative, the same identifications hold in the $\infty$-categories $(\mathcal C\cS^\cI)_\infty \simeq \mathrm{Alg}_{E_\infty}(\cS_\infty)$ and $(\mathcal C\Spsym{})_\infty \simeq \mathrm{Alg}_{E_\infty}(\Sp_\infty)$.
\end{lemma}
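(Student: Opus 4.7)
The approach is to trace the stated objects and morphisms through the model-categorical Quillen equivalences underlying the $\infty$-categorical equivalences $(\cS^{\cI})_{\infty} \simeq \cS_{\infty}$, $(\Spsym{})_{\infty} \simeq \Sp_{\infty}$, $(\cC\cS^{\cI})_{\infty} \simeq \mathrm{Alg}_{E_{\infty}}(\cS_{\infty})$, and $(\cC\Spsym{})_{\infty} \simeq \mathrm{Alg}_{E_{\infty}}(\Sp_{\infty})$. Each of these is modeled on the point-set level by the homotopy colimit $(-)_{h\cI}$, combined in the spectrum case with $\bS^{\cI}[-]$, and possibly followed by rigidification of operad actions.

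For the first assertion, the Quillen equivalence $\hocolim_{\cI} \dashv \const_{\cI}$ realizes $(\cS^{\cI})_{\infty} \simeq \cS_{\infty}$, so the image of $BG$ is the homotopy colimit $(BG)_{h\cI}$. By~\cite{Sagave-S_diagram} the functor $(-)_{h\cI}$ is strong monoidal for $\boxtimes$ and $\times$ up to natural weak equivalence on flat cofibrant inputs; since $G$ is flat, applying $(-)_{h\cI}$ degreewise to the simplicial bar object $[q]\mapsto G^{\boxtimes q}$ and taking realizations yields a natural weak equivalence $(BG)_{h\cI} \simeq B(G_{h\cI})$, where the right hand side is the classical bar construction of the underlying simplicial monoid. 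Since $G\to \GLoneIof{R}$ is an $\cI$-equivalence by construction, $G_{h\cI} \simeq (\GLoneIof{R})_{h\cI}$, and this is by definition $\GLoneof{R}$ (see~\cite{Schlichtkrull_units}). Hence $(BG)_{h\cI} \simeq B\GLoneof{R}$.

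For the ring spectrum map, I use that $\bS^{\cI} \dashv \Omega^{\cI}$ is the point-set model for $\Sigma^{\infty}_+ \dashv \Omega^{\infty}$ under the equivalences above; in particular, for cofibrant $X$ in $\cS^{\cI}$, the underlying symmetric spectrum of $\bS^{\cI}[X]$ is stably equivalent to $\Sigma^{\infty}_+ X_{h\cI}$. Thus $\bS^{\cI}[G]\to R$ corresponds to a map $\bS[\GLoneof{R}]\to R$, and because by construction this map is adjoint to $G \to \GLoneIof{R} \subseteq \Omega^{\cI}(R)$ and hence on homotopy colimits to the inclusion $\GLoneof{R}\to \Omega^{\infty}R$ of units, the resulting spectrum map is the canonical one.

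For the commutative case, all of the involved functors and Quillen equivalences refine to the $E_{\infty}$-level: the equivalence $(\cC\cS^{\cI})_{\infty} \simeq \mathrm{Alg}_{E_{\infty}}(\cS_{\infty})$ arises from $(-)_{h\cI}$ equipped with its Barratt--Eccles structure, which is strong symmetric monoidal on flat cofibrant objects (cf.~\cite[\S 3]{Sagave-S_diagram}), and analogously for ring spectra. Under these refined equivalences the derived bar construction $BG$ in $\cC\cS^{\cI}$ corresponds to the classifying $E_{\infty}$-space of the grouplike $E_{\infty}$-space $G_{h\cI}$, and $\bS^{\cI}[G]\to R$ corresponds to the canonical $\bS[\GLoneof{R}]\to R$. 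The main obstacle will be matching the operadic coherences; concretely, this reduces to knowing that the natural map $(X\boxtimes Y)_{h\cI} \to X_{h\cI}\times Y_{h\cI}$ is a weak equivalence for flat cofibrant $X,Y$ and that the resulting Barratt--Eccles action on $(-)_{h\cI}$ is compatible with both the $\boxtimes$-bar construction and the classical one, which is essentially~\cite[\S 2]{Sagave-S_group-compl} combined with~\cite[\S 6]{Schlichtkrull_Thom-symmetric}, and then transfers routinely to simplicial bar objects.
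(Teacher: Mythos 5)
Your argument is correct, and at bottom it rests on the same facts as the paper's proof, but it is packaged quite differently. The paper argues entirely on the $\infty$-categorical side: it invokes the commutative square of symmetric monoidal equivalences and lax symmetric monoidal right adjoints relating $\Omega^{\cI}$ to $\Omega^{\infty}$ (so that $\Omega^{\cI}R$ and $\Omega^{\infty}R$ correspond as monoid objects and the units are cut out by the same restriction to invertible components), then uses Hinich's theorem that the localization functor of a monoidal model category is strong monoidal on cofibrant objects to see that the two simplicial bar objects $[q]\mapsto G^{\boxtimes q}$ and $[q]\mapsto \GLoneof{R}^{\times q}$ agree in $\cS_\infty$, and finally cites Lurie's result that geometric realization models the $\infty$-categorical colimit. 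You instead work at the point-set level, replacing Hinich by the concrete statement that $\hocolim_{\cI}$ carries $\boxtimes$ to $\times$ up to weak equivalence on flat objects and that the Barratt--Eccles actions are compatible. Both routes are legitimate; the paper's buys all the operadic coherences for free from the already-established symmetric monoidal equivalences, whereas yours is more explicit but forces you to handle those coherences by hand (you rightly flag this as the main obstacle). Two small points you should make explicit: the Quillen equivalence is $\colim_{\cI}\dashv\const_{\cI}$ with $\colim_{\cI}$ strong symmetric monoidal on the nose (this is what makes Hinich applicable, and is the clean reason your "monoidal up to weak equivalence" claim holds on cofibrant objects); and the passage $(BG)_{h\cI}\simeq |[q]\mapsto (G^{\boxtimes q})_{h\cI}|\simeq B(G_{h\cI})$ requires the simplicial objects to be good (degeneracies cofibrations/$h$-cofibrations), which follows from the cofibrancy of $G$ but should be said.
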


\begin{proof}
All of the assertions follow immediately from the previous lemma and the commutative square 
\[\xymatrix@-1pc{(\Spsym{})_\infty^\wedge \ar[rr]^-\simeq \ar[d]_{\Omega^\cI} && \Sp_\infty^\wedge \ar[d]^{\Omega^\infty}\\
            (\cS^\cI)_\infty^\boxtimes \ar[rr]^-\simeq               && \cS_\infty^\times}\]
of symmetric monoidal $\infty$-categories and symmetric monoidal functors (lax in case of the vertical functors): By the monoidality of the horizontal functors the objects $\Omega^\cI R$ and $ \Omega^\infty R$ correspond as $E_1/E_\infty$ objects and the units are given by the same restriction to path components. The respective bar constructions then agree as simplicial objects by the cofibrancy assumption on $G$ and Hinich's result on the strong monoidality on cofibrant objects of the localization functor from a monoidal model category to its underlying $\infty$-category, see~\cite[Proposition~3.2.2]{Hinich_Dwyer-Kan} and also \cite[Appendix A]{Nikolaus-S_tc}. Finally, geometric realization in a simplicial model category models the colimit in its underlying $\infty$-category by \cite[Theorem 4.2.4.1]{Lurie_HTT}, which gives the claim about $B\GLoneof{R}$.

The maps from the spherical group rings to $R$ are the counits of the vertical adjunction with the suspension functor, so also corresponds under the above equivalences.
\end{proof}

\begin{proof}[Proof of Proposition \ref{prop:gamma_comp}]
The idea is to specify both $\gamma_R$ and $F_R$ in terms of data pinned down by Theorem \ref{thm: comp-mult-total} and the previous lemma. Namely, we will show that both objects are given by the relative $\barsm$-product of the diagram $\bS \leftarrow \bS_t[\GLoneof{R}] \rightarrow R$ (with the appropriate interpretations in $(\SpsymR)_\infty$ and $T\cS_\infty$, respectively). Note that this really is determined by the lemma, since the map $\bS_t[\GLoneof{R}] \rightarrow R$ factors by definition (as an $E_1$/$E_\infty$-map as appropriate) through the tautological map $\bS_t[\GLoneof{R}] \rightarrow \bS[\GLoneof{R}]$. For $\gamma_R \in (\SpsymR)_\infty$ the identification with the relative $\barsm$-product holds by \cite[Theorem 4.4.2.8 (ii)]{Lurie_HA}, since relative tensor products are computed by the bar construction. 

In case $R$ is commutative, we first note that  \cite[Example 3.2.4.4, Proposition 3.2.4.7 and Theorem 4.5.2.1]{Lurie_HA} together say that for an $E_\infty$-ring spectrum $A$, the coproduct of $E_\infty$-algebras in $A$-modules is given by the relative tensor product. Combining this with the facts that $E_\infty$-algebras in $A$-modules are the same thing as $E_\infty$-algebras under $A$ \cite[Corollary 3.4.1.7]{Lurie_HA} and that coproducts in slice categories are computed as pushouts in the original category, it follows that the above relative tensor product inherits an $E_\infty$-structure which makes it the pushout of $E_\infty$-rings.
In particular, this structure is again determined by data preserved under the equivalence of Theorem \ref{thm: comp-mult-total}. By applying \cite[Theorem 4.4.2.8]{Lurie_HA} to both the category of parametrized spectra and $E_\infty$ algebras therein, we find that the $E_\infty$ structure on the relative tensor product agrees with that coming from the termwise one on the bar construction (the forgetful functor from $E_\infty$ algebras commutes with geometric realization by \cite[Proposition 3.2.3.1]{Lurie_HA}). We therefore find that the $E_\infty$ structure on $\gamma_R \in (\SpsymR)_\infty$ agrees with that on the relative tensor product.

For the case of $F_R \in TS_\infty$ we argue by identifying both $F_R$ and the relative $\barsm$-product as colimits of the functor 
\begin{equation}\label{colim eq}
G_R \colon B\GLoneof{R} \xrightarrow{F_R} \Sp_{\infty} \longrightarrow T\cS_\infty
\end{equation}
whose second part is the inclusion over the one point space. If $R$ is commutative, it will be an identification as $E_\infty$-algebras. This has meaning since $F_R$ and thus $G_R$ are lax symmetric monoidal in this case, whence the colimit of $G_R$ inherits an $E_\infty$-structure for example by \cite[Proposition 3.3 and Corollary 3.8]{nik-stable}.

To see that $\colim G_R \simeq F_R$, we employ one direction of \cite[Proposition 7.3.1.12 (1)]{Lurie_HA}. Informally speaking, it says that
\[\colimsubscript_{b \in B\GLoneof{R}} F_R(b) \simeq \colimsubscript_{b \in B\mathrm{Gl}_1R} (\iota_b)_!F_R(b),\]
where $\iota_b \colon \ast \rightarrow B\GLoneof{R}$ denotes the inclusion of $b$. Formally, the right hand side arises as follows: Choose a colimit extension of $G_R$ to the cone category $B\GLoneof{R}^\vartriangleright$ and consider the  lifting problem consisting of the solid parts of
\[\xymatrix@-1pc{B\GLoneof{R}^\vartriangleright \times \{0\} \ar[rrr]^-{G_R}\ar[d] &&& T\cS_\infty\ar[d]^{\pi_b}\\ 
B\GLoneof{R}^\vartriangleright \times \Delta^1 \ar[r] \ar@{-->}[urrr] & B\GLoneof{R}^\vartriangleright \ar[rr]^-{\pi_b G_R} && \cS_\infty,}\]
using the map $B\GLoneof{R}^\vartriangleright \times \Delta^1 \rightarrow B\GLoneof{R}^\vartriangleright$ that is the evident projection on $B\GLoneof{R}^\vartriangleright \times \{0\}$ and collapses $B\GLoneof{R}^\vartriangleright \times \{1\}$ to the cone point. By (the duals of) \cite[Remark 2.4.1.9 and Propositions 3.1.2.1]{Lurie_HTT}, there exists an essentially unique diagonal filler mapping every edge $\{b\} \times \Delta^1$ to a cocartesian edge, since $\pi_b$ is cocartesian. The restriction $H_R$ of this filler to $B\GLoneof{R} \times \{1\}$ canonically factors through the inclusion $\mathrm{Fun}(B\GLoneof{R},\Sp_{\infty}) \rightarrow T\cS_\infty$, since $\pi_b \circ H_R$ is the constant functor with value $B\GLoneof{R}$ by the reverse implication of \cite[Proposition~7.3.1.12~(1)]{Lurie_HA}. Regarded as a functor $H_R\colon B\GLoneof{R} \rightarrow \mathrm{Fun}(B\GLoneof{R},\Sp_{\infty})$, the $H_R$ then defines the right hand side in the original assertion (\ref{colim eq}) and the agreement of the two colimits is an instance of \cite[Proposition 4.3.1.10]{Lurie_HTT}. Now $H_R$ is adjoint to $\Delta_!F_R \colon B\GLoneof{R} \times B\GLoneof{R} \rightarrow \Sp_\infty$: The value of the adjoint of $H_R$ on a pair of objects $(b,b') \in B\GLoneof{R} \times B\GLoneof{R}$ is by construction the colimit of the functor 
\[\iota_b/b' \longrightarrow \{b\} \xrightarrow{F_R} \Sp_\infty,\]
where $\iota_b :\ast \rightarrow B\GLoneof{R}$ denotes the inclusion of $b$ again, whereas the left Kan-extension along $\Delta \colon B\GLoneof{R} \rightarrow B\GLoneof{R} \times B\GLoneof{R}$ by definition evaluates to the colimit of
\[\Delta/(b,b') \longrightarrow B\GLoneof{R} \xrightarrow{F_R} \Sp_\infty.\]
But taking products with $\id_b$ induces an equivalence $\iota_b/b' \rightarrow \Delta/(b,b')$ making the above triangle commute, from which the claim follows by the uniqueness of Kan extensions. We thus find 
\[\mathrm{Hom}_{B\GLoneof{R}}(\colim H_R, -) \simeq \mathrm{Hom}_{B\GLoneof{R} \times B\GLoneof{R}}(\Delta_! F_R, \mathrm{pr}_2^* -) \simeq \mathrm{Hom}_{B\GLoneof{R}}(F_R,-)\]
as functors on $\mathrm{Fun}(B\GLoneof{R},\Sp_{\infty})$, the first by definition and the second by the adjunction $(\Delta_!,\Delta^*)$. It follows that $\mathrm{colim} G_R\simeq \mathrm{colim} H_R \simeq F_R$ as desired. 

For $R$ commutative the functor $G_R$ extends to a map $G^\otimes_R \colon (B\GLoneof{R})^\wedge \rightarrow (T\cS)^\barsm$ witnessing the lax monoidality of $G_R$. Repeating the same argument then shows that in this case $\colim G_R \simeq F_R$ as lax symmetric monoidal functors.

To see that the relative $\barsm$-product is also a colimit of $G_R$, we follow the proof of \cite[Proposition 3.26]{Ando-B-G-H-R_infinity-Thom}: They consider the inclusion of the category $B\GLoneof{R}$ into the category $\mathrm{Fun}(B\GLoneof{R}, \cS_\infty)$ of all $\GLoneof{R}$-spaces as the automorphisms of $\GLoneof{R}$. Then the diagram
\[\xymatrix@-1pc{B\mathrm{Aut}_{\mathrm{Fun}(B\GLoneof{R}, \cS_\infty)}(\GLoneof{R}) \ar[r] \ar[d]     & \mathrm{Fun}(B\GLoneof{R}, \cS_\infty) \ar[d] \\
            B\mathrm{Aut}_{R-\mathrm{Mod}}(R) \ar[r]^-{G_R} & T\cS_\infty}\]
with the vertical maps given by $\bS_t[-] \barsm_{\bS_t[\GLoneof{R}]} R$ is commutative. Since the left vertical map is an equivalence of $\infty$-groupoids the colimit over $G_R$ may equally well be computed using the upper composite. Since the right vertical map preserves colimits, this amounts to computing the relative $\barsm$-product of $R$ with the suspension spectrum of the colimit of the inclusion \[B\mathrm{Aut}_{\mathrm{Fun}(B\GLoneof{R}, \cS_\infty)}(\GLoneof{R}) \rightarrow \mathrm{Fun}(B\GLoneof{R}, \cS_\infty).\] This is the terminal object by the argument preceding \cite[Proposition 3.26]{Ando-B-G-H-R_infinity-Thom} and we obtain the desired identification. 

In case $R$ is commutative all functors in sight are lax symmetric monoidal (for the Day convolution on the upper right corner), whence the identification of colimits preserves $E_\infty$-structures.
\end{proof}

We can now also compare our definition of twisted (co)homology from Section~\ref{sec:tw-coho} with the $\infty$-categorical one given in \cites{Ando-B-G-H-R_infinity-Thom, Ando-B-G_parametrized}.

\begin{proposition}\label{cohom infty comp}
Let $R$ be a positive fibrant symmetric ring spectrum, let $G \to \GLoneIof(R)$ be a cofibrant replacement, and let $(\gamma_R, BG)$ be the universal line bundle. Given a map $\tau \colon K \rightarrow BG_{h\cI}$, there are canonical isomorphisms relating our $(\gamma_R, BG)_n(K,\tau)$ and $(\gamma_R, BG)^n(K,\tau)$ with \cite[Definition~1.2]{Ando-B-G_parametrized} applied to \[K \xrightarrow{\tau} \{n\} \times B\GLoneof{R} \rightarrow \mathrm{Pic}(R).\]
\end{proposition}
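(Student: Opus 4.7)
The plan is to reduce the statement to three identifications already established: Proposition~\ref{prop:gamma_comp} (which identifies $\gamma_R$ with $F_R$), the naturality of the equivalence $(\Spsym{(-)})_\infty \simeq \mathrm{Fun}((-)_{h\cI},\Sp_\infty)$ established in Lemma~\ref{lem:SpsymX-infty-functorial}, and the general fact that under this equivalence the right Quillen functor $f^*$ corresponds to restriction of functors, while $\mathbb L (Y\to *)_!$ and $\mathbb R (Y\to *)_*$ correspond to colimit and limit over $Y_{h\cI}$, respectively.

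First, I would recall the $\infty$-categorical definition: for a map $\varphi\colon K \to \mathrm{Pic}(R)$, the twisted homology and cohomology from~\cite[Definition~1.2]{Ando-B-G_parametrized} are given by $\pi_n$ of $\colim_K(\mathrm{act} \circ \varphi)$ and $\pi_{-n}$ of $\lim_K(\mathrm{act}\circ \varphi)$ respectively, where $\mathrm{act}\colon\mathrm{Pic}(R)\to R\text{-}\mathrm{Mod}\to\Sp_\infty$ is the action functor. When $\varphi$ factors through $\{n\}\times B\GLoneof{R}$, this action functor restricted to the appropriate component becomes a shift of $F_R$.

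Next, I would apply the equivalence of Lemma~\ref{lem:SpsymX-infty-functorial} simultaneously to $BG$, $P_\tau(K)$, and $*$. The $\cI$-spacification $\tau_{\cI}\colon P_\tau(K)\to BG$ has base homotopy colimit $\tau\colon K\to (BG)_{h\cI}$ (up to natural equivalence, since $P_{\tau}(K)_{h\cI}\simeq K$ by the properties of the $\cI$-spacification in Section~\ref{subsec:I-spacification}), while $(BG)_{h\cI}\simeq B\GLoneof{R}$ by Lemma~\ref{Gl cpmp}. Hence, using the naturality in the $\cI$-space variable, the functor $\mathbb R\tau_{\cI}^*\colon (\Spsym{BG})_\infty\to (\Spsym{P_\tau(K)})_\infty$ corresponds under the equivalences to restriction along $\tau\colon K\to B\GLoneof{R}$.

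Then I would invoke Proposition~\ref{prop:gamma_comp}, which identifies the object $\gamma_R\in(\Spsym{BG})_\infty$ with the functor $F_R\colon B\GLoneof{R}\to\Sp_\infty$ on the $\infty$-categorical side. Combining this with the previous step shows that $\mathbb R\tau_{\cI}^*(\gamma_R,BG)$ corresponds to the composite $K\xrightarrow{\tau}B\GLoneof{R}\xrightarrow{F_R}\Sp_\infty$, which is exactly (up to the possible suspension by $n$, absorbed via the standard identification of $\mathrm{Pic}(R)$ with $\coprod_n\{n\}\times B\GLoneof{R}$) the functor whose (co)limit defines the $\infty$-categorical twisted (co)homology. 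Finally, applying naturality again to the unique map $P_\tau(K)\to *$, the adjoint pairs $(\mathbb L(P_\tau(K)\to *)_!,\mathbb R(P_\tau(K)\to *)^*)$ and $(\mathbb R(P_\tau(K)\to *)^*,\mathbb R(P_\tau(K)\to *)_*)$ correspond to the colimit/constant-diagram and constant-diagram/limit adjunctions between $\mathrm{Fun}(K,\Sp_\infty)$ and $\Sp_\infty$, since left and right adjoints are preserved under equivalences of $\infty$-categories. Taking $\pi_n$, respectively $\pi_{-n}$, yields the claimed isomorphisms.

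The main obstacle will be the second step, namely carefully tracking the various naturality statements so that the identifications of $\mathbb R\tau_{\cI}^*$ and $\mathbb L\Theta$, $\mathbb R\Gamma$ with their $\infty$-categorical counterparts are compatible on the nose, including the identification of $P_\tau(K)_{h\cI}$ with $K$ and of $(BG)_{h\cI}$ with $B\GLoneof{R}$ as objects over which the relevant pullbacks and adjoint functors are taken. Once the naturality in Lemma~\ref{lem:SpsymX-infty-functorial} is unpacked on morphisms (which is genuine $\infty$-categorical data rather than a mere equivalence of categories of functors), the remaining arguments are formal consequences of the uniqueness of adjoints.
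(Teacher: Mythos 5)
Your overall skeleton matches the paper's: reduce to the $\infty$-categorical side via Lemma~\ref{lem:SpsymX-infty-functorial}, identify $\gamma_R$ with $F_R$ via Proposition~\ref{prop:gamma_comp}, identify $\mathbb R\tau_\cI^*$ with restriction along $\tau$, and identify $\mathbb L\Theta$ and $\mathbb R\Gamma$ with colimit and limit. For the \emph{homology} statement this is essentially complete, since the definition in \cite[Definition~1.2]{Ando-B-G_parametrized} of twisted homology really is $\pi_0 F^R(R,\Theta\sigma^*\gamma_R)\cong\pi_0\,\Theta\sigma^*\gamma_R$, i.e.\ the colimit.

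For \emph{cohomology}, however, there is a genuine gap. You take as the definition ``$\pi_{-n}$ of $\lim_K(\mathrm{act}\circ\varphi)$'', but that is not what \cite[Definition~1.2]{Ando-B-G_parametrized} says: twisted cohomology there is defined as $\pi_0 F^R(\Theta\check\sigma^*\gamma_R,R)$, the $R$-linear mapping spectrum \emph{out of} the Thom spectrum of the \emph{inverse} twist $\check\sigma = (-)^{-1}\circ\sigma$. Equating this with the limit (i.e.\ $\mathbb R\Gamma$) description is precisely the nontrivial content of the paper's argument, which proceeds via the chain
\[F^R(\Theta\check\sigma^*\gamma_R,R)\simeq F^R_L(\check\sigma^*\gamma_R,R_L)\simeq F^R_L(R_L,\sigma^*\gamma_R)\simeq F^R(R,\Gamma\sigma^*\gamma_R),\]
where the outer equivalences are the $(\Theta,(-)_L)$ and $((-)_L,\Gamma)$ adjunctions and the middle one is given by fiberwise smashing over $R$ with $\sigma^*\gamma_R$ — using in an essential way that $\gamma_R$ is a \emph{line} bundle, so that $\check\sigma^*\gamma_R$ is fiberwise inverse to $\sigma^*\gamma_R$. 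Without this duality step your identification of the cohomology groups does not go through; the rest of your argument (naturality of the comparison, $P_\tau(K)_{h\cI}\simeq K$, $(BG)_{h\cI}\simeq B\GLoneof{R}$, preservation of adjoints under equivalences) is sound and is exactly what the paper uses once the two definitions of cohomology have been reconciled.
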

In the formulation we have again used Lemma~\ref{Gl cpmp}  to identify $BG_{h\cI}$ with the $\infty$-categorical $B\GLoneof{R}$.

\begin{proof}
By the suspension isomorphism it suffices to consider the case $n=0$. Let us then recall the relevant definitions (in our notation) for $\sigma \colon L \rightarrow B\GLoneof{R}$:
\[R^0(L,\sigma) = \pi_0F^R(\Theta\check\sigma^*\gamma_R,R) \quad \text{and} \quad R_0(L,\sigma) = \pi_0F^R(R,\Theta \sigma^* \gamma_R),\]
where $F^R$ denotes the spectrum of $R$-linear maps and $\check\sigma$ denotes the composite
\[L \xrightarrow{\sigma} B\GLoneof{R} \xrightarrow{(-)^{-1}} B\GLoneof{R}.\]
The functor $\Theta$ being left adjoint to pullback along the constant map makes it the $\infty$-categorical colimit. In the $\infty$-category of spectra we then have
\[F^R(\Theta\check\sigma^*\gamma_R,R) \simeq F_L^R(\check\sigma^*\gamma_R,R_L) 
                                      \simeq F_L^R(R_L, \sigma^*\gamma_R) \simeq F^R(R,\Gamma\sigma^*\gamma_R),\] where $(-)_L$ denotes the constant diagram functor $\Sp_\infty \rightarrow \Fun(L,\Sp_\infty)$, the second equivalence is given by fiberwise smashing (over $R$ and $L$) with $\sigma^*\gamma_R$ and the remainder by the definition of $\Theta$ and $\Gamma$ as adjoints to $(-)_L$. Identifying $F^R(R,-)$ with the identity functor on $R$-module spectra we therefore obtain from Lemma~\ref{lem:SpsymX-infty-functorial} and Proposition~\ref{prop:gamma_comp} that both the definition of homology  and cohomology agree with ours.
\end{proof}

Finally we record the agreement of our products~\eqref{eq:pairings} with their $\infty$-categorical counterparts. 

\begin{proposition}\label{prop:comparison-pairings}
Let $R$ be a positive fibrant commutative symmetric ring spectrum. Then the products~\eqref{eq:pairings} agree with those from \cite[Theorem 4.21]{Ando-B-G_parametrized} under the isomorphisms from Proposition \ref{cohom infty comp}.
\end{proposition}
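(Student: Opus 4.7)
The plan is to decompose Construction~\ref{constr:pairings} into a chain of building blocks, identify each block with its $\infty$-categorical counterpart using the machinery assembled in this section, and conclude that the two products agree because each ingredient does. Concretely, the cross product on parametrized homology factors as a pairing of five natural transformations: (a) the lax monoidal structure $\mathbb Rf^*(-) \barsm_{\mathbb L} \mathbb Rg^*(-) \to \mathbb R(f\boxtimes g)^*(-\barsm_{\mathbb L}-)$ on derived pullback from Proposition~\ref{prop:monoidal_pull}; (b) the commutative ring structure of $\gamma_R$ giving $\gamma_R \barsm_{\mathbb L}\gamma_R \to \mu^* \gamma_R$; (c) the monoidal structure map of the $\cI$-spacification turning $\tau_\cI \boxtimes \sigma_\cI \to \mu^*(-)_\cI$ into $(\tau\times_\mu\sigma)_\cI$; and (d)--(e) the strong monoidality of $\mathbb L\Theta$ from Lemma~\ref{lem:cobase-change-spym-lax-monoidal}. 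The cup product is obtained by replacing $\mathbb L\Theta$ with $\mathbb R\Gamma$ and using the monoidal transformation~\eqref{eq:R-Gamma-monoidal-str}.

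On the $\infty$-categorical side, the construction of \cite[Theorem 4.21]{Ando-B-G_parametrized} admits an analogous decomposition: (a$'$) the strong monoidality of pullback $(f\times g)^* \simeq f^*\boxtimes g^*$ built into the Day convolution on $\Fun(-,\Sp_\infty)$; (b$'$) the $E_\infty$-algebra structure of $F_R \in \Fun(B\GLoneof R,\Sp_\infty)$; (c$'$) the functoriality of $\Fun(-,\Sp_\infty)$ under the multiplication $B\GLoneof R \times B\GLoneof R \to B\GLoneof R$; and (d$'$) the strong monoidality of $\colim$ (resp.\ lax monoidality of $\lim$) out of symmetric monoidal presheaf categories. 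First I would invoke Theorem~\ref{thm:infty-categorical-products-intro}, more precisely Theorem~\ref{thm: comp-mult-total}, to translate (a) into (a$'$): that theorem asserts the equivalence $(\SpsymR)_\infty \simeq T\cS_\infty$ is symmetric monoidal with respect to the external $\barsm$-product on both sides, and naturality in base changes implies that the lax monoidal structure on derived pullback is transported to its $\infty$-categorical analogue. Next I would use Proposition~\ref{prop:gamma_comp}, which already identifies $\gamma_R$ with $F_R$ as an $E_\infty$-algebra, so that (b) corresponds to (b$'$).

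The remaining pieces are the $\cI$-spacification and the collapse functors. For (c), I would observe that under the equivalence $(\cS^\cI)_\infty \simeq \cS_\infty$ from \cite[Theorem 3.3]{Sagave-S_diagram} the $\cI$-spacification $P_M$ corresponds to the canonical straightening of a map of spaces over $M_{h\cI}$, and its monoidal structure map (which uses the $E_\infty$-action on $M_{h\cI}$ to identify $(\tau \times_\mu \sigma)_\cI$ with the composite $\mu \circ (\tau_\cI \boxtimes \sigma_\cI)$) corresponds to the evident functoriality of unstraightening under the $E_\infty$-multiplication on $B\GLoneof R$. For (d)--(e), I would note that $\mathbb L\Theta = \mathbb L(M \to *)_!$ corresponds under the comparison equivalence of Lemma~\ref{lem:SpsymX-infty-functorial} to the colimit functor $\Fun(M_{h\cI},\Sp_\infty) \to \Sp_\infty$, and that its lax monoidal structure from Lemma~\ref{lem:cobase-change-spym-lax-monoidal} matches the canonical strong monoidal structure on colimits over an $E_\infty$-space against Day convolution (cf.\ the comparison in the proof of Theorem~\ref{thm: comp-mult-total}). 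The analogous identification for $\mathbb R \Gamma$ follows by passing to right adjoints, using the explicit topological description in \eqref{eq:RGamma-topological} and Lemma~\ref{gamma-top}.

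Assembling the five identifications yields a commutative diagram in the stable homotopy category relating the paper's pairing to the $\infty$-categorical one, from which the statement follows after taking $\pi_0$. The main obstacle I anticipate is the naturality of the identification at step (c): one needs that the lax monoidal structure of $P_M$, which was defined via the explicit bar resolution of $M$ and used the $E_\infty$-action of the Barratt--Eccles operad on $M_{h\cI}$, is indeed the one obtained under the equivalence between (rigidified) commutative $\cI$-space monoids and $E_\infty$-spaces. This is essentially the content that underlies the proof of Theorem~\ref{thm:SpsymM-infty-identification}, but applied to the multiplicative structure induced on the slice category over $M$; spelling out this naturality of the $\cI$-spacification functor under the symmetric monoidal structures is where the technical bulk of the proof will reside. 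Once that step is granted, the rest is a diagram chase assembling the coherences already proved.
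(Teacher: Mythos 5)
Your proposal is correct and follows essentially the same route as the paper: the paper's proof consists of a single sentence asserting that the statement follows from Theorem~\ref{thm:SpsymM-infty-identification}, Lemma~\ref{Gl cpmp}, and Proposition~\ref{prop:gamma_comp} ``by unwinding the definitions.'' Your five-step decomposition is precisely that unwinding made explicit, invoking the same comparison results, so the only difference is that you spell out (and honestly flag the delicate point in) what the paper leaves implicit.
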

\begin{proof}
This follows immediately from the conjunction of Theorem~\ref{thm:SpsymM-infty-identification}, Proposition~\ref{Gl cpmp} and Proposition~\ref{prop:gamma_comp} by unwinding the definitions.
\end{proof}

\begin{bibdiv}
\begin{biblist}

\bib{Ando-B-G_parametrized}{article}{
      author={Ando, Matthew},
      author={Blumberg, Andrew~J.},
      author={Gepner, David},
       title={Parametrized spectra, multiplicative {T}hom spectra, and the
  twisted {U}mkehr map},
        date={2018},
     journal={Geom. Topol.},
      volume={22},
       pages={3761\ndash 3825},
}

\bib{Ando-B-G-H-R_infinity-Thom}{article}{
      author={Ando, Matthew},
      author={Blumberg, Andrew~J.},
      author={Gepner, David},
      author={Hopkins, Michael~J.},
      author={Rezk, Charles},
       title={An {$\infty$}-categorical approach to {$R$}-line bundles,
  {$R$}-module {T}hom spectra, and twisted {$R$}-homology},
        date={2014},
        ISSN={1753-8416},
     journal={J. Topol.},
      volume={7},
      number={3},
       pages={869\ndash 893},
         url={http://dx.doi.org/10.1112/jtopol/jtt035},
}

\bib{Ando-B-G-H-R_units-Thom}{article}{
      author={Ando, Matthew},
      author={Blumberg, Andrew~J.},
      author={Gepner, David},
      author={Hopkins, Michael~J.},
      author={Rezk, Charles},
       title={Units of ring spectra, orientations and {T}hom spectra via rigid
  infinite loop space theory},
        date={2014},
        ISSN={1753-8416},
     journal={J. Topol.},
      volume={7},
      number={4},
       pages={1077\ndash 1117},
         url={http://dx.doi.org/10.1112/jtopol/jtu009},
}

\bib{AGG-Uniqueness}{article}{
      author={Antieau, Benjamin},
      author={Gepner, David},
      author={G\'omez, Jos\'e~Manuel},
       title={Actions of {$K(\pi,n)$} spaces on {$K$}-theory and uniqueness of
  twisted {$K$}-theory},
        date={2014},
        ISSN={0002-9947},
     journal={Trans. Amer. Math. Soc.},
      volume={366},
      number={7},
       pages={3631\ndash 3648},
         url={https://doi.org/10.1090/S0002-9947-2014-05937-0},
}

\bib{Barwick_left-right}{article}{
      author={Barwick, Clark},
       title={On left and right model categories and left and right {B}ousfield
  localizations},
        date={2010},
        ISSN={1532-0073},
     journal={Homology, Homotopy Appl.},
      volume={12},
      number={2},
       pages={245\ndash 320},
         url={http://projecteuclid.org/euclid.hha/1296223884},
}

\bib{Blumberg-C-S_THH-Thom}{article}{
      author={Blumberg, Andrew~J.},
      author={Cohen, Ralph~L.},
      author={Schlichtkrull, Christian},
       title={Topological {H}ochschild homology of {T}hom spectra and the free
  loop space},
        date={2010},
        ISSN={1465-3060},
     journal={Geom. Topol.},
      volume={14},
      number={2},
       pages={1165\ndash 1242},
         url={http://dx.doi.org/10.2140/gt.2010.14.1165},
}

\bib{VBM-Thesis}{misc}{
      author={Braunack-Mayer, Vincent~S.},
       title={Rational parametrised stable homotopy theory},
        date={2018},
        note={Ph.D. Thesis, University of Zurich, available at
  \url{https://doi.org/10.5167/uzh-153000}},
}

\bib{Borceux-1}{book}{
      author={Borceux, Francis},
       title={Handbook of categorical algebra. 1},
      series={Encyclopedia of Mathematics and its Applications},
   publisher={Cambridge University Press, Cambridge},
        date={1994},
      volume={50},
        ISBN={0-521-44178-1},
        note={Basic category theory},
}

\bib{Borceux-2}{book}{
      author={Borceux, Francis},
       title={Handbook of categorical algebra. 2},
      series={Encyclopedia of Mathematics and its Applications},
   publisher={Cambridge University Press, Cambridge},
        date={1994},
      volume={51},
        ISBN={0-521-44179-X},
        note={Categories and structures},
}

\bib{Bousfield_telescopic}{article}{
      author={Bousfield, A.~K.},
       title={On the telescopic homotopy theory of spaces},
        date={2001},
        ISSN={0002-9947},
     journal={Trans. Amer. Math. Soc.},
      volume={353},
      number={6},
       pages={2391\ndash 2426},
         url={https://doi.org/10.1090/S0002-9947-00-02649-0},
}

\bib{Basu_SS_Thom}{article}{
      author={Basu, Samik},
      author={Sagave, Steffen},
      author={Schlichtkrull, Christian},
       title={{G}eneralized {T}hom spectra and their topological {H}ochschild
  homology},
        date={2020},
        ISSN={1474-7480},
     journal={J. Inst. Math. Jussieu},
      volume={19},
      number={1},
       pages={21\ndash 64},
         url={https://doi.org/10.1017/s1474748017000421},
}

\bib{Cagne-M_bifibrations}{article}{
      author={{Cagne}, P.},
      author={{Melli{\`e}s}, P.-A.},
       title={On bifibrations of model categories},
        date={2017},
        note={\arxivlink{1709.10484}},
}

\bib{Clapp-P_parametrized}{article}{
      author={Clapp, M\'onica},
      author={Puppe, Dieter},
       title={The homotopy category of parametrized spectra},
        date={1984},
        ISSN={0025-2611},
     journal={Manuscripta Math.},
      volume={45},
      number={3},
       pages={219\ndash 247},
         url={https://doi.org/10.1007/BF01158038},
}

\bib{Dugger_replacing}{article}{
      author={Dugger, Daniel},
       title={Replacing model categories with simplicial ones},
        date={2001},
        ISSN={0002-9947},
     journal={Trans. Amer. Math. Soc.},
      volume={353},
      number={12},
       pages={5003\ndash 5027},
         url={https://doi.org/10.1090/S0002-9947-01-02661-7},
}

\bib{Gorchinskiy-G_positive}{article}{
      author={Gorchinskiy, S.},
      author={Guletski\u{\i}, V.},
       title={Positive model structures for abstract symmetric spectra},
        date={2018},
        ISSN={0927-2852},
     journal={Appl. Categ. Structures},
      volume={26},
      number={1},
       pages={29\ndash 46},
         url={https://doi.org/10.1007/s10485-016-9480-9},
}

\bib{Gepner-G-N_universality}{article}{
      author={Gepner, David},
      author={Groth, Moritz},
      author={Nikolaus, Thomas},
       title={Universality of multiplicative infinite loop space machines},
        date={2015},
        ISSN={1472-2747},
     journal={Algebr. Geom. Topol.},
      volume={15},
      number={6},
       pages={3107\ndash 3153},
         url={http://dx.doi.org/10.2140/agt.2015.15.3107},
}

\bib{Goerss-J_simplicial}{book}{
      author={Goerss, Paul~G.},
      author={Jardine, John~F.},
       title={Simplicial homotopy theory},
      series={Progress in Mathematics},
   publisher={Birkh\"auser Verlag},
     address={Basel},
        date={1999},
      volume={174},
        ISBN={3-7643-6064-X},
}

\bib{Gla}{article}{
      author={Glasman, Saul},
       title={Day convolution for {$\infty$}-categories},
        date={2016},
        ISSN={1073-2780},
     journal={Math. Res. Lett.},
      volume={23},
      number={5},
       pages={1369\ndash 1385},
         url={https://doi.org/10.4310/MRL.2016.v23.n5.a6},
}

\bib{Hinich_Dwyer-Kan}{article}{
      author={Hinich, Vladimir},
       title={Dwyer-{K}an localization revisited},
        date={2016},
        ISSN={1532-0073},
     journal={Homology Homotopy Appl.},
      volume={18},
      number={1},
       pages={27\ndash 48},
         url={https://doi.org/10.4310/HHA.2016.v18.n1.a3},
}

\bib{Hirschhorn_model}{book}{
      author={Hirschhorn, Philip~S.},
       title={Model categories and their localizations},
      series={Mathematical Surveys and Monographs},
   publisher={American Mathematical Society},
     address={Providence, RI},
        date={2003},
      volume={99},
        ISBN={0-8218-3279-4},
}

\bib{Hirschhorn_over_under}{misc}{
      author={Hirschhorn, Philip~S.},
       title={Overcategories and undercategories of model categories},
        date={2015},
        note={\arxivlink{1507.01624}},
}

\bib{HNP}{article}{
      author={Harpaz, Yonatan},
      author={Nuiten, Joost},
      author={Prasma, Matan},
       title={The tangent bundle of a model category},
        date={2019},
     journal={Theory Appl. Categ.},
      volume={34},
       pages={Paper No. 33, 1039\ndash 1072},
}

\bib{Hovey_symmetric-general}{article}{
      author={Hovey, Mark},
       title={Spectra and symmetric spectra in general model categories},
        date={2001},
        ISSN={0022-4049},
     journal={J. Pure Appl. Algebra},
      volume={165},
      number={1},
       pages={63\ndash 127},
         url={http://dx.doi.org/10.1016/S0022-4049(00)00172-9},
}

\bib{Hovey_model}{book}{
      author={Hovey, Mark},
       title={Model categories},
      series={Mathematical Surveys and Monographs},
   publisher={American Mathematical Society},
     address={Providence, RI},
        date={1999},
      volume={63},
        ISBN={0-8218-1359-5},
}

\bib{Harpaz-P_Grothendieck-construction}{article}{
      author={Harpaz, Yonatan},
      author={Prasma, Matan},
       title={The {G}rothendieck construction for model categories},
        date={2015},
        ISSN={0001-8708},
     journal={Adv. Math.},
      volume={281},
       pages={1306\ndash 1363},
         url={http://dx.doi.org/10.1016/j.aim.2015.03.031},
}

\bib{HS-twisted}{misc}{
      author={Hebestreit, Fabian},
      author={Sagave, Steffen},
       title={Homotopical and operator algebraic twisted
  \texorpdfstring{$K$}{K}-theory},
        date={2019},
        note={\arxivlink{1904.01872v1}},
}

\bib{HSS}{article}{
      author={Hovey, Mark},
      author={Shipley, Brooke},
      author={Smith, Jeff},
       title={Symmetric spectra},
        date={2000},
        ISSN={0894-0347},
     journal={J. Amer. Math. Soc.},
      volume={13},
      number={1},
       pages={149\ndash 208},
}

\bib{Jo-coherence}{incollection}{
      author={Joachim, Michael},
       title={Higher coherences for equivariant {$K$}-theory},
        date={2004},
   booktitle={Structured ring spectra},
      series={London Math. Soc. Lecture Note Ser.},
      volume={315},
   publisher={Cambridge Univ. Press, Cambridge},
       pages={87\ndash 114},
         url={https://doi.org/10.1017/CBO9780511529955.006},
        note={London Math. Soc. Lecture Note Ser. \textbf{315}, Cambridge Univ.
  Press},
}

\bib{Joyal-quasi-categories}{misc}{
      author={Joyal, A.},
       title={The theory of quasi-categories and its applications},
        date={2008},
        note={Available at
  \url{http://mat.uab.cat/~kock/crm/hocat/advanced-course/Quadern45-2.pdf}},
}

\bib{Lewis_thesis}{thesis}{
      author={Lewis, L.~Gaunce, Jr.},
       title={The stable category and generalized {T}hom spectra},
        type={Ph.D. Thesis},
        date={1978},
}

\bib{Lewis_when-cofibration}{article}{
      author={Lewis, L.~Gaunce, Jr.},
       title={When is the natural map {$X\rightarrow \Omega \Sigma X$} a
  cofibration?},
        date={1982},
        ISSN={0002-9947},
     journal={Trans. Amer. Math. Soc.},
      volume={273},
      number={1},
       pages={147\ndash 155},
         url={http://dx.doi.org/10.2307/1999197},
}

\bib{Lewis_fibre-spaces}{article}{
      author={Lewis, L.~Gaunce, Jr.},
       title={Open maps, colimits, and a convenient category of fibre spaces},
        date={1985},
        ISSN={0166-8641},
     journal={Topology Appl.},
      volume={19},
      number={1},
       pages={75\ndash 89},
         url={http://dx.doi.org/10.1016/0166-8641(85)90087-2},
}

\bib{LMS}{book}{
      author={Lewis, L.~G., Jr.},
      author={May, J.~P.},
      author={Steinberger, M.},
      author={McClure, J.~E.},
       title={Equivariant stable homotopy theory},
      series={Lecture Notes in Mathematics},
   publisher={Springer-Verlag, Berlin},
        date={1986},
      volume={1213},
        ISBN={3-540-16820-6},
         url={https://doi.org/10.1007/BFb0075778},
        note={With contributions by J. E. McClure},
}

\bib{Lurie_HTT}{book}{
      author={Lurie, Jacob},
       title={Higher topos theory},
      series={Annals of Mathematics Studies},
   publisher={Princeton University Press, Princeton, NJ},
        date={2009},
      volume={170},
        ISBN={978-0-691-14049-0; 0-691-14049-9},
}

\bib{Lurie_HA}{misc}{
      author={Lurie, Jacob},
       title={Higher algebra},
        date={2016},
        note={Preprint, available at
  \url{http://www.math.harvard.edu/~lurie/}},
}

\bib{MMSS}{article}{
      author={Mandell, M.~A.},
      author={May, J.~P.},
      author={Schwede, S.},
      author={Shipley, B.},
       title={Model categories of diagram spectra},
        date={2001},
        ISSN={0024-6115},
     journal={Proc. London Math. Soc. (3)},
      volume={82},
      number={2},
       pages={441\ndash 512},
         url={http://dx.doi.org/10.1112/S0024611501012692},
}

\bib{May-S_parametrized}{book}{
      author={May, J.~P.},
      author={Sigurdsson, J.},
       title={Parametrized homotopy theory},
      series={Mathematical Surveys and Monographs},
   publisher={American Mathematical Society, Providence, RI},
        date={2006},
      volume={132},
        ISBN={978-0-8218-3922-5; 0-8218-3922-5},
         url={http://dx.doi.org/10.1090/surv/132},
}

\bib{nik-stable}{misc}{
      author={Nikolaus, Thomas},
       title={Stable $\infty$-operads and the multiplicative {Y}oneda lemma},
        date={2016},
        note={\arxivlink{1608.02901}},
}

\bib{Nikolaus_S-presentably}{article}{
      author={Nikolaus, Thomas},
      author={Sagave, Steffen},
       title={Presentably symmetric monoidal {$\infty$}-categories are
  represented by symmetric monoidal model categories},
        date={2017},
        ISSN={1472-2747},
     journal={Algebr. Geom. Topol.},
      volume={17},
      number={5},
       pages={3189\ndash 3212},
         url={https://doi.org/10.2140/agt.2017.17.3189},
}

\bib{Nikolaus-S_tc}{article}{
      author={Nikolaus, Thomas},
      author={Scholze, Peter},
       title={On topological cyclic homology},
        date={2018},
        ISSN={0001-5962},
     journal={Acta Math.},
      volume={221},
      number={2},
       pages={203\ndash 409},
}

\bib{Pavlov-S_symmetric-operads}{article}{
      author={Pavlov, Dmitri},
      author={Scholbach, Jakob},
       title={Symmetric operads in abstract symmetric spectra},
        date={2019},
        ISSN={1474-7480},
     journal={J. Inst. Math. Jussieu},
      volume={18},
      number={4},
       pages={707\ndash 758},
         url={https://doi.org/10.1017/s1474748017000202},
}

\bib{Schlichtkrull_units}{article}{
      author={Schlichtkrull, Christian},
       title={Units of ring spectra and their traces in algebraic
  {$K$}-theory},
        date={2004},
        ISSN={1465-3060},
     journal={Geom. Topol.},
      volume={8},
       pages={645\ndash 673 (electronic)},
}

\bib{Schlichtkrull_Thom-symmetric}{article}{
      author={Schlichtkrull, Christian},
       title={Thom spectra that are symmetric spectra},
        date={2009},
     journal={Doc. Math.},
      volume={14},
       pages={699\ndash 748},
}

\bib{Schwede_SymSp}{misc}{
      author={Schwede, Stefan},
       title={Symmetric spectra},
        date={2012},
        note={Book project, available at
  \url{http://www.math.uni-bonn.de/people/schwede/}},
}

\bib{schulz-logarithmic}{misc}{
      author={Schulz, Julia},
       title={Logarithmic structures on commutative {$Hk$}-algebra spectra},
        date={2018},
        note={Ph.D. Thesis, Universit\"at Hamburg, available at
  \url{http://ediss.sub.uni-hamburg.de/volltexte/2018/9312/}},
}

\bib{Shipley_convenient}{incollection}{
      author={Shipley, Brooke},
       title={A convenient model category for commutative ring spectra},
        date={2004},
   booktitle={Homotopy theory: relations with algebraic geometry, group
  cohomology, and algebraic {$K$}-theory},
      series={Contemp. Math.},
      volume={346},
   publisher={Amer. Math. Soc.},
     address={Providence, RI},
       pages={473\ndash 483},
         url={http://dx.doi.org/10.1090/conm/346/06300},
        note={Contemp. Math. \textbf{346}, Amer. Math. Soc.},
}

\bib{Schwede-S_algebras_modules}{article}{
      author={Schwede, Stefan},
      author={Shipley, Brooke~E.},
       title={Algebras and modules in monoidal model categories},
        date={2000},
        ISSN={0024-6115},
     journal={Proc. London Math. Soc. (3)},
      volume={80},
      number={2},
       pages={491\ndash 511},
         url={http://dx.doi.org/10.1112/S002461150001220X},
}

\bib{Sagave-S_diagram}{article}{
      author={Sagave, Steffen},
      author={Schlichtkrull, Christian},
       title={Diagram spaces and symmetric spectra},
        date={2012},
        ISSN={0001-8708},
     journal={Adv. Math.},
      volume={231},
      number={3-4},
       pages={2116\ndash 2193},
         url={http://dx.doi.org/10.1016/j.aim.2012.07.013},
}

\bib{Sagave-S_group-compl}{article}{
      author={Sagave, Steffen},
      author={Schlichtkrull, Christian},
       title={Group completion and units in {$\mathcal I$}-spaces},
        date={2013},
        ISSN={1472-2747},
     journal={Algebr. Geom. Topol.},
      volume={13},
      number={2},
       pages={625\ndash 686},
         url={http://dx.doi.org/10.2140/agt.2013.13.625},
}

\bib{Thomason-homotopy-colimt}{article}{
      author={Thomason, R.~W.},
       title={Homotopy colimits in the category of small categories},
        date={1979},
        ISSN={0305-0041},
     journal={Math. Proc. Cambridge Philos. Soc.},
      volume={85},
      number={1},
       pages={91\ndash 109},
}

\end{biblist}
\end{bibdiv}

\end{document}